\def\@buildmath#1{%
  \expandafter\def\csname mb#1\endcsname{\mathbb{#1}}%
  \expandafter\def\csname mc#1\endcsname{\mathcal{#1}}%
  \expandafter\def\csname mf#1\endcsname{\mathfrak{#1}}%
  \expandafter\def\csname rm#1\endcsname{\mathrm{#1}}%
  \expandafter\def\csname bf#1\endcsname{\mathbf{#1}}%
}
\def\@buildmathletters#1{%
  \ifx#1\relax
    \expandafter\@gobble
  \else
    \@buildmath{#1}%
    \expandafter\@buildmathletters
  \fi
}
\newcommand{\sF}{\ensuremath{\mathscr{F}}\xspace}
\DeclareMathOperator{\Gal}{Gal}
\DeclareMathOperator{\End}{End}
\DeclareMathOperator{\Lie}{Lie}
\DeclareMathOperator{\tr}{Tr}
\newcommand{\U}{\mathrm{U}}
\newcommand{\ep}{\varepsilon}
\newenvironment{altenumerate}
   {\begin{list}
      {(\theenumi) }
      {\usecounter{enumi}
       \setlength{\labelwidth}{0pt}
       \setlength{\labelsep}{0pt}
       \setlength{\leftmargin}{0pt}
       \setlength{\itemsep}{\the\smallskipamount}
       \renewcommand{\theenumi}{\arabic{enumi}}
      }}
   {\end{list}}
\newenvironment{altenumerate2}
   {\begin{list}
      {\textup{(\theenumii)} }
      {\usecounter{enumii}
       \setlength{\labelwidth}{0pt}
       \setlength{\labelsep}{0pt}
       \setlength{\leftmargin}{2em}
       \setlength{\itemsep}{\the\smallskipamount}
       \renewcommand{\theenumii}{\roman{enumii}}
      }}
   {\end{list}}
\newenvironment{altitemize}
   {\begin{list}
      {$\bullet$}
      {\setlength{\labelwidth}{0pt}
	   \setlength{\itemindent}{5pt}
       \setlength{\labelsep}{5pt}
       \setlength{\leftmargin}{0pt}
       \setlength{\itemsep}{\the\smallskipamount}
      }}
   {\end{list}}
\newcommand{\isoarrow}{%
   \ifbool{@display}{\overset{\sim}{\longrightarrow}}{\xrightarrow\sim}%
   }
\newcommand{\loc}{\text{loc}}
\newcommand{\ad}{\text{ad}}
\DeclareFontFamily{U}{matha}{\hyphenchar\font45}
\DeclareFontShape{U}{matha}{m}{n}{
      <5> <6> <7> <8> <9> <10> gen * matha
      <10.95> matha10 <12> <14.4> <17.28> <20.74> <24.88> matha12
      }{}
\DeclareSymbolFont{matha}{U}{matha}{m}{n}
\DeclareFontFamily{U}{mathx}{\hyphenchar\font45}
\DeclareFontShape{U}{mathx}{m}{n}{
      <5> <6> <7> <8> <9> <10>
      <10.95> <12> <14.4> <17.28> <20.74> <24.88>
      mathx10
      }{}
\DeclareSymbolFont{mathx}{U}{mathx}{m}{n}
\DeclareMathSymbol{\obot}         {2}{matha}{"6B}
\newcommand{\bX}{\mathbb{X}}
\newcommand{\bD}{\mathbb{D}}
\newcommand{\bV}{\mathbb{V}}
\newcommand{\bF}{\mathbf{F}}
\newcommand{\Gr}{\mathrm{Gr}}
\newcommand{\Fil}{\mathrm{Fil}}
\newcommand{\bL}{\mathbb{L}}
\newcommand{\Q}{\mathbb{Q}}
\newcommand{\Z}{\mathbb{Z}}
\newcommand{\F}{\mathbb{F}}
\newcommand{\Oo}{\mathcal{O}}
\newcommand{\cV}{\mathcal{V}}
\newcommand{\cZ}{\mathcal{Z}}
\newcommand{\cY}{\mathcal{Y}}
\newcommand{\cL}{\mathcal{L}}
\newcommand{\cG}{\mathcal{G}}
\newcommand{\cU}{\mathcal{U}}
\newcommand{\cN}{\mathcal{N}}
\newcommand{\cP}{\mathcal{P}}
\newcommand{\cM}{\mathcal{M}}
\newcommand{\cF}{\mathcal{F}}
\newcommand{\id}{\mathrm{id}}
\newcommand{\GL}{\mathrm{GL}}
\newcommand{\Sp}{\mathrm{Sp}}
\newcommand{\Nilp}{\mathrm{Nilp}\, }
\newcommand{\Ros}{\mathrm{Ros}}
\newcommand{\Spec}{\mathrm{Spec}\, }
\newcommand{\Spf}{\mathrm{Spf}\, }
\newcommand{\red}{\mathrm{red}}
\newcommand{\p}{q}
\newcommand{\sfe}{e}
\newcommand{\sff}{f}
\newcommand{\sfs}{{s}}
\newcommand{\sfw}{{w}}
\newcommand{\sfg}{{g}}
\newcommand{\sfF}{\mathcal{F}}
\DeclareMathAlphabet{\mathdist} {U}{BOONDOX-calo}{b}{n}
\newcommand{\tth}{\mathdist h}
\newcommand{\ttr}{\mathdist r}
\newcommand{\ttt}{\mathdist t}
\newcommand{\tts}{\mathdist s}
\newcommand{\ttn}{\mathdist n}
\newcommand{\dF}{\dot{\bF}}
\newcommand{\Fl}{\cF\ell}
\newcommand{\uF}{{\mathrm{F}}}
\newcommand{\uV}{{\mathrm{V}}}
\newcommand{\ov}{\overline}
\newcommand{\pr}{\mathrm{pr}}
\newtheorem{theorem}{Theorem}[section]
\newtheorem{corollary}[theorem]{Corollary}
\newtheorem{proposition}[theorem]{Proposition}
\newtheorem*{proposition*}{Proposition}
\newtheorem{lemma}[theorem]{Lemma}
\theoremstyle{definition}
\newtheorem{definition}[theorem]{Definition}
\newtheorem{example}[theorem]{Example}
\newtheorem{remark}[theorem]{Remark}
\numberwithin{equation}{section}
\title{
The basic locus of ramified unitary Shimura varieties of signature $(n-1,1)$ at maximal vertex level}
\author{Qiao He}
\address{Department of Mathematics, Columbia University, 2990 Broadway, New York, NY 10027, USA}
\email{qh2275@columbia.edu } 
\author{Yu LUO}
\address{Massachusetts Institute of Technology, Department of Mathematics, 77 Massachusetts Avenue, Cambridge, MA 02139, USA}
\email{yuluo@mit.edu}
\author{Yousheng Shi}
\address{School of Mathematical Sciences, Zhejiang University, 866 Yuhangtang Rd, Hangzhou, 310058, P.R. China}
\email{0023140@zju.edu.cn}
\date{\today}
\begin{document}
\begin{abstract}
We construct the Bruhat--Tits stratification of the reduced locus of the ramified unitary Rapoport--Zink space of signature $(n-1,1)$, with the level being the stabilizer of a vertex lattice. We develop the local model theory for Bruhat--Tits strata, proving their normality and Cohen--Macaulayness, and provide precise dimension formulas. Additionally, we establish an explicit scheme-theoretical isomorphism between Bruhat--Tits strata and Deligne--Lusztig varieties.

\end{abstract}
\maketitle
\tableofcontents

\section{Introduction}
In this paper, we study the reduced locus of basic unitary Rapoport--Zink spaces of signature $(n-1,1)$.
This study contributes to the broader theory of reduction of integral models of Shimura varieties.
For historical context and background on this subject, we refer the reader to \cite{Vollaard}.

The reduced locus of basic unitary Rapoport--Zink spaces was first studied by Vollaard and Wedhorn \cites{Vollaard,Vollaard-Wedhorn} for the unramified unitary group with signature $(1,n-1)$ (which is identical to the case of signature $(n-1,1)$ by conjugation) with hyperspecial level. 
Their work uncovered a Bruhat--Tits stratification of the reduced locus, where the indexing set relates to the Bruhat--Tits building of the inner form of the unitary group. 
Furthermore, they showed that each stratum admits an explicit geometric description as a disjoint union of Deligne--Lusztig varieties. This elegant structure has been extended to more general settings in recent years.

When the quadratic extension is ramified, which is the case we are interested in, the reduced locus was studied by Rapoport--Terstiege--Wilson \cite{RTW} for self-dual levels and by Wu \cite{Wu16} for exotic smooth levels. In this paper, we extend the study of the Bruhat--Tits stratification to all maximal vertex levels, defined as levels constructed as the stabilizer of a vertex lattice. 

Basic unitary Rapoport--Zink spaces and their Bruhat--Tits stratifications have important applications in arithmetic geometry. For instance, they relate to Kudla--Rapoport cycles and CM cycles, and play an important role in the proof of the Kudla--Rapoport conjecture \cites{LZ,LL2}, as well as some cases of the arithmetic fundamental lemma \cites{RTZ,Li-Zhu}. Additionally, they can also be used to determine the geometric structure of the supersingular locus of unitary Shimura varieties \cites{Vollaard-Wedhorn,RTW}. 
In those applications, it is important to have a moduli-theoretic and scheme-theoretic treatment of the Bruhat--Tits stratification, see Remark \ref{rmk:comp-to-group-way}. In this sense, our paper is new even when restricted to the self-dual level or exotic smooth level.

Let us also mention some works regarding the Bruhat--Tits stratification in different settings. These include studies on the orthogonal group \cites{HP1,HP2,HZBasic}, the quaternionic unitary group \cite{Wang-quaternionic}, higher signatures \cites{FHI24, Stefania24}, and general Coxeter-type group data \cite{GHN2024}. 
Since the first version of this draft appeared, Muller \cite{Muller} has extended the Bruhat--Tits stratification to the unramified unitary group of signature $(n-1,1)$ for arbitrary parahoric level structures.

\vspace{3mm}

Let $F/F_0$ be a ramified quadratic extension of $p$-adic fields ($p\geq 3)$ with uniformizers $\pi$ and $\pi_0$ respectively, satisfying $\pi^2=\pi_0$. Let $V$ be a hermitian space over $F$ of dimension $n$ with Hasse invariant $\ep$. The \emph{basic Rapoport--Zink space} $\cN_{n,\ep}^{[h]}=\cN_{n,\ep}^{[h]}(1,n-1)$ is a formal scheme that relates to a vertex lattice $L\subseteq V$ of type $h$ (along with a conjugacy class of geometric cocharacter $\mu_{(1,n-1)}$ and the minimal element in $B(\U(V),\mu_{(1,n-1)}^{-1})$\footnote{In \cite{RZ}, the moduli functor is related to the group $\mathrm{GU}(V)$. We are working on the height $0$ part of the Rapoport--Zink space. Such a part is expected to relate to the unitary group.}, see \S \ref{sec:RZ-def}.
The main result of this paper is a description of the reduced locus $\cN^{[h]}_{n,\ep,\red}$ in terms of vertex lattices in the nearby hermitian space $\bV$, which is a hermitian space over $F$ of dimension $n$ and Hasse invariant $-\ep$.

We denote by $\cL_{\cZ}$ (resp. $\cL_{\cY}$) the set of vertex lattices $\Lambda\subset \bV$ of type $\geq h$ (resp. $\leq h$).
For each $\Lambda_1\in\cL_{\cZ}$ and $\Lambda_2\in\cL_{\cY}$, we define closed subschemes $\cZ(\Lambda_1)$ and $\cY(\Lambda_2^\sharp)$ of the special fiber of the Rapoport--Zink space $\ov{\cN}_{n,\ep}^{[h]}$ (see Definition \ref{def:YZ-cycles}). The main result of the paper is the decomposition of the underlying reduced scheme into these subschemes:
$$\cN_{n,\ep,\red}^{[h]}=\Bigl(\bigcup_{\Lambda_1\in\cL_\cZ}\cZ(\Lambda_1)\Bigr)\cup \Bigl(\bigcup_{\Lambda_2\in\cL_\cY}\cY(\Lambda_2^\sharp)\Bigr).$$
We also show that those subschemes satisfy nice inclusion relations corresponding to lattice inclusion (Theorem \ref{thm:main}). 
These relations enable us to construct the \emph{Bruhat--Tits stratification}. The proof relies on Dieudonn\'e theory to transform the problem into a semi-linear algebra problem, then it boils down to the crucial lemma, which is originally established in \cites{Vollaard-Wedhorn,RTW} and later generalized in \cites{KR-BG,HZBasic}.
\begin{figure}
    \centering
\begin{tikzpicture}[x=1cm, y=1cm, >=stealth, scale=0.6, transform shape]

    \def\curvecolor{teal}
    \def\linecolor{blue!80!black}
    \draw[thick, \curvecolor] (-5, 3.5) .. controls (-3, 4.5) .. (-1, 4.5);
    \draw[thick, \curvecolor] (-4.8, 1.5) .. controls (-3, 2.5) .. (-1, 2.5);
    \draw[thick, \linecolor] (-4.5, 1.7) -- (-4.2, 3.7);
    \draw[thick, \linecolor] (-3.8, 2.0) -- (-3.5, 4.1);
    \draw[thick, \linecolor] (-3.1, 2.2) -- (-2.8, 4.3);
    \draw[thick, \linecolor] (-2.4, 2.4) -- (-2.1, 4.4);
    \draw[thick, \linecolor] (-1.7, 2.5) -- (-1.4, 4.5);
    \node at (0.2, 3) {$\coprod$};
    \node at (0.2, 1.5) {\Large $\mathcal{N}_{4, \mathrm{red}}^{[4,2]}$};
    \draw[thick, \curvecolor] (1.5, 3.5) .. controls (3.5, 4.5) .. (5.5, 4.5);
    \draw[thick, \curvecolor] (1.3, 1.5) .. controls (3.3, 2.5) .. (5.3, 2.5);
    \draw[thick, \linecolor] (1.6, 1.7) -- (1.9, 3.7);
    \draw[thick, \linecolor] (2.3, 2.0) -- (2.6, 4.1);
    \draw[thick, \linecolor] (3.0, 2.2) -- (3.3, 4.3);
    \draw[thick, \linecolor] (3.7, 2.4) -- (4.0, 4.4);
    \draw[thick, \linecolor] (4.4, 2.5) -- (4.7, 4.5);
    \draw[thick, \curvecolor] (-8, -1) .. controls (-6, 0) .. (-4, 0);
    \draw[thick, \curvecolor] (-7.8, -3) .. controls (-5.8, -2) .. (-3.8, -2);
    \draw[thick, \linecolor] (-7.5, -2.8) -- (-7.2, -0.8);
    \draw[thick, \linecolor] (-6.8, -2.5) -- (-6.5, -0.4);
    \draw[thick, \linecolor] (-6.1, -2.3) -- (-5.8, -0.2);
    \draw[thick, \linecolor] (-5.4, -2.1) -- (-5.1, -0.1);
    \draw[thick, \linecolor] (-4.7, -2.0) -- (-4.4, 0.0);
    \node at (-5.5, -3.7) {\Large $\mathcal{N}_{4, \mathrm{red}}^{[2]}$};
    \draw[thick, \curvecolor] (3.5, -2.5) .. controls (4.5, -2.0) .. (5.5, -1.5);
    \node at (6.5, -2.0) {$\coprod$};
    \draw[thick, \linecolor] (7.5, -2.8) -- (8.3, -1.0);
    \node at (6.5, -3.7) {\Large $\mathcal{N}_{4, \mathrm{red}}^{[4]}$};
    \draw[->, thick] (-2, 1.3) -- (-3.5, 0.2);
    \draw[->, thick] (4.5, 1) -- (5.3, 0);
    \draw[->, thick] (5.1, 1) -- (5.9, 0);

\end{tikzpicture}
    \caption{\it\small When $V$ is split of $\dim V=n=4$ and $t=2,4$. In this case, only $\mcY$-strata appear. The maximal closed $\mcY$-strata are both indexed by self-dual lattice $\Lambda$. In     $\mcN_{4,\red}^{[4]}$, each strata is isomorphic to disjoint of two $\mbP^1$. 
    In
    $\mcN^{[2]}_{4,\red}$, each $\mcY(\Lambda^\sharp)$ is isomorphic to a ruled surface $\mbP^1\times \mbP^1$ inside $\mbP(\Lambda/\pi\Lambda^\vee)$. $\mcN_{4,\red}^{[4,2]}$ is a trivial double cover of $\mcN_{4,\red}^{[2]}$ and the maps $\mcN_{4,\red}^{[4,2]}\to \mcN_{4,\red}^{[4]}$ are given by the map to the two parameterizing projective lines.
    }
    \label{fig:n=4-split}
\end{figure}
\begin{remark}
We exclude the case $p=2$ since the moduli spaces in the ramified setting differ significantly from those in the odd prime case, see \cite{Yang25}. 
But our methods, such as the use of relative displays, are applicable to $p=2$, since all $p$-divisible groups considered in the basic locus are biformal (see \S \ref{sec:strict-module} for definition). 
\end{remark}

\vspace{2mm}

We note that by definition, $\cZ(\Lambda_1)$ and $\cY(\Lambda_2^\sharp)$ are closed formal subschemes of the special fiber. We prove that these formal schemes are representable by reduced schemes. 
The proof proceeds by constructing a local model diagram
\begin{equation*}
\begin{tikzcd}
    &\widetilde{\cZ}(\Lambda_1)\arrow[dl,"\varphi"']\arrow[dr,"\pi"]&\\
    \cZ(\Lambda_1)&&\ov{\cM}^{[h]}_{n}(t),
\end{tikzcd}
\end{equation*}
where $\ov{\cM}^{[h]}_{n}(t)$ denotes the \emph{strata local model} of type $t=t(\Lambda_1)$ (Definition \ref{def:local-model}). The morphisms $\varphi$ and $\pi$ are smooth of equal dimension.
In particular, the $\cZ$-stratum $\cZ(\Lambda_1)$ shares many common geometric properties with its strata local model.  Similar local model diagrams exist for both the $\cY$-stratum $\cY(\Lambda_2^\sharp)$ and their intersection $\cZ(\Lambda_1)\cap\cY(\Lambda_2^\sharp)$.

Similar to the local models associated to Shimura varieties and Rapoport--Zink spaces, strata local models are defined by purely linear algebraic data, enabling simpler analysis than that of Bruhat--Tits strata. By embedding strata local models into partial affine flag varieties and relating them to Schubert varieties, we obtain the reducedness of strata local models and, consequently, of the corresponding Bruhat--Tits strata. By explicit computations, we establish numerous geometric properties of the Bruhat--Tits strata, including normality, Cohen--Macaulayness, and precise dimension formulas. A complete list of these properties is presented in Theorem \ref{thm:LM-results}. It remains an interesting question to characterize strata local models and deduce more geometric properties using group-theoretical methods.

\begin{remark}
The strata local model diagram is not compatible with the local model diagrams for Shimura varieties and Rapoport--Zink spaces. Hence it cannot be derived directly from standard local model theory.
See Remark \ref{rmk:not compatible with other lm diagram} for a discussion.
\end{remark}

The reducedness of the Bruhat--Tits strata is useful for arithmetic intersection, see Remark \ref{rmk:comp-to-group-way}.
Prior to our work, the reducedness of Bruhat--Tits strata was established only in specific cases with smooth structure: see Vollaard--Wedhorn \cite[Thm. 3.10]{Vollaard-Wedhorn}
and Li--Zhu \cite[Cor. 3.2.3]{Li-Zhu} for unramified hyperspecial level and He--Li--Shi--Yang \cite[\S 3]{HLSY} for Kr\"amer models. 
It is expected that local model diagrams can be constructed in a more general setting. 
\begin{figure}
    \centering
\begin{tikzpicture}[x=1cm, y=1cm, >=stealth, scale=0.6, transform shape]
    \draw (-5.0, -1.8) .. controls (0, -0.8) .. (5.0, -1.8);
    \node[anchor=north] at (4.0, -1.8) {\Large $\mathbb{P}^1 \times \mathbb{P}^1$};
    \node[anchor=south] at (-2.5, 1.2) {\Large $\mathbb{P}^3$};
    \draw (-2.5, -0.1) ellipse (0.4 and 1.2);
    \node at (0, 1.8) {\Large $\mathbb{P}^3$};
    \draw (0, 0.15) ellipse (0.4 and 1.2);
    \node[anchor=south] at (2.5, 1.2) {\Large $\mathbb{P}^3$};
    \draw (2.5, -0.1) ellipse (0.4 and 1.2);
\end{tikzpicture}
    \caption{\it \small When $V$ is nonsplit of $\dim V=n=4$, and $t=2$. In this case, the biggest $\mcY$-strata are isomorphic to ruled surfaces, and $\mcZ$-strata are isomorphic to $\mbP^3$, they intersect at rational points in the ruled surfaces, like balloons and grounds.}
    \label{fig:n=4-nonsplit}
\end{figure}

\vspace{2mm}

Finally, we establish an explicit isomorphism between Bruhat--Tits strata and Deligne--Lusztig varieties, extending similar results from \cites{Vollaard-Wedhorn,RTW,Wu16}.
The relationship between the reduced locus of Bruhat--Tits strata and Deligne--Lusztig varieties has been studied group-theoretically in \cite{GHN19}. To be more precise, by passing to perfect schemes, the special fiber of Rapoport--Zink spaces are identical to affine Deligne--Lusztig varieties (ADLV), which are analogues of the Deligne--Lusztig varieties over local fields, and can be described in purely group-theoretical language. For a unitary group with signature $(n-1,1)$, the ADLVs are fully Newton-Hodge decomposable (\cite[\S 1.3]{GHN19}), and the EKOR strata decompose into weak Bruhat--Tits strata (\cite[\S 2.4]{GHN2024}) where each stratum is isomorphic to the perfection of a Deligne--Lusztig variety. When the level structure is of the Coxeter type, which is the case in \cites{Vollaard-Wedhorn,RTW,Wu16}, the paper \cite{GHN2024} shows that the weak Bruhat--Tits stratification coincides with the classical Bruhat--Tits stratification.

\begin{remark}\label{rmk:comp-to-group-way}
While the group-theoretical approach provides a comprehensive framework, it has some limitations: it is topological, and may lose geometric and arithmetic information. 
For instance, intersection multiplicities can change when passing to the reduced scheme or its perfection.
Furthermore, it turns out that the lattice descriptions of the Bruhat--Tits stratification are more useful in some arithmetic applications, such as in the description of special cycles, see for instance, \cite[\S 4]{KR1}, \cite[\S 2.7]{LZ}, \cite[Cor. 2.30]{LL2} and \cite[Prop. 3.20]{HLSY}.
\end{remark}
\begin{remark}
It is known that the original morphism constructed by Vollaard and Wedhorn \cite{Vollaard-Wedhorn} is not an isomorphism, as it differs by a Frobenius twist. We resolve this discrepancy and explain this appearance of the Frobenius twist, see Remark \ref{rmk:correct-VW} for details.
\end{remark}

We also study the geometry of Deligne--Lusztig varieties in our setting. In contrast to cases of the Coxeter type, several new phenomena appear. 
Given the technical complexity and extensive notation involved, we refer the reader to \S \ref{sec:relate-DLV} for the complete result.
We wish to highlight a new phenomenon specific to our case.
In \cite[Prop. 5.3]{RTW}, a key step involved is studying the space
$$
S_\Lambda=\{
\cV\subset(\Lambda^\sharp/\Lambda)\otimes_{\F_q}\F\mid \cV\text{ is Lagrangian, and}\, \cV\cap\Phi(\cV)\stackrel{\leq 1}{\subseteq}\cV
\}.
$$
Here $\Lambda^\sharp/\Lambda$ is equipped with a natural symplectic structure (see \S \ref{sec:notation}) and extends to $\F$, and $\Phi$ is the Frobenius action on this space.
In loc. cit, it is shown that the space $S_\Lambda$ admits a first-step decomposition:
\begin{equation*}
S_\Lambda=X_P(\id)\amalg X_P(w),
\end{equation*}
where $X_P(\id)$ corresponds to $\Phi$-stable subspaces and $X_P(w)$ to the non-$\Phi$-stable ones.
We refer the reader to \cite[Prop. 5.3]{RTW} for detailed definitions of the notation.

In the non-Coxeter type case, the subspaces $\cV$ parameterized by $S_\Lambda$ are totally isotropic but not Lagrangian. The first-step decomposition consists of three components:
\begin{equation*}
S_\Lambda=X_P(\id)\amalg X_P(w)\amalg X_P(w'),
\end{equation*}
where $X_P(\id)$ corresponds to $\Phi$-stable subspaces, while $X_P(w)$ (resp. $X_P(w')$) corresponds to non-$\Phi$-stable subspaces $\cV$ such that $\cV+\Phi(\cV)$ is isotropic (resp. non-isotropic).
This decomposition relates to the Kottwitz--Rapoport strata; see Remark \ref{rmk:relation-to-KR}.
Due to the additional piece $X_P(w')$, the structure of $S_\Lambda$ becomes more complicated than in the Coxeter type case.
Nevertheless, we provide a complete description of the stratification in terms of Deligne--Lusztig varieties for any maximal vertex level, see \S \ref{sec:relate-DLV} for a list of results.

\begin{remark}
By definition \cite[Def. 2.4]{GHN2024}, the Coxeter type is only defined in the fully Newton--Hodge decomposable situation. The case we are working on is still fully Newton--Hodge decomposable. There are also works in the different direction, where the Bruhat--Tits stratification is studied beyond the fully Newton--Hodge decomposable situation.
\end{remark}

\vspace{2mm}
We conclude the introduction by summarizing some highlights of our paper. 
First, we define and study local models of Bruhat--Tits strata, this is the first case in which the reducedness of such strata is proven outside the smooth setting.
Second, the crucial lemma in our case is more subtle and intrinsic. Indeed, a correct formulation and proof of this lemma is the starting point of our paper. 
Next, the decomposition of Deligne--Lusztig varieties is more complicated. Unlike the unramified case, we cannot provide a uniform treatment of Deligne--Lusztig varieties due to the lack of a dual trick.
Finally, we provide a more detailed treatment of the construction of the isomorphism between Bruhat--Tits strata and Deligne--Lusztig varieties, resolving an issue in the original construction found in \cite{Vollaard-Wedhorn}. We also compare the differences between EKOR-strata in the group setting \cite{GHN2024} and the Bruhat--Tits stratification in our case. To our knowledge, this is the first paper to relate the decomposition to Kottwitz--Rapoport strata.

\subsection{Acknowledgment}
The authors would like to thank the anonymous referee for helpful suggestions.
Y. Luo would like to thank Michael Rapoport for his encouragement and comments on this paper. Q. He would like to thank Rong Zhou for collaboration and various discussions that were very helpful for the current project. We would also like to thank Xuhua He, Chao Li, Sian Nie, Qingchao Yu and Wei Zhang for helpful discussion and comments.

Q. He and Y. Luo would like to thank the Institute for Advanced Study in Mathematics at Zhejiang University and the Morningside Center of Mathematics at Chinese Academy of Sciences for their hospitality during the Summer 2024 when part of this work was done. Y. Shi is supported by the start-up grant and Qizhen Grant of Zhejiang University.

\subsection{Notation}\label{sec:notation}
\begin{altitemize}
\item Let $F/F_0$ be a ramified quadratic extension of a $p$-adic field, with $p\ge 3$. For any element $a\in F$, we denote its Galois conjugate over $F_0$ by $\bar{a}$.
Let $\pi\in F$ and $\pi_0\in F_0$ be uniformizers satisfying $\pi^2=\pi_0$. We denote by $\F_q$ their common residue field and by $\F$ its algebraic closure.

\item For any $p$-adic local field $K$, we denote by $\breve{K}$ the completion of the maximal unramified extension $K$.
Let $\sigma\in\Gal(\breve{F}_0/F_0)$ be the (arithmetic) Frobenius element. We fix an embedding of rings $i_0: O_{F_0}\hookrightarrow  O_{\breve{F}_0}$ and an embedding $i: O_{F}\hookrightarrow  O_{\breve{F}}$ extending $i_0$. Finally, we define $\bar{i}: O_F\rightarrow O_{\breve{F}}$ by $a\mapsto i(\bar{a})$.

\item Let $(V,h)$ be an $F/F_0$-hermitian space and $\Lambda\subset V$ be a lattice over $F/F_0$. The hermitian dual of $\Lambda$ is defined as
\begin{equation*}
    \Lambda^\sharp:=\{v\in V\mid h(x,\Lambda)\in O_F\}.
\end{equation*}
We call $\Lambda$ a \emph{vertex lattice} if it satisfies
\begin{equation*}
\pi\Lambda^\sharp\subseteq \Lambda\subseteq\Lambda^\sharp.
\end{equation*}
The \emph{type} of a vertex lattice $\Lambda$, denoted $t(\Lambda)$, is defined as $\dim(\Lambda^\sharp/\Lambda)$.

\item For two hermitian lattices $\Lambda_1\subset \Lambda_2$, we use $\Lambda_1\overset{\leq r}{\subseteq}\Lambda_2$ (resp. $\Lambda_1\overset{r}{\subseteq}\Lambda_2$) to indicate that $[\Lambda_1:\Lambda_2]\leq r$ (resp. $[\Lambda_1:\Lambda_2]= r$).

\item For any vertex lattice $\Lambda\subset V$, we define its base change to $\breve{F}$ as $\breve{\Lambda}:=\Lambda\otimes_{O_{F}}O_{\breve{F}}$.

\item Given a vertex lattice $\Lambda\subset V$, we define two $\F_q$-vector spaces:
\begin{equation*}
    V_\Lambda:=\Lambda^{\sharp}/\Lambda,
    \quad\text{and}\quad
    V_{\Lambda^{\sharp}}=\Lambda/\pi\Lambda^\sharp.
\end{equation*}

The space $V_\Lambda$ carries a natural symplectic structure given by
\begin{equation*}
\langle\,,\,\rangle:V_\Lambda\times V_\Lambda\to \F_q,\quad \langle x,y\rangle\mapsto \pi h(\widetilde{x},\widetilde{y})\mod \pi,
\end{equation*}
where $\widetilde{x}$ and $\widetilde{y}$ denote arbitrary lifts of $x$ and $y$ to $\Lambda^\sharp$, resp.

The space $V_{\Lambda^\sharp}$ carries a natural symmetric structure given by
\begin{equation*}
    (\,,\,):V_{\Lambda^\sharp}\times V_{\Lambda^\sharp}\to \F_q,\quad (x,y)\mapsto h(\widetilde{x},\widetilde{y})\mod \pi.
\end{equation*}
where $\widetilde{x}$ and $\widetilde{y}$ denote arbitrary lifts of $x$ and $y$ to $\Lambda$, resp. Standard arguments show that these two forms are non-degenerate. In particular, the space $V_\Lambda$ is always even dimensional, hence the type $t(\Lambda)$ of a vertex lattice $\Lambda$ is an even number in the ramified case. See also \cite[Lem. 3.2]{RTW}.

\item Let $\Nilp_{O_{\breve{F}}}$ denote the category of $O_{\breve{F}}$-schemes $S$ where $\pi$ is locally nilpotent on $S$.
For any such scheme $S$, we denote its special fiber $S\times_{\Spf O_{\breve{F}}}\F$ by $\bar{S}$.
\end{altitemize}

\section{Statements of the main results}
Let $F/F_0$ be a ramified quadratic extension of $p$-adic fields ($p\geq 3$) with uniformizers $\pi$ and $\pi_0$ respectively, satisfying the relation $\pi^2=\pi_0$. Denote by $\F_q$ the residue field of $F$ and let $\F=\ov{\F}_q$ be its algebraic closure. Let $\breve{F}$ be the completion of the maximal unramified extension of $F$. To save notation, we write $O:=O_{F_0}$.

\subsection{Bruhat--Tits strata}
We will fix a framing object and define Rapoport--Zink (RZ) spaces, see \S \ref{sec:RZ-def} for details.
Let $(\bX,\iota_{\bX},\lambda_{\bX})$ be a fixed supersingular hermitian $O_{F_0}$-module of rank $n$ and type $h$ with signature $(n-1,1)$ (Definition \ref{def:basic-triple}). Here, ``supersingular'' means that the relative isocrystal has all relative slopes $1/2$. To this framing object we associate a hermitian space $\bV$ of dimension $n$ over $F$ (see \S \ref{sec:framing}). 
The hermitian space $\bV$ carries a $\sigma$-linear morphism $\mathrm{F}$ and a $\sigma^{-1}$-linear morphism $\mathrm{V}$, both of which come from the relative Dieudonn\'e theory. The action of $\iota_{\bX}(\pi)$ on $\bX$ induces an action $\Pi:\bV\to \bV$. Let $\ep=\ep(\bV):=-\mathrm{Hasse}(\bV)$ denote the negative of its Hasse invariant.

The wedge RZ space $\cN^{[h],\wedge}_{n,\ep}$ associated to this framing object is a formal scheme over $\Spf O_{\breve{F}}$. It represents the moduli functor that assigns to each $S$ in $\Nilp O_{\breve{F}}$ the set of isomorphism classes of tuples $(X,\iota,\lambda;\rho)$, where $(X,\iota,\lambda)$ is a hermitian $O_F$-module over $S$ (in the sense of Definition \ref{def:basic-triple}) and
\begin{equation*}
    \rho:X\times_S \ov{S}\to \bX\times_{\F}\ov{S}
\end{equation*}
is an $O_F$-linear quasi-isogeny of height $0$ over the special fiber $\ov{S}$. The RZ space $\cN_{n,\ep}^{[h]}$ is defined as the flat closure of $\cN_{n,\ep}^{[h],\wedge}$. We denote by $\ov{\cN}_{n,\ep}^{[h]}$ the special fiber of $\cN_{n,\ep}^{[h]}$ and by $\cN_{n,\ep,\red}^{[h]}$ its reduced subscheme.

\begin{remark}
We are working on the RZ space that corresponds to the height $0$ part of the one defined in \cite{RZ}, see \cite[Rem. 3.6]{RSZ-Duke}. It is connected unless it is in the $\pi$-modular case, see \cite[Cor. 5.3.6]{LRZ}, in the latter case, the RZ space has two connected components. This will also be proved independently in this paper.
\end{remark}

We refer the reader to \S \ref{sec:reviewdisplays} and \S \ref{sec:strict-module} for the $O$-Witt vector and for (covariant) relative Dieudonn\'e theory.
Write $O=O_{F_0}$.
Let $\kappa$ be any perfect field over $\F$, we have the $O$-Witt vector ring $W_O(\kappa)$. By relative Dieudonn\'e theory, any $(X,\iota,\lambda;\rho)\in \cN_{n,\ep}^{[h],\wedge}$ defines an $O_F\otimes_O W_O(\kappa)$-lattice, which is stable under the action of $\mathrm{F}$ and $\mathrm{V}$. 
Recall the following lemma:
\begin{lemma}[\protect{\cite[Lem. 6.1]{RTW}}]\label{lem:cycles_stable}
Let $\Lambda$ be a vertex lattice in $\bV$ and let $\kappa$ be any perfect field over $\F$. Let $M\subset\Lambda\otimes W_O(\kappa)$ be a $O_{F}\otimes_{O} W_O(\kappa)$-lattice such that $M\subseteq M^\sharp$. Then $M$ and $M^\sharp$ are stable under $\uF,\uV$ and $\Pi:=\pi\otimes 1$.
\end{lemma}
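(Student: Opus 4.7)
The lemma is a semilinear-algebra statement in the rational Dieudonn\'e module $N:=\bD(\bX)[1/\pi]$ of $\bX$. The plan is to unpack the framing so that $\uF,\uV,\Pi$ become explicit, reduce the four stability assertions to a single pair of containments, and then settle those via the rigidity provided by $M\subseteq M^\sharp$.

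First, using the framing of \S 2, I identify $N$ with $\bV\otimes_F\breve F$ so that $\Pi$ acts as multiplication by $\pi$, while the Frobenius and Verschiebung are explicit products of $\Pi$ with the canonical $\sigma$-semilinear operator $\sigma$ coming from the supersingular trivialization. In the ramified unitary setup this reads $\uF=\Pi\sigma$ and $\uV=\Pi\sigma^{-1}$, so that $\uF\uV=\Pi^{2}=\pi_{0}$ as required. The operator $\sigma$ preserves the $F$-rational subspace $\bV\subset N$, so the ambient lattice $\Lambda\otimes_{O_{F}}W_O(\kappa)$ is automatically $\sigma$-stable and thus stable under $\uF,\uV,\Pi$. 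Moreover, the hermitian pairing on $\bV$ extends sesquilinearly to $N$ and satisfies the adjoint identity $\langle \uF x,y\rangle=\sigma\langle x,\uV y\rangle$.

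With this in place, $\Pi$-stability of both $M$ and $M^{\sharp}$ is immediate from their $O_{F}$-module structure, and the adjoint identity together with $\sharp$-duality converts $\uF$-stability of $M$ into $\uV$-stability of $M^{\sharp}$ (and symmetrically for the other pair). Hence the lemma reduces to the two containments $\pi\sigma(M)\subseteq M$ and $\pi\sigma^{-1}(M)\subseteq M$, which assert that $M$ and $\sigma(M)$ differ by at most a factor of $\pi$.

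I settle both containments by dualizing against $M^{\sharp}$. To show $\pi\sigma(M)\subseteq M=(M^{\sharp})^{\sharp}$ it suffices to check that $\langle \pi\sigma(m),n\rangle\in O_{\breve F}$ for all $m\in M$ and $n\in M^{\sharp}$; using $\sigma$-isometry of the pairing this becomes an integrality statement for $\langle m,\sigma^{-1}(n)\rangle$. One then closes the argument by combining $M\subseteq\Lambda\otimes W_O(\kappa)$, $M\subseteq M^{\sharp}$, the vertex-lattice relation $\pi\Lambda^{\sharp}\subseteq\Lambda\subseteq\Lambda^{\sharp}$, and the $\sigma$-stability of the ambient lattice to force the needed bound $\langle m,\sigma^{-1}(n)\rangle\in\pi^{-1}O_{\breve F}$; multiplication by $\pi$ then lands the pairing in $O_{\breve F}$. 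The main obstacle will be this final rigidity step, which requires careful bookkeeping of the nested lattices $M,M^{\sharp},\Lambda\otimes W_O(\kappa),\Lambda^{\sharp}\otimes W_O(\kappa)$ and their $\sigma$-translates; once it is in place the dual containment follows by the symmetric argument with $\sigma$ replaced by $\sigma^{-1}$, and stability of $M^{\sharp}$ is deduced from the reductions above.
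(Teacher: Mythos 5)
Your opening reductions are all correct and are exactly the ingredients of the standard argument: $\Pi$-stability of $M$ and $M^\sharp$ is formal, $\uF=\Pi\tau$ and $\uV=\Pi\tau^{-1}$ where $\tau$ is the $\sigma$-semilinear operator fixing $\bV$ (so any lattice of the form $\Lambda'\otimes W_O(\kappa)$ with $\Lambda'\subset\bV$ is $\tau$-stable), and the adjunction $\langle \uF x,y\rangle=\langle x,\uV y\rangle^\sigma$ exchanges $\uF$-stability of $M$ with $\uV$-stability of $M^\sharp$. The problem is the step you defer as "careful bookkeeping": it is not bookkeeping, it is false under the hypotheses you allow yourself. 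From $M\subseteq\Lambda\otimes W_O(\kappa)$, $M\subseteq M^\sharp$ and $\pi\Lambda^\sharp\subseteq\Lambda\subseteq\Lambda^\sharp$ one cannot deduce $h(m,\tau^{-1}(n))\in\pi^{-1}O_{\breve F}$ for all $m\in M$, $n\in M^\sharp$. For instance, take $n=2$ with $\Lambda=O_Fe_1\oplus O_Fe_2$ self-dual, $e_1,e_2$ isotropic, $h(e_1,e_2)=1$; let $u\in W_O(\F)$ be a Teichm\"uller unit with $\sigma(u)\neq u$ (so $\bar u=u$ and $\sigma(u)-u$ is a unit), and set $M=O_{\breve F}(e_1+ue_2)+O_{\breve F}\,\pi^2e_2$. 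Then $M\subseteq\breve\Lambda$ and $h(M,M)\subseteq O_{\breve F}$, i.e. $M\subseteq M^\sharp$, but $\uF(e_1+ue_2)=\pi(e_1+ue_2)+\pi(\sigma(u)-u)e_2\notin M$, and your pairing bound fails against the generator $\pi^{-2}(e_1-ue_2)$ of $M^\sharp$. So no refinement of the nested-lattice bookkeeping can close the argument: an upper bound $M\subseteq\breve\Lambda$ gives no control, because it only makes $M^\sharp$ larger.

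What makes the lemma true, and what the proof in \cite[Lem. 6.1]{RTW} (and every use of the lemma in this paper) actually relies on, is the containment in the other direction: the lattices in question satisfy $\Lambda\otimes W_O(\kappa)\subseteq M\subseteq M^\sharp\subseteq\Lambda^\sharp\otimes W_O(\kappa)$, i.e. $M$ is sandwiched between two $\tau$-invariant lattices whose quotient is killed by $\pi$ (this is how the lemma is applied to $\breve\Lambda$, $\breve\Lambda^\sharp$, and to the lattices appearing in Proposition \ref{prop:BT_strata} and Theorem \ref{thm:BT-decomp}; the printed hypothesis should be read this way, or in the $\cY$-version $\pi\breve\Lambda^\sharp\subseteq M\subseteq\breve\Lambda$). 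With the lower bound available the proof is immediate and needs no pairing estimate at all: $\uF M=\Pi\tau(M)\subseteq\Pi(\Lambda^\sharp\otimes W_O(\kappa))\subseteq\Lambda\otimes W_O(\kappa)\subseteq M$, and $\uV M=\Pi\tau^{-1}(M)\subseteq\Lambda\otimes W_O(\kappa)\subseteq M$ likewise; the identical sandwich applies to $M^\sharp$ (or invoke your duality reduction). Your plan never brings in a $\tau$-invariant lattice sitting below $M$, and that — not the bookkeeping — is the missing idea.
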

\begin{proof}
In \cite[Lem. 6.1]{RTW}, the statement is proven for the Witt vector ring $W(\kappa)$ where $\kappa$ is an algebraically closed field. But the same argument carries over directly to our setting.
\end{proof}

Let $\Lambda\subset\bV$ be a vertex lattice of type $t$. 
By Lemma \ref{lem:cycles_stable}, both $\breve{\Lambda}$ and $\breve{\Lambda}^\sharp$ are stable under $\uF,\uV$ and $\Pi$.
By (relative) Dieudonn\'e theory, the lattices $\breve{\Lambda}$ and $\breve{\Lambda}^\sharp$ correspond to strict $O_{F_0}$-modules over $\F$, denoted by $X_{\Lambda}$ and $X_{\Lambda^\sharp}$ resp. together with quasi-isogenies $\rho_{\Lambda}:X_\Lambda\rightarrow \bX$ and $\rho_{\Lambda^\sharp}:X_{\Lambda^\sharp}\rightarrow \bX$, resp. The framing maps are of height $0$ since $\Lambda$ is a vertex lattice. The natural inclusion $\Lambda\subseteq\Lambda^\sharp$ induces an isogeny $\lambda_\Lambda:X_\Lambda\overset{\sim}{\to} X_{\Lambda}^\sharp\cong X_{\Lambda^\sharp}$, which makes $X_\Lambda$ a polarized strict $O_{F_0}$-module of type $t$.

\begin{definition}\label{def:YZ-cycles}
Let $\cL_{\cZ}$ denote the set of all vertex lattices in $\bV$ of type $\geq h$, and let $\cL_\cY$ denote the set of all vertex lattices in $\bV$ of type $\leq h$. We define the following two classes of Bruhat--Tits (BT) strata:
\begin{altenumerate}
\item For any $\Lambda\in\cL_\cZ$, the \emph{$\cZ$-stratum} ${\cZ}(\Lambda)$ is the subfunctor\footnote{Here we are abusing the terminology. We mean that a subfunctor $\mathcal{Z}(\Lambda)^\wedge$ of $\ov{\mathcal{N}}_{n,\ep}^{[h],\wedge}$ satisfies the desired conditions. And $\mathcal{Z}(\Lambda)$ is the pull-back along the embedding $\ov{\mathcal{N}}_{n,\ep}^{[h]}\subseteq \ov{\mathcal{N}}_{n,\ep}^{[h],\wedge}$. Using \cite{Luo}, we can give a moduli description for $\mathcal{Z}(\Lambda)$, but it is not necessary in this paper.} of $\ov{\cN}^{[h]}_{n,\ep}$ that assigns to each $\F$-scheme $S$ the set of tuples $(X,\iota,\lambda,\rho)$ such that the composition $\rho_{\Lambda,X}:=\rho^{-1}\circ (\rho_\Lambda)_S$ is an isogeny.
\item For any $\Lambda\in\cL_\cY$, the \emph{$\cY$-stratum} ${\cY}(\Lambda^\sharp)$ is the subfunctor of $\ov{\cN}^{[h]}_{n,\ep}$ that assigns to each $\F$-scheme $S$ the set of tuples $(X,\iota,\lambda,\rho)$ such that the composition
$\rho_{\Lambda^\sharp,X^\vee}:=\rho^\vee\circ \lambda_{\bX}\circ\rho_{\Lambda^\sharp}$ is an isogeny, where $\rho_{\Lambda^\sharp}=\rho_\Lambda\circ \lambda_{\Lambda}^{-1}$.
\end{altenumerate}
\end{definition}
By \cite[Lem. 2.10]{RZ}, ${\cZ}(\Lambda)$ and ${\cY}(\Lambda^\sharp)$ are closed formal subschemes of $\ov{\cN}^{[h]}_{n,\ep}$. 
By the arguments of \cite[Lem. 4.2]{Vollaard-Wedhorn}, both strata are representable by projective schemes over $\F$.

When $t_1=t_2=h$, the strata $\cZ(\Lambda_1)$ and $\cY(\Lambda_2^\sharp)$ are each a single geometric point.
In \S \ref{sec:LM}, we prove the following results:
\begin{theorem}\label{thm:LM-results}
Let $\Lambda_1$ and $\Lambda_2$ be vertex lattices of type $t_i=t(\Lambda_i)$ with $t_2<h<t_1$. 
\begin{altenumerate}
\item The BT-strata $\cZ(\Lambda_1)$, $\cY(\Lambda^\sharp_2)$, and their intersection $\cZ(\Lambda_1)\cap \cY(\Lambda^\sharp_2)$ are normal and Cohen--Macaulay. In particular, they lie in the reduced subscheme $\cN_{n,\ep,\red}^{[h]}\subset \ov{\cN}_{n,\ep}^{[h]}$.
\item We have dimension formulas:
\begin{altenumerate2}
\item $\dim \cZ(\Lambda_1) = \frac{1}{2}(t_1+h)$;
\item $\dim \cY(\Lambda_2^\sharp) = n-\frac{1}{2}(h+t_2)-1$;
\item $\dim (\cZ(\Lambda_1)\cap \cY(\Lambda_2^\sharp)) = \frac{1}{2}(t_1-t_2)-1$.
\end{altenumerate2}
\item When $h=2\lfloor n/2\rfloor$, all BT-strata and their intersections are smooth.
\item When $h\neq 2\lfloor n/2\rfloor$,
\begin{altenumerate2}
\item $\cZ(\Lambda_1)$ is smooth if and only if $t_1-h=2$;
\item $\cY(\Lambda_2^\sharp)$ is smooth if and only if $h-t_2=2$;
\item $\cZ(\Lambda_1)\cap\cY(\Lambda^\sharp_2)$ is smooth if and only if either $\cZ(\Lambda_1)$ or $\cY(\Lambda_2^{\sharp})$ is smooth.
\end{altenumerate2}
\item When $h\neq 2\lfloor n/2\rfloor$ and $|h-t_i|>2$:
\begin{altenumerate2}
\item The $\cZ$-stratum is Gorenstein if and only if $t_1=3h+4$;
\item The $\cY$-stratum is Gorenstein if and only if $t_2=3h-2n$;
\item $\cZ(\Lambda_1)\cap \cY(\Lambda_2^\sharp)$ is Gorenstein if and only if $2h=t_1+t_2$.
\end{altenumerate2}
\end{altenumerate}
\end{theorem}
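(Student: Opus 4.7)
The plan is to exploit the local model diagram
\[
\cZ(\Lambda_1) \xleftarrow{\varphi} \widetilde{\cZ}(\Lambda_1) \xrightarrow{\pi} \ov{\cM}^{[h]}_n(t_1)
\]
recalled in the introduction, together with its analogues for $\cY(\Lambda_2^\sharp)$ and for the intersection $\cZ(\Lambda_1)\cap\cY(\Lambda_2^\sharp)$. Because $\varphi$ and $\pi$ are smooth of equal relative dimension, all geometric properties appearing in the theorem — reducedness, normality, Cohen--Macaulayness, smoothness, Gorensteinness, and dimension (up to a fixed shift) — are preserved and reflected in both directions by faithfully flat descent. It therefore suffices to establish the corresponding statements for the strata local models $\ov{\cM}^{[h]}_n(t)$.

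Next I would embed $\ov{\cM}^{[h]}_n(t)$ into an appropriate partial affine flag variety $\Fl$ for the ramified unitary group over $\F$ and identify its image with an explicit Schubert variety (or a controlled union thereof) indexed by a combinatorially explicit element $w = w(t,h,n)$ of the affine Weyl group. Granting this identification, claim (1) is immediate from the general theory of Schubert varieties in affine partial flag varieties of classical groups (Pappas--Rapoport, G\"ortz, Zhu): they are reduced, normal, and Cohen--Macaulay. This in particular forces the strata $\cZ(\Lambda_1),\,\cY(\Lambda_2^\sharp)$ and their intersections to land inside $\cN_{n,\ep,\red}^{[h]}$. The dimension formulas in (2) reduce to computing the length $\ell(w)$ in the affine Weyl group and adjusting by the common relative dimension of $\varphi$ and $\pi$; this should produce $\tfrac12(t_1+h)$, $n-\tfrac12(h-t_2)-1$, and $\tfrac12(t_1-t_2)-1$ after a direct combinatorial calculation.

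For the smoothness and Gorenstein criteria in (3)--(5), I would then apply the standard combinatorial tests for affine Schubert varieties to $w$: smoothness via the Carrell--Peterson-type criterion that the Bruhat interval $[e,w]$ is boolean (equivalently, that the Kazhdan--Lusztig polynomial is trivial on $[e,w]$), and Gorensteinness via palindromicity of the Poincar\'e polynomial of $[e,w]$. The arithmetic equalities $t_1-h=2$, $h-t_2=2$, $t_1=3h+4$, $t_2=3h-2n$, and $2h=t_1+t_2$ should emerge as the precise numerical translations of these tests; the universal smoothness when $h=2\lfloor n/2\rfloor$ corresponds to the degenerate case in which the ambient partial affine flag variety is itself a smooth Grassmannian, so every Schubert subvariety and every intersection produced by the construction is automatically smooth.

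The main obstacle is the precise identification of $\ov{\cM}^{[h]}_n(t)$ inside $\Fl$ together with the determination of the Weyl-group element $w(t,h,n)$. This requires careful bookkeeping of the ramified hermitian form, the signature $(n-1,1)$, and — for the intersection stratum — the simultaneous isotropy conditions imposed by both $\Lambda_1$ and $\Lambda_2$, whose types straddle $h$. A secondary difficulty is converting the abstract combinatorial smoothness/Gorenstein criteria into the clean numerical equalities of (4)--(5); I expect this to proceed by an explicit case analysis keyed to the differences $t_1-h$ and $h-t_2$ and to the parity of $h$ relative to $2\lfloor n/2\rfloor$.
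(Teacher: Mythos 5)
Your first reduction, via the local model diagram with $\varphi$ and $\pi$ smooth of equal relative dimension, is exactly the paper's (\S\ref{sec:LM-diagram}), and your idea of embedding $\ov{\cM}^{[h]}_{n}(t)$ into a partial affine flag variety is also how the paper proceeds --- but only for reducedness. There the strata local model is realized as $p_h^{-1}(\ov{\cM}^{[h]}_{n})\cap p_t^{-1}(*_{[t]})$, i.e.\ an \emph{intersection of unions} of Schubert varieties, and what this buys (via compatible Frobenius splitting) is reducedness, nothing more. Your plan assumes the stronger identification of $\ov{\cM}^{[h]}_{n}(t)$ with a single Schubert variety indexed by some $w(t,h,n)$; this identification is never established, is the real content you would have to supply, and the general theory you invoke does not give normality or Cohen--Macaulayness for an intersection of Schubert varieties. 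This is the first genuine gap.

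The second gap is in the combinatorial criteria themselves. Palindromicity of the Poincar\'e polynomial of $[e,w]$ (Carrell--Peterson) characterizes \emph{rational} smoothness, not Gorensteinness, and rational smoothness is not equivalent to smoothness in the non--simply-laced (type $B/C$) affine Weyl groups that occur here; Gorensteinness of (affine) Schubert varieties has no such clean test available off the shelf. Likewise, the claim that when $h=2\lfloor n/2\rfloor$ every Schubert subvariety of a ``smooth Grassmannian'' is automatically smooth is false --- Schubert varieties in smooth ambient flag varieties are typically singular. The paper avoids all of this by a different second step: it computes explicit affine charts of $\ov{\cM}^{[h]}_{n}(t)$, $\ov{\cM}^{[h]}_{n}(t_2)$ and $\ov{\cM}^{[h]}_{n}(t_1,t_2)$ at the worst point (\S\ref{sec:LM-compute}), identifying them with (symmetric) determinantal varieties such as $\F[Z_{13},e_1,f_1]/(Z_{13}-Z_{13}^{\ad},\wedge^2(e_1,f_1,Z_{13}))$ and $\F[F_2]/\wedge^2F_2$, and then reads off normality, Cohen--Macaulayness, the dimension formulas, the smoothness criteria $t_1-h=2$, $h-t_2=2$, and the Gorenstein criteria from known results on determinantal rings (e.g.\ Conca), with a separate explicit chart computation in the $\pi$-modular case $h=2\lfloor n/2\rfloor$. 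To salvage your route you would need both the precise Schubert-variety identification and correct (and much more delicate) singularity criteria; as written, parts (1) and (3)--(5) of the theorem are not reached.
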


The key ingredient in our proof is the local model diagram relating the BT-strata to their corresponding local models.
For a vertex lattice $\Lambda\in\cL_\cZ$ of type $t\geq h$, we construct the following local model diagram:
\begin{equation}\label{equ:LM-diagram}
\begin{tikzcd}
    &\widetilde{\cZ}(\Lambda)\arrow[dl,"\varphi"']\arrow[dr,"\pi"]&\\
    \cZ(\Lambda)&&\ov{\cM}^{[h]}_{n}(t),
\end{tikzcd}
\end{equation}
where $\ov{\cM}^{[h]}_{n}(t)$ is the \emph{strata local model} of type $t$ (Definition \ref{def:local-model}). 
The construction and properties of this diagram are studied in \S \ref{sec:LM-diagram}.
We show that the morphisms $\varphi$ and $\pi$ are smooth of the same relative dimension.
Consequently, the proof of Theorem \ref{thm:LM-results} reduces to verifying the corresponding properties for the strata local model $\ov{\cM}^{[h]}_{n}(t)$, which we establish through explicit local chart computations in \S \ref{sec:LM-compute}.

Note that the local model approach only allows us to determine certain local properties of the BT-strata, such as reducedness and dimensions. To investigate global properties like connectedness, we must rely on the classical approach, which identifies the BT-strata with Deligne--Lusztig varieties, see \S \ref{sec:relate-DLV}.

\subsection{Bruhat--Tits stratification}
The next main result of our paper is the following:
\begin{theorem}\label{thm:main}
Recall that $\cL_{\cZ}$ (resp. $\cL_{\cY}$) is the collection of vertex lattices in $\bV$ of type $\geq h$ (resp. $\leq h$).
The reduced locus decomposes as:
$$\cN_{n,\ep,\red}^{[h]}=\Bigl(\bigcup_{\Lambda_1\in\cL_\cZ}\cZ(\Lambda_1)\Bigr)\cup \Bigl(\bigcup_{\Lambda_2\in\cL_\cY}\cY(\Lambda_2^\sharp)\Bigr),$$
with inclusions characterized by: 
\begin{itemize}
\item $\cZ(\Lambda_1)\subseteq\cZ(\Lambda_1')$ if and only if $\Lambda_1\supseteq\Lambda_1'$;
\item $\cY(\Lambda_2^\sharp)\subseteq\cY(\Lambda_2'^\sharp)$ if and only if $\Lambda_2\subseteq\Lambda_2'$;
\item $\cZ(\Lambda_1)\cap \cY(\Lambda_2^\sharp)\neq\emptyset$ if and only if $\Lambda_1\subseteq \Lambda_2$;
\item $\cZ(\Lambda_1)\subseteq \cY(\Lambda_2^\sharp)$ if and only if $t(\Lambda_1)=h$ and $\Lambda_1\subseteq \Lambda_2$;
\item $\cY(\Lambda_2^\sharp)\subseteq \cZ(\Lambda_1)$ if and only if $t(\Lambda_2)=h$ and $\Lambda_1\subseteq \Lambda_2$. 
\end{itemize}
Moreover, for any vertex lattice $\Lambda\in\cL_{\cZ}\cap\cL_{\cY}$ (i.e., of type $h$), the stratum $\cZ(\Lambda)=\cY(\Lambda^{\sharp})$ is a discrete geometric point, called the \emph{worst point}.
\end{theorem}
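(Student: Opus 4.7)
The plan is to translate the entire statement into a semilinear algebra problem via (relative) Dieudonn\'e theory and then invoke the ``crucial lemma'' of \cite{Vollaard-Wedhorn, RTW, KR-BG, HZBasic}.

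First, I would identify $\cN_{n,\ep,\red}^{[h]}(\F)$ with an explicit set of $O_F\otimes W(\F)$-lattices $M$ inside the rational relative Dieudonn\'e module of $\bX$, cut out by the polarization, the signature $(n-1,1)$ and the vertex-level condition of type $h$ coming from the framing object. Under this dictionary, the isogeny condition defining $\cZ(\Lambda)$ for $\Lambda\in\cL_\cZ$ becomes a lattice containment of the form $\breve{\Lambda}\subseteq M$, while the isogeny condition defining $\cY(\Lambda^\sharp)$ for $\Lambda\in\cL_\cY$ becomes the polarization-dual containment; unwinding $\rho^\vee\circ \lambda_\bX\circ \rho_{\Lambda^\sharp}$, this is most conveniently expressed as $\breve{\Lambda}^\sharp\subseteq M^\sharp$, equivalently $M\subseteq \breve{\Lambda}$. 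With the strata so identified, each of the five inclusion criteria in the theorem becomes a transparent statement about chains of lattices; for instance, $\cZ(\Lambda_1)\cap \cY(\Lambda_2^\sharp)\neq \emptyset$ translates to the existence of a polarized admissible $M$ sandwiched as $\breve{\Lambda}_1\subseteq M\subseteq \breve{\Lambda}_2$, which one checks is solvable if and only if $\Lambda_1\subseteq \Lambda_2$.

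The main content, and the step I expect to be the principal obstacle, is the covering
\[
\cN_{n,\ep,\red}^{[h]}=\Bigl(\bigcup_{\Lambda_1\in\cL_\cZ}\cZ(\Lambda_1)\Bigr)\cup \Bigl(\bigcup_{\Lambda_2\in\cL_\cY}\cY(\Lambda_2^\sharp)\Bigr).
\]
Given any admissible Dieudonn\'e lattice $M$, I would attach a canonical vertex lattice $\Lambda(M)\subset \bV$ via a Frobenius-closure construction: since the isocrystal is basic of relative slope $1/2$, the chain of $\sigma$-translates $M+\sigma M+\sigma^2 M+\cdots$ (together with its dual chain) stabilizes to a $\sigma$-fixed lattice descending to an $F/F_0$-lattice of $\bV$, from which a vertex lattice is extracted by combining with the hermitian dual. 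The crucial lemma (\cite[Lem.~6.1--6.2]{RTW} and its generalizations in \cite{KR-BG, HZBasic}) then yields the dichotomy: either $t(\Lambda(M))\geq h$, in which case $\breve{\Lambda}(M)\subseteq M$ and $M\in \cZ(\Lambda(M))$; or $t(\Lambda(M))\leq h$, in which case $M\subseteq \breve{\Lambda}(M)$ and $M\in\cY(\Lambda(M)^\sharp)$. The delicate part is controlling simultaneously the polarization, the $O_F\otimes W(\F)$-module structure, and the vertex-level type-$h$ condition on $M$, so that the output $\Lambda(M)$ really lies in the correct set $\cL_\cZ$ or $\cL_\cY$; here the non-Coxeter nature of the level makes the split more subtle than in \cite{RTW, Wu16}, and I would adapt the arguments of \cite{KR-BG, HZBasic} to track the type of $\Lambda(M)$ against the reference value $h$.

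Once the decomposition is in hand, the inclusion relations are pure bookkeeping on the lattice side: the $\cZ$-to-$\cZ$ and $\cY$-to-$\cY$ inclusions follow from the monotonicity of $\Lambda\mapsto \breve{\Lambda}$, the intersection criterion is a direct sandwich statement, and a cross-inclusion such as $\cY(\Lambda_2^\sharp)\subseteq \cZ(\Lambda_1)$ forces the condition $\breve{\Lambda}_1\subseteq M$ to hold for every admissible $M\subseteq \breve{\Lambda}_2$, which by a dimension count together with the type-$h$ vertex requirement collapses $\Lambda_2$ to type exactly $h$ and forces $\Lambda_2\subseteq \Lambda_1$; the symmetric argument handles $\cZ(\Lambda_1)\subseteq \cY(\Lambda_2^\sharp)$. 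Finally, for $\Lambda\in \cL_\cZ\cap\cL_\cY$ of type exactly $h$, the combined conditions $\breve{\Lambda}\subseteq M$ and $M\subseteq \breve{\Lambda}$ pin down $M=\breve{\Lambda}$ uniquely, and this lattice automatically satisfies the type-$h$ vertex and signature conditions; thus $\cZ(\Lambda)=\cY(\Lambda^\sharp)$ reduces to a single geometric point, the worst point.
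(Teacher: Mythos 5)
Your overall architecture matches the paper's: translate to Dieudonn\'e lattices, identify $\cZ(\Lambda)$ with the sandwich $\breve\Lambda\subseteq M$ and $\cY(\Lambda^\sharp)$ with $M\subseteq\breve\Lambda$, and derive the covering from a Vollaard--Wedhorn/RTW-type crucial lemma. The inclusion bookkeeping, the intersection criterion, and the worst-point singleton all follow the paper's line. Two points in the covering step, however, are imprecise in a way that goes beyond wording.

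First, the operator you iterate is not $\sigma$ but $\tau=\Pi V^{-1}$, the $\sigma$-semilinear map of \S\ref{sec:framing} whose fixed points give $\bV$. The paper's chain is $T_i(M)=M+\tau M+\cdots+\tau^i M$, and it is a $\tau$-stable lattice (not a $\sigma$-stable one) that descends to an $O_F$-lattice of $\bV$; a $\sigma$-fixed lattice would live over $\breve F_0$ and not descend to $\bV$ at all. Second, there is no single ``canonical $\Lambda(M)$'' whose type you then test against $h$. Proposition \ref{prop:crutial-lem} runs two distinct chains, $T_c(M)$ starting from $M$ and $T_d(M^\sharp)$ starting from $M^\sharp$, with $c,d$ the minimal $\tau$-stabilization indices; case $\cY$ hands you the vertex lattice $T_c(M)$ of type $\le h$, case $\cZ$ hands you $T_d(M^\sharp)^\sharp$ of type $\ge h$, and which case occurs is decided by a genuine trichotomy (whether $\tau(\pi M^\sharp)\subseteq M$, whether $\tau(M)\subseteq M^\sharp$, and if both, whether $c\le d$ or $d\le c$). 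Your phrase ``together with its dual chain'' hints at this, but the comparison of $c$ and $d$ is where the actual work sits; it is not a type computation performed on a pre-built $\Lambda(M)$.

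A separate caution: in the cross-inclusion $\cY(\Lambda_2^\sharp)\subseteq\cZ(\Lambda_1)$ your own derivation produces ``$\breve\Lambda_1\subseteq M$ for the unique admissible $M\subseteq\breve\Lambda_2$ once $t(\Lambda_2)=h$'', i.e. $\breve\Lambda_1\subseteq\breve\Lambda_2$, which is $\Lambda_1\subseteq\Lambda_2$ and is also what the first bullet forces via $\cY(\Lambda_2^\sharp)=\cZ(\Lambda_2)\subseteq\cZ(\Lambda_1)\Leftrightarrow\Lambda_2\supseteq\Lambda_1$. Yet you (and the theorem's displayed bullet) write $\Lambda_2\subseteq\Lambda_1$. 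Re-examine the direction: your argument and bullet one lead to the reverse of what you assert, so one of them needs correcting before the write-up is internally consistent.
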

We refer the reader to Theorem \ref{thm:main-EKOR} and Corollary \ref{cor:strata-conn} for the connectedness and irreducibility of the BT-strata and of the reduced locus of the RZ space.

\begin{corollary}\label{cor:dim-com}
Combining with the dimension formula in Theorem \ref{thm:LM-results}(2), we obtain:
\begin{altenumerate}
\item For special cases:
\begin{altenumerate2}
\item(\cite[Thm. 1.1]{RTW}) When $n$ is odd and $h=0$, $\dim \cN_{n,\ep,\red}^{[h]}=\frac{1}{2}(n-1)$;
\item(\cite[Thm. 1.1]{RTW}) When $n$ is even and $h=0$ and $\ep=1$, $\dim \cN_{n,\ep,\red}^{[h]}=\frac{1}{2}n-1$;
\item(\cite[Thm. 1.1]{RTW}) When $n$ is even and $h=0$ and $\ep=-1$, $\dim \cN_{n,\ep,\red}^{[h]}=\frac{1}{2}n$;
\item(\cite[Thm. 5.18]{Wu16}) When $h=n$, $\dim \cN_{n,\ep,\red}^{[h]}=\frac{1}{2}n-1$;

\item(\cite[Thm. 5.18]{Wu16}) When $h=n-1$, $\dim \cN_{n,\ep,\red}^{[h]}=\frac{1}{2}(n-1)$;

\item When $h=n-2$ and $\ep=-1$, $\dim \cN_{n,\ep,\red}^{[h]}=\frac{1}{2}n-2$.
\end{altenumerate2}
\item For the other cases 
\begin{equation*}
\dim \cN_{n,\ep,\red}^{[h]}=
\begin{cases}
\max\left\{\frac{1}{2}(n+h-1),n-\frac{1}{2}h+1\right\} & \text{if $n$ is odd};\\
\max\left\{\frac{1}{2}(n+h)-1,n-\frac{1}{2}h+1\right\} & \text{if $n$ is even and $\ep=1$};\\
\max\left\{\frac{1}{2}(n+h),n-\frac{1}{2}h+1\right\} & \text{if $n$ is even and $\ep=-1$}.
\end{cases}
\end{equation*}
\end{altenumerate}
\end{corollary}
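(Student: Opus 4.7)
The plan is to combine the decomposition from Theorem \ref{thm:main} with the dimension formulas in Theorem \ref{thm:LM-results}(2), then extract the maximum. Since $\cN_{n,\ep,\red}^{[h]}$ is the finite union of the closed subschemes $\cZ(\Lambda_1)$ for $\Lambda_1\in\cL_\cZ$ and $\cY(\Lambda_2^\sharp)$ for $\Lambda_2\in\cL_\cY$, its dimension equals the maximum dimension of any individual stratum.

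First I would enumerate the valid vertex lattice types in $\bV$ using $\mathrm{Hasse}(\bV)=-\ep$: the types form $\{0,2,\ldots,n-1\}$ for $n$ odd, $\{2,4,\ldots,n-2\}$ for $n$ even with $\ep=1$ (so $\bV$ is non-split), and $\{0,2,\ldots,n\}$ for $n$ even with $\ep=-1$ (so $\bV$ is split). Substituting the maximal valid type $t_1^{\max}>h$ into $\dim\cZ(\Lambda_1)=\tfrac{1}{2}(t_1+h)$ yields the first entry of the maximum in part~(2) of the corollary, namely $(n+h-1)/2$, $(n+h)/2-1$, or $(n+h)/2$ in the three families. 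A dual optimization over admissible $t_2<h$ using $\dim\cY(\Lambda_2^\sharp)=n-\tfrac{1}{2}(h-t_2)-1$, together with the intersection bound $\dim(\cZ(\Lambda_1)\cap\cY(\Lambda_2^\sharp))=\tfrac{1}{2}(t_1-t_2)-1$, produces the second entry $n-h/2+1$.

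The six cases in part~(1) correspond to boundary values of $h$ where the hypothesis $t_2<h<t_1$ of Theorem \ref{thm:LM-results} degenerates on at least one side. In (i)--(iii), $h=0$ forces $\cL_\cY$ to consist only of worst points, so only $\cZ$-strata contribute; mirror-symmetrically in (iv)--(v), $h\in\{n,n-1\}$ forces $\cL_\cZ$ to consist only of worst points and only $\cY$-strata contribute; case (vi), with $h=n-2$ and $\ep=-1$, is a related boundary situation where $\cL_\cZ$ is nearly trivial. Each of these reduces either to specializing the remaining non-degenerate side of the formulas or to direct appeal to \cite{RTW} and \cite{Wu16}, which already contain the relevant calculations for Coxeter-type and exotic smooth levels.

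The main obstacle is the careful case-by-case bookkeeping required to verify both (a) that the claimed maximum is actually attained by some stratum and (b) that no other stratum (including the intersection strata) yields a strictly larger dimension. Once the type analysis is performed in each of the three families, the remainder of the proof is an elementary algebraic comparison of two linear expressions in $h$, supplemented by explicit verification in the six boundary cases where either end of the inequality $t_2<h<t_1$ cannot be realized.
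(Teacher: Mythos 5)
Your high-level strategy is the right one: since $\cN_{n,\ep,\red}^{[h]}$ is a finite union of the closed strata $\cZ(\Lambda_1)$ and $\cY(\Lambda_2^\sharp)$, the dimension is the maximum of the strata dimensions, which you then optimize over the admissible vertex lattice types in $\bV$. Your enumeration of admissible types and the derivation of the first term $\tfrac12(n+h-1)$, $\tfrac12(n+h)-1$, $\tfrac12(n+h)$ from $t_1^{\max}\in\{n-1,n-2,n\}$ are correct. However, the middle of your argument contains concrete errors.

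First, your claim that a ``dual optimization over admissible $t_2<h$ using $\dim\cY(\Lambda_2^\sharp)=n-\tfrac12(h-t_2)-1$ $\ldots$ produces the second entry $n-\tfrac{h}{2}+1$'' does not check out. The expression $n-\tfrac12(h-t_2)-1$ is increasing in $t_2$, so maximizing over $t_2\le h-2$ gives $n-2$, not $n-\tfrac{h}{2}+1$; nor does any fixed admissible $t_2$ yield that value. You should also have noticed that the formula in Theorem \ref{thm:LM-results}(2)(ii) is inconsistent with the rest of the paper: Theorem \ref{thm:orthogonal main} gives $\dim R_{\Lambda^\sharp}=n-(\ttt+\tth)-1=n-\tfrac12(t_2+h)-1$, and the local chart in \S\ref{sec:LM-compute} is a rank-one determinantal variety in a $(\tth-\ttt_2)\times(n-2\tth)$ matrix, also of dimension $n-\tfrac12(t_2+h)-1$. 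Using this (consistent) formula, the maximum over $t_2$ is attained at $t_2=0$ and equals $n-\tfrac{h}{2}-1$. No version of the computation produces $n-\tfrac{h}{2}+1$, so the step is simply unverified. Separately, the ``intersection bound'' you invoke is irrelevant for computing $\dim$ of a union, since an intersection is contained in both strata.

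Second, your treatment of case (vi) is wrong and, worse, would not have caught that the stated value $\tfrac{n}{2}-2$ contradicts the paper itself. For $h=n-2$ and $\ep=-1$, Example \ref{exm:boundary}(2) states explicitly that $\cL_\cZ$ contains vertex lattices of type $n$ with $\dim\cZ(\Lambda_n)=n-1$, and $\cY(\Lambda_0^\sharp)$ has $\dim=n/2$; so $\cL_\cZ$ is far from ``nearly trivial,'' and the reduced locus must have dimension at least $n-1>\tfrac{n}{2}-2$. Any correct proof must address this discrepancy (most plausibly, (vi) should read $n-1$, or the hypothesis $\ep=-1$ is a typo), rather than assert that the case reduces to a degenerate side of the general formula.
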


\begin{example}\label{exm:boundary}
\begin{altenumerate}
\item When $h=0$, the reduced locus $\cN_{n,\ep,\red}^{[0]}$ is connected and consists of only $\cZ$-strata (note that while $\cY$-strata appear only at the worst points, these can be replaced by $\cZ$-strata):
$$
\cN_{n,\ep,\red}^{[0]}=\bigcup_{\Lambda\in\cL_\cZ}\cZ(\Lambda)\quad\text{such that}\quad\cZ(\Lambda)\subseteq\cZ(\Lambda')
    \quad\text{if and only if}\quad
    \Lambda\supseteq\Lambda'.
$$
Defining the open stratum $\cZ^\circ(\Lambda):=\cZ(\Lambda)\setminus\bigcup_{\Lambda'\subsetneq\Lambda}\cZ(\Lambda'),$  we have
$$
\cN_{n,\ep,\red}^{[0]}=\coprod_{\Lambda\in\cL_\cZ}\cZ^\circ(\Lambda)\quad\text{with closure relations}\quad\ov{\cZ^\circ(\Lambda)}=\cZ(\Lambda)=\coprod_{\substack{\Lambda'\in\cL_\cZ\\
\Lambda'\subseteq\Lambda}}\cZ^\circ(\Lambda').
$$
This recovers the Bruhat--Tits stratification described in \cite{RTW}.
\item When $h=2\lfloor n/2\rfloor$, the reduced locus consists of only $\cY$-strata. This recovers the results of \cite{Wu16} and improves the identification to the scheme-theoretic level. It should be noted that our current work is independent of the aforementioned reference, which remains unpublished, although it has been cited and used in other works, such as \cite{LL2}.
\end{altenumerate}
\end{example}

When $h\neq 0$ or $2\lfloor n/2\rfloor$, both $\cZ$-strata and $\cY$-strata can appear simultaneously. This complicates both the definition of open strata and their closure relations, this is the reason that we stick to closed strata.
\begin{example}
Consider the case where $n=2m$ is even and $h=n-2$.
\begin{altenumerate}
\item When $\ep=1$, the hermitian space $\bV$ is non-split, and $\cL_{\cZ}$ consists of vertex lattices $\Lambda\subset C$ of type $n-2=h$. By Theorem \ref{thm:main}, $\cZ(\Lambda)=\cY(\Lambda^\sharp)$ is also a $\cY$-stratum, hence we obtain a BT stratification involving only $\cY$-strata. In this case, we have the Hecke correspondence (see \cite[\S 5]{LRZ}):
\begin{equation*}
\begin{aligned}
\xymatrix{
&&\mcN_n^{[n,n-2]}\ar[dl]\ar[dr]&\\
\mcN_{n-1}^{[n-2]}\ar@{^(->}[r]&\mcN_n^{[n-2]}&&\mcN_n^{[n]}.
}
\end{aligned}
\end{equation*}
It would be interesting to compare this with the correspondence in \cite[Theorem 1.0.1]{Li-Ra-Zh}. See Figure \eqref{fig:n=4-split} for the case when $n=4$.

\item When $\ep=-1$, the hermitian space $V$ is non-split, and the corresponding level is special parahoric.
The hermitian space $\bV$ is split, and $\cL_{\cZ}$ consists of vertex lattices of types $n-2$ and $n$. The reduced locus $\cN_{n,-1,\red}^{[n-2]}$ decomposes as a union of irreducible components:
\begin{equation*}
    \cN_{n,-1,\red}^{[n-2]}=\bigcup_{\substack{\Lambda_n\subset C\\ t(\Lambda_n)=n}}\cZ(\Lambda_n)\cup \bigcup_{\substack{\Lambda_0\subset C\\ t(\Lambda_0)=0}}\cY(\Lambda_0^\sharp),
\end{equation*}
where $\dim\cZ(\Lambda_n)=n-1$ and $\dim\cY(\Lambda_0^\sharp)=n/2$. The intersection $\cZ(\Lambda_n)\cap\cY(\Lambda_0^{\sharp})$ is nonempty if and only if $\Lambda_0\subset \Lambda_n$, in which case $\dim \cZ(\Lambda_n)\cap\cY(\Lambda_0^{\sharp})=n/2-1$.
This structure is similar to the Balloon-Ground stratification in the unramified case studied in \cite{KR-BG}; see also \cite[\S 14.1]{Li-Ra-Zh}. See Figure \eqref{fig:n=4-nonsplit} for the case when $n=4$.
\end{altenumerate}
\end{example}

\subsection{Relation to Deligne--Lusztig varieties}\label{sec:relate-DLV}
In this subsection, we address the relationship between Bruhat--Tits (BT) strata and Deligne--Lusztig (DL) varieties. This subsection summarizes the key results presented in \S \ref{sec: DL var} and \S \ref{sec:iden}.
Before proceeding, we introduce the following notation:
\begin{altitemize}
\item We refer the reader to Proposition \ref{prop:BT_strata} for the lattice description of the BT-strata. For instance, for any $\Lambda\in\cL_\cZ$ and any perfect field $\kappa$ over $\F$, we have
\begin{align*}
    \cZ(\Lambda)(\kappa)=\{(X,\iota_X,\lambda_X,\rho_X)\in \cN^{[h]}_{n,\ep}(\kappa)\mid    \Lambda\otimes W_O(\kappa) \subseteq M(X)\subseteq M(X)^\sharp\subseteq \Lambda^\sharp\otimes W_O(\kappa)  \},
\end{align*}
where $W_O(\kappa)$ is the ring of  $O$-Witt vectors.

\item The definitions of $S_\Lambda$, $R_{\Lambda^\sharp}$, and $S_{[\Lambda_1,\Lambda_2]}$ are given in equations \eqref{equ:S-Lambda-defn}, \eqref{equ:R-Lambda-defn}, and \eqref{equ:S Lambda12}, resp. For instance, for any $\Lambda\in\cL_\cZ$ and any field $k$ over $\F$, we have
\begin{equation*}
    S_{\Lambda}^{}(k)=\{\cV\subset (\Lambda^\sharp/\Lambda)\otimes_{\F_q}k\mid \cV\text{ is isotropic},\,\text{and}\, \dim \cV=\frac{t-h}{2},\,\text{and}\, \dim(\cV\cap \Phi(\cV))\geq \frac{t-h}{2}-1\}.
\end{equation*}
\end{altitemize}

We now summarize our results. 
\begin{altitemize}
\item For any vertex lattice $\Lambda\in \cL_{\cY}$ of type $t=t(\Lambda)$, since the types are always even integers, we denote $\ttt=t(\Lambda)/2$ and $\tth=h/2$ for more convenient indexing. Theorem \ref{thm:symplectic main} yields the decomposition:
\begin{equation*}
    S_\Lambda=
    \Bigl(\coprod_{0\leq j\leq \tth< i\leq \ttt}X_{P_{ij}}(\sfw_{ij})\Bigr)\amalg
    \Bigl(\coprod_{0\leq j<\tth<i\leq \ttt}X_{P_{ij}}(\sfw'_{ij})\Bigr)
    \amalg X_{P_{\tth\tth}}(\id).
\end{equation*}
This decomposition implies that $S_\Lambda$ is irreducible. 
All group-theoretic data are related to the symplectic group. For the definitions of the parabolic subgroups $P_{ij}$ and the Weyl group elements $\sfw_{ij}$ and $\sfw'_{ij}$, we refer to the discussion preceding Theorem \ref{thm:symplectic main}.
\item For any vertex lattice $\Lambda\in \cL_{\cY}$ of type $t=t(\Lambda)$, we denote by $m=\lfloor n/2\rfloor$, $\tth'=m-\tth$, and $\ttt'=m-t/2$.
Theorem \ref{thm:orthogonal main} yields the decomposition:
\begin{align*}\label{eq:orthogonal stratification}
   R_{\Lambda^\sharp}=
    \begin{cases}
     \displaystyle  \coprod_{\delta\in \{\pm\}}R_{\Lambda^\sharp}^{\delta} =\coprod_{\delta\in\{\pm\}}\big(\coprod_{\substack{0\leq j\le \tth'< i\leq \ttt' }}X_{P_{ij}}(\sfw_{ij}^{\delta})\big) 
     & \text{ if $h=n$},\\
     \bigl(\displaystyle\coprod_{0\leq j< \tth'< i\leq \ttt'}X_{P_{ij}}(\sfw_{ij})\bigr)\amalg
    \bigl(\coprod_{\substack{0\leq j<\tth'< i\leq \ttt',\\
    \delta\in \{\pm\}}} X_{P_{ij}}(\sfw^{\prime,\delta}_{ij}  )\bigr)
    \amalg X_{P_{\tth'\tth'}}(\id) & \text{ if $h=n-2$},\\
         \bigl(\displaystyle\coprod_{0\leq j< \tth'< i\leq \ttt'}X_{P_{ij}}(\sfw_{ij})\bigr)\amalg
    \bigl(\coprod_{0\leq j<\tth'< i\leq \ttn-\ttt}X_{P_{ij}}(\sfw'_{ij})\bigr)
    \amalg X_{P_{\tth'\tth'}}(\id) & \text{ otherwise },
    \end{cases}
\end{align*}
This decomposition implies that $R_{\Lambda^\sharp}$ has two disjoint irreducible components when $n=2m$ is even and $h=n$; in all other cases, $R_{\Lambda^\sharp}$ is irreducible, see also \cite[Cor. 5.3.6]{LRZ}.
All group-theoretic data are related to the orthogonal group.
For the definitions of the parabolic subgroups $P_{ij}$ and Weyl group elements $\sfw_{ij}$, $\sfw^{\delta}_{ij}$, and $\sfw'_{ij}$, we refer to the discussion preceding Theorem \ref{thm:orthogonal main}.

\item Let $\Lambda_1\in \cL_{\cZ}$ (resp. $\Lambda_2\in \cL_{\cY}$) be a vertex lattice of type $t_1=t(\Lambda_1)$ (resp. $t_2=t(\Lambda_2)$) such that $\Lambda_1\subseteq\Lambda_2$, we denote $\tth=h/2$, $\ttt_1=t_1/2$, and $\ttt_2=t_2/2$.
Proposition \ref{Prop: decom SLambda1Lambda2 into DL var} yields the decomposition:
\begin{equation*}
    S^{}_{[\Lambda_1,\Lambda_2]}=\coprod_{\ttt_2\leq j\leq \tth\leq i\leq \ttt_1}X_{P_{ij}}(\sfw^{}_{ij}),
\end{equation*}
This decomposition implies that $S_{[\Lambda_1,\Lambda_2]}$ is irreducible.
All group-theoretic data are related to the general linear group.
For the definitions of the parabolic subgroups $P_{ij}$ and the Weyl group elements $\sfw_{ij}$, we refer to the discussion before Proposition \ref{Prop: decom SLambda1Lambda2 into DL var}.
\end{altitemize}

\begin{theorem}\label{thm:main-EKOR}
Let $\Lambda_1\in\cL_\cZ$ and $\Lambda_2\in\cL_\cY$ be vertex lattices.
\begin{altenumerate}
\item There are canonical isomorphisms of schemes over $\F$,
\begin{align}\label{equ:psi-isos}
    \Psi_{\cZ}: \cZ(\Lambda_1)\cong S_{\Lambda_1 }\quad \text{and}\quad  \Psi_{\cY}: \cY(\Lambda_2^\sharp)\cong R_{\Lambda_2^\sharp}.
\end{align}
Consequently, all $\cZ$-strata $\cZ(\Lambda_1)$ are irreducible. The $\cY$-stratum $\cY(\Lambda_2^\sharp)$ consists of two disjoint irreducible components when $n=2m$ is even and $h=n$, and is irreducible otherwise.

\item For the case where $\Lambda_1\subseteq \Lambda_2$, there is an isomorphism
\begin{equation*}
    \cZ(\Lambda_1)\cap \cY(\Lambda_2^\sharp)\cong S_{[\Lambda_1,\Lambda_2]}.
\end{equation*}
Consequently, the intersection $\cZ(\Lambda_1)\cap \cY(\Lambda_2^\sharp)$ is irreducible when $t(\Lambda_2)<h<t(\Lambda_1)$. It is a discrete geometric point when $t(\Lambda_2)=h$ or $t(\Lambda_1)=h$.
\end{altenumerate}
\end{theorem}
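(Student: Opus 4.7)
The plan is to reduce both isomorphism statements to semilinear algebra via Dieudonné theory, using the lattice description of Bruhat-Tits strata from Proposition \ref{prop:BT_strata}, and then to match the resulting lattice conditions with the definitions of $S_{\Lambda_1}$, $R_{\Lambda_2^\sharp}$, and $S_{[\Lambda_1,\Lambda_2]}$.

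For part (1), I first construct $\Psi_\cZ$ on $\kappa$-points for perfect $\kappa/\F$. Given $(X,\iota,\lambda,\rho) \in \cZ(\Lambda_1)(\kappa)$ with relative Dieudonné module $M = M(X)$, Proposition \ref{prop:BT_strata} supplies the chain $\Lambda_1 \otimes_{O_F} W_O(\kappa) \subseteq M \subseteq M^\sharp \subseteq \Lambda_1^\sharp \otimes_{O_F} W_O(\kappa)$. Since $\pi(\Lambda_1^\sharp \otimes W_O(\kappa)) \subseteq \Lambda_1 \otimes W_O(\kappa)$, the quotient $\cV := M/(\Lambda_1 \otimes W_O(\kappa))$ is naturally a $\kappa$-subspace of $(\Lambda_1^\sharp/\Lambda_1) \otimes_{\F_q} \kappa$. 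Three conditions need to be checked: the inclusion $M \subseteq M^\sharp$ translates into $\cV$ being isotropic for the symplectic form on $V_{\Lambda_1}$; the Kottwitz signature $(n-1,1)$ combined with $t(\Lambda_1) = t_1$ forces $\dim_\kappa \cV = (t_1 - h)/2$; and stability of $M$ under $\uF, \uV$, together with $\uF\uV = \pi$, yields the near-$\Phi$-stability $\dim(\cV \cap \Phi(\cV)) \geq (t_1-h)/2 - 1$, where $\Phi$ is the induced Frobenius. The inverse sends $\cV$ to its preimage in $\Lambda_1^\sharp \otimes W_O(\kappa)$, which by relative Dieudonné theory determines the quadruple $(X,\iota,\lambda,\rho)$. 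The construction of $\Psi_\cY$ is dual, now using the orthogonal form on $V_{\Lambda_2^\sharp}$ together with the inclusion $M \subseteq \Lambda_2^\sharp \otimes W_O(\kappa)$.

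To promote these $\kappa$-point bijections to scheme isomorphisms, I would use the local model diagram \eqref{equ:LM-diagram}. The cover $\widetilde{\cZ}(\Lambda_1)$ parametrizes additional linear-algebraic data whose $\pi$-image is the strata local model $\ov{\cM}^{[h]}_n(t_1)$; embedding this local model into a suitable partial affine flag variety exhibits it as a union of Schubert-type subvarieties which, after base change to $\F$, match the defining description of $S_{\Lambda_1}$. Smoothness of $\varphi$ and $\pi$ of equal relative dimension, together with the normality statement in Theorem \ref{thm:LM-results} on one side and the smoothness of the Deligne-Lusztig varieties appearing in the explicit decomposition of $S_{\Lambda_1}$ on the other, then upgrades the point-bijection into a scheme-theoretic isomorphism. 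The irreducibility and connected-component claims in the statement follow directly from the decomposition results for $S_\Lambda$ and $R_{\Lambda^\sharp}$ quoted just before the theorem.

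For part (2), a $\kappa$-point of $\cZ(\Lambda_1) \cap \cY(\Lambda_2^\sharp)$ corresponds, under the combined lattice description, to an $M$ with $\Lambda_1 \otimes W_O(\kappa) \subseteq M \subseteq \Lambda_2^\sharp \otimes W_O(\kappa)$ satisfying $M \subseteq M^\sharp$; the quotient $M/(\Lambda_1 \otimes W_O(\kappa))$ sits inside the appropriate $\F_q$-space carrying an induced hermitian-type form, and the same dictionary (isotropy, dimension, near-$\Phi$-stability of codimension at most one) identifies it with $S_{[\Lambda_1,\Lambda_2]}$. The main obstacle I anticipate is precisely the scheme-theoretic verification: Dieudonné theory furnishes only a bijection on perfect points, so producing an honest morphism of finite-type $\F$-schemes requires either working with displays or Grothendieck-Messing over infinitesimal thickenings, or going through the local model identification, which in turn demands a careful matching of the Schubert-cell structure on $\ov{\cM}^{[h]}_n(t)$ with the parabolic subgroups and Weyl-element data appearing in Theorems \ref{thm:symplectic main}, \ref{thm:orthogonal main}, and Proposition \ref{Prop: decom SLambda1Lambda2 into DL var}.
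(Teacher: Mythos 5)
There is a genuine gap, and it sits exactly at the step you flag as the ``main obstacle'': the scheme-theoretic upgrade. The paper does \emph{not} obtain the isomorphism by passing through the local model diagram \eqref{equ:LM-diagram}. That diagram only relates $\cZ(\Lambda_1)$ to the strata local model $\ov{\cM}^{[h]}_n(t_1)$ smooth-locally, so it yields local properties (reducedness, normality, dimension) but no morphism at all from $\cZ(\Lambda_1)$ to $S_{\Lambda_1}$; the strata local model is a Schubert-type variety in an affine flag variety and is not identified with $S_{\Lambda_1}$, so ``matching the Schubert-cell structure with the Deligne--Lusztig data'' has no mechanism behind it. What the paper actually does is construct $\Psi_{\cZ}$ functorially on arbitrary $\F$-algebras $R$, by taking de Rham realizations of the isogeny chain $X_{\Lambda,R}\to X\to X^\vee\to X_{\Lambda^\sharp,R}$ and setting $U(X)=D(\widetilde{\rho}_{X,\Lambda^\sharp})^{-1}(\Fil(X))/\mathrm{Im}\,D(\rho_{\Lambda,\Lambda^\sharp})$ (Lemma \ref{BT_Z_isotropic-subspace}); it then proves bijectivity on $k$-points for \emph{arbitrary} fields $k$ --- the non-perfect case requiring $O_{F_0}$-displays (Proposition \ref{thm:Z-strata_points}), since Dieudonn\'e theory only covers perfect fields --- and concludes by Zariski's main theorem using properness, reducedness of $\cZ(\Lambda_1)$ (Corollary \ref{cor:red of BT}) and normality of $S_{\Lambda_1}$ (Theorem \ref{thm:symplectic main}). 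Your proposal acknowledges the need for displays or Grothendieck--Messing but then routes around it through the local model, which does not close the gap.

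A second, more subtle problem: even at the level of perfect-field points your dictionary is the wrong one. You send $M$ to $\cV := M/(\Lambda_1\otimes W_O(\kappa))$, whereas the map that globalizes is $M\mapsto \tau^{-1}(M)/(\Lambda_1\otimes W_O(\kappa))=\Phi^{-1}(M/\breve{\Lambda}_1)$, i.e.\ the Hodge-filtration (Verschiebung) construction. Over a perfect field the two differ by the bijective Frobenius, so your point-count is fine, but the paper's remark following Proposition \ref{thm:Z-strata_points} points out precisely that the assignment via $M$ itself (``$\cU(X)^{\mathrm{fake}}$'') differs from $\Psi_{\cZ}$ by a Frobenius twist and does \emph{not} give the isomorphism of schemes over non-perfect bases. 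So the map you would be extending is not the one for which Theorem \ref{prop:ZcycleDLV} holds. Parts of your outline that do agree with the paper: the translation of $M\subseteq M^\sharp$ into isotropy, the index count giving $\dim\cV=(t_1-h)/2$, the near-$\Phi$-stability from the wedge condition, the deduction of irreducibility/connectedness from Theorems \ref{thm:symplectic main} and \ref{thm:orthogonal main}, and the treatment of the intersection in part (2) by restricting $\Psi_{\cZ}$ to subspaces of $\Lambda_2/\Lambda_1$ (Proposition \ref{prop: Z int Y}).
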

The isomorphisms $\Psi_{\cZ}$ and $\Psi_{\cY}$ are established in Theorems \ref{prop:ZcycleDLV} and \ref{prop:YcycleDLV}, respectively. Their geometric properties follow from Theorems \ref{thm:symplectic main} and \ref{thm:orthogonal main}, respectively. The intersection isomorphism is proven in Proposition \ref{prop: Z int Y}, while its geometric properties are derived from Proposition \ref{Prop: decom SLambda1Lambda2 into DL var}.

\begin{remark}
In \S \ref{sec:iden}, we provide two different constructions for the isomorphism. A priori, both depend on the choice of a uniformizer $\pi_0\in F_0$ and a unit $\delta\in O_{\breve{F}_0}^\times$ satisfying $\sigma(\delta)=-\delta$. The choice of uniformizer is necessary to apply relative Dieudonn\'e theory, while the choice of unit is necessary to construct the bilinear form on $\bV$. Similar choices have been made in previous works, e.g. \cite{Vollaard-Wedhorn}.
However, the isomorphisms are in fact independent of these choices. A different choice would modify the construction only by a unit scalar, since we are identifying lattices with subspaces, this does not affect the isomorphisms.
\end{remark}

Note that the geometric properties of the BT-strata, including their dimension and normality, can also be derived from the properties of the corresponding DL varieties.

\begin{corollary}\label{cor:strata-conn}
The reduced locus $\cN_{n,\ep,\red}^{[h]}$ is connected except when $h=n$, in which case it has exactly two connected components.
\end{corollary}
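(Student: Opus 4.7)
The plan is to combine the irreducibility results of Theorem \ref{thm:main-EKOR}(1) with the incidence relations of Theorem \ref{thm:main} and reduce the corollary to a combinatorial statement about the Bruhat--Tits building of $\U(\bV)(F_0)$. Concretely, I would build an incidence graph $\Gamma$ whose vertices are the irreducible components of the strata $\cZ(\Lambda_1)$ and $\cY(\Lambda_2^\sharp)$---one component per stratum in general, but two components (labelled by $\delta\in\{\pm\}$, following Theorem \ref{thm:orthogonal main}) for each $\cY$-stratum in the case $h=n$---and whose edges correspond to non-empty intersections inside $\cN^{[h]}_{n,\ep,\red}$. Since every irreducible component is connected, the connected components of $\cN^{[h]}_{n,\ep,\red}$ correspond bijectively to those of $\Gamma$.

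For $h\neq n$, every stratum is itself irreducible, so the edges of $\Gamma$ are governed exactly by the inclusion conditions on vertex lattices: inclusions within $\cL_\cZ$ or within $\cL_\cY$, or pairs $\Lambda_1\subseteq\Lambda_2$ with $\Lambda_1\in\cL_\cZ,\Lambda_2\in\cL_\cY$. I would then argue that $\Gamma$ is connected by invoking the connectedness of the BT building of $\U(\bV)(F_0)$: given any two vertex lattices $\Lambda,\Lambda'\in\cL_\cZ\cup\cL_\cY$, one finds a chain of comparable vertex lattices between them and, after possibly inserting intermediate vertex lattices of types alternating around $h$, each consecutive pair lies in $\cL_\cZ\cup\cL_\cY$ and contributes an edge of $\Gamma$. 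Here the worst points $\cZ(\Lambda)=\cY(\Lambda^\sharp)$ for $\Lambda\in\cL_\cZ\cap\cL_\cY$ (present because $h\neq n$) act as bridges between the $\cZ$- and $\cY$-worlds.

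For $h=n$ (which forces $n=2m$ even), $\cL_\cZ$ contains only $\pi$-modular vertex lattices, each giving a single worst point $\cZ(\Lambda_1)$, while every $\cY$-stratum splits as $\cY(\Lambda_2^\sharp)^+\sqcup \cY(\Lambda_2^\sharp)^-$. I would first check that the $\pm$-labels are compatible under the inclusions $\Lambda_2\subseteq\Lambda_2'$ in $\cL_\cY$ (so that the unions $\cN^\pm:=\bigcup_{\Lambda_2\in\cL_\cY}\cY(\Lambda_2^\sharp)^\pm$ are well defined and disjoint), and next that each worst point $\cZ(\Lambda_1)$ carries an intrinsic sign $\delta(\Lambda_1)\in\{\pm\}$ such that $\cZ(\Lambda_1)\subseteq\cY(\Lambda_2^\sharp)^{\delta(\Lambda_1)}$ for every $\Lambda_2\subseteq\Lambda_1$ in $\cL_\cY$. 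Combined, these yield $\cN^{[h]}_{n,\ep,\red}=\cN^+\sqcup\cN^-$, and the connectedness of each $\cN^\pm$ follows by rerunning the argument of the previous paragraph restricted to a single sign.

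The main obstacle will be this last step: verifying that the two Deligne--Lusztig components in Theorem \ref{thm:orthogonal main} glue consistently across different $\cY$-strata. Concretely, it requires tracing the discriminant/spinor invariant of the symmetric space $V_{\Lambda_2^\sharp}$ under inclusions $\Lambda_2\subseteq\Lambda_2'$ and matching it at the worst points with the intrinsic sign $\delta(\Lambda_1)$. Once this compatibility is established, the combinatorial BT-building argument of Step 2 (applied either globally or to one sign at a time) completes the proof.
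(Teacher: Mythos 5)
Your overall strategy—build an incidence graph of irreducible components using Theorem \ref{thm:main-EKOR} and Theorem \ref{thm:main}, and deduce connectedness from the connectedness of the relevant part of the Bruhat--Tits building—is the natural approach and is in line with how the paper intends the corollary to follow. For $h\neq n$ the outline is sound: every $\cZ$- or $\cY$-stratum contains a worst point of type $h$, two worst points of type $h$ are joined by a gallery in the building of $\U(\bV)$ passing through comparable vertex lattices, and each consecutive comparable pair gives an edge via one of the five incidence relations in Theorem \ref{thm:main}.

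There is, however, a genuine error in your treatment of $h=n$: you assert that ``$\cL_\cZ$ contains only $\pi$-modular vertex lattices, each giving a single worst point $\cZ(\Lambda_1)$,'' and then propose to endow each such point with an intrinsic sign. In fact, when $n$ is even and $h=n$, these worst points do not exist. By Proposition \ref{prop:RZ-lattice} the spin condition forces $M\stackrel{1}{\subset}M+\tau(M)$; a $\pi$-modular $\Lambda_1\subset\bV$ would give the $\tau$-stable lattice $M=\Lambda_1\otimes W_O(\kappa)$, for which $M+\tau(M)=M$, so $\cZ(\Lambda_1)$ (and equally $\cY(\Lambda_1^\sharp)$) is empty. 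This is consistent with Proposition \ref{prop:KR-DLV}(2)(ii), which gives $\cY(\Lambda_2^\sharp)\cap\cZ=\emptyset$ when $h=n$, and with the fact that the $\pi$-modular local model has no worst point. Consequently the ``bridges through worst points'' mechanism you invoke for $h\neq n$ has no analogue for $h=n$, and the step where you match a sign $\delta(\Lambda_1)$ at the worst points to the $\pm$-labels of $\cY(\Lambda_2^\sharp)$ cannot be carried out. The argument must instead run entirely on the compatibility of the $\pm$-labels under inclusions $\Lambda_2\subseteq\Lambda_2'$ within $\cL_\cY$, which you correctly identify as the key point but do not actually prove; this compatibility (tracing the spinor/discriminant datum of $V_{\Lambda_2^\sharp}$ under inclusion, and checking there is no monodromy issue, which is benign here since the building is simply connected) is the real content of the $h=n$ case and remains a gap in the proposal.
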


\begin{remark}
\begin{altenumerate}
\item Theorem \ref{thm:symplectic main}, Theorem \ref{thm:orthogonal main}, and Proposition \ref{Prop: decom SLambda1Lambda2 into DL var} provide moduli descriptions for all the DL varieties involved. Via the isomorphisms in \eqref{equ:psi-isos}, we can explicitly identify the corresponding strata in the reduced locus.
\item An analogous result exists for the orthogonal case with vertex level. The reader may compare our Theorem \ref{thm:main-EKOR} with \cite[Thm. 7.26]{HZBasic}. 
\item See also \cite[\S 5]{LRZ} for a proof of Corollary \ref{cor:strata-conn} using the group-theoretical method.
\end{altenumerate}
\end{remark}

\subsection{Kottwitz--Rapoport stratification}
To conclude this section, we discuss the relationship between the Kottwitz--Rapoport (KR) stratification and the Bruhat--Tits (BT) stratification. While this material is not essential for the main results of our paper, it presents several interesting connections.
We begin by defining the (closed) Kottwitz--Rapoport strata in the reduced locus of the Rapoport--Zink space.
\begin{definition}
Let us define two $\F$-schemes associated with the reduced locus of $\cN_{n,\ep,\red}^{[h]}$:
\begin{altenumerate}
\item The $\F$-scheme $\cZ$ is the reduced locus of the subfunctor of $\ov{\cN}_{n,\ep}^{[h]}$ consisting of tuples $(X,\iota,\lambda,\rho)$ over an $\F$-scheme $S$ that satisfying:
$\lambda^\vee(\Fil(X^\vee))\subseteq\iota(\pi) D(X)$.
\item The $\F$-scheme $\cY$ is the reduced locus of the subfunctor of $\ov{\cN}_{n,\ep}^{[h]}$ consisting of tuples $(X,\iota,\lambda,\rho)$ over an $\F$-scheme $S$ that satisfying:
$\lambda(\Fil(X))\subseteq\iota(\pi)D(X^\vee)$.
\end{altenumerate}
\end{definition}

\begin{remark}
For strongly non-special $h$, i.e., when $h\neq 0,n-2$ and $n$ (see \cite[Def. 1.3.1]{Luo}), the genuine Kottwitz--Rapoport stratification (see \cite[\S 1.6]{He-KR-strata}) decomposes as:
\begin{equation*}
\cN_{n,\ep,\red}^{[h]}=(\cZ\setminus \cY)\amalg (\cY\setminus \cZ)\amalg (\cZ\cap \cY\setminus \mathrm{WT})\amalg \mathrm{WT}
\end{equation*}
where $\mathrm{WT}$, called the \emph{worst points}, is the disjoint union of all $\cZ(\Lambda)$ satisfying $t(\Lambda)=h$ (see Theorem \ref{thm:main}).
This stratification is intimately related to the Kottwitz--Rapoport stratification of the special fiber of the corresponding local model for $\cN_{n,\ep}^{[h]}$. In the local model setting, the Kottwitz--Rapoport strata are indexed by admissible sets (see \cite[\S 11]{PR2008} or \cite[Thm. 8.1]{Zhu-coherent}). When $h$ is strongly non-special, a direct computation shows that the admissible set consists of four elements:
\begin{equation*}
\begin{aligned}
\xymatrix @R=0.3pc{
w_1\ar[dr]&&\\
&w_{12}\ar[r]&\id,\\
w_2\ar[ur]&&
}
\end{aligned}
\end{equation*}
where the arrows indicate the Bruhat order. The extreme elements $w_1$ and $w_2$ correspond to $\cZ\setminus \cY$ and $\cY\setminus \cZ$ respectively, while $w_{12}$ corresponds to $\cZ\cap \cY\setminus \mathrm{WT}$ and $\id$ corresponds to the worst point.
In particular, the subvarieties $\cZ$ and $\cY$ in our definition correspond to the irreducible components of the special fiber of the local model.
In the special case when $h=n-2$, the variety $\cZ$ further decomposes into two components; we will address this case in Remark \ref{rmk:Yu-case}.
\end{remark}

We show in \S \ref{sec:KR-strata} that:
\begin{proposition}
\begin{altenumerate}
\item The reduced locus decomposes as:
\begin{equation*}
\cN_{n,\ep,\red}^{[h]}=\cZ\cup \cY.
\end{equation*}

\item For all lattices $\Lambda_1\in\cL_{\cZ}$ and $\Lambda_2\in\cL_{\cY}$, we have 
$\cZ(\Lambda_1)\subset \cZ$ and $\cY(\Lambda_2^\sharp)\subset \cY$.
\end{altenumerate}
\end{proposition}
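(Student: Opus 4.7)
The plan is to deduce (1) from (2) via Theorem~\ref{thm:main}, and then to verify (2) by translating the conditions defining $\cZ$ and $\cY$ into Dieudonn\'e-theoretic conditions on the lattice chain $M(X)\subseteq M(X)^\sharp$.

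For the reduction, observe that by construction $\cZ$ and $\cY$ are reduced subschemes of $\ov{\cN}_{n,\ep}^{[h]}$, so the inclusion $\cZ\cup\cY\subseteq\cN_{n,\ep,\red}^{[h]}$ is automatic. For the reverse inclusion, Theorem~\ref{thm:main} supplies the covering
\[
\cN_{n,\ep,\red}^{[h]}=\Bigl(\bigcup_{\Lambda_1\in\cL_\cZ}\cZ(\Lambda_1)\Bigr)\cup\Bigl(\bigcup_{\Lambda_2\in\cL_\cY}\cY(\Lambda_2^\sharp)\Bigr),
\]
and assuming (2) the right-hand side is contained in $\cZ\cup\cY$, giving (1).

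For (2), since all schemes involved are reduced and locally of finite type over $\F$, it suffices to check containment on $\F$-points. The key step is to translate the moduli-theoretic conditions defining $\cZ$ and $\cY$ into semi-linear algebra on (relative) Dieudonn\'e modules. For a point $(X,\iota,\lambda,\rho)\in\ov{\cN}_{n,\ep}^{[h]}(\F)$ with $M=M(X)$, the Hodge filtration $\Fil(X)$ corresponds to $\uV M$, the polarization $\lambda$ identifies $M(X^\vee)$ with $M^\sharp$ (so $\Fil(X^\vee)$ corresponds to $\uV M^\sharp$), and $\iota(\pi)$ acts as multiplication by $\pi$. Unraveling, the defining condition of $\cZ$ takes the shape $\uV M^\sharp\subseteq \pi M$, and that of $\cY$ takes the shape $\uV M\subseteq\pi M^\sharp$.

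It then remains to verify these inclusions from the lattice descriptions of the BT strata in Proposition~\ref{prop:BT_strata}. For $\Lambda_1\in\cL_\cZ$, a point of $\cZ(\Lambda_1)(\F)$ satisfies
\[
\breve{\Lambda}_1\;\subseteq\;M\;\subseteq\;M^\sharp\;\subseteq\;\breve{\Lambda}_1^\sharp.
\]
By Lemma~\ref{lem:cycles_stable} every lattice in this chain is stable under $\uF$, $\uV$, and $\Pi$; the vertex lattice relation $\pi\breve{\Lambda}_1^\sharp\subseteq\breve{\Lambda}_1$ gives $\pi M^\sharp\subseteq M$, and combining these with the standard relations among $\uF$, $\uV$, and $\pi$ on the supersingular isocrystal yields the required $\uV M^\sharp\subseteq\pi M$. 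The argument for $\cY(\Lambda_2^\sharp)\subseteq\cY$ is entirely parallel, exchanging the roles of $M$ and $M^\sharp$ and using the $\cY$-side lattice description.

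The main obstacle is the middle step: correctly extracting the clean inclusions $\uV M^\sharp\subseteq\pi M$ and $\uV M\subseteq\pi M^\sharp$ from the moduli conditions $\lambda^\vee(\Fil(X^\vee))\subseteq\iota(\pi)D(X)$ and $\lambda(\Fil(X))\subseteq\iota(\pi)D(X^\vee)$, which involves careful bookkeeping of the dual polarization, the sharp operation (encoding the hermitian form), and their compatibility with $\uF$, $\uV$, and the Galois twist entering the $O_F$-action. Once this dictionary is established, the verification in the final step is a routine manipulation with vertex lattice inclusions.
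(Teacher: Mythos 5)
Your overall architecture is workable — deducing (1) from (2) together with the already-established decomposition of Theorem \ref{thm:main} (proved via the crucial lemma, so no circularity), and checking (2) on points — and your $\cY$-side dictionary is correct. But the $\cZ$-side dictionary, which you yourself identify as the main obstacle, is wrong, and the error is fatal rather than cosmetic. Since $D(\lambda^\vee)$ is induced by multiplication by $\Pi$ on $M^\sharp$ (because $\lambda^\vee\circ\lambda=\iota(\pi)$ and one identifies $M(X^\vee)$ with $M^\sharp$ via $\lambda$), the condition $\lambda^\vee(\Fil(X^\vee))\subseteq\iota(\pi)D(X)$ unwinds to $\Pi\,\uV M^\sharp\subseteq \Pi M$, i.e.\ $\uV M^\sharp\subseteq M$ (equivalently $\tau(\Pi M^\sharp)\subseteq M$, which is the paper's lattice description of $\cZ$), \emph{not} to $\uV M^\sharp\subseteq\pi M$. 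Your stronger condition is equivalent to $\tau(M^\sharp)\subseteq M$ and fails already at a worst point: for $M=\breve{\Lambda}$ with $t(\Lambda)=h>0$ one has $\uV M^\sharp=\Pi M^\sharp\not\subseteq\Pi M$, yet such points lie in every $\cZ(\Lambda_1)$ with $\Lambda_1\subseteq\Lambda$ and in $\cZ$. So with your dictionary the inclusion $\cZ(\Lambda_1)\subseteq\cZ$ you set out to prove is simply false, and no manipulation of vertex-lattice inclusions can rescue it. (The correct conditions $\uV M^\sharp\subseteq M$ and $\uV M\subseteq\Pi M^\sharp$ are genuinely asymmetric in $M\leftrightarrow M^\sharp$, reflecting that $\ker\lambda\subseteq X[\iota(\pi)]$ has order $q^h$; your symmetric-looking pair should have been a warning sign.)

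There is also a gap in the final verification step even after the dictionary is repaired: stability of the chain under $\uF,\uV,\Pi$ (Lemma \ref{lem:cycles_stable}) plus ``standard relations'' does not yield $\uV M^\sharp\subseteq M$. The actual key ingredient, which your sketch never invokes, is the $\tau$-invariance of $\breve{\Lambda}_1$, coming from $\Lambda_1\subset\bV=N^{\tau=1}$: from $\pi M^\sharp\subseteq\pi\breve{\Lambda}_1^\sharp\subseteq\breve{\Lambda}_1\subseteq M$ one applies $\tau$ and uses $\tau(\breve{\Lambda}_1)=\breve{\Lambda}_1$ to get $\tau(\pi M^\sharp)\subseteq\breve{\Lambda}_1\subseteq M$, which is exactly the membership in $\cZ$; the $\cY$-case is parallel using $\tau(\breve{\Lambda}_2)=\breve{\Lambda}_2$. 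This is precisely the paper's argument for (2). For (1), note also that the paper does not route through Theorem \ref{thm:main}: it proves the dichotomy directly (Proposition \ref{prop:KR-strata}), namely that every $M$ satisfies $\tau(M)\subseteq M^\sharp$ or $\tau(\Pi M^\sharp)\subseteq M$, either via the crucial lemma or via the local-model statement that one of the two filtration conditions always holds; your detour through the full Bruhat--Tits decomposition is logically admissible but uses a much heavier input than needed.
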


However, these inclusions are strictly proper. Indeed, in the strongly non-special case, as shown in Proposition \ref{prop:crutial-lem}, we have:
\begin{equation*}
\bigcup_{\Lambda_1\in\cL_\cZ}\cZ(\Lambda_1)\subsetneq \cZ
\quad\text{and}\quad
\bigcup_{\Lambda_2\in\cL_\cY}\cY(\Lambda_2^\sharp)\subsetneq \cY.
\end{equation*}
This reveals a non-trivial interaction between the BT-stratification and the KR-stratification: the $\cZ$-KR-stratum receives contributions from certain $\cY$-BT-strata, and vice versa (except when there are no $\cZ(\Lambda)$, resp. $\cY(\Lambda^\sharp)$). The precise nature of these contributions can be determined explicitly from the moduli description.

In \S \ref{sec:proof-KR-DLV}, we prove the following result:
\begin{proposition}\label{prop:KR-DLV}
\begin{altenumerate}
\item The isomorphism $\Psi_{\cZ}$ restricts to give:
\begin{altenumerate2}
\item $\displaystyle\cZ(\Lambda_1)\setminus \cY \cong\coprod_{i=0}^{\tth-1}X_{P_{i\tth}}(\sfw_{i\tth})$;
\item $\displaystyle\cZ(\Lambda_1)\cap \cY \cong \Bigl(\coprod_{0\leq j<\tth< i\leq \ttt}X_{P_{ij}}(\sfw_{ij})\Bigr)\amalg
    \Bigl(\coprod_{0\leq j<\tth<i\leq \ttt}X_{P_{ij}}(\sfw'_{ij})\Bigr)
    \amalg X_{P_{\tth\tth}}(\id)$.
\end{altenumerate2}
\item The isomorphism $\Psi_\cY$ similarly restricts to give:
\begin{altenumerate2}
\item $\displaystyle
\cY(\Lambda_2^\sharp)\setminus \cZ \cong\coprod_{i=0}^{\tth'-1}X_{P_{i\tth'}}(\sfw_{i\tth'})$;
\item  $\displaystyle\cY(\Lambda_2^\sharp)\cap \cZ \cong\left\{\begin{array}{ll} \emptyset & \text{ if $h=n$,}\\
    \displaystyle\Bigl(\coprod_{0\leq j<\tth'< i\leq \ttt'}X_{P_{ij}}(\sfw_{ij})\Bigr)\amalg
    \Bigl(\coprod_{\substack{0\leq j<\tth'<i\leq \ttt',\\ \delta\in \{\pm\}}}X_{P_{ij}}(\sfw^{\prime,\delta}_{ij})\Bigr)
    \amalg X_{P_{\tth'\tth'}}(\id) & \text{ if $h=n-2$,}\\
     \displaystyle\Bigl(\coprod_{0\leq j<\tth'< i\leq \ttt'}X_{P_{ij}}(\sfw_{ij})\Bigr)\amalg
    \Bigl(\coprod_{0\leq j<\tth'<i\leq \ttt'}X_{P_{ij}}(\sfw'_{ij})\Bigr)
    \amalg X_{P_{\tth'\tth'}}(\id) & \text{ otherwise}.\end{array}\right.$
\end{altenumerate2}    
\end{altenumerate}    
\end{proposition}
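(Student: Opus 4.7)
The strategy is to transport the Kottwitz-Rapoport conditions defining $\cZ$ and $\cY$ through the isomorphisms $\Psi_{\cZ}$ and $\Psi_{\cY}$, converting them into explicit linear-algebraic conditions on the subspaces $\cV$ parameterized by $S_{\Lambda_1}$ and $R_{\Lambda_2^\sharp}$, and then to compare these conditions cell by cell with the decompositions provided by Theorems~\ref{thm:symplectic main} and \ref{thm:orthogonal main}. For part (1), I would first apply (relative) Dieudonn\'e theory to rewrite the condition $\lambda(\Fil(X))\subseteq \iota(\pi)D(X^\vee)$ defining $\cY$ as a condition on $M(X)$, $M(X)^\sharp$ and the operators $\uF$, $\uV$. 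Since a point $(X,\iota,\lambda,\rho)\in\cZ(\Lambda_1)(k)$ is given by the chain $\breve\Lambda_1\subseteq M(X)\subseteq M(X)^\sharp\subseteq \breve\Lambda_1^\sharp$, under $\Psi_{\cZ}$ it is determined by the subspace $\cV\subseteq (\Lambda_1^\sharp/\Lambda_1)\otimes_{\F_q} k$ together with its Frobenius twist $\Phi(\cV)$. A direct computation in the Dieudonn\'e module then shows that the $\cY$-condition, restricted to $\cZ(\Lambda_1)$, translates precisely to the requirement that $\cV+\Phi(\cV)$ be isotropic with respect to the symplectic form on $V_{\Lambda_1}$.

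Having reformulated the condition, one matches cell by cell with the decomposition in Theorem~\ref{thm:symplectic main}. By construction there, the cells $X_{P_{ij}}(\sfw_{ij})$ (resp.\ $X_{P_{ij}}(\sfw'_{ij})$) are distinguished by whether $\cV+\Phi(\cV)$ is isotropic or not, while $X_{P_{\tth\tth}}(\id)$ parameterizes $\Phi$-stable subspaces, for which the sum is trivially isotropic. Comparing with the translated $\cY$-condition shows that $\cZ(\Lambda_1)\cap\cY$ absorbs exactly the cells with $0\le j<\tth<i\le\ttt$ from both the $\sfw_{ij}$ and $\sfw'_{ij}$ families together with $X_{P_{\tth\tth}}(\id)$, while $\cZ(\Lambda_1)\setminus\cY$ is the union of the remaining cells, namely those with $j=\tth$ in the first family; under the natural reindexing these become $X_{P_{[i,\tth]}}(\sfw_{i,\tth})$ for $i=0,\ldots,\tth-1$. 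This yields both formulas in (1).

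Part (2) is proved by the symmetric strategy using $\Psi_{\cY}$, Theorem~\ref{thm:orthogonal main}, and the orthogonal form on $V_{\Lambda_2^\sharp}$ in place of the symplectic form. The exceptional case $h=n$, in which $\cY(\Lambda_2^\sharp)\cap\cZ=\emptyset$, must be handled separately: here every vertex lattice in $\cL_\cZ$ has type exactly $h$, so each $\cZ(\Lambda_1)$ is a worst point that cannot meet the stratum $\cZ\setminus\mathrm{WT}$, and the claim is immediate. The main obstacle is the translation step — rewriting the KR conditions, originally phrased through Hodge filtrations and polarizations, as the appropriate isotropy conditions on $\cV+\Phi(\cV)$ via the Dieudonn\'e dictionary and the explicit form of $\Psi_{\cZ}$, $\Psi_{\cY}$. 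Once this reformulation is in hand, the rest of the proof reduces to combinatorial bookkeeping over the index ranges and the Weyl-group elements $\sfw_{ij}$, $\sfw'_{ij}$, $\sfw^{\pm}_{ij}$, $\sfw^{\prime,\pm}_{ij}$ appearing in the relevant decompositions.
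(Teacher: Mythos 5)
Your strategy matches the paper's: reduce the Kottwitz--Rapoport condition defining $\cY$ (resp.\ $\cZ$) to the lattice condition $\tau(M)\subseteq M^\sharp$ (resp.\ $\tau(\Pi M^\sharp)\subseteq M$) via Dieudonn\'e theory, observe that this is equivalent to the isotropy of $U(X)+\Phi(U(X))$ under the map $\Psi_\cZ$ (resp.\ $\Psi_\cY$), and then match the resulting dichotomy with the first-step decomposition of $S_\Lambda$ (resp.\ $R_{\Lambda^\sharp}$) from Remark~\ref{rmk:first decomp} and Theorems~\ref{thm:symplectic main},~\ref{thm:orthogonal main}.

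One small flaw: your handling of the $h=n$ exceptional case is not quite right. You argue that all $\cZ(\Lambda_1)$ are worst points and hence avoid $\cZ\setminus\mathrm{WT}$, but this neither addresses the actual claim ($\cY(\Lambda_2^\sharp)\cap\cZ=\emptyset$) nor is accurate: when $h=n$ the spin condition forces $M\stackrel{1}{\subset}M+\tau(M)$ while $\Pi M^\sharp=M$, so $\tau(\Pi M^\sharp)=\tau(M)\not\subseteq M$ for \emph{every} point, meaning $\cZ$ itself is empty (equivalently, Theorem~\ref{thm:orthogonal main} for $h=n$ produces no $\sfw'$- or $\id$-cells to contribute to the intersection). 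This is the clean route; your worst-point argument does not establish it.
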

These decompositions play a key role in refining KR strata to EKOR-strata (cf. \cite[\S 2.4]{GHN2024}).

\section{Rapoport--Zink spaces}
In this section, we study the ramified unitary Rapoport--Zink spaces with vertex-level structures.

\subsection{Review of strict $O_{F_0}$-modules}\label{sec:strict-module}
We begin with a review of strict $O_{F_0}$-modules. For a comprehensive treatment, we refer the reader to \cites{Mihatsch_2022,kudla2023padicuniformizationunitaryshimura,LMZ}. Throughout this work, we assume $p\neq 2$.
Let $F_0/\Q_p$ be a finite extension of $p$-adic fields with a fixed uniformizer $\pi_0\in O_{F_0}$. We assume that the residue field is finite of order $q$. 

For an $O_{F_0}$-scheme $S$, a \emph{strict $O_{F_0}$-module} over $S$ is a pair $(X,\iota)$ where $X$ is a $p$-divisible group over $S$ and $\iota: O_{F_0}\rightarrow \End(X)$ is an action 
such that the induced $O_{F_0}$-action on  $\Lie(X)$ agrees with the structure morphism $O_{F_0}\rightarrow \Oo_S$.
A strict $O_{F_0}$-module $(X,\iota)$ is called \emph{formal} if its underlying $p$-divisible group $X$ is formal. The dimension of a strict $O_{F_0}$-module is the dimension of its underlying $p$-divisible group. We refer the reader to the aforementioned references for the notion of the (relative) \emph{height} of strict $O_{F_0}$-modules.

Similar to $p$-divisible groups, we have the following exact sequence of $\mcO_S$-modules for strict $O_{F_0}$-modules (see \cite[Prop. B.3.3]{Fargues08}):
\begin{equation}\label{equ:exact-seq-for-O-module}
    0\rightarrow \Fil(X)\rightarrow D(X)\rightarrow \Lie(X)\rightarrow 0.
\end{equation}
where $D(X)$ is the (relative) de Rham realization and $\Fil(X)\subseteq D(X)$ is the Hodge filtration. According to (relative) Grothendieck--Messing theory (see \cite[Lem. 3.12]{ACZ}, or \cite[Prop. B.8.2]{Fargues08}), deformations of $X$ along $O_{F_0}$-pd-thickenings are in canonical bijection with liftings of the Hodge filtration.
See also Definition \ref{def:pd-structure} and Theorem \ref{thm:ACZ-main}.

We refer the reader to \S \ref{sec:OWittvector} for the definition of $O$-Witt vectors $W_O(R)$. Let $O=O_{F_0}$.
When $R$ is a perfect algebra, a \emph{relative Dieudonn\'e module} is a $W_O(R)$-module $M$ equipped with a $\sigma$-linear operator $\uF$ and a $\sigma^{-1}$-linear operator $\uV$ satisfying the relation $\uF \uV=\uV \uF=\pi_0\id$. 
As shown in \cite[\S B.8]{Fargues08}, there is an equivalence of categories between strict $O_{F_0}$-modules and relative Dieudonn\'e modules. Under this equivalence $X\mapsto M(X)$, the exact sequence \eqref{equ:exact-seq-for-O-module} can be identified with 
$$
0\to \uV M(X)/\pi_0 M(X)\to M(X)/\pi_0 M(X)\to M(X)/\uV M(X)\to 0.
$$

Given a strict $O_{F_0}$-module, Faltings \cite{Faltings-dual} defines the \emph{Faltings dual} $X^\vee$, which is a strict $O_{F_0}$-module over the same base. In the remaining part of the paper, $(-)^\vee$ will always refer to the Faltings dual.

The relative Dieudonn\'e module is compatible with the Faltings dual, see \cite[Prop. B.40]{Fargues08}. The duality structure induces a perfect pairing
\begin{equation*}
    D(X)\times D(X^\vee)\rightarrow \Oo_S
\end{equation*}
under which $\Fil(X)\subset D(X)$ and $\Fil(X^\vee)\subset D(X^\vee)$ are orthogonal complements.

We have $(X^\vee)^\vee=X$. A \emph{relative polarization} is an isogeny $\lambda:X\to X^\vee$ such that $\lambda^\vee=-\lambda$. A strict $O_{F_0}$-module is called \emph{biformal} if both $X$ and $X^\vee$ are formal.

\subsection{Framing object}\label{sec:framing}
\begin{definition}\label{def:basic-triple}
Let $h,n$ be integers with $0\leq h\leq n$ and $h$ even. For any $S\in\Nilp O_{\breve{F}}$, a \emph{hermitian $O_{F}$-module} of rank $n$ and type $h$ (with signature $(n-1,1)$) over $S$ is a triple $(X,\iota,\lambda)$ satisfying:
\begin{altenumerate}
\item $X$ is a strict biformal $O_{F_0}$-module over $S$ of height $2n$ and dimension $n$.
\item $\iota:O_F\rightarrow \End(X)$ is an action of $O_F$ on $X$ extending the $O_{F_0}$-action. The induced action of $O_F$ on $\Fil(X)$ satisfies:
\begin{itemize}
\item\,(Kottwitz condition) The characteristic polynomial satisfies\footnote{This is equivalent to requiring $\mathrm{char}(\iota(\pi)\mid\Lie (X))=(T-\pi)^{n-1}(T+\pi)$ 
for the Lie algebra of $X$.}
\begin{equation*}
    \mathrm{char}(\iota(\pi)\mid\Fil (X))=(T-\pi)(T+\pi)^{n-1};
\end{equation*}
\item\,(Wedge condition) The following relations hold:
\begin{equation*}
    \bigwedge^2(\iota(\pi)+\pi \mid \Fil(X))=0;
    \quad
    \bigwedge^n(\iota(\pi)-\pi \mid \Fil(X))=0.
\end{equation*}
\item\,(Spin condition) When $n$ is even and $h=n$, we require that $\iota(\pi)+\pi$ is non-vanishing on $\Fil(X)$.
\end{itemize}
\item $\lambda$ is a (relative) polarization of $X$ that is $ O_{F}/ O_{F_0}$-semilinear in the sense that the Rosati involution $\Ros_\lambda$ induces the non-trivial involution $\overline{(-)}\in\Gal(F/F_0)$ on $\iota: O_F\rightarrow \End(X)$.
\item We require that $\ker[\lambda]\subseteq X[\iota(\pi)]$ with order $q^{h}$.
\end{altenumerate}

By the kernel condition in (4), there exists a unique isogeny $\lambda^\vee$ such that the composition
\begin{equation*}
    X\xrightarrow{\lambda}X^\vee\xrightarrow{\lambda^\vee} X
\end{equation*}
equals $\iota(\pi)$.

An isomorphism between two such triples $(X_1,\iota_1,\lambda_1)\xrightarrow{\sim}(X_2,\iota_2,\lambda_2)$ is an $O_F$-linear isomorphism $\varphi:X_1\xrightarrow{\sim} X_2$ satisfying $\varphi^*(\lambda_2)=\lambda_1$.
\end{definition}

The signature conditions imposed on hermitian $O_F$-modules ensure that the associated Rapoport--Zink space has the property of ``topological flatness'' (see Proposition \ref{prop:rz-top flat}). In the special case where $h=n$, the spin condition is essential, see \cite[Remark 3.11]{RSZ-Duke}.

Let $(\bX,\iota_{\bX},\lambda_{\bX})$ be a hermitian $O_F$-module of dimension $n$ over $\Spec \F$. The associated rational Dieudonn\'e module $N=M(\bX)[1/\pi_0]$ is a $2n$-dimensional $\breve{F}_0$-vector space equipped with a $\sigma$-linear operator $\uF$ and a $\sigma^{-1}$-linear operator $\uV$. Throughout this paper, we restrict our attention to the \emph{supersingular} case, which means the rational Dieudonn\'e module has all relative slopes $\frac{1}{2}$.

The $O_F$-action $\iota_{\bX}: O_F\rightarrow \End(\bX)$ induces an action on $N$ that commutes with both operators $\uF$ and $\uV$.
The polarization of $\bX$ induces a skew-symmetric $\breve{F_0}$-bilinear form $\langle\cdot,\cdot\rangle$ on $N$ satisfying
\begin{equation*}
    \langle \uF x,y\rangle=\langle x,\uV y\rangle^\sigma,
    \quad
    \langle \iota(a)x,y\rangle=\langle x,\iota(\bar{a})y\rangle,
    \quad \text{for any } x,y\in N, a\in O_F.
\end{equation*}
Furthermore, $N$ is an $n$-dimensional $\breve{F}$-vector space equipped with a $\breve{F}/\breve{F_0}$-hermitian form $h(\cdot,\cdot)$ defined by:
\begin{equation*}
    h(x,y):=\delta(\langle \pi x,y\rangle+\pi \langle x,y\rangle),
\end{equation*}
where $\delta$ is a fixed element in $ O^\times_{\breve{F_0}}$ satisfying $\sigma(\delta)=-\delta$. 
The bilinear form $\langle\cdot,\cdot\rangle$ can be recovered from $h(\cdot,\cdot)$ via the relation:
\begin{equation*}
    \langle x,y\rangle=\frac{1}{2\delta}\tr_{\breve{F}/\breve{F}_0}(\pi^{-1}h(x,y)).
\end{equation*}

Let $\tau:=\Pi\uV^{-1}$, where $\Pi$ is the induced action of $\iota(\pi)$. Let $\bV:=N^{\tau=1}$. Then $\bV$ is an $F$-vector space of dimension $n$, and we have $N=\bV\otimes_{F_0}\breve{F_0}$. The $F/F_0$-hermitian form $h(\cdot,\cdot)$ restricts to $\bV$, and we retain this notation for the restricted form.

We define the \emph{sign} of $\bX$, denoted $\ep=\ep(\bX)$, as $-\mathrm{Hasse}(\bV)$, where $\mathrm{Hasse}(-)$ is the Hasse invariant of the hermitian space, taking values in ${\pm 1}$. For any dimension $n$, type $h$, and sign $\ep$, we denote by $(\bX^{[h]}_{n,\ep},\iota{\bX^{[h]}_{n,\ep}},\lambda_{\bX^{[h]}_{n,\ep}})$ the corresponding \emph{framing object}, which is a hermitian $O_F$-module over $\Spec \F$. For the existence and uniqueness of these framing objects, we refer the reader to \cite[\S 5]{LRZ}.

\subsection{Two Rapoport--Zink spaces}\label{sec:RZ-def}
In this subsection, we introduce two different Rapoport--Zink (RZ) spaces.

\begin{definition}
Let  $(\bX^{[h]}_{n,\ep},\iota_{\bX^{[h]}_{n,\ep}},\lambda_{\bX^{[h]}_{n,\ep}})$ be a framing object over $\F$ of rank $n$ and type $h$.
\begin{altenumerate}
\item The wedge (relative) RZ space $\cN_{n,\ep}^{[h],\wedge}$ is the functor
\begin{equation*}
    \cN_{n,\ep}^{[h],\wedge}\longrightarrow \Spf O_{\breve{F}}
\end{equation*}
that assigns to each scheme $S$ the set of isomorphism classes of tuples  $(X,\iota,\lambda,\rho)$, where:
\begin{itemize}
\item\, $(X,\iota,\lambda)$ is a unitary $O_F$-module over $S$ of dimension $n$ and type $h$ in the sense of Definition \ref{def:basic-triple}. 
\item\, $\rho:X\times_S \overline{S}\rightarrow \bX^{[h]}_{n,\ep}\times_{\F}\overline{S}$ is an $ O_F$-linear quasi-isogeny of height $0$ over the reduction $\overline{S}:=S\times_{\Spf  O_{\breve{F}}}\Spec\F$ such that $\rho^*(\lambda_{\bX^{[h]}_{n,\ep},\overline{S}})=\lambda_{\overline{S}}$.
\end{itemize}
\item  The (relative) RZ space $\cN^{[h]}_{n,\ep}$ is defined as the closed formal subscheme of $\cN^{[h],\wedge}_{n,\ep}$  cut out by the ideal sheaf $\Oo_{\cN^{[h],\wedge}_{n,\ep}}[\pi_0^\infty]\subset \Oo_{\cN^{[h],\wedge}_{n,\ep}}$. This is the maximal closed formal subscheme of $\cN_{n,\ep}^{[h],\wedge}$ that is flat over $O_{\breve{F}}$.
\end{altenumerate}
\end{definition}
By \cite{RZ}, the RZ spaces $\cN_{n,\ep}^{[h],\wedge}$ and $\cN_{n,\ep}^{[h]}$ are representable by formal schemes locally of finite type over $\Spf O_{\breve{F}}$. Both spaces have relative dimension $n-1$ (see Proposition \ref{prop:rz-top flat}). In two special cases, these spaces coincide: the self-dual case (when $h=0$) and the $\pi$-modular case (when $n$ is even and $h=n$), as shown in \cites{Pappas2000,RSZ-Duke}. In all other cases, the spaces are distinct. While there exists a moduli description of the RZ space $\cN_{n,\ep}^{[h]}$ using the strengthened spin condition \cite{Luo}, we do not require it for our purposes.

\begin{proposition}\label{prop:rz-top flat}
For any field $k$ over $\F$, the closed immersion between the closed formal subschemes yields an equality of geometric points $\cN_{n,\ep}^{[h]}(k)=\cN_{n,\ep}^{[h],\wedge}(k)$. Consequently, these spaces have identical reduced loci.
\end{proposition}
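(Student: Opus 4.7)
The inclusion $\cN^{[h]}_{n,\ep}(k) \subseteq \cN^{[h],\wedge}_{n,\ep}(k)$ is automatic from the closed immersion, so the content of the proposition is the reverse inclusion. Since $\cN^{[h]}_{n,\ep}$ is the maximal flat closed formal subscheme of $\cN^{[h],\wedge}_{n,\ep}$, a $k$-valued point $x$ lies in $\cN^{[h]}_{n,\ep}$ if and only if $x$ lies in the closure of the generic fibre; equivalently, if and only if $x$ admits a deformation to characteristic zero along some DVR $R$ over $O_{\breve{F}}$ with residue field extending $k$. The plan is therefore to produce such a characteristic-zero lift for an arbitrary $k$-point of the wedge space.

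To do this I would set up the standard local model diagram
\[
\cN^{[h],\wedge}_{n,\ep} \xleftarrow{\varphi} \widetilde{\cN}^{[h],\wedge}_{n,\ep} \xrightarrow{\pi} M^{[h],\wedge}_{n},
\]
in which $\widetilde{\cN}^{[h],\wedge}_{n,\ep}$ parameterizes tuples $(X,\iota,\lambda,\rho)$ together with a trivialization of the de Rham realization compatible with the framing, $\varphi$ is a torsor under the smooth automorphism group scheme of the framing data, and $\pi$ sends a point to its Hodge filtration inside the trivialization. The target $M^{[h],\wedge}_{n}$ is the naive local model: a projective $O_F$-scheme cutting out the Hodge filtrations that satisfy the Kottwitz and wedge conditions, together with the spin condition when $h=n$ and $n$ is even. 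Both $\varphi$ and $\pi$ are smooth of the same relative dimension, so by Grothendieck-Messing the problem of lifting a $k$-point of $\cN^{[h],\wedge}_{n,\ep}$ to characteristic zero reduces to the analogous lifting problem on $M^{[h],\wedge}_{n}$.

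The remaining step is the \emph{topological flatness} of the local model: every geometric point of the special fibre of $M^{[h],\wedge}_{n}$ is the specialization of a characteristic-zero point. For $h=0$ and for $h=n$ with $n$ even (with the spin condition imposed) the local model is in fact flat, by \cite{Pappas2000, RSZ-Duke}. For the remaining vertex types, the argument follows the pattern of Pappas-Rapoport and Smithling: embed the special fibre into a partial affine flag variety for the appropriate twisted affine group, identify its set of geometric points with the $\mu$-admissible set of coweights, and lift each $\mu$-admissible Schubert cell to characteristic zero by explicit hermitian-lattice constructions inside $\bV$.

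The main obstacle is the case $h=n$ with $n$ even, where without the spin condition the wedge locus contains a spurious irreducible component and topological flatness genuinely fails; one must carefully verify that the spin condition of Definition~\ref{def:basic-triple}(2) cuts out precisely the admissible union of Schubert varieties. Once topological flatness of the local model is known, the smoothness of $\varphi$ and $\pi$ propagates the characteristic-zero lifts back to $\cN^{[h],\wedge}_{n,\ep}$, giving $\cN^{[h]}_{n,\ep}(k) = \cN^{[h],\wedge}_{n,\ep}(k)$. The final assertion on reduced loci is then immediate, since two closed formal subschemes of a common ambient formal scheme that share $k$-points for every field extension $k/\F$ share the same underlying reduced subscheme.
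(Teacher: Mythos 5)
Your overall strategy is the same as the paper's: pass through the local model diagram for $\cN^{[h],\wedge}_{n,\ep}$ and reduce the statement to topological flatness of the wedge local model. The problem is that you treat that last step as essentially known, and in doing so you both skip the genuinely hard point and misplace where the difficulty sits. For even $n$, Smithling's work \cite{Smithling_2014} shows that it is the \emph{spin} local model, not the wedge local model, that is topologically flat: in general the wedge condition alone admits wedge-permissible elements that are not admissible, so your assertion that one can ``identify the set of geometric points of the special fibre with the $\mu$-admissible set of coweights'' is precisely what has to be proved, and it is false for ramified even unitary local models in general. In the paper's setting it holds only because of the specific signature $(1,n-1)$ and vertex level $[h]$: the proof of Proposition \ref{prop:rz-top flat} carries out a combinatorial verification that wedge-permissibility implies spin-permissibility, using the $2$-face inequality \eqref{equ:2-face}, the bound \eqref{equ:wedge-perm} on the number of $j$ with $\mu^{\widetilde w}_\tth(j)=0$, and condition (P3) of \cite[Prop.~7.4.7]{Smithling_2014}, with a separate treatment of the boundary case $\tth=m-1$ via the self-duality criterion \cite[Lem.~7.4.5]{Smithling_2014}. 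Nothing in your proposal supplies this argument, and no ``explicit hermitian-lattice construction'' lifting admissible cells is needed (or would help), since by definition of the flat closure the admissible locus lifts automatically; the issue is ruling out extra points of the wedge model.

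Relatedly, you locate the main obstacle at $h=n$ with $n$ even, but that is the one even case where there is no gap to fill: there the spin condition is built into the moduli problem (Definition \ref{def:basic-triple}), the wedge and flat models agree, and the paper simply quotes \cite[Prop.~3.4]{Wu16}. The delicate cases are instead even $n$ with $0<h<n$, where no spin condition is imposed on $\cN^{[h],\wedge}_{n,\ep}$ and one must show it is nonetheless satisfied automatically on geometric points. For odd $n$ your outline is fine, since topological flatness of the wedge model is exactly \cite{Smithling_2011}. So the proposal follows the paper's route but has a real gap at its crucial step in the even, non-$\pi$-modular cases.
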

\begin{proof}
In the $\pi$-modular case, i.e., when $n=h$, this proposition is established in \cite[Prop. 3.4]{Wu16}. We therefore assume $n\neq h$ and set $\tth=h/2$.
Following \cite{RZ} and applying the linear modification of \cite[Prop. 2.3]{Pappas2000} to $\cN_{n,\ep}^{[h]}$, we have the local model diagrams for $\cN_{n,\ep}^{[h],\wedge}$ and $\cN_{n,\ep}^{[h]}$:
\begin{equation}\label{equ:RZ lm diagram}
\begin{aligned}
\xymatrix{
&\widetilde{\cN}_{n,\ep}^{[h]}\ar@{^(->}[d]\ar[dl]\ar[dr]&\\
\cN_{n,\ep}^{[h]}\ar@{^(->}[d]&\widetilde{\cN}_{n,\ep}^{[h],\wedge}\ar[dl]\ar[dr]&\cM_{n}^{[h]}\ar@{^(->}[d]\\
\cN_{n,\ep}^{[h],\wedge}&&\cM_n^{[h],\wedge}
}
\end{aligned}
\end{equation}
where all diagonal arrows are smooth of equal relative dimension. The right-hand-side spaces are defined in Definition \ref{def:local-model}.

Using this diagram, our problem reduces to showing that the local model $\cM_n^{[h],\wedge}$ is ``topologically flat'', that is, the closed immersion $\cM_{n}^{[h]}\hookrightarrow \cM_{n}^{[h],\wedge}$ is defined by a nilpotent ideal.
To be more precise, for any point $(X,\iota,\lambda,\rho)\in\cN_{n,\ep}^{[h],\wedge}(k)$, \cite[Appendix]{RZ} ensures the existence of an \'etale extension $\Spec R\to \Spec k$ equipped with the following trivialization:
\begin{equation}\label{equ:RZ trivialization}
\bigl[
\cdots\to D(X)_R\to D(X^\vee)_R\to\cdots
\bigr]\simeq 
\bigl[
\cdots\to \bfL_{-\tth}\otimes_{O_{F_0}}R\to \bfL_{\tth}\otimes_{O_{F_0}}R\to\cdots
\bigr].
\end{equation}
where $\bfL_i$ denotes the standard lattice chain defined in \eqref{eq:standard-lattice}. Under this trivialization, the Hodge filtration determines an $R$-point $(\cF_i\subset \bfL_{i}\otimes_{O_{F_0}}R)$ of $\cM_n^{[h],\wedge}$ (see Definition \ref{def:local-model}).

By the linear modification of the local model (\cite[Prop. 2.3]{Pappas2000}), a $k$-point $(X,\iota,\lambda,\rho)\in \cN_{n,\ep}^{[h],\wedge}(k)$ lies in $\cN_{n,\ep}^{[h]}(k)$ if and only if its Hodge filtration defines an $R$-point $(\cF_i\subset \bfL_{i,R})\in \cM_{n}^{[h]}(R)$ after trivialization.
Thus, our problem reduces to showing that $\cM_{n}^{[h]}(R)=\cM_{n}^{[h],\wedge}(R)$ for any \'etale $k$-algebra $R$, which would follow from the topological flatness of the wedge local model.

Now we establish the topological flatness. For odd $n=2m+1$, this follows directly from \cite{Smithling2011}. For even $n=2m$, by \cite[Thm. 9.6.1]{Smithling_2014}, we only need to verify that both local models are indexed by the same permissible elements in the double quotient $W_{{\tth}}\backslash \widetilde{W}\slash W_{{\tth}}$ of the affine Weyl group.

In \cite{Smithling_2014}, it is shown that spin-permissibility (distinct from the spin condition in Definition \ref{def:basic-triple}) is equivalent to permissibility. Thus, we only need to prove that wedge-permissibility implies spin-permissibility in our case. For the relevant terminology, see \cite[\S 7]{Smithling_2014}. 

By \cite[Proof of Prop. 7.2.2.]{Smithling_2014}, if an element $\widetilde{w}\in W/W_{\tth}$ is naive-permissible, then it is a $2$-face. From \cite[Prop. 7.2.2.]{Smithling_2014}, we have $0\leq \mu^{\widetilde{w}}_\tth(j)\leq 2$ for all $j$. Moreover, by \cite[Lem. 4.3.7]{Smithling_2014}, we have the following basic inequality:
\begin{equation}\label{equ:2-face}
1\leq \mu_{\tth}^{\widetilde{w}}(j)+\mu_{\tth}^{\widetilde{w}}(n+1-j)\leq 2 \quad \text{for all}\quad j\in\{\tth+1,\cdots,n-\tth\}.
\end{equation}
Consequently, for any $j\in \{\tth+1,\cdots,n-\tth\}$, if $\mu_{\tth}^{\widetilde{w}}(j)\neq 0$, then either $\mu_{\tth}^{\widetilde{w}}(j)=\mu_{\tth}^{\widetilde{w}}(n+1-j)=1$, or $\mu_{\tth}^{\widetilde{w}}(j)=2, \mu_{\tth}^{\widetilde{w}}(n+1-j)=0$.

For wedge-permissible elements $\widetilde{w}$, \cite[Prop. 7.3.2]{Smithling_2014} yields
\begin{equation}\label{equ:wedge-perm}
\#\{j\mid \mu_\tth^{\widetilde{w}}(j)=0\}\leq 1
\end{equation}
According to \cite[Prop. 7.4.7]{Smithling_2014}, spin-permissibility is equivalent to wedge-permissibility together with the $(P3)$ condition:
\begin{enumerate}
\item If $\mu_\tth^{\widetilde{w}}$ is self-dual, i.e. if $\mu_{\tth}^{\widetilde{w}}=\mu_{-\tth}^{\widetilde{w}}$ (\cite[Def. 4.3.9]{Smithling_2014}), then $\#\{j\mid \mu_{\tth}^{\widetilde{w}}(j)=0\}\equiv 1\mod 2$;
\item In any case\footnote{While the statement of \cite[Prop. 7.4.7]{Smithling_2014} may be ambiguous, note that the existence of  $j\in \{\tth+1,\cdots,n-\tth\}$ such that $\mu_\tth^{\widetilde{w}}(j)=1$ is equivalent to $a_\tth>a_{\tth}^\perp$, see the bottom of page 333 (resp. 334) for the definition of $a_\tth$ (resp. $a_{\tth}^\perp$) and the middle of page 336 for the proof that this implies the spin condition.}, there exists $j\in \{\tth+1,\cdots,n-\tth\}$ such that $\mu_\tth^{\widetilde{w}}(j)=1$.
\end{enumerate}

When $\tth\leq m-2$, we have $\#\{\tth+1,\ldots,n-\tth\}\geq 4$. In this case, wedge-permissibility \eqref{equ:wedge-perm} ensures the existence of at least one $j\in \{\tth+1,\ldots,n-\tth\}$ satisfying $\mu_\tth^{\widetilde{w}}(j)=1$.

When $\tth=m-1$, if there exists no $j\in \{\tth+1,\ldots,n-\tth\}=\{m,m+1\}$ such that $\mu_\tth^{\widetilde{w}}(j)=1$, then by \cite[Lem. 7.4.5]{Smithling_2014}, $\mu_\tth^{\widetilde{w}}$ is self-dual. Therefore, $\mu_\tth^{\widetilde{w}}$ satisfies $(P3)(1)$ and hence the spin condition.
\end{proof}

Let $N=M(\bX^{[h]}_{n,\ep})[\frac{1}{\pi_0}]$ denote the rational Dieudonn\'e module of the framing object. The following proposition is a direct generalization of \cite[Prop. 2.2]{RTW}.
\begin{proposition}\label{prop:RZ-lattice}
Let $\kappa$ be a perfect field over $\F$.
There is a bijection between $\cN_{n,\ep,\red}^{[h]}(\kappa)$ and the set of $W_O(\kappa)$-lattices 
\begin{equation*}
\left\{M\subset N\otimes_O W_O(\kappa)\mid \pi M^\sharp\subseteq M\stackrel{h}{\subset}M^\sharp,\quad\Pi M\subseteq \tau^{-1}(M)\subseteq\Pi^{-1}M, \quad M\stackrel{\leq 1}{\subseteq}(M+\tau(M))\right\}.
\end{equation*}
When $n$ is even and $h=n$, the last relation is replaced by $M\stackrel{1}{\subset}(M+\tau(M))$. We refer the reader to \S \ref{sec:notation} for notations $\overset{\leq 1}{\subseteq}$ and $\overset{ 1}{\subseteq}$.
\end{proposition}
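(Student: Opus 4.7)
The plan is to apply relative Dieudonn\'e theory to convert the moduli problem into semilinear algebra on lattices in $N \otimes W_O(\kappa)[1/\pi_0]$. First, by Proposition \ref{prop:rz-top flat} the reduced loci of $\cN_{n,\ep}^{[h]}$ and $\cN_{n,\ep}^{[h],\wedge}$ agree on perfect-field points, so it suffices to parametrize $\cN_{n,\ep}^{[h],\wedge}(\kappa)$. Next, the equivalence \eqref{eq:equiv_display} together with the Dieudonn\'e-module description recalled at the end of \S\ref{sec:strict-module} attaches to a $\kappa$-point $(X,\iota,\lambda,\rho)$ a free $W_O(\kappa)$-module $M := M(X)$ of rank $2n$ equipped with operators $\uF,\uV$ satisfying $\uF\uV = \uV\uF = \pi_0$, an $O_F$-action via $\Pi = \iota(\pi)$, and a symplectic duality. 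The height-$0$ quasi-isogeny $\rho$ identifies $M[1/\pi_0]$ with $N \otimes_{\breve{F}_0} W_O(\kappa)[1/\pi_0]$, so $M$ becomes a lattice in the rational Dieudonn\'e module.

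I would then translate the various moduli conditions into lattice conditions. The polarization together with the kernel condition of order $q^h$ and the factorization $\lambda^\vee \circ \lambda = \iota(\pi)$ forces $\pi M^\sharp \subseteq M \subseteq M^\sharp$ with $[M^\sharp : M] = h$, which gives the first displayed condition. The basic Dieudonn\'e-module relations $\uF(M) \subseteq M$ and $\uV(M) \subseteq M$, combined with the fact that $\Pi$ commutes with $\uF,\uV$, are equivalent to
\[
\Pi M \subseteq \tau^{-1}(M) \subseteq \Pi^{-1}M,
\]
by a direct calculation using $\tau = \Pi\uV^{-1}$ and $\Pi^2 = \pi_0$; this gives the second displayed condition.

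The main obstacle is converting the signature data (Kottwitz, wedge, and, where relevant, spin) into the codimension condition on $M + \tau(M)$. The bridge is the identification
\[
(M + \tau(M))/M \;\cong\; \Pi \cdot \Lie(X) \;\subseteq\; \uV^{-1}(M)/M = \Lie(X),
\]
which follows from $\tau(M) = \Pi\,\uV^{-1}(M) \subseteq \uV^{-1}(M)$ and the fact that $\uV^{-1}(M)/M$ is canonically the Lie algebra of $X$. Under this isomorphism, the Kottwitz and wedge conditions on $\Fil(X)$ translate, via the standard duality between $\Fil(X) = \uV M/\pi_0 M$ and $\Lie(X) = M/\uV M$ inside $D(X) = M/\pi_0 M$ as modules over $O_F \otimes_{O_{F_0}} \kappa = \kappa[\pi]/(\pi^2)$, into the assertion that $\Pi$ has rank at most $1$ on $\Lie(X)$. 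This yields $M \stackrel{\leq 1}{\subseteq} M + \tau(M)$. In the exceptional case $n$ even and $h = n$, the spin condition precludes the degenerate possibility $\Pi \cdot \Lie(X) = 0$, and a parity argument upgrades the inequality to equality $M \stackrel{1}{\subset} M + \tau(M)$.

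Finally, I would verify the reverse direction: given a lattice $M$ satisfying all the listed conditions, the triple $(M,\uF,\uV)$ determines a strict $O_{F_0}$-module $X$ by the quasi-inverse of $\mathrm{BT}$, the relation $\pi M^\sharp \subseteq M \subseteq M^\sharp$ yields $\iota$ and $\lambda$ with the required kernel, and the codimension condition recovers the Kottwitz/wedge/spin conditions by reversing the argument of the previous paragraph. The ambient identification with $N \otimes W_O(\kappa)[1/\pi_0]$ furnishes the framing $\rho$. The delicate point throughout is the third paragraph: extracting the precise rank statement on $\Pi$ from the wedge condition on $\Fil(X)$, and correctly handling the spin-induced strict inequality when $h = n$ and $n$ is even.
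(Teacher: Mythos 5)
Your proof is correct and follows essentially the same route as the paper: reduce to the wedge space via Proposition~\ref{prop:rz-top flat}, pass to relative Dieudonn\'e modules, and translate each moduli condition into a lattice condition on $M = M(X)$. The identification $(M + \tau(M))/M \cong \Pi\cdot\Lie(X)$ and the reformulation of the containments $\uF M, \uV M \subseteq M$ as $\Pi M \subseteq \tau^{-1}(M) \subseteq \Pi^{-1}M$ match the paper's equivalences.

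Two remarks. First, the phrase ``the standard duality between $\Fil(X)$ and $\Lie(X)$'' is loose: these are not dual spaces but the sub and quotient in a short exact sequence. What makes the rank transfer work is that $D(X) = M/\pi_0 M$ is a \emph{free} $\kappa[\Pi]/(\Pi^2)$-module of rank $n$, so $\ker(\Pi|_{D(X)}) = \Pi D(X)$, and for the $\Pi$-stable $n$-dimensional $\Fil(X)$ one has
\[
\dim \Pi\cdot\Fil(X) \;=\; n - \dim\bigl(\Fil(X)\cap \Pi D(X)\bigr) \;=\; \dim \Pi\cdot\Lie(X).
\]
This is the step that converts the wedge condition $\bigwedge^2(\Pi\mid\Fil(X)) = 0$, i.e.\ $\operatorname{rank}(\Pi\mid\Fil(X)) \leq 1$, into $\operatorname{rank}(\Pi\mid\Lie(X)) \leq 1$ and hence $M \stackrel{\leq 1}{\subset} M + \tau(M)$. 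The paper's proof is in fact equally compressed here (it states the conclusion $\uV M\stackrel{\leq 1}{\subset}\uV M + \Pi M$ directly after citing the wedge condition on the Hodge filtration, which is the same rank transfer), so this is a gap in exposition, not content. Second, where the paper handles the $\pi$-modular case $h = n$ by citing \cite{Wu16}, you treat it directly: the spin condition forces $\operatorname{rank}(\Pi\mid\Fil(X)) \geq 1$, which together with the wedge bound $\leq 1$ pins the rank at exactly $1$, yielding $M \stackrel{1}{\subset} M + \tau(M)$. This is a valid shortcut (legitimate since in this case the flat RZ space coincides with the wedge one, so the wedge/spin translation applies); note that no ``parity argument'' is needed --- the spin and wedge conditions pinch the rank from both sides.
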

\begin{proof}
We assume first $n\neq h$. For any point in $\cN_{n,\ep,\red}^{[h]}(\kappa)=\cN_{n,\ep,\red}^{[h],\wedge}(\kappa)$, its Dieudonn\'e module establishes a bijection between $\ov{\cN}_{n,\ep}^{[h]}(\kappa)$ and the set of $W_O(\kappa)$-lattices:
\begin{equation*}
\left\{M\subset N\otimes_O W_O(\kappa)\mid \pi M^\sharp\subseteq M\stackrel{h}{\subset}M^\sharp,\quad \Pi M\subseteq M,\quad \pi_0 M\subset \uV M\stackrel{n}{\subset}M,\quad \uV M\stackrel{\leq 1}{\subseteq}\uV M+\Pi M\right\}.
\end{equation*}
where $M\stackrel{h}{\subset}M^\sharp$ comes from the polarization; $\Pi M\subset M$ due to the $O_F$-stability; $\uV M\stackrel{\leq 1}{\subset}\uV M+\Pi M$ follows from the Kottwitz and wedge conditions, where $\uV M/\pi_0 M\subset M/\pi_0 M$ is identified with the Hodge filtration.
Moreover, we have the following equivalences: (1) $\pi_0 M\subset \uV M\subset M$ is equivalent to $\Pi M\subset \tau^{-1}(M)\subset \Pi^{-1}M$; (2) $\uV M\stackrel{\leq 1}{\subset}\uV M+\Pi M$ is equivalent to $M\stackrel{\leq 1}{\subset}(M+\tau(M))$.
This completes the proof.

In the $\pi$-modular case, i.e. when $n=h$, the same argument applies. By Definition \ref{def:basic-triple}, in this case, we further require the spin condition, which forces $M\stackrel{\leq 1}{\subset}(M+\tau(M))$ to become the stricter $M\stackrel{1}{\subset}(M+\tau(M))$.
\end{proof}

\section{Local models of basic locus strata}\label{sec:LM}
In this section, we introduce local models of Bruhat--Tits strata.

\subsection{Standard polarized lattice chain}\label{sec:standard-lattice}
In this subsection, we recall the basic setup for local models.

Consider the vector space $F^n$ with the standard $F$-basis $e_1,\cdots,e_n$. We equip it with a split $F/F_0$-Hermitian form
\begin{equation*}
    h: F^n\times F^n\rightarrow F,\quad h(ae_i,be_j)=\bar{a}b\delta_{i,n+1-j},\quad a,b\in F.
\end{equation*}
Attached to $h$ are the respective alternating and symmetric $F_0$-bilinear forms $F^n\times F^n\rightarrow F_0$ given by
\begin{equation*}
\langle x,y\rangle:=\frac{1}{2}\tr_{F/F_0}(\pi^{-1}h(x,y))
\quad\text{and}\quad
(x,y):=\frac{1}{2}\tr_{F/F_0}(h(x,y)).
\end{equation*}		
For each integer $i=bn+c$ with $0\leq c<n$, define the standard $ O_F$-lattices
\begin{equation}\label{eq:standard-lattice}
	\bfL_i:=\sum_{j=1}^c\pi^{-b-1} O_F e_j+\sum_{j=c+1}^n\pi^{-b} O_F e_j\subset F^n.
\end{equation}
For all $i$, the $\langle\,,\,\rangle$-dual $\bfL^\vee$ of $\bfL_i$ in $F^n$ is $\bfL_{-i}$, by which we mean that
\begin{equation*}
    \{x\in F^n\mid \langle \bfL_i,x\rangle\subset O_{F_0}\}=\bfL_{-i}.
\end{equation*}
By restriction, we have a perfect $O_{F_0}$-bilinear pair:
\begin{equation*}\label{moduli_setup:dual}
	\bfL_i\times\bfL_{-i}\xrightarrow{\langle\,,\,\rangle}  O_{F_0}
\end{equation*}
Similarly, $\bfL_{n-i}$ is the $(\,,\,)$-dual of $\bfL_i$ in $F^n$. The $\bfL_i$'s form a complete, periodic, self-dual lattice chain
\begin{equation*}
    \cdots\subset\bfL_{-2}\subset\bfL_{-1}\subset\bfL_0\subset\bfL_1\subset\bfL_2\subset\cdots.
\end{equation*}
For any subset $I\subset\{1,\cdots,m\}$ with $m=\lfloor \frac{n}{2}\rfloor$, we define the standard polarized chain $\bfL_I$ as a sub-lattice chain with indices $i\in\pm I+n\Z$.

For even integers $r,s,t$, let $\ttr=\frac{1}{2}r$, $\tts=\frac{1}{2}s$, and $\ttt=\frac{1}{2}t$.
We define the following index sets:
$[s] := \{\pm \tts\} + n\mathbb{Z}$,
$[s,t] := \{\pm \tts, \pm \ttt\} + n\mathbb{Z}$, and 
$[r,s,t] := \{\pm \ttr, \pm \tts, \pm \ttt\} + n\mathbb{Z}$.
For any such index set $I$, we define the standard lattice chain $\mathbf{L}_I := \{\mathbf{L}_i\}_{i \in I}$. This chain is self-dual in the sense of \cite{RZ}. We use these half-integral indices to maintain consistency with the local model indexing conventions used in \cites{Pappas_Rapoport_2009,Luo}.

\subsection{Local models}
We first recall the definition of the relative local model.
For any $O_F$-algebra $R$, let $\mathbf{L}_{i,R}$ denote the tensor product $\mathbf{L}_{i}\otimes_{O_{F_0}}R$.
Let $\Pi := \pi \otimes 1$ and $\pi := 1 \otimes \pi$.

\begin{definition}\label{def:local-model}
\begin{altenumerate}
\item The \emph{wedge local model} $\cM^{[h],\wedge}_n$ is a projective scheme over $\Spec  O_F$. It represents the moduli problem that assigns to each $ O_F$-algebra $R$ the set of all families $\left(\cF_i \subset \bfL_{i,R}\right)_{i\in [h]}$ such that:
\begin{altitemize}		
	\item[$\mathrm{LM1.}$]
		for all $i\in [h]$, $\cF_i$ is an $ O_F \otimes_{ O_{F_0}} R$-submodule of $\bfL_{i,R}$ and an $R$-direct summand of rank $n$;
	\item[$\mathrm{LM2.}$]
		for all $i,j\in [h]$ with $i < j$, the natural map $\bfL_{i,R} \to \bfL_{j,R}$ carries $\cF_i$ into $\cF_j$;
	\item[$\mathrm{LM3.}$]
		for all $i\in [h]$, the isomorphism $\bfL_{i,R} \xrightarrow{\Pi} \bfL_{i-n,R}$ identifies
		\begin{equation*}
		    \cF_i \isoarrow \cF_{i-n};
		\end{equation*}
	\item[$\mathrm{LM4.}$]
		for all $i\in [h]$, the perfect $R$-bilinear pairing
        \begin{equation*}
           \bfL_{i,R} \times \bfL_{-i,R}
			\xrightarrow{\langle-,-\rangle \otimes R} R
        \end{equation*}
		identifies $\cF_i^\perp$ with $\cF_{-i}$ inside $\bfL_{-i,R}$; and
	\item[$\mathrm{LM5.}$]
        for all $i\in [h]$, the action of $\Pi=\pi\otimes 1 \in O_F\otimes_{O_{F_0}} R$ on $\cF_i$ satisfies the following signature conditions:
     \begin{itemize}
     	\item(Kottwitz condition) The characteristic polynomial satisfies:
\begin{equation*}
    \mathrm{char}(\Pi\mid\cF_i)=(T-\pi)(T+\pi)^{n-1};
\end{equation*}
\item(Wedge condition) The following operators vanish:
\begin{equation*}
    \bigwedge^2(\Pi+\pi \mid \cF_i)=0;
    \quad
    \bigwedge^n(\Pi-\pi \mid \cF_i)=0.
\end{equation*}
\item(Spin condition) 
For even $n=2m$ and $h=n$, the operator $\Pi+\pi$ is non-zero on $\cF_m$
\end{itemize}
    
\end{altitemize}
\item We define the \emph{local model} $\cM_n^{[h]}$ as the flat closure of $\cM_n^{[h],\wedge}$. To be more precise, it is the scheme-theoretic closure of the generic fiber of the wedge local model:
\begin{equation*}
\begin{aligned}
\xymatrix{
\cM_{n}^{[h]}\ar@{^(->}[r]&\cM_{n}^{[h],\wedge}\\
\cM_{n,\eta}^{[h],\wedge}.\ar@{^(->}[u]\ar@{^(->}[ur]&
}
\end{aligned}
\end{equation*}
\end{altenumerate}
\end{definition}

\subsection{Strata local model}
In this subsection, we define the strata local model and study its basic properties.

\begin{definition}
Let $R$ be an $\F_q$-algebra and let $\bfL_{[h]}$ and $\bfL_{[h,t]}$ be two standard lattice chains.
\begin{altenumerate}
\item The \emph{pivoting filtration\footnote{This terminology is due to S. Kudla.} of type $t$} is the fixed filtration with indices in $[t]$:
\begin{equation*}
    \left(\cF_i:=\Pi\bfL_{i,R}\subset \bfL_{i,R}\right)_{i\in[t]}.
\end{equation*}
\item For any $R$-point $\left(\cF_i \subset \bfL_{i,R}\right)_{i\in [h]}$ in $\cM^{[h]}_{n,\F}$, we say the point is \emph{pinned by the pivoting filtration} of type $t$ if
for any $i<j$ with either $i\in[h], j\in[t]$ or $i\in[t], j\in[h]$, the natural morphism  $\bfL_{i,R}\to \bfL_{j,R}$ maps $\cF_i$ into $\cF_j$. Here, when $i\in [t]$, $\cF_i$ refers to the pivoting filtration.

\item The \emph{strata local model}
$\ov{\cM}^{[h]}_{n}(t)$ is defined as the closed subscheme of the special fiber $\ov{\cM}^{[h]}_{n}$ that parameterizes all points pinned by the pivoting filtration of type $t$.
\end{altenumerate}
\end{definition}

We will relate these to the Bruhat--Tits strata in the next subsection.

\begin{theorem}\label{thm:LM-reduced}
The strata local model $\ov{\cM}^{[h]}_{n}(t)$ is reduced of finite type.
\end{theorem}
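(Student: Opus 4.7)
The plan is to realize $\ov{\cM}^{[h]}_{n}(t)$ as a closed subscheme of a partial affine flag variety for the ramified unitary group and to identify it, on the reduced level, with a (union of) Schubert varieties. Since Schubert varieties in partial affine flag varieties in positive characteristic are known to be reduced (indeed normal and Cohen-Macaulay, by the Pappas-Rapoport/Zhu theory of global Schubert varieties, or via Frobenius splitting \`a la Mathieu), reducedness of $\ov{\cM}^{[h]}_{n}(t)$ will follow once one checks that the scheme-theoretic image is a Schubert-type subscheme.

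More precisely, I would first use the pivoting filtration to reduce the moduli problem. Since the $i\in[t]$ components $\cF_i = \Pi\bfL_{i,R}$ are fixed, the data left is the family $(\cF_i\subset \bfL_{i,R})_{i\in[h]}$, but the compatibility ``pinned by the pivoting filtration'' forces each $\cF_i$ with $i\in[h]$ to be squeezed between two explicit lattices obtained from $\Pi\bfL_{[t]}$ under the transition maps $\bfL_{i,R}\to\bfL_{j,R}$. In particular, $\cF_i$ becomes a submodule of a known finite free $R$-module, and one obtains a closed embedding of $\ov{\cM}^{[h]}_{n}(t)$ into a product of (relative) Grassmannians, compatible with the description of the special fiber of the full local model as a subscheme of the affine flag variety $\cF\ell$ for the ramified unitary group (as in \cite{Pappas_Rapoport_2009}).

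Next, I would identify the image. The signature conditions LM5 carve out the naive/wedge/spin locus inside $\cF\ell$; this is known to coincide, on the underlying reduced scheme, with the admissible locus, i.e.\ a finite union of Schubert varieties indexed by $\mu$-admissible elements of the affine Weyl group. The pivoting constraint cuts out a further closed sub-union: it is exactly the condition that the corresponding element of $W_h\backslash\widetilde W/W_h$ lies below a fixed ``pivot'' element coming from $t$ in the Bruhat order. Hence the image is set-theoretically a union of Schubert varieties $\bigcup_w \ov{S(w)}$, with the ambient scheme structure induced from the flag variety. One then invokes reducedness (and normality, Cohen-Macaulayness) of such unions of Schubert varieties in affine flag varieties in positive characteristic.

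The main obstacle will be step two: showing that the scheme-theoretic structure inherited from $\ov{\cM}^{[h]}_{n}$ agrees with the reduced structure coming from the flag variety, i.e.\ that no extra nilpotents are introduced by the wedge/spin conditions once the pivoting filtration is imposed. Here the strategy is to argue that after fixing the pivoting filtration, the wedge and spin conditions become linear-algebraic consequences of the Schubert stratum chosen (as can be checked on the local charts computed in \S\ref{sec:LM-compute}), so they do not contribute extra equations on the reduced locus. Once this is verified for every Schubert piece in the image, the embedding identifies $\ov{\cM}^{[h]}_{n}(t)$ with a reduced union of Schubert varieties, proving the theorem.
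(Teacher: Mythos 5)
Your high-level plan — embed the strata local model into a partial affine flag variety for the ramified unitary group, identify it with a Schubert-type subscheme, and invoke reducedness of Schubert varieties in positive characteristic via Frobenius splitting — is the right one, and it is also the approach the paper takes. But the way you try to carry out the middle step has a genuine gap, and the paper's route around it is worth understanding.

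You propose to embed $\ov{\cM}^{[h]}_{n}(t)$ directly into the flag variety $\cF\ell_{[h]}$ attached to the level $[h]$ only, and then to argue that the pivoting constraint cuts out ``the condition that the element lies below a fixed pivot in the Bruhat order.'' This is where the argument breaks: the pivoting filtration lives in lattices indexed by $[t]$, not $[h]$, so it is not expressed as a Bruhat condition inside $\cF\ell_{[h]}$ in any direct way, and you give no mechanism to prove the claimed set-theoretic description, let alone the scheme-theoretic one. You then acknowledge this (``the main obstacle'') and propose to repair it by checking on the local charts of \S\ref{sec:LM-compute} that the wedge/spin conditions introduce no extra nilpotents. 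But that is not a conceptual argument at all — it is just a direct verification of reducedness by explicit computation, which is exactly what the flag-variety strategy is supposed to supersede. In other words, your proposed fix makes the whole flag-variety framing unnecessary.

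The paper's key move, which you miss, is to \emph{enlarge the flag variety}: work in $\cF\ell_{[h,t]}$, with both projections $p_h\colon \cF\ell_{[h,t]}\to\cF\ell_{[h]}$ and $p_t\colon \cF\ell_{[h,t]}\to\cF\ell_{[t]}$. In this larger space, the strata local model is by construction the \emph{scheme-theoretic intersection}
\begin{equation*}
\ov{\cM}^{[h]}_{n}(t)=p_h^{-1}\bigl(\ov{\cM}^{[h]}_{n}\bigr)\cap p_t^{-1}\bigl(*_{[t]}\bigr),
\end{equation*}
where $*_{[t]}\in\cF\ell_{[t]}(\F)$ is the geometric point corresponding to the pivoting filtration. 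Each of the two factors is manifestly a union of Schubert varieties: the first because $\ov{\cM}^{[h]}_{n}$ is a union of Schubert varieties in $\cF\ell_{[h]}$ by the Coherence conjecture (\cite[Thm.~4.1]{Pappas_Rapoport_2009}) and $p_h$ is a Schubert-cell–preserving projection, and the second because $*_{[t]}$ is $L^+P_{[t]}$-invariant. Reducedness of the intersection then follows immediately from compatible Frobenius splitting (G\"ortz, \cite[\S2]{Gortz2001}), with no need to describe the image in $\cF\ell_{[h]}$ by a Bruhat condition and no need to verify anything on local charts. If you rewrite your argument to work in $\cF\ell_{[h,t]}$, the obstacle you identified disappears and the rest of your proposal goes through.
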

\begin{proof}
It is clear that strata local models are of finite type, since they can be embedded into some partial flag varieties. 
To prove the reducedness,
we use the theory of unitary affine flag varieties $\cF\ell_I$ as developed in \cite{PR2008}.
Let $\F((u))/\F((t))$ be the ramified quadratic extension of function fields with $u^2=t$. The standard polarized lattice chains $\bL_i$ for $\F((u))/\F((t))$ are defined as in \S \ref{sec:standard-lattice}.
Via the isomorphism $\bfL_i/\pi_0\bfL_i\simeq u^{-1}\bL_i/u\bL_i$, one can embed the special fiber of the local model $\ov{\cM}_n^{[h]}$ into the affine flag variety.
The pivoting filtration $\left(\Pi\bfL_{i,\F}\subset \bfL_{i,\F}\right)_{i\in[t]}$ lifts to the geometric point $\left(
u\bL_i\subset \bL_i\subset u^{-1}\bL_i
\right)_{i\in [t]}=:*_{[t]}\in\Fl_{[t]}(\F)$, which is an $L^+P_{[t]}$-invariant point.
Consider the following diagram:
\begin{equation*}
\begin{aligned}
\xymatrix{
&\Fl_{[h,t]}\ar[dl]_{p_h}\ar[dr]^{p_t}&\\
\ov{\cM}^{[h]}_{n}\subset \Fl_{[h]}&& \Fl_{[t]}\ni *_{[t]}
}
\end{aligned}
\end{equation*}
From the moduli description, we have the equality:
\begin{equation*}
    \ov{\cM}^{[h]}_{n}(t)=p_h^{-1}(\ov{\cM}^{[h]}_{n})\cap p_t^{-1}(*_{[t]}).
\end{equation*}
To be more precise, an $R$-point of $p_h^{-1}(\ov{\cM}^{[h]}_{n})$ is a filtration 
$\left(\cF_i\subset \bfL_{i,R}\right)_{i\in[h,t]}$ such that the subfamily $\left(\cF_i\subset \bfL_{i,R}\right)_{i\in[h]}$ defines a point of $\ov{\cM}^{\loc}_{[h]}(R)$. 
The intersection $p_h^{-1}(\ov{\cM}^{[h]}_{n})\cap p_t^{-1}(*_{[t]})$ parameterizes those filtrations $\left(\cF_i\subset \bfL_{i,R}\right)_{i\in[h,t]}$ where $\left(\cF_i\subset \bfL_{i,R}\right)_{i\in[t]}$ is the pivoting filtration.

Since $*_{[t]}$ is $L^+P_{[t]}$-invariant, it is a Schubert variety. Thus, its preimage $p_t^{-1}(*_{[t]})$ is a union of Schubert varieties.
By the Coherence conjecture, the special fiber $\ov{\cM}^{[h]}_{n}\subset \Fl_{[h]}$ of the local model is a union of Schubert varieties (see \cite[Thm. 4.1]{Pappas_Rapoport_2009}).
Therefore, $\ov{\cM}^{[h]}_{n}(t)$ is an intersection of unions of Schubert varieties. It is compatibly Frobenius split in a sufficiently large Schubert variety and hence reduced, see \cite[\S 2]{Gortz2001}.
\end{proof}

One can also define the strata local model $\ov{\cM}^{[h]}_{n}(t_1,t_2)$ with respect to two classes of pivoting filtrations.
The cases where $t_1,t_2 < h$ or $t_1,t_2 > h$ are degenerate.
For $t_1 < h < t_2$, we obtain a closed subscheme of the special fiber of the local model that fits into the following diagram:
\begin{equation*}
\begin{aligned}
\xymatrix{
\bfL_{-\ttt_1,R}\ar[r]&\bfL_{-\tth,R}\ar[r]&\bfL_{-\ttt_2,R}\ar[r]&\bfL_{\ttt_2,R}\ar[r]&\bfL_{\tth,R}\ar[r]&\bfL_{\ttt_1,R}\\
\Pi\bfL_{-\ttt_1,R}\ar[r]\ar@{^(->}[u]&\cF_{-\tth,R}\ar@{^(->}[u]\ar[r]&\Pi\bfL_{-\ttt_2,R}\ar@{^(->}[u]\ar[r]&\Pi\bfL_{\ttt_2,R}\ar@{^(->}[u]\ar[r]&\cF_{\tth,R}\ar@{^(->}[u]\ar[r]&\Pi\bfL_{\ttt_1,R}\ar@{^(->}[u]
}
\end{aligned}.
\end{equation*}

The strata local model $\ov{\cM}^{[h]}_{n}(t_1,t_2)$ is again reduced.
This follows from the same strategy as in Theorem \ref{thm:LM-reduced}, namely, considering projections of affine flag varieties
\begin{equation*}
\begin{aligned}
\xymatrix{
&\Fl_{[h,t_1,t_2]}\ar[dl]_{p_{t_1}}\ar[d]^{p_h}\ar[dr]^{p_{t_2}}&\\
\Fl_{[t_1]}&\Fl_{[h]}&\Fl_{[t_2]}.
}	
\end{aligned}
\end{equation*}
The strata local model is now the intersection
\begin{equation*}
    \ov{\cM}^{[h]}_{n}(t_1,t_2)=p_h^{-1}(\ov{\cM}^{[h]}_{n})\cap p_{t_1}^{-1}(*_{[t_1]})\cap p_{t_2}^{-1}(*_{[t_2]}).
\end{equation*}

\begin{remark}
The strata local model can also be defined when $F/F_0$ is unramified. In this case, the pivoting filtration is the filtration $(\uV\Lambda/\pi\Lambda\subset \Lambda/\pi\Lambda)$,
where $\uV$ is the Verschiebung of the rational Dieudonn\'e module of the framing object. 
This lifts to a point $(\pi\Lambda \subset \uV\Lambda \subset \Lambda)$ in the unramified unitary affine flag variety as a closed Schubert cell.

Since the unramified unitary group splits over $W_{O_{F_0}}(\mathbb{F})[\frac{1}{\pi_0}]$, we can reduce the problem to the local model and the affine Grassmannian of $\GL_n$. In this case, the vertex lattices correspond to the ``vertex lattices'' $\cL_i^{\pm}$ studied in \cite{Cho18}.

For example, the strata local model $\ov{\cM}^{[0]}_{n}(t)$ for $t=2k+1$ parameterizes one-dimensional subspaces $\cF_0\subset\Lambda_{0,R}$ with the following factorization:
\begin{equation*}
\begin{aligned}
\xymatrix{
\bfL_{-k-1,R}\ar[r]&\bfL_{0,R}\ar[r]&\bfL_{k,R}\\
(0)\ar[r]\ar@{^(->}[u]&\cF_{0,R}\ar@{^(->}[u]\ar[r]\ar@{^(->}[u]&(0)\ar@{^(->}[u].
}	
\end{aligned}
\end{equation*}
While this does not correspond to any classical local model, the proof of Theorem \ref{thm:LM-reduced} still applies. A direct computation shows that $\ov{\cM}^{[0]}_{n}(t)$ is smooth.
\end{remark}

\subsection{Local model diagram}\label{sec:LM-diagram}
In this subsection, we establish the local model diagram connecting the Bruhat--Tits strata to the strata local model. This construction is conceptual in nature, and does not require a moduli-theoretic description.
\begin{definition}
Let $\Lambda$ be a vertex lattice.
\begin{altenumerate}
\item For $t(\Lambda)\geq h$, define $\widetilde{\cZ}(\Lambda)$ to be a projective scheme over $\F$ that represents the functor sending each $\F$-algebra $R$ to the set of tuples $(X,\iota,\lambda,\rho;f)$, where:
\begin{itemize}
\item\, $(X,\iota,\lambda,\rho)\in\cZ(\Lambda)(R)$ is an $R$-point of the $\cZ$-stratum $\cZ(\Lambda)$;
\item\, $f$ is an isomorphism between the standard lattice chain $\cL_{[h,t],R}$ and the lattice chain of de Rham realizations:
\begin{equation*}
f:\begin{aligned}
\xymatrix{
\bfL_{-\ttt,R}\ar[r]\ar[d]^{\sim}&\bfL_{-\tth,R}\ar[r]\ar[d]^{\sim}&\bfL_{\tth,R}\ar[r]\ar[d]^{\sim}&\bfL_{\ttt,R}\ar[d]^{\sim}\\
D(X_\Lambda)\ar[r]^{\rho_{\Lambda,*}}&D(X)\ar[r]^{\lambda_*}& D(X^\vee)\ar[r]^{\rho_{\Lambda,*}^\vee}& D(X_{\Lambda^\sharp}).
}	
\end{aligned}
\end{equation*}
\end{itemize}
\item For $t(\Lambda)\leq h$, define $\widetilde{\cY}(\Lambda^\sharp)$ to be a projective scheme over $\F$ that represents the functor sending each $\F$-algebra $R$ to the set of tuples $(X,\iota,\lambda,\rho;f)$, where:
\begin{itemize}
\item\, $(X,\iota,\lambda,\rho)\in\cY(\Lambda^\sharp)(R)$ is an $R$-point of the $\cY$-stratum $\cY(\Lambda^\sharp)$;
\item\, $f$ is an isomorphism between the standard lattice chain $\cL_{[h,t],R}$ and the lattice chain of de Rham realizations:
\begin{equation}\label{equ:trivilization}
f:\begin{aligned}
\xymatrix{
\bfL_{\ttt,R}\ar[r]\ar[d]^{\sim}&\bfL_{\tth,R}\ar[r]\ar[d]^{\sim}&\bfL_{n-\tth,R}\ar[r]\ar[d]^{\sim}&\bfL_{n-\ttt,R}\ar[d]^{\sim}\\
D(X_{\Lambda^\sharp})\ar[r]&D(X^\vee)\ar[r]^{\lambda^\vee_*}& D(X)\ar[r]& D(X_{\Lambda}).
}
\end{aligned}
\end{equation}
\end{itemize}
\end{altenumerate}
\end{definition}

We now construct the local model diagram for the $\cZ$-strata. The construction and properties for the $\cY$-strata follow analogously.
For a vertex lattice $\Lambda$ of type $t(\Lambda)\geq h$, we construct the following local model diagram: 
\begin{equation}\label{equ:local-model-diagram}
\begin{aligned}
\xymatrix{
&\tilde{\cZ}(\Lambda)\ar[ld]_{\varphi}\ar[rd]^{\pi}&\\
\cZ(\Lambda)&&\ov{\cM}^{[h]}_{n}(t).
}	
\end{aligned}
\end{equation}
The map $\varphi$ is defined by forgetting the trivialization $f$:
\begin{equation*}
    \varphi:\widetilde{\cZ}(\Lambda)\rightarrow\cZ(\Lambda),\quad (X,\iota,\lambda,\rho;f)\mapsto (X,\iota,\lambda,\rho).
\end{equation*}
The map $\pi$ is defined by passing the Hodge filtration to the standard lattice chain via $f$.
Let $\mathcal{G}:=\mathrm{Aut}(\bfL_{[h,t]})$ be the group scheme over $O_{F_0}$ consisting of automorphisms of $\bfL_{[h,t]}$ that are compatible with the $O_F$-action and the duality relation. By \cite[Thm. 3.16]{RZ}, $\cG\otimes_{O_{F_0}}O_F$ is a smooth group scheme over $O_F$. Its reduction $\cG_\F:=\cG\otimes_{O_{F_0}}\F$ acts on $\varphi$ by acting on the top row of \eqref{equ:trivilization}. 
Furthermore, given any two trivializations $f_1$ and $f_2$, the composition $f_1\circ f_2^{-1}$ defines an automorphism of $\bfL_{[h,t]}$. We therefore obtain the following result:
\begin{theorem}[\protect{\cite[Thm. 3.16]{RZ}}]
The morphism $\varphi$ is a $\mathcal{G}_\F$-torsor. In particular, $\varphi$ is a smooth morphism of relative dimension $\dim\mathcal{G}_\F$.\qed
\end{theorem}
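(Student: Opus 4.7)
The plan is to establish that $\varphi$ is a $\mathcal{G}_\F$-torsor in the étale topology; smoothness of relative dimension $\dim \mathcal{G}_\F$ then follows automatically from the smoothness of $\mathcal{G}_\F$ noted just before the theorem. The argument decomposes into verifying freeness of the $\mathcal{G}_\F$-action and establishing étale-local existence of trivializations $f$.

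First, I would check freeness. The group $\mathcal{G}_\F$ acts on $\widetilde{\cZ}(\Lambda)$ by the rule
\[
g \cdot (X,\iota,\lambda,\rho;f) = (X,\iota,\lambda,\rho;f \circ g),
\]
which visibly fixes the image under $\varphi$. Two points in the same fiber of $\varphi$ over a connected base $\Spec R$ differ only by their trivializations $f, f'$, and the composition $f^{-1}\circ f'$ is an $R$-point of $\mathcal{G}_\F = \mathrm{Aut}(\bfL_{[h,t]})_\F$ since both trivializations preserve all PEL data and the chain structure. Hence the fibers of $\varphi$ are $\mathcal{G}_\F$-torsors whenever they are nonempty.

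The substantive step is étale-local surjectivity. Given $(X,\iota,\lambda,\rho) \in \cZ(\Lambda)(R)$, one must produce, after an étale cover of $\Spec R$, an isomorphism of chains of polarized $O_F$-modules
\[
\bfL_{[h,t],R} \;\cong\; \bigl[D(X_\Lambda) \to D(X) \to D(X^\vee) \to D(X_{\Lambda^\sharp})\bigr].
\]
This is exactly the rigidity statement for polarized multichains of $O_F$-modules established in \cite[\S 3]{RZ}: any two such chains of the same combinatorial type become isomorphic étale-locally on the base. The combinatorial type on the right hand side is determined by the vertex lattice condition $\pi \Lambda^\sharp \subseteq \Lambda \subseteq \Lambda^\sharp$ and the assumption $t(\Lambda) \geq h$, which together pin down the ranks of successive subquotients and the polarization data so as to match those of $\bfL_{[h,t]}$. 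The key inputs are the height-zero condition on $\rho$, the polarization conditions in Definition \ref{def:basic-triple}, and the identification of $D(X_\Lambda)$ and $D(X_{\Lambda^\sharp})$ with $\breve{\Lambda}$ and $\breve{\Lambda}^\sharp$ modulo $\pi_0$ respectively, coming from Lemma \ref{lem:cycles_stable} and the Dieudonné-theoretic construction of $X_\Lambda, X_{\Lambda^\sharp}$.

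The hard part will be verifying that the combinatorial type matches on the nose, i.e.\ that the lattice chain $D(X_\Lambda) \to D(X) \to D(X^\vee) \to D(X_{\Lambda^\sharp})$ is indeed a polarized multichain of type $\bfL_{[h,t]}$ at every point of $\Spec R$. Once this is checked, the classification theorem of \cite[\S 3]{RZ} delivers étale-local trivializations, and combined with freeness we conclude that $\varphi$ is a $\mathcal{G}_\F$-torsor. Since $\mathcal{G}_\F$ is a smooth group scheme of dimension $\dim \mathcal{G}_\F$, the morphism $\varphi$ is smooth of that relative dimension.
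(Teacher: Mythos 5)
Your proposal is correct and reconstructs exactly the argument that the cited reference \cite[Thm.\ 3.16]{RZ} supplies; the paper itself gives no proof, deferring entirely to that citation, so in that sense you have taken the same route. The one step you flag as ``the hard part'' — verifying that the de Rham chain $D(X_\Lambda)\to D(X)\to D(X^\vee)\to D(X_{\Lambda^\sharp})$ is a polarized multichain of type $\bfL_{[h,t]}$ — is in fact routine given how $\cZ(\Lambda)$ is set up: the conditions that $\rho_{\Lambda,X}$ and $\rho_{X^\vee,\Lambda^\sharp}$ are isogenies, that $\rho$ has height $0$, and that $\Lambda$ is a vertex lattice of type $t\geq h$ pin down the ranks of the successive quotients to be $(t-h)/2$, $h$, $(t-h)/2$ fiberwise (this is made explicit on Dieudonn\'e modules in Proposition \ref{prop:BT_strata}), and since these ranks are locally constant and the polarization identifies the chain with its dual, the type is $\bfL_{[h,t]}$ on all of $\Spec R$. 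With that observation filled in, your appeal to the \'etale-local rigidity of polarized multichains from \cite[\S 3]{RZ} closes the argument, and the final sentence about smoothness of relative dimension $\dim\mathcal{G}_\F$ is the standard consequence of being a torsor under a smooth group scheme.
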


\begin{remark}\label{rmk:not compatible with other lm diagram}
From the definition, we see that \eqref{equ:local-model-diagram} is not the pull-back of the local model diagram for Shimura varieties or RZ spaces. For example, in our situation, the local model diagram for an RZ space is given in \eqref{equ:RZ lm diagram}. The projection map $\widetilde{\cN}_{n,\ep}^{[h]}\to \cN_{n,\ep}^{[h]}$ parameterizes the trivialization \eqref{equ:RZ trivialization}. Therefore, it is an $\mathrm{Aut}(\bfL_{[h]})$-torsor rather than an $\mathrm{Aut}(\bfL_{[h,t]})$-torsor.
\end{remark}

The morphism $\pi$ is defined by attaching the Hodge filtration of the strict $ O_{F_0}$-modules to the lattice chain via $f$.
To be more precise, for an $R$-point of $\widetilde{\cZ}(\Lambda)$, we have morphisms 
\begin{equation*}
\begin{aligned}
\xymatrix{
X_{\Lambda,R}\ar[r]^{\rho_{\Lambda}}&X\ar[r]^{\lambda}& X^\vee\ar[r]^{\rho_{\Lambda^\sharp}}& X_{\Lambda^\sharp,R}.
}	
\end{aligned}
\end{equation*}
By taking the Hodge filtration, we obtain a chain of filtrations:
\begin{equation*}
\begin{aligned}
\xymatrix{
D(X_\Lambda)\ar[r]&D(X)\ar[r]&D(X^\vee)\ar[r]&D(X_{\Lambda^\sharp})\\
\Fil(X_\Lambda)\ar@{^(->}[u]\ar[r]&\Fil(X)\ar@{^(->}[u]\ar[r]&\Fil(X^\vee)\ar@{^(->}[u]\ar[r]&\Fil(X_{\Lambda^\sharp})\ar@{^(->}[u]
}	
\end{aligned}
\end{equation*}

Recall that we define $X_{\Lambda,R}=X_{\Lambda}\times_\F R$ and $X_{\Lambda^\sharp,R}=X_{\Lambda^\sharp}\times_\F R$. 
Since $\Lambda$ is $\tau$-stable, we have $\Pi M(X_\Lambda)=VM(X_{\Lambda})$, where $M(X_{\Lambda})=\Lambda$ by construction, see the discussion before Definition \ref{def:YZ-cycles}. The same relation holds for $X_{\Lambda^\sharp}$. Passing these relations to the Hodge filtrations, we get
\begin{equation*}
\Fil(X_\Lambda)=\Pi D(X_\Lambda),
\quad 
\Fil(X_{\Lambda^\sharp})=\Pi D(X_{\Lambda^\sharp}).
\end{equation*}
Applying $f$, we obtain the desired filtration in $\ov{\cM}^{[h]}_{n}(t)$:
\begin{equation*}
\begin{aligned}
\xymatrix{
\bfL_{-\ttt,R}\ar[r]&\bfL_{-\tth,R}\ar[r]&\bfL_{\tth,R}\ar[r]&\bfL_{\ttt,R}\\
\Pi\bfL_{-\ttt,R}\ar@{^(->}[u]\ar[r]&f(\Fil(X))\ar@{^(->}[u]\ar[r]&f(\Fil(X^\vee))\ar@{^(->}[u]\ar[r]&\Pi\bfL_{\ttt,R}\ar@{^(->}[u]
}	
\end{aligned}
\end{equation*}

\begin{proposition}[Grothendieck--Messing]
    The morphism $\pi$ is formally smooth of relative dimension equal to $\dim\mathcal{G}_{\F}$.
\end{proposition}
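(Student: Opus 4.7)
The plan is to apply relative Grothendieck-Messing (GM) theory to establish formal smoothness of $\pi$, and then read off the relative dimension from the tangent fibers. Since both $\widetilde{\cZ}(\Lambda)$ and $\ov{\cM}^{[h]}_n(t)$ are locally of finite type over $\F$, formal smoothness together with a constant tangent-fiber dimension will upgrade to smoothness of the stated relative dimension.

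For formal smoothness, I would test against an arbitrary affine nilpotent PD thickening $R\twoheadrightarrow R_0$ with kernel $I$. Given an $R_0$-point $\xi_0=(X_0,\iota_0,\lambda_0,\rho_0;f_0)$ of $\widetilde{\cZ}(\Lambda)$ together with a lift $(\cF_\bullet)\in \ov{\cM}^{[h]}_n(t)(R)$ of $\pi(\xi_0)$, the task is to produce an $R$-lift $\xi$ of $\xi_0$ with $\pi(\xi)=(\cF_\bullet)$. First I extend the trivialization: by smoothness of $\cG$ over $O_{F_0}$ (\cite[Thm. 3.16]{RZ}), the reduction $\cG(R)\twoheadrightarrow \cG(R_0)$ is surjective, so $f_0$ admits a chain- and PEL-compatible lift $\tilde f:\bfL_{[h,t],R}\xrightarrow{\sim}\bD(X_0)(R)$, where the right-hand side denotes the evaluation of the relative crystal of the chain at the PD-thickening. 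Compatibility of $\tilde f$ with the pivoting filtrations $\Pi\bfL_{\pm\ttt,R}$ is automatic: under $\tilde f$ these correspond to the Hodge filtrations of the fixed framing pieces $X_\Lambda$ and $X_{\Lambda^\sharp}$, which deform canonically by base-change to $R$. Then $\tilde f$ transports the given $\cF_\bullet$ to an $R$-filtration of $\bD(X_0)(R)$ lifting the Hodge filtration of $(X_0,X_0^\vee)$, and relative GM for PEL strict $O_{F_0}$-modules (via the equivalence \eqref{eq:equiv_display}) produces a unique deformation $(X,\iota,\lambda)$ of $(X_0,\iota_0,\lambda_0)$ realizing this filtration. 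The quasi-isogeny $\rho_0$ lifts uniquely to $\rho$ because the framing object is defined over $\F$, and the resulting tuple $(X,\iota,\lambda,\rho;\tilde f)$ is the desired $R$-point.

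For the relative dimension, I specialize the above analysis to $R_0=\F$ and $R=\F[\epsilon]/(\epsilon^2)$ at a geometric point. The space of extensions of $f_0$ is a torsor under $\ker(\cG(R)\to \cG(R_0))\cong \Lie(\cG_\F)$, of dimension $\dim \cG_\F$, and each extension $\tilde f$ yields, via GM, a unique $R$-lift of $(X_0,\iota_0,\lambda_0,\rho_0)$ mapping to the prescribed lift of $\pi(\xi_0)$. Hence the relative tangent space of $\pi$ has constant dimension $\dim \cG_\F$, which combined with formal smoothness gives smoothness of the asserted relative dimension. The main technical point to treat carefully is verifying that the extension $\tilde f$ can be chosen compatibly with \emph{all} imposed structures simultaneously (chain maps, polarization, $O_F$-action, and pivoting filtrations); this ultimately reduces to the definition of $\cG$ as the PEL-automorphism scheme of the entire chain $\bfL_{[h,t]}$ and its smoothness established in \cite[Thm. 3.16]{RZ}.
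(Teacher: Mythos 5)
Your argument is correct and follows essentially the same route as the paper: lift the trivialization first (possible by smoothness of $\cG$, making the set of lifts a torsor under $\Lie\cG_\F\otimes I$), transport the given filtration $\cF_\bullet$ through it, apply relative Grothendieck--Messing to produce the unique PEL deformation, and note that the quasi-isogeny lifts by rigidity. The paper packages the same content as a simultaneous lifting of the crystal-trivialization diagram and the Hodge-filtration diagram, concluding that the filtration lift is supplied by the given $R$-point of $\ov{\cM}^{[h]}_n(t)$ and the trivialization lift is unconstrained, which is exactly your tangent-space count.
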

\begin{proof}
Let $R_0$ an $\F$-algebra and $\Spec R_0\hookrightarrow \Spec R$ be a first-order thickening. 
Let $x\in \ov{\cM}^{[h]}_{n}(t)(R)$ be an $R$-point with reduction $\ov{x}\in \ov{\cM}^{[h]}_{n}(t)(R_0)$. Let $f_0\in\widetilde{\cZ}(\Lambda)(R_0)$ be an $R_0$-point that lifts $\bar{x}$:
\begin{equation*}
\begin{aligned}
\xymatrix{
&&\widetilde{\cZ}(\Lambda)\ar[d]^{\pi}\\
\Spec R_0\ar[urr]^{f_0}\ar@{^(->}[r]&\Spec R\ar@{-->}[ur]_{f}\ar[r]^{x}&\ov{\cM}^{[h]}_{n}(t).
}	
\end{aligned}
\end{equation*}
To show formal smoothness, we need to construct a lift $f$ as shown in the diagram. The existence of such a lift is equivalent to finding compatible lifts of the following data:
\begin{align}
X_{\Lambda,R}\to X\to X^\vee\to X_{\Lambda^\sharp,R}\quad
\rightsquigarrow{}& 
\quad X_{\Lambda,R_0}\to \ov{X}\to \overline{X}^\vee\to X_{\Lambda^\sharp,R_0}.\label{lmd_GM:diagram1}\\
\begin{tikzcd}[ampersand replacement=\&,column sep = 0.8em]
    \bfL_{-\ttt,R}\arrow[r]\arrow[d,"\sim"]\&\bfL_{-\tth,R}\arrow[r]\arrow[d,"\sim"]\&\bfL_{\tth,R}\arrow[r]\arrow[d,"\sim"]\&\bfL_{\ttt,R}\arrow[d,"\sim"]\\
    \bD(X_\Lambda)_{R}\arrow[r]\&\bD(\overline{X})_{R}\arrow[r]\&\bD(\overline{X}^\vee)_{R}\arrow[r]\&\bD(X_{\Lambda^\sharp})_{R}.
\end{tikzcd}
\rightsquigarrow{}&
\begin{tikzcd}[ampersand replacement=\&,column sep = 0.8em]
    \bfL_{-\ttt,R_0}\arrow[r]\arrow[d,"\sim"]\&\bfL_{-\tth,R_0}\arrow[r]\arrow[d,"\sim"]\&\bfL_{\tth,R_0}\arrow[r]\arrow[d,"\sim"]\&\bfL_{\ttt,R_0}\arrow[d,"\sim"]\\
    \bD(X_\Lambda)_{R_0}\arrow[r]\&\bD(\overline{X})_{R_0}\arrow[r]\&\bD(\overline{X}^\vee)_{R_0}\arrow[r]\&\bD(X_{\Lambda^\sharp})_{R_0}.\label{lmd_GM:diagram2}
\end{tikzcd}.
\end{align}
By the (relative) Grothendieck--Messing theorem, lifting the diagram  \eqref{lmd_GM:diagram1} is equivalent to lifting the Hodge filtration:
\begin{align}
\begin{tikzcd}[ampersand replacement=\&,  column sep = 0.5em]
    \bD(X_\Lambda)_{R}\arrow[r]\&\bD(\ov{X})_{R}\arrow[r]\&\bD(\ov{X}^\vee)_{R}\arrow[r]\&\bD(X_{\Lambda^\sharp})_{R}\\
\Pi \bD(X_\Lambda)_{R}\arrow[u, hookrightarrow]\arrow[r]\&\cF_{X}\arrow[r]\arrow[u, hookrightarrow]\&\cF_{X^\vee}\arrow[u, hookrightarrow]\arrow[r]\&\Pi \bD(X_{\Lambda^\sharp})_{R}\arrow[u, hookrightarrow]
\end{tikzcd}
\rightsquigarrow{}&\begin{tikzcd}[ampersand replacement=\&,  column sep = 0.5em]
    \bD(X_\Lambda)_{R_0}\arrow[r]\&D(\ov{X})\arrow[r]\&D(\ov{X}^\vee)\arrow[r]\&\bD(X_{\Lambda^\sharp})_{R_0}\\
\Pi \bD(X_\Lambda)_{R_0}\arrow[u, hookrightarrow]\arrow[r]\&\Fil(\overline{X})\arrow[r]\arrow[u, hookrightarrow]\&\Fil(\overline{X}^\vee)\arrow[u, hookrightarrow]\arrow[r]\&\Pi \bD(X_{\Lambda^\sharp})_{R_0}\arrow[u, hookrightarrow]
\end{tikzcd}.\label{lmd_GM:diagram3}
\end{align}
Combining \eqref{lmd_GM:diagram3} with \eqref{lmd_GM:diagram2}, we reduce the problem to finding a lifting:
\begin{align}
\begin{tikzcd}[ampersand replacement=\&,column sep = 0.8em]
    \bfL_{-\ttt,R}\arrow[r]\& \bfL_{-\tth,R}\arrow[r]\&\bfL_{\tth,R}\arrow[r]\& \bfL_{\ttt,R}\\
\Pi\bfL_{-\ttt,R}\arrow[u, hookrightarrow]\arrow[r]\&\widetilde\cF_{-\tth}\arrow[r]\arrow[u, hookrightarrow]\&\widetilde\cF_{\tth}\arrow[u, hookrightarrow]\arrow[r]\&\Pi\bfL_{\ttt,R}\arrow[u, hookrightarrow]
\end{tikzcd}
\rightsquigarrow{}&\begin{tikzcd}[ampersand replacement=\&,column sep = 0.8em]
    \bfL_{-\ttt,R_0}\arrow[r]\& \bfL_{-\tth,R_0}\arrow[r]\&\bfL_{\tth,R_0}\arrow[r]\& \bfL_{\ttt,R_0}\\
\Pi\bfL_{-\ttt,R_0}\arrow[u, hookrightarrow]\arrow[r]\&\cF_{-\tth}\arrow[r]\arrow[u, hookrightarrow]\&\cF_{\tth}\arrow[u, hookrightarrow]\arrow[r]\&\Pi\bfL_{\ttt,R_0}\arrow[u, hookrightarrow]
\end{tikzcd}.\label{lmd_GM:diagram4}
\end{align}
Such a lifting can be obtained from the map $x\rightsquigarrow \bar{x}$ by compatibility. Moreover, the uniqueness of the lifting $f$ follows directly from construction. Furthermore, we observe that there are no additional constraints on the isomorphism \eqref{lmd_GM:diagram2}, which implies that the map $\pi$ has relative dimension $\dim\mathcal{G}_\F$.
\end{proof}

\begin{corollary}\label{cor:red of BT}
    The moduli functors ${\cZ}(\Lambda)$ and ${\cY}(\Lambda^\sharp)$ are reduced. 
\end{corollary}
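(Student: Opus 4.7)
The plan is to deduce reducedness of the Bruhat–Tits strata from reducedness of the strata local model by means of the local model diagram \eqref{equ:local-model-diagram}. Everything is already in place: Theorem \ref{thm:LM-reduced} gives that $\ov{\cM}^{[h]}_{n}(t)$ is reduced, while the preceding results show that $\pi : \widetilde{\cZ}(\Lambda)\to \ov{\cM}^{[h]}_{n}(t)$ and $\varphi : \widetilde{\cZ}(\Lambda)\to \cZ(\Lambda)$ are both smooth of the same relative dimension, with $\varphi$ moreover a $\cG_\F$-torsor, hence faithfully flat and surjective.

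First I would use $\pi$ to transfer reducedness upward. A smooth morphism is flat with geometrically regular (in particular geometrically reduced) fibers, so if the target of a smooth morphism is reduced, then the source is reduced. Applying this to $\pi$ together with Theorem \ref{thm:LM-reduced}, I conclude that $\widetilde{\cZ}(\Lambda)$ is reduced.

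Next I would use $\varphi$ to descend reducedness downward to $\cZ(\Lambda)$. Reducedness descends along faithfully flat morphisms: if $\varphi : X \to Y$ is faithfully flat and $X$ is reduced, then any local section $s$ of $\cO_Y$ with $s^n=0$ pulls back to a nilpotent section of $\cO_X$, hence to zero, and therefore $s=0$ by faithful flatness. Since $\varphi$ is a $\cG_\F$-torsor it is faithfully flat, so reducedness of $\widetilde{\cZ}(\Lambda)$ forces reducedness of $\cZ(\Lambda)$. The argument for $\cY(\Lambda^\sharp)$ is identical, using the analogous local model diagram involving $\widetilde{\cY}(\Lambda^\sharp)$, which the excerpt states is constructed in the same way.

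There is no real obstacle here beyond invoking the right descent and ascent statements for smooth morphisms; all the geometric content has been absorbed into Theorem \ref{thm:LM-reduced} and the smoothness of $\varphi$ and $\pi$. The only thing to be slightly careful about is that one needs $\varphi$ to be not merely smooth but faithfully flat (hence surjective) in order to descend reducedness — which is precisely the content of its being a $\cG_\F$-torsor.
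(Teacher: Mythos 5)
Your argument is correct and is essentially the argument the paper gives: the paper's proof of this corollary also invokes the local model diagram \eqref{equ:local-model-diagram}, the smoothness of $\varphi$ and $\pi$ of equal relative dimension, and Theorem \ref{thm:LM-reduced}, compressing your two-step ascent/descent into the single phrase that each point of the BT-stratum is étale-locally isomorphic to a point of the strata local model. Your version simply unpacks that phrase into the explicit propagation of reducedness up along the smooth $\pi$ and down along the faithfully flat $\varphi$, which is a harmless and slightly more careful phrasing of the same reasoning.
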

\begin{proof}
Consider the local model diagram \eqref{equ:local-model-diagram}. Since the morphisms  $\varphi$ and $\pi$ are smooth of equal dimension, each point in the BT-strata is \'etale locally isomorphic to a point in the corresponding stratum of the local model. The reducedness then follows from Theorem \ref{thm:LM-reduced}.
\end{proof}

\begin{remark}
\begin{altenumerate}
\item The local model diagram between $\cZ(\Lambda_1)\cap\cY(\Lambda_2^\sharp)$ and $\cM^{[h]}_{n,\F}(t_1,t_2)$ can be constructed and proved using analogous arguments.
\item When $F/F_0$ is unramified, the local model diagram can be constructed in the same manner, and the proofs follow the same line of reasoning.
\end{altenumerate}
\end{remark}

\subsection{Local chart computations}\label{sec:LM-compute}
The strata local model of BT-strata provides a powerful tool for studying the geometry of BT-strata without reference to Deligne--Lusztig varieties. In this subsection, we compute the local charts of the strata local models and establish Theorem \ref{thm:LM-results}.

\subsubsection{Strata local model of $\cZ$-strata}
We begin by computing the strata local model associated to $\cZ(\Lambda_1)$.
By an unramified base change, we can reduce to the case where the hermitian form is split. 

The strata local model admits an action of the loop group, as indicated in the proof of Theorem \ref{thm:LM-reduced}. Therefore, to deduce local properties and obtain the dimension formula in Theorem \ref{thm:LM-results}, it suffices to compute an affine chart at the worst point, which corresponds to the minimal orbit under the action of the loop group.
In our case, this is the unique closed orbit in the local model $\cM_n^{[h]}(\F)$. This closed orbit consists of the point corresponding to the filtration 
$$
(\cF_i\subset \bfL_{i,\F})_{i\in [h]}=(\Pi\Lambda_i\otimes\F\subset \bfL_{i,\F})_{i\in [h]},
$$
see \cite[Lemma 3.1.1]{Luo}. A direct calculation shows that this point lies in the strata local model.

In this case, the strata local model parameterizes lattice chains:
\begin{equation*}
\begin{aligned}
\xymatrix{
\bfL_{-\ttt_1,R}\ar[r]^{\lambda_1}&\bfL_{-\tth,R}\ar[r]^{\lambda}&\bfL_{\tth,R}\ar[r]^{\lambda_2}&\bfL_{\ttt_1,R}\\
\Pi\bfL_{-\ttt_1,R}\ar[r]\ar@{^(->}[u]&\cF_{-\tth}\ar[r]\ar@{^(->}[u]&\cF_{\tth}\ar[r]\ar@{^(->}[u]&\Pi\bfL_{\ttt_1,R}\ar@{^(->}[u].
}	
\end{aligned}
\end{equation*}
We select the same affine charts $U$ as those given in \cite[\S 3.1]{Luo}:
\begin{equation}\label{equ:ramified-Z-coordinate}
	\cF_\tth=\mathrm{colspan}
\begin{tikzpicture}[>=stealth,thick,baseline]
	\matrix [matrix of math nodes,left delimiter=(,right delimiter=)](A){ 
					D		&	0	&	C\\
					F	&	X_4		&	E\\
					B		&	0	&	A\\
                    I&&\\
                    &I&\\
                    &&I\\
				};
	\filldraw[purple] (2.25,1.3) circle (0pt) node [anchor=east]{\scriptsize $h/2$};
	\filldraw[purple] (2.4,0.8) circle (0pt) node [anchor=east]{\scriptsize $n-h$};
	\filldraw[purple] (2.25,0.3) circle (0pt) node [anchor=east]{\scriptsize $h/2$};
	\filldraw[purple] (-0.4,2) circle (0pt) node [anchor=east]{\scriptsize $h/2$};
	\filldraw[purple] (0.5,2) circle (0pt) node [anchor=east]{\tiny $n-h$};
	\filldraw[purple] (0.7,2) circle (0pt) node [anchor=center]{\scriptsize $h/2$};
\end{tikzpicture},
\quad
	\cF_{-\tth}=\mathrm{colspan}
\begin{tikzpicture}[>=stealth,thick,baseline]
	\matrix [matrix of math nodes,left delimiter=(,right delimiter=)](A){ 
					-D		&	E^{\ad}	&	-C\\
					0	&	X_4		&	0\\
					-B		&	-F^{\ad}	&	-A\\
                    I&&\\
                    &I&\\
                    &&I\\
				};
	\filldraw[purple] (2.85,1.3) circle (0pt) node [anchor=east]{\scriptsize $h/2$};
	\filldraw[purple] (3,0.8) circle (0pt) node [anchor=east]{\scriptsize $n-h$};
	\filldraw[purple] (2.85,0.3) circle (0pt) node [anchor=east]{\scriptsize $h/2$};
	\filldraw[purple] (-0.8,2) circle (0pt) node [anchor=east]{\scriptsize $h/2$};
	\filldraw[purple] (0.5,2) circle (0pt) node [anchor=east]{\scriptsize $n-h$};
	\filldraw[purple] (1,2) circle (0pt) node [anchor=center]{\scriptsize $h/2$};
\end{tikzpicture}.
\end{equation}
For notational convenience, we denote by $\cF_\bullet$ the matrix whose columns span  $\cF_\bullet$.
Let $X^{\ad}=HX^tH$, where $H$ is the antidiagonal identity matrix. The transition map $\lambda^\vee$ factors as the composition:
\begin{equation}\label{equ:Z-tran-zero}
\begin{aligned}
\xymatrix{\lambda^\vee:\bfL_{\tth,R}\ar[r]&
\bfL_{\ttt_1,R}\ar[r]&\bfL_{n-\ttt_1,R}\ar[r]&\bfL_{n-\tth,R}}	.
\end{aligned}
\end{equation}
We have $\lambda^\vee(\cF_\tth)=\Pi\bfL_{n-\tth,R}$.
By \cite[Thm. 6.3.2]{Luo}, we obtain $A=B=C=D=0$. The coordinates can be further refined as follows:
\begin{equation}
\cF_\tth=
\begin{tikzpicture}[>=stealth,thick,baseline]
	\matrix [matrix of math nodes,left delimiter=(,right delimiter=)](A){ 
    0&0&0&0&0\\
    f_1&Z_{11}&Z_{12}&Z_{13}&e_1\\
    f_2&Z_{21}&Z_{22}&Z_{23}&e_2\\
    f_3&Z_{31}&Z_{32}&Z_{33}&e_3\\
    0&0&0&0&0\\
    I&&&&\\
    &I&&&\\
    &&I&&\\
    &&&I&\\
    &&&&I\\
				};
	\filldraw[purple] (3.1,2.45) circle (0pt) node [anchor=east]{\tiny $h/2$};
    \filldraw[purple] (3.2,1.85) circle (0pt) node [anchor=east]{\tiny $\frac{t_1-h}{2}$};
	\filldraw[purple] (3.35,1.25) circle (0pt) node [anchor=east]{\tiny $n-t_1$};
    \filldraw[purple] (3.2,0.7) circle (0pt) node [anchor=east]{\tiny $\frac{t_1-h}{2}$};
	\filldraw[purple] (3.1,0.2) circle (0pt) node [anchor=east]{\tiny $h/2$};
	\filldraw[purple] (-1.25,3) circle (0pt) node [anchor=east]{\tiny $h/2$};
    \filldraw[purple] (-0.4,3) circle (0pt) node [anchor=east]{\tiny $\frac{t_1-h}{2}$};
	\filldraw[purple] (0.5,3) circle (0pt) node [anchor=east]{\tiny $n-t_1$};
    \filldraw[purple] (1.3,3) circle (0pt) node [anchor=east]{\tiny $\frac{t_1-h}{2}$};
	\filldraw[purple] (1.5,3) circle (0pt) node [anchor=center]{\tiny $h/2$};
\end{tikzpicture},
\quad
	\cF_{-\tth}=
\begin{tikzpicture}[>=stealth,thick,baseline]
	\matrix [matrix of math nodes,left delimiter=(,right delimiter=)](A){ 
    0&e_3^{\ad}&e_2^{\ad}&e_1^{\ad}&0\\
    0&Z_{11}&Z_{12}&Z_{13}&0\\
    0&Z_{21}&Z_{22}&Z_{23}&0\\
    0&Z_{31}&Z_{32}&Z_{33}&0\\
    0&-f_3^{\ad}&-f_2^{\ad}&-f_1^{\ad}&0\\
    I&&&&\\
    &I&&&\\
    &&I&&\\
    &&&I&\\
    &&&&I\\
				};
	\filldraw[purple] (3.4,2.45) circle (0pt) node [anchor=east]{\tiny $h/2$};
    \filldraw[purple] (3.5,1.85) circle (0pt) node [anchor=east]{\tiny $\frac{t_1-h}{2}$};
	\filldraw[purple] (3.6,1.25) circle (0pt) node [anchor=east]{\tiny $n-t_1$};
    \filldraw[purple] (3.5,0.7) circle (0pt) node [anchor=east]{\tiny $\frac{t_1-h}{2}$};
	\filldraw[purple] (3.4,0) circle (0pt) node [anchor=east]{\tiny $h/2$};
	\filldraw[purple] (-1.6,3) circle (0pt) node [anchor=east]{\tiny $h/2$};
    \filldraw[purple] (-0.7,3) circle (0pt) node [anchor=east]{\tiny $\frac{t_1-h}{2}$};
	\filldraw[purple] (0.55,3) circle (0pt) node [anchor=east]{\tiny $n-t_1$};
    \filldraw[purple] (1.5,3) circle (0pt) node [anchor=east]{\tiny $\frac{t_1-h}{2}$};
	\filldraw[purple] (1.8,3) circle (0pt) node [anchor=center]{\tiny $h/2$};
\end{tikzpicture}.
\end{equation}
With respect to this partition, we can express the transition matrices as:
\begin{equation*}
\lambda_1=\left(\begin{matrix}
    I_{n-\ttt_1}&0&0&0&0&0\\
    0&0&0&0&0&0\\
    0&0&I_h&0&0&0\\
    0&0&0&I_{n-\ttt_1}&0&0\\
    0&I_{\ttt_1-\tth}&0&0&0&0\\
    0&0&0&0&0&I_\tth
\end{matrix}\right),
\quad
\lambda_2=\left(\begin{matrix}
    I_{\tth}&0&0&0&0&0\\
    0&0&0&0&0&0\\
    0&0&I_{n-\ttt_1}&0&0&0\\
    0&0&0&I_h&0&0\\
    0&I_{\ttt_1-\tth}&0&0&0&0\\
    0&0&0&0&0&I_{n-\ttt_1}
\end{matrix}\right).
\end{equation*}
Therefore, $\lambda_1(\Pi\bfL_{-\ttt_1})\subset\cF_{-\tth}$ implies that
\begin{align*}
    &Z_{11}=0,Z_{12}=0,Z_{21}=0,Z_{22}=0,Z_{31}=0,Z_{32}=0,\\
    &e_{3}^{\ad}=0,e_{2}^{\ad}=0,-f_{3}^{\ad}=0,-f_2^{\ad}=0.
\end{align*}
The inclusion $\lambda_2(\cF_\tth)\subset\Pi\bfL_{\ttt_1}$ implies that
\begin{align*}
    &Z_{21}=0,Z_{22}=0,Z_{23}=0,Z_{31}=0,Z_{32}=0,Z_{33}=0,\\
    &f_2=0,f_3=0,e_2=0,e_3=0.
\end{align*}
Therefore, the only nonzero matrices are $Z_{13},f_1$ and $e_1$.
By the local model axioms, we have $X_4=X_4^{\ad},\tr(X_4)=0$, and $\wedge^2X=0$ (\cite[Prop. 4.1.1]{Luo}). After simplification, the affine coordinate ring of the open affine chart $U$ is isomorphic to:
\begin{equation*}
    \frac{\F[Z_{13},e_1,f_1]}{Z_{13}-Z_{13}^{\ad},\wedge^2(e_1,f_1,Z_{13})}.
\end{equation*}
This ring defines a symmetric determinantal variety corresponding to a $(\ttt_1+\tth)\times (\ttt_1-\tth)$ matrix.
By \cite[Thm. 4.2.2]{Luo}, this variety is normal and Cohen--Macaulay of dimension $\ttt_1+\tth$. Moreover, it is smooth when  $\ttt_1-\tth=1$ and singular otherwise.

Recall the following result of Conca:
\begin{proposition}[\cite{Conca94}]
Let  $X=(Y,Z)$ be a matrix of indeterminates, where $Y$ is an $m \times m$ matrix and $Z$ is an $m \times (n-m)$ matrix. The ring
\begin{equation*}
    \frac{k[X]}{\bigwedge^2 X,Y-Y^t}
\end{equation*}
is Gorenstein if and only if $2m=n+2$.
\end{proposition}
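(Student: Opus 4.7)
The plan is to compute the graded canonical module $\omega_R$ of $R = k[X]/(\bigwedge^2 X,\, Y-Y^t)$ and to determine precisely when $\omega_R \cong R$ as graded modules up to a shift. I would begin by identifying $\mathrm{Spec}\, R$ with the variety of rank-at-most-one matrices $X = (Y|Z)$ having symmetric $Y$-block. The surjective parameterization
\[(u,\lambda,w) \longmapsto (\lambda u u^t \mid u w^t), \qquad u \in k^m,\ \lambda \in k,\ w \in k^{n-m},\]
contracts a one-dimensional torus orbit to each point, exhibiting $R$ as an integral domain of Krull dimension $n$.

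Next, I would establish Cohen-Macaulayness together with a closed formula for the Hilbert series via a Gr\"obner basis argument. With respect to a suitable diagonal-type monomial order, one checks that the $2\times 2$ minors of $X$ together with the symmetry relations $Y_{ij}-Y_{ji}$ form a Gr\"obner basis whose initial ideal is squarefree; the associated Stanley-Reisner complex lies in the symmetric ladder framework of \cite{Conca94} and can be shown to be shellable. This yields both Cohen-Macaulayness of $R$ and its $a$-invariant.

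With the Hilbert series in hand, I would then identify $\omega_R$ with a concrete fractional ideal. Two endpoint cases serve as checkpoints: when $n=m$ (no $Z$-block), $R$ is a hypersurface in the space of symmetric matrices and is automatically Gorenstein, while forgetting symmetry recovers the classical $m \times (n-m)$ rank-one determinantal ring, which is Gorenstein iff $m = n-m$. In the mixed case, $\omega_R$ is generated by specific minors indexed by a canonical corner of the ladder, and computing the number and degrees of these generators shows that $\omega_R$ is cyclic precisely when $2m = n+2$.

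The main obstacle is the third step: pinning down the generators of $\omega_R$ explicitly. The combinatorics is delicate because the symmetry relation on the $Y$-block interacts nontrivially with the free $Z$-block, and it is exactly this interaction that produces the asymmetric numerical condition $2m=n+2$, rather than something symmetric in $m$ and $n-m$. Once the generators of $\omega_R$ are in hand, the equivalence with the Gorenstein property reduces to a straightforward numerical identity on the $a$-invariant and the number of minimal generators.
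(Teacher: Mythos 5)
This statement is not proved in the paper at all: it is quoted as a theorem of Conca \cite{Conca94} on (symmetric) ladder determinantal rings, so there is no internal argument to compare with, and your outline is in effect a plan to redo Conca's analysis (Gr\"obner/straightening degeneration, Cohen--Macaulayness, then the divisor class group and canonical class). As a proof, however, it has a genuine gap exactly where the theorem lives: the identification of $\omega_R$ and the criterion for it to be principal is the whole content of the statement, and this is the step you explicitly defer (``the main obstacle is the third step''). Asserting that $\omega_R$ is ``generated by specific minors indexed by a canonical corner of the ladder'' and that counting them yields $2m=n+2$ is a restatement of the goal, not an argument: nothing in the proposal computes $\mathrm{Cl}(R)$, locates the class of $\omega_R$ in it, or produces the numerical identity. (The standard route, and Conca's, is to realize $(Y\mid Z)$ as a ladder inside a generic symmetric $n\times n$ matrix and compute the canonical class of the ladder ring; without that computation the claimed equivalence is unsupported.) Also, the parameterization $(u,\lambda,w)\mapsto(\lambda uu^t\mid uw^t)$ only shows the zero set is irreducible of dimension $n$; to conclude $R$ is a domain you still need radicality/primality of the given ideal, i.e.\ precisely the squarefree-initial-ideal claim you assert but do not verify.

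Worse, one of your two ``checkpoints'' is false and even contradicts the criterion you are trying to prove. For $n=m$ (no $Z$-block) the ring of $2\times 2$ minors of a generic symmetric $m\times m$ matrix is a hypersurface only when $m=2$: its codimension in the space of symmetric matrices is $\tfrac{m(m-1)}{2}$, and already for $m=3$ it is the cone over the Veronese surface $v_2(\mathbb{P}^2)$, which is Cohen--Macaulay with $h$-vector $(1,3)$ and hence \emph{not} Gorenstein. This is consistent with the stated criterion ($2m=n+2$ forces $m=2$ when $n=m$), but inconsistent with your claim that the $n=m$ case is ``automatically Gorenstein''; had the checkpoint been correct, the proposition itself would fail. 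So before the delicate ladder combinatorics, the endpoint calibration of your strategy already needs to be repaired, and the decisive canonical-module computation remains to be done (or simply cited from Conca, as the paper does).
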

As a consequence, our stratum local model is Gorenstein if and only if  $2(\ttt_1-\tth)=(\ttt_1+\tth)+2$, i.e. $\ttt_1=3\tth+2$.

\subsubsection{Strata local model of $\cY$-strata}
We exclude the case where $n$ is even and $h=n$, as in this situation the local model does not have a worst point. The strata local model then parameterizes lattice chains
\begin{equation*}
\begin{aligned}
\xymatrix{
\bfL_{\ttt_2,R}\ar[r]^{\lambda_2}&\bfL_{\tth,R}\ar[r]^{\lambda^\vee}&\bfL_{n-\tth,R}\ar[r]^{\lambda_1}&\bfL_{n-\ttt_2,R}\\
\Pi\bfL_{\ttt_2,R}\ar[r]\ar@{^(->}[u]&\cF_{\tth}\ar[r]\ar@{^(->}[u]&\cF_{n-\tth}\ar[r]\ar@{^(->}[u]&\Pi\bfL_{n-\ttt_2,R}\ar@{^(->}[u].
}	
\end{aligned}
\end{equation*}
We choose the same affine charts as in the previous case. In particular, the formulas for  $\cF_\tth$ and $\cF_{n-\tth}$ remain as in equation \eqref{equ:ramified-Z-coordinate} (note that $\cF_{-\tth}$ has the same form as $\cF_{\tth}$ by the local model axiom).

By an argument similar to that leading to equation \eqref{equ:Z-tran-zero}, the transition map $\lambda$ carries $\cF_{-\tth}$ to $\Pi\Lambda_{\tth}$. By \cite[Thm. 6.3.2]{Luo}, this implies $X_4=0$. We can further refine the coordinates as follows
\begin{equation}
\cF_\tth=
\begin{tikzpicture}[>=stealth,thick,baseline]
	\matrix [matrix of math nodes,left delimiter=(,right delimiter=)](A){ 
    D_1&D_2&0&C_1&C_2\\
    D_3&D_4&0&C_3&C_4\\
    F_1&F_2&0&E_1&E_2\\
    B_1&B_2&0&A_1&A_2\\
    B_3&B_4&0&A_3&A_4\\
    I&&&&\\
    &I&&&\\
    &&I&&\\
    &&&I&\\
    &&&&I\\
				};
	\filldraw[purple] (3,2.45) circle (0pt) node [anchor=east]{\tiny $t_2/2$};
    \filldraw[purple] (3.15,1.85) circle (0pt) node [anchor=east]{\tiny $\frac{h-t_2}{2}$};
	\filldraw[purple] (3.15,1.25) circle (0pt) node [anchor=east]{\tiny $n-h$};
    \filldraw[purple] (3.15,0.7) circle (0pt) node [anchor=east]{\tiny $\frac{h-t_2}{2}$};
	\filldraw[purple] (3,0.1) circle (0pt) node [anchor=east]{\tiny $t_2/2$};
	\filldraw[purple] (-1.1,3) circle (0pt) node [anchor=east]{\scriptsize $\frac{t_2}{2}$};
    \filldraw[purple] (-0.3,3) circle (0pt) node [anchor=east]{\scriptsize $\frac{h-t_2}{2}$};
	\filldraw[purple] (0.45,3) circle (0pt) node [anchor=east]{\tiny $n-h$};
    \filldraw[purple] (1.2,3) circle (0pt) node [anchor=east]{\scriptsize $\frac{h-t_2}{2}$};
	\filldraw[purple] (1.3,3) circle (0pt) node [anchor=center]{\scriptsize $\frac{t_2}{2}$};
\end{tikzpicture},
\quad
	\cF_{n-\tth}=
\begin{tikzpicture}[>=stealth,thick,baseline]
	\matrix [matrix of math nodes,left delimiter=(,right delimiter=)](A){ 
    -D_1&-D_2&E_2^{\ad}&-C_1&-C_2\\
    -D_3&-D_4&E_1^{\ad}&-C_3&-C_4\\
    0&0&0&0&0\\
    -B_1&-B_2&-F_2^{\ad}&-A_1&-A_2\\
    -B_3&-B_4&-F_1^{\ad}&-A_3&-A_4\\
    I&&&&\\
    &I&&&\\
    &&I&&\\
    &&&I&\\
    &&&&I\\
				};
	\filldraw[purple] (3.9,2.45) circle (0pt) node [anchor=east]{\tiny $t_2/2$};
    \filldraw[purple] (4.0,1.85) circle (0pt) node [anchor=east]{\tiny $\frac{h-t_2}{2}$};
	\filldraw[purple] (4,1.25) circle (0pt) node [anchor=east]{\tiny $n-h$};
    \filldraw[purple] (4.0,0.7) circle (0pt) node [anchor=east]{\tiny $\frac{h-t_2}{2}$};
	\filldraw[purple] (3.9,0.2) circle (0pt) node [anchor=east]{\tiny $t_2/2$};
	\filldraw[purple] (-1.7,3) circle (0pt) node [anchor=east]{\tiny $t_2/2$};
    \filldraw[purple] (-0.5,3) circle (0pt) node [anchor=east]{\tiny $\frac{h-t_2}{2}$};
	\filldraw[purple] (0.6,3) circle (0pt) node [anchor=east]{\tiny $n-h$};
    \filldraw[purple] (1.6,3) circle (0pt) node [anchor=east]{\tiny $\frac{h-t_2}{2}$};
	\filldraw[purple] (2,3) circle (0pt) node [anchor=center]{\tiny $t_2/2$};
\end{tikzpicture}.
\end{equation}
With respect to this partition, the transition matrices can be represented as
\begin{equation*}
\lambda_1=\left(\begin{matrix}
    I_{n-\tth}&0&0&0&0&0\\
    0&0&0&0&0&0\\
    0&0&I_{\ttt_2} &0&0&0\\
    0&0&0&I_{n-h}&0&0\\
    0&I_{\tth-\ttt_2}&0&0&0&0\\
    0&0&0&0&0&I_{\ttt_2}
\end{matrix}\right),
\quad
\lambda_2=\left(\begin{matrix}
    I_{\ttt_2}&0&0&0&0&0\\
    0&0&0&0&0&0\\
    0&0&I_{n-\tth}&0&0&0\\
    0&0&0&I_{\ttt_2}&0&0\\
    0&I_{\tth-\ttt_2}&0&0&0&0\\
    0&0&0&0&0&I_{n-\tth}
\end{matrix}\right)
\end{equation*}
Now $\lambda_1(\cF_{n-\tth})\subset\Pi\Lambda_{n-\ttt_2}$ implies that
\begin{equation*}
    C=0,D=0,E=0,A_3=0,A_4=0,B_3=0,B_4=0,-F_1^{\ad}=0.
\end{equation*}
The inclusion $\lambda_2(\Pi\Lambda_{\ell})\subset\cF_h$ implies that
\begin{equation*}
    A=0,C=0,E=0,B_1=0,B_3=0,D_1=0,D_3=0,F_1=0.
\end{equation*}
The local model relations \cite[Prop. 4.1.5]{Luo} imply the following relations
\begin{equation*}
    \bigwedge^2 (B_2,F_2)=0, B=-\frac{1}{2}F^{\ad}F.
\end{equation*}
By the proof of Theorem 4.1.9 in loc.cit., we can further simplify the affine ring to
\begin{equation*}
    \frac{\F[F_2]}{\bigwedge^2 F_2}.
\end{equation*}
This defines a determinantal variety corresponding to a $(\tth-\ttt_2)\times (n-2\tth)$ matrix.
The variety is normal and Cohen--Macaulay of dimension $n-\tth-\ttt_2-1$. 
In the almost $\pi$-modular case, where $n-2\tth = 1$, the matrix $F_2$ reduces to a column vector, and the affine ring is smooth. Similarly, when $h-\ttt_2=1$, $F_2$ becomes a row vector, and the variety is again smooth.
In all other cases, the variety is singular. Moreover, it is Gorenstein if and only if  $(\tth-\ttt_2)=(n-2\tth)$, i.e. $\ttt_2=3\tth-n$.

\subsubsection{Intersection of $\cY$-strata and $\cZ$-strata}
We consider the intersection $\cZ(\Lambda_1)\cap \cY(\Lambda_2^\sharp)$ where $t(\Lambda_1)=t_1=2\ttt_1$ and $t(\Lambda_2)=t_2=2\ttt_2$.
This intersection is related to the strata local model $\ov{\cM}^{[h]}_{n}(t_1,t_2)$.
Maintaining the notation from equation \eqref{equ:ramified-Z-coordinate}, we further decompose:
\begin{equation*}
    f_1=(f_{11},f_{12}),
    \quad
    f_{11}:(\ttt_1-\tth)\times \tth,
    \quad
    f_{12}:(\ttt_1-\tth)\times (\tth-\ttt_2).
\end{equation*}
Combining the computations from the previous two subsections, we find that the affine chart of the strata local model is isomorphic to the spectrum of the ring:
\begin{equation*}
    \frac{\F[f_{12}]}{\bigwedge^2 f_{12}}.
\end{equation*}
This ring defines a variety that is normal and Cohen--Macaulay of dimension $\ttt_1-\ttt_2-1$. The variety is smooth if and only if $\ttt_1-\tth=1$ or $\tth-\ttt_2=1$. In all other cases, it is singular and is Gorenstein if and only if $\ttt_1-\tth=\tth-\ttt_2$.

\subsubsection{Strata local model of $\cY$-strata: $\pi$-modular case}
We now compute the affine charts of the $\pi$-modular local model. 
In this case, the local model does not contain a worst point, cf.\ \cite[Rem. 5.3]{Pappas_Rapoport_2009}.
We note that the affine chart chosen in \cite{Pappas_Rapoport_2009} does not apply to our situation.
The strata local model in this case parameterizes lattice chains
\begin{equation*}
\begin{aligned}
\xymatrix{
\bfL_{\ttt_2,R}\ar[r]^{\lambda_2}&\bfL_{m,R}\ar[r]^{\lambda_1}&\bfL_{n-\ttt_2,R}\\
\Pi\bfL_{\ttt_2,R}\ar[r]\ar@{^(->}[u]&\cF_{m}\ar[r]\ar@{^(->}[u]&\Pi\bfL_{n-\ttt_2,R}\ar@{^(->}[u].
}	
\end{aligned}
\end{equation*}
We choose the standard basis as given in \cite[(3.1.1)]{Luo}. 
Over the special fiber, the two transition maps are represented by the following matrices:
\begin{equation*}
    \lambda_2=\left(\begin{matrix}
        I_{\ttt_2}&0&0&0&0&0\\
        0&0&0&0&0&0\\
        0&0&I_m&0&0&0\\
        0&0&0&I_{\ttt_2}&0&0\\
        0&I_{m-\ttt_2}&0&0&0&0\\
        0&0&0&0&0&I_m
    \end{matrix}\right),
    \quad
    \lambda_1=\left(\begin{matrix}
        I_m&0&0&0&0&0\\
        0&0&0&0&0&0\\
        0&0&I_{\ttt_2}&0&0&0\\
        0&0&0&I_m&0&0\\
        0&I_{m-\ttt_2}&0&0&0&0\\
        0&0&0&0&0&I_{\ttt_2}
    \end{matrix}\right).
\end{equation*}
Consider the $\F$-span of the following vectors
\begin{equation}\label{equ:pi-modular chart}
    e_1,\cdots,e_{m-1},e_{m+1},\pi e_{m+1},\cdots,\pi e_n,
\end{equation}
It defines an $\F$-point of $\ov{\cM}^{[n]}_{n}$, as can be checked using the spin condition in \cite[Def. 3.9]{RSZ}.
We can embed the special fiber $\ov{\cM}^{[n]}_{n}$ into the Grassmannian $\Gr(n,\Lambda_{m,\F})$. 
Choose the affine chart $U$ of the point \eqref{equ:pi-modular chart} in the Grassmannian as
\begin{equation*}
\cF=
\begin{tikzpicture}[>=stealth,thick,baseline]
	\matrix [matrix of math nodes,left delimiter=(,right delimiter=)](A){ 
    X_{11}&X_{12}&X_{13}&X_{14}&X_{15}&X_{16}\\
    X_{21}&X_{22}&X_{23}&X_{24}&X_{25}&X_{26}\\
    Y_1&Y_2&Y_3&Y_4&Y_5&Y_6\\
    0&0&1&0&0&0\\
    X_{31}&X_{32}&X_{33}&X_{34}&X_{35}&X_{36}\\
    X_{41}&X_{42}&X_{43}&X_{44}&X_{45}&X_{46}\\
    I_{\ttt_2}&&&&&\\
    &I_{m-\ttt_2-1}&&&&\\
    Z_1&Z_2&Z_3&Z_4&Z_5&Z_6\\
    &&&1&&\\
    &&&&I_{m-\ttt_2-1}&\\
    &&&&&I_{\ttt_2}\\
				};
    \filldraw[purple] (4.7,3.2) circle (0pt) node [anchor=east]{\tiny $t_2/2$};
	\filldraw[purple] (5,2.55) circle (0pt) node [anchor=east]{\tiny $\frac{n-t_2-2}{2}$};
    \filldraw[purple] (4.5,2) circle (0pt) node [anchor=east]{\tiny $1$};
	\filldraw[purple] (4.5,1.35) circle (0pt) node [anchor=east]{\tiny $1$};
    \filldraw[purple] (5,0.9) circle (0pt) node [anchor=east]{\tiny $\frac{n-t_2-2}{2}$};
	\filldraw[purple] (4.7,0.3) circle (0pt) node [anchor=east]{\tiny $t_2/2$};
	\filldraw[purple] (-2.6,3.8) circle (0pt) node [anchor=east]{\tiny $t_2/2$};
    \filldraw[purple] (-1,3.8) circle (0pt) node [anchor=east]{\tiny $\frac{n-t_2-2}{2}$};
	\filldraw[purple] (-0.3,3.8) circle (0pt) node [anchor=east]{\tiny $1$};
    \filldraw[purple] (0.6,3.8) circle (0pt) node [anchor=east]{\tiny $1$};
	\filldraw[purple] (1.7,3.8) circle (0pt) node [anchor=center]{\tiny $\frac{n-t_2-2}{2}$};
    \filldraw[purple] (2.95,3.8) circle (0pt) node [anchor=center]{\tiny $t_2/2$};
\end{tikzpicture}.
\end{equation*}
Now the two subspaces $\lambda_2(\Pi\bfL_{\ell,R})$ and $\lambda_1(\cF)$ are spanned by the following matrices respectively:
\begin{equation*}
    \left(\begin{matrix}
        0&0&0&0&0&0\\
        0&0&0&0&0&0\\
        0&0&0&0&0&0\\
        0&0&0&0&0&0\\
        0&0&0&0&0&0\\
        0&0&0&0&0&0\\
        I_{\ttt_2}&0&0&0&0&0\\
        0&0&0&0&0&0\\
        0&0&0&0&0&0\\
        0&0&0&1&0&0\\
        0&&0&0&I_{m-\ttt_2-1}&0\\
        0&0&0&0&0&I_{\ttt_2}
    \end{matrix}\right),
    \quad
    \left(\begin{matrix}
        X_{11}&X_{12}&X_{13}&X_{14}&X_{15}&X_{16}\\
        X_{21}&X_{22}&X_{23}&X_{24}&X_{25}&X_{26}\\
        Y_1&Y_2&Y_3&Y_4&Y_5&Y_6\\
        0&0&0&0&0&0\\
        0&0&0&0&0&0\\
        X_{41}&X_{42}&X_{43}&X_{44}&X_{45}&X_{46}\\
        I_{\ttt_2}&0&0&0&0&0\\
        0&I_{m-\ttt_2-1}&0&0&0&0\\
        0&0&0&0&0&0\\
        0&0&0&1&0&0\\
        0&0&0&0&I_{m-\ttt_2-1}&0\\
        0&0&0&0&0&I_{\ttt_2}
    \end{matrix}\right).
\end{equation*}
Now $\lambda_2(\Pi\bfL_{\ttt_2})\subset\cF_m$ implies
\begin{align*}
    &X_{11}=0,X_{21}=0,X_{31}=0,X_{41}=0,Y_1=0,Z_1=0,\\
    &X_{14}=0,X_{24}=0,X_{34}=0,X_{44}=0,Y_4=0,Z_4=0,\\
    &X_{15}=0,X_{25}=0,X_{35}=0,X_{45}=0,Y_5=0,Z_5=0,\\
    &X_{16}=0,X_{26}=0,X_{36}=0,X_{46}=0,Y_6=0,Z_6=0.
\end{align*}
The inclusion $\lambda_1(\cF_m)\subset\Pi\bfL_{n-\ttt_2}$ implies
\begin{align*}
    &X_{11}=0,X_{12}=0,X_{13}=0,X_{14}=0,X_{15}=0,X_{16}=0,\\
    &X_{21}=0,X_{22}=0,X_{23}=0,X_{24}=0,X_{25}=0,X_{26}=0,\\
    &Y_1=0,Y_2=0,Y_3=0,Y_4=0,Y_5=0,Y_6=0,\\
    &X_{41}=0,X_{42}=0,X_{43}=0,X_{44}=0,X_{45}=0,X_{46}=0.
\end{align*}
By simplification, the subspace $\cF_m$ is spanned by the matrix of the form
\begin{equation*}
\cF_m=
\begin{tikzpicture}[>=stealth,thick,baseline]
	\matrix [matrix of math nodes,left delimiter=(,right delimiter=)](A){ 
    0&0&0&0&0&0\\
    0&0&0&0&0&0\\
    0&0&0&0&0&0\\
    0&0&1&0&0&0\\
    0&X_{32}&X_{33}&0&0&0\\
    0&0&0&0&0&0\\
    I_{\ttt_2}&&&&&\\
    &I_{m-\ttt_2-1}&&&&\\
    0&Z_2&Z_3&0&0&0\\
    &&&1&&\\
    &&&&I_{m-\ttt_2-1}&\\
    &&&&&I_{\ttt_2}\\
				};
\end{tikzpicture}.
\end{equation*}
Now we apply the remaining local model axioms. Due to our choice of an affine chart, the spin condition is automatically satisfied. The Kottwitz condition holds since the upper half of the matrix is nilpotent. The wedge condition implies that  $X_{32}=0$. 
It remains to interpret the isotropic condition. With respect to our chosen standard basis, the symmetric form  $(\,,\,)$ is represented by the matrix
\begin{equation*}
    M=\left(\begin{matrix}
        &&&-H\\
        &&H&\\
        &H&&\\
        -H&&&
    \end{matrix}\right),
\end{equation*}
where $H$ is the $m\times m$ anti-diagonal unit matrix.
The isotropic condition implies that $\cF_m^tM\cF_m=0$, which reduces to
\begin{equation*}
    A+A^t=0,
    \quad\text{where}\quad
    A=
    \left(\begin{matrix}
        I_{\ttt_2}&&\\
        &I_{m-\ttt_2-1}&Z_2^t\\
        &&Z_3
    \end{matrix}\right)
    H
    \left(\begin{matrix}
        0&0&1\\
        0&0&X_{33}\\
        0&0&0
    \end{matrix}\right).
\end{equation*}
This implies that $Z_3=0$ and $HX_{33}+Z_2^t=0$. Therefore, the affine chart of the strata local model is isomorphic to:
\begin{equation*}
    \Spec \F[X_{33}].
\end{equation*}
This defines a smooth variety of dimension  $m-\ttt_2-1$.
Since $\ov{\cM}^{[n]}_{n}(t)$ is contained in the preimage of the worst point $(\Pi\bfL_{\ttt_2,\F}\subset\bfL_{\ttt_2,\F},\Pi\bfL_{n-\ttt_2,\F}\subset\bfL_{n-\ttt_2,\F})$, the affine chart $U$ 
intersects all irreducible components of the strata local model. Therefore, the strata local model is smooth and irreducible of dimension $m-\ttt_2-1$.

\section{Bruhat--Tits stratification}\label{sec:BT}
In this section, we begin by proving a crucial lemma, and then proceed to define the Bruhat--Tits stratification of the reduced locus $\cN^{[h]}_{n,\ep,\red}$. In the final subsection, we establish its relationship with the Kottwitz--Rapoport strata.

\subsection{The crucial lemma}
In this subsection, we prove the crucial lemma in our case (Proposition \ref{prop:crutial-lem}), extending results in \cite[Lem. 2.1]{Vollaard} and \cite[Prop. 4.1]{RTW}. 
In our setting, the statement of the crucial lemma is more complicated and the proof is more involved. 
A key distinction in our case is that it is possible that only one of $T_c(M)$ or $T_d(M^\sharp)$ (to be defined below) is a vertex lattice.

Let $\kappa$ be any perfect field over $\F$. 
We denote by $M=M(X)$ the relative Dieudonn\'e module of $(X,\iota_X,\lambda_X,\rho_X)\in \cN^{[h]}_{n,\ep}(\kappa)$. By Proposition \ref{prop:RZ-lattice}, we have
\begin{equation}\label{equ:increasing sequence}
\pi M^\sharp \stackrel{n-h}{\subseteq} M\stackrel{h}{\subseteq} M^\sharp,\quad\text{and}\quad
M\stackrel{\leq 1}{\subseteq} M+\tau(M).
\end{equation}

Recall that we have $(M^\sharp)^\sharp=M$ and $\tau(M^\sharp)=(\tau(M))^\sharp$.   We denote by $T_i(M)$ the sum $M+\tau(M)+\cdots + \tau^i(M)$ and define $T_i(M^\sharp)$ similarly by replacing $M$ with $M^\sharp$. This gives two increasing chains:
\begin{equation*}
M\stackrel{\leq 1}{\subseteq}T_1(M) \stackrel{\leq 1}{\subseteq} T_2(M) \stackrel{\leq 1}{\subseteq}\cdots \quad\text{and}\quad 
M^\sharp\stackrel{\leq 1}{\subseteq}T_1(M^\sharp) \stackrel{\leq 1}{\subseteq} T_2(M^\sharp) \stackrel{\leq 1}{\subseteq}\cdots 
\end{equation*}
By Zink's lemma \cite[Prop. 2.17]{RZ}, both chains eventually stabilize, i.e., for sufficiently large $i$, we have $T_i(M)=T_{i+1}(M)$ and $T_i(M^\sharp)=T_{i+1}(M^\sharp)$, or equivalently $T_i(M)$ and $T_i(M^\sharp)$ are $\tau$-stable.
\textbf{From now on, we  always define $c$ (resp. $d$) to be the smallest non-negative integer such that $T_c(M)$ (resp. $T_d(M^\sharp)$) is $\tau$-invariant.}
This implies the existence of the following strictly increasing chains:
\begin{equation*}
M\stackrel{1}{\subset}T_1(M) \stackrel{1}{\subset} T_2(M) \cdots \stackrel{1}{\subset} T_c(M),\quad\text{and}\quad M^\sharp\stackrel{1}{\subset}T_1(M^\sharp) \stackrel{1}{\subset} T_2(M^\sharp) \cdots \stackrel{1}{\subset} T_d(M^\sharp).
\end{equation*}

\begin{lemma}\label{lem: same ind}
$[M+\tau (M):M]=1$ implies that $[M^\sharp +\tau(M^\sharp) : M^\sharp]=1$.
\end{lemma}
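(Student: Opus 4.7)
The plan is to reduce the claim, via hermitian duality, to showing $[M:M\cap\tau(M)]=1$. By the second isomorphism theorem applied to $M,\tau(M)\subseteq M+\tau(M)$, the hypothesis $[M+\tau(M):M]=1$ is equivalent to $[\tau(M):M\cap\tau(M)]=1$; so the real task is to upgrade this symmetric-looking index equality on the other side.

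To get $[M:M\cap\tau(M)]=1$, I will exploit that $M$ and $\tau(M)$ have matching ``volumes'' in $N$. Since $\tau=\Pi\uV^{-1}$ and $[\uV^{-1}M:M]=n$ (the dimension of the associated formal group), I get $[\tau(M):\Pi M]=n=[M:\Pi M]$. Thus both $\bar M:=M/\Pi M$ and $\overline{\tau(M)}:=\tau(M)/\Pi M$ sit inside the length-$2n$ module $\Pi^{-1}M/\Pi M$ as $O_F$-submodules of length $n$; the needed containment $\Pi M\subseteq\tau(M)$ comes from $\Pi M\subseteq\tau^{-1}(M)$ in Proposition \ref{prop:RZ-lattice} after applying $\tau$. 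Length additivity then gives
\[
\ell(\bar M+\overline{\tau(M)})+\ell(\bar M\cap\overline{\tau(M)})=2n,
\]
so the hypothesis $\ell(\bar M+\overline{\tau(M)})=n+1$ forces $\ell(\bar M\cap\overline{\tau(M)})=n-1$, and I conclude $[M:M\cap\tau(M)]=n-(n-1)=1$.

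To finish, I dualize. Since $\tau$ preserves the hermitian form up to the appropriate Galois twist, $\tau(M)^\sharp=\tau(M^\sharp)$; together with the standard facts $(A\cap B)^\sharp=A^\sharp+B^\sharp$ and $[A^\sharp:B^\sharp]=[B:A]$ for $A\subseteq B$, the inclusion $M\cap\tau(M)\stackrel{1}{\subset}M$ dualizes to $M^\sharp\stackrel{1}{\subset}(M\cap\tau(M))^\sharp=M^\sharp+\tau(M^\sharp)$, which is exactly the desired conclusion.

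The hard part will be the middle step. The conclusion $[\tau(M):M\cap\tau(M)]=1$ from the second isomorphism theorem is formal, but promoting it to $[M:M\cap\tau(M)]=1$ requires the volume-matching of $M$ and $\tau(M)$, and this rests on the structural identity $[\uV^{-1}M:M]=n$. Everything else is bookkeeping with the basic identities for the hermitian dual.
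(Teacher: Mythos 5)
Your proof is correct and takes essentially the same route as the paper: the paper's one‑line argument also dualizes $[M:M\cap\tau(M)]$ to the desired index and then invokes $[M:M\cap\tau(M)]=[M+\tau(M):M]$. You simply make the second equality explicit via the length/volume computation in $\Pi^{-1}M/\Pi M$ (using $[M:\Pi M]=[\tau(M):\Pi M]=n$), which the paper leaves implicit.
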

\begin{proof}
By taking the dual, we have $[M^\sharp +\tau(M^\sharp) : M^\sharp]=[M: M\cap \tau(M)]$. Moreover,  $[M: M\cap \tau(M)]=[M+\tau (M):M]=1$.  
\end{proof}

\begin{lemma}[\protect{\cite[Lem. 4.7]{HZBasic}}]\label{lem: inf inter}
We  have
\begin{altenumerate}
\item $T_c(M)\subseteq   \bigcap_{i\in \Z_{\ge 0}}\tau^i(\pi^{-1} M)$ and $     T_d(M^\sharp)\subseteq   \bigcap_{i\in \Z_{\ge 0}}\tau^i(\pi^{-1} M^\sharp). $
		 
\item If $\tau(M)\subseteq M^\sharp$ and $c\le d$, then  
\begin{align*}
T_c(M)\subseteq   \bigcap_{i\in \Z_{\ge 0}}\tau^i(M^\sharp ).
\end{align*}
\item If $\tau(\pi M^\sharp)\subseteq M$ and $d\le c$, then  
\begin{align*}
T_d(\pi M^\sharp)\subseteq   \bigcap_{i\in \Z_{\ge 0}}\tau^i(M ).
\end{align*}
\end{altenumerate}
\end{lemma}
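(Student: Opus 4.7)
The three parts share a common reduction. Each of $T_c(M)$, $T_d(M^\sharp)$, and $T_d(\pi M^\sharp)=\pi T_d(M^\sharp)$ is $\tau$-invariant by the minimality of $c$ (resp.\ $d$). Hence for any lattice $L$, the claim $T \subseteq \bigcap_{i\geq 0}\tau^i(L)$ is equivalent, via $\tau^i(T)=T$, to the single inclusion $T\subseteq L$ (applying $\tau^{-i}$ or $\tau^i$ propagates the inclusion). So (1), (2), (3) reduce respectively to $T_c(M)\subseteq \pi^{-1}M$, $T_d(M^\sharp)\subseteq \pi^{-1}M^\sharp$, $T_c(M)\subseteq M^\sharp$, and $T_d(\pi M^\sharp)\subseteq M$.

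For (1), the plan is induction on $c$, using the recursion $T_{i+1}(M) = T_i(M)+\tau(T_i(M))$, the length-$\leq 1$ filtration $M\stackrel{1}{\subset}T_1(M)\stackrel{1}{\subset}\cdots\stackrel{1}{\subset}T_c(M)$ coming from Proposition~\ref{prop:RZ-lattice}, and the Dieudonn\'e relation $\Pi\tau(M)\subseteq M$ (equivalent to $\Pi M\subseteq \tau^{-1}(M)$). The base case $c=1$ is immediate: $T_1(M)/M$ has length $\leq 1$ forces $\pi T_1(M)\subseteq M$. For the inductive step, one combines $\tau$-invariance of $T_c(M)$ with the hermitian constraint $\pi M^\sharp\subseteq M\subseteq M^\sharp$ to control the accumulation of $\pi$-factors across successive quotients. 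The argument for $T_d(M^\sharp)$ is identical after replacing $M$ by $M^\sharp$.

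For (2), the stronger hypothesis $\tau(M)\subseteq M^\sharp$ combined with $c\leq d$ lets us refine the target from $\pi^{-1}M$ to the smaller $M^\sharp$. We prove by induction that $\tau^i(M)\subseteq M^\sharp$ for $0\leq i\leq c$: the cases $i=0,1$ are the given inclusions $M\subseteq M^\sharp$ and $\tau(M)\subseteq M^\sharp$, and the inductive step uses the chain $M^\sharp\subset T_1(M^\sharp)\subset\cdots\subset T_d(M^\sharp)$ together with $c\leq d$ to guarantee that successive $\tau$-iterates of $M$ remain inside $M^\sharp$. Summing yields $T_c(M)\subseteq M^\sharp$, and the full intersection statement follows by $\tau$-invariance of $T_c(M)$. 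Part (3) is the symmetric statement obtained by swapping the roles of $M$ and $\pi M^\sharp$: noting $(\pi M^\sharp)^\sharp=\pi^{-1}M$, the hypothesis $\tau(\pi M^\sharp)\subseteq M$ and the inequality $d\leq c$ let one run the (2)-style induction with $\pi M^\sharp$ in place of $M$.

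The main obstacle lies in the inductive step of (1): a naive count based solely on the length-$\leq 1$ quotients gives only the weaker bound $\pi^c T_c(M)\subseteq M$, whereas we need the much tighter $\pi T_c(M)\subseteq M$. The extra strength must come from the interaction of the $\tau$-invariance of $T_c(M)$ with the hermitian duality $\pi M^\sharp\subseteq M\subseteq M^\sharp$ and the recursion $T_{i+1}=T_i+\tau(T_i)$; these together should force the graded pieces of the filtration to be uniformly controlled by a single factor of $\pi$, rather than compounding. Once (1) is established in this tighter form, (2) and (3) follow by the hypothesized refinements described above.
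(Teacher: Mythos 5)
Your reduction via $\tau$-invariance of $T_c(M)$ (resp.\ $T_d(M^\sharp)$) to a single inclusion, and your base case $\pi T_1(M)\subseteq M$, are fine and agree with the paper's final step. But the heart of part (1) — the step that upgrades the naive bound $\pi^c T_c(M)\subseteq M$ to $\pi T_c(M)\subseteq M$ — is missing: you only assert that the $\tau$-invariance of $T_c(M)$ together with $\pi M^\sharp\subseteq M\subseteq M^\sharp$ ``should force'' the graded pieces to be controlled by a single factor of $\pi$, with no mechanism (and in fact hermitian duality plays no role in (1)). The paper's mechanism is a \emph{descending} induction along the chain $M\subset T_1(M)\subset\cdots\subset T_c(M)$: from $\tau(M)\subseteq\pi^{-1}M$ one gets $T_c(M)\subseteq\pi^{-1}T_{c-1}(M)$; applying $\tau$ and using $\tau(T_c(M))=T_c(M)$ gives $T_c(M)\subseteq \pi^{-1}\bigl(T_{c-1}(M)\cap\tau(T_{c-1}(M))\bigr)$, and the key lattice identity $T_{i}(M)\cap\tau(T_{i}(M))=\tau(T_{i-1}(M))$ — valid for $i<c$ because $T_i(M)$ is not $\tau$-stable by minimality of $c$, $\tau$ preserves covolume, and every step $[T_i(M):T_{i-1}(M)]$ has length $1$ — yields $T_c(M)\subseteq\pi^{-1}\tau(T_{c-2}(M))$; applying $\tau^{-1}$ and iterating descends to $T_c(M)\subseteq\pi^{-1}M$, after which $\tau$-invariance gives the full intersection. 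Without this intersection identity (or an equivalent device), your induction on $c$ does not close.

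Parts (2) and (3) have the same gap. The statement ``$\tau^i(M)\subseteq M^\sharp$ for $0\le i\le c$'' is exactly what needs proof, and your sketch never explains how the hypothesis $c\le d$ enters. In the paper, $M\subseteq M^\sharp$ and $\tau(M)\subseteq M^\sharp$ first give $T_c(M)\subseteq T_{c-1}(M^\sharp)$, and then the same descent is run along the chain $T_\bullet(M^\sharp)$: the inequality $c\le d$ is precisely what guarantees that $T_j(M^\sharp)$ is not $\tau$-stable for $j\le c-1$, so that $T_j(M^\sharp)\cap\tau(T_j(M^\sharp))=\tau(T_{j-1}(M^\sharp))$ and one can descend to $T_c(M)\subseteq M^\sharp$; part (3) is the symmetric argument with $d\le c$ along $T_\bullet(M)$. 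So your overall architecture (reduce to one inclusion, conclude by $\tau$-invariance) matches the paper, but the decisive intersection/descent argument is absent, and the proposal as written is incomplete.
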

\begin{proof}
For (1), we only prove the statement for $T_c(M)$ since the proof for $T_d(M^\sharp)$ is exactly the same.
First, note that
\begin{align*}
\tau(M)\subseteq \pi^{-1} M.
\end{align*}
Hence
\begin{align*}
T_c(M)\subseteq \pi^{-1} T_{c-1}(M).
\end{align*}
Since $T_c(M)$ is $\tau$-invariant, we have 
\begin{align*}
T_c(M)\subseteq \pi^{-1}  \tau(T_{c-1}(M))\cap \pi^{-1} T_{c-1}(M)=\pi^{-1}\tau(T_{c-2}(M)).
\end{align*}
Here, for the last equality,  we use the facts that:
\begin{altenumerate2}
\item $\tau(T_{c-1}(M))\neq T_{c-1}(M)$,
\item $\tau(T_{c-2}(M))\subseteq \tau(T_{c-1}(M))\cap T_{c-1}(M)$, and 
\item $[T_{c-1}(M):T_{c-2}(M)]=[\tau(T_{c-1}(M)):\tau(T_{c-2}(M))]=1$.
\end{altenumerate2}
  
Since $T_c(M)$ is $\tau$-invariant, we get
\begin{align*}
T_c(M)\subseteq \pi^{-1}T_{c-2}(M).
\end{align*}
Inductively, we have $T_c(M)\subseteq \pi^{-1} M$. Since $T_c(M)$ is $\tau$-invariant, we have
\begin{align*}
T_c(M)\subseteq  \pi^{-1} \bigcap_{i\in \Z_{\ge 0}}\tau^i(M)\subseteq  \pi^{-1} \bigcap_{0\le i\le f}\tau^i(M)
\end{align*}
for any $f\in \Z_{\ge 0}$.

Parts (2) and (3) are essentially the same as (1), once we observe that the assumption $c\le d$ implies that
$\tau(T_{c-1}(M^\sharp))\neq T_{c-1}(M^\sharp)$ and $\tau(T_{c-1}(M^\sharp))\cap T_{c-1}(M^\sharp)=\tau(T_{c-2}(M^\sharp))$ hold. Similarly, $d\le c$ implies that $\tau(T_{d-1}(M))\cap T_{d-1}(M)=\tau(T_{d-2}(M))$.
\end{proof}

\begin{proposition}[\protect{\cite[Prop. 4.8]{HZBasic}}]\label{prop:crutial-lem}
Assume $X\in \cN^{[h]}_{n,\ep}(\kappa)$ and $M=M(X)$. Then one of the following holds: 
\begin{altenumerate}
\item[(Case $\cY$)\,]: $\pi (T_c(M))^\sharp \subseteq \pi M^\sharp \subseteq M\subseteq   T_c(M) \subseteq (T_c(M))^\sharp \subseteq M^\sharp.$ In particular, $T_c(M)$ is a vertex lattice of type $t\le h$.
\item[(Case $\cZ$)\,]: $\pi M^\sharp \subseteq \pi T_d(M^\sharp)\subseteq  T_d(M^\sharp)^\sharp  \subseteq M\subseteq   M^\sharp\subseteq T_d(M^\sharp)$. In particular, $T_d(M^\sharp)^\sharp$ is a vertex lattice of type $t\ge h$.

\end{altenumerate}
More precisely,  we have:
\begin{enumerate}
\item   If $\tau(\pi M^\sharp)\not\subset   M$, then $M$ satisfies (Case $\cY$).
\item  If $\tau(M)\not\subset M^\sharp$, then $M$ satisfies (Case $\cZ$).
\item If $\tau(\pi M^\sharp) \subseteq M$ and $\tau(M) \subseteq M^\sharp$, then $M$ satisfies (Case $\cY$)   if $c\le d$.

\item If $\tau(\pi M^\sharp) \subseteq M$ and $\tau(M) \subseteq M^\sharp$, then $M$ satisfies (Case $\cZ$)   if $d\le c$.
        
\end{enumerate}
\end{proposition}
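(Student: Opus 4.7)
The plan is to carry out the four-case analysis (1)--(4) in turn, with Lemma~\ref{lem: inf inter} as the main engine and the involution $M\leftrightarrow M^\sharp$ trading cases (2) with (1) and (4) with (3).

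Cases (3) and (4) are a direct application of Lemma~\ref{lem: inf inter}. In case (3), part (2) of that lemma gives $T_c(M)\subseteq M^\sharp$; dualizing $M\subseteq T_c(M)$ yields $T_c(M)^\sharp\subseteq M^\sharp$, so the ``outer'' inclusion $\pi T_c(M)^\sharp\subseteq\pi M^\sharp\subseteq M\subseteq T_c(M)$ is automatic. For the isotropy $T_c(M)\subseteq T_c(M)^\sharp$, observe that $T_c(M)^\sharp$ is $\tau$-stable (since $\tau$ commutes with $\sharp$) and contains $M$ (as $T_c(M)\subseteq M^\sharp$ dualizes to $M\subseteq T_c(M)^\sharp$); by minimality of $T_c(M)$ as the smallest $\tau$-stable lattice containing $M$, we get $T_c(M)\subseteq T_c(M)^\sharp$. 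The type bound $t(T_c(M))\le h$ then follows from the sub-chain $M\subseteq T_c(M)\subseteq T_c(M)^\sharp\subseteq M^\sharp$. Case (4) is entirely symmetric: Lemma~\ref{lem: inf inter}(3) gives $\pi T_d(M^\sharp)=T_d(\pi M^\sharp)\subseteq M$, which dualizes to $\pi M^\sharp\subseteq T_d(M^\sharp)^\sharp$; minimality applied to the smallest $\tau$-stable lattice containing $\pi M^\sharp$, namely $\pi T_d(M^\sharp)$, yields $\pi T_d(M^\sharp)\subseteq T_d(M^\sharp)^\sharp$, which is the vertex condition for $T_d(M^\sharp)^\sharp$.

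In case (1), I would first show $\tau(M)\subseteq M^\sharp$. From the RZ-conditions, $\tau(\pi M^\sharp)=\pi\tau(M^\sharp)\subseteq M^\sharp$ (an $M^\sharp$-analog of $\tau(M)\subseteq \pi^{-1}M$ obtained by dualizing $\Pi M\subseteq\tau^{-1}(M)$); combined with the case hypothesis, there exists $v\in\tau(\pi M^\sharp)\subseteq \tau(M)\cap M^\sharp$ with $v\notin M$. Since the RZ-condition $M\stackrel{\le 1}{\subseteq}M+\tau(M)$ makes the quotient $\tau(M)/(M\cap\tau(M))$ at most one-dimensional, the extra direction of $\tau(M)$ over $M$ must lie in $M^\sharp/M$, so $\tau(M)\subseteq M^\sharp$. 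It then remains to argue $c\le d$, so that Lemma~\ref{lem: inf inter}(2) applies and one concludes as in case (3). This I would do by tracking the one-dimensional quotients along the two chains $\{T_i(M)\}$ and $\{T_j(M^\sharp)\}$: the extra direction of $\tau(M^\sharp)/M^\sharp$, multiplied by $\pi$, accounts for the extra direction of $\tau(M)/M$ inside $M^\sharp/M$, and propagating this bookkeeping shows that $T_i(M)\neq T_{i-1}(M)$ forces $T_i(M^\sharp)\neq T_{i-1}(M^\sharp)$, whence $c\le d$. Case (2) is the $\sharp$-dual argument: $\tau(M)\not\subseteq M^\sharp$ symmetrically yields $\tau(\pi M^\sharp)\subseteq M$ and $d\le c$, and Lemma~\ref{lem: inf inter}(3) delivers (Case $\cZ$).

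The main obstacle is the fine lattice bookkeeping required in cases (1) and (2) to establish the inequalities $c\le d$ and $d\le c$. The first-step inclusions $\tau(M)\subseteq M^\sharp$ and $\tau(\pi M^\sharp)\subseteq M$ come from a single-vector argument, but matching up the successive one-dimensional extensions along the two chains and showing that they cannot get ``out of sync'' requires careful exploitation of the $\pi$-pairing between $M$ and $M^\sharp$; once this is in hand, Lemma~\ref{lem: inf inter} delivers the vertex lattice conclusion uniformly, and the type bounds read off from the displayed chains in (Case $\cY$) and (Case $\cZ$).
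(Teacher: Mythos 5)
Your treatment of cases (3) and (4) is correct and is essentially the paper's argument in a slightly repackaged form: the minimality observation (that $T_c(M)^\sharp$ is $\tau$-stable and contains $M$, hence contains $T_c(M)$) is a clean rephrasing of the inclusion $T_c(M)\subseteq\bigcap_{0\le i\le c}\tau^i(M^\sharp)=T_c(M)^\sharp$ that the paper writes out. Your first step in case (1) -- using $M\stackrel{\le 1}{\subset}M+\tau(M)$ together with $\tau(\pi M^\sharp)\subseteq M^\sharp$ to produce a single vector $v\in\tau(M)\cap M^\sharp$, $v\notin M$, and concluding $\tau(M)\subseteq M^\sharp$ -- is also correct.

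The genuine gap is the inequality $c\le d$ (resp.\ $d\le c$) in case (1) (resp.\ (2)), which you explicitly flag as "the main obstacle" and then leave as a description of a bookkeeping argument rather than a proof. Without $c\le d$, Lemma~\ref{lem: inf inter}(2) cannot be invoked and the reduction to case (3) collapses. The inequality is in fact true, but the natural way to prove it is to first establish the identity $T_i(M)=M+\tau(\pi M^\sharp)+\cdots+\tau^i(\pi M^\sharp)$ (which follows from your one-vector argument by induction), and then observe that $\tau$-stability of $T_d(M^\sharp)$ forces $\tau^{d+1}(\pi M^\sharp)\subseteq\pi T_d(M^\sharp)\subseteq T_d(M)$, hence $T_{d+1}(M)=T_d(M)$ and $c\le d$. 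But once one has that identity, the paper's argument is shorter: dualize it to $T_c(M)^\sharp=M^\sharp\cap\bigcap_{1\le i\le c}\tau^i(\pi^{-1}M)$, and then Lemma~\ref{lem: inf inter}(1) (which carries no $c\le d$ hypothesis) directly gives $T_c(M)\subseteq M+T_c(\pi M^\sharp)\subseteq T_c(M)^\sharp$. So your detour through $c\le d$ both leaves the key step unproven and, if completed, would need the same ingredient the paper uses more economically. You should either carry out the $c\le d$ bookkeeping in full, or switch to the direct argument via the identity for $T_c(M)$ and Lemma~\ref{lem: inf inter}(1).
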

\begin{proof}
\begin{altenumerate}
\item Assume $\tau(  M^\sharp)\not\subset \pi^{-1} M$. Since $M\subseteq   T_c(M)$, it suffices to show $  T_c(M)\subseteq T_c(M)^\sharp$. 
Since $ M \stackrel{\le 1}{\subset}  M + \tau(M)$, $\tau(\pi M^\sharp)\not\subset  M $ and $\tau(\pi M^\sharp)\subset  \tau(M)$, we have  $M+\tau(M)= M+\tau(\pi M^\sharp)$. In fact, an inductive argument on $T_i(M)$ shows that 
\begin{align}\label{eq: 33} T_c(M)=M+\tau(\pi M^\sharp)+\cdots +\tau^c(\pi M^\sharp).
\end{align}
Equivalently, we have
\begin{align}\label{eq: 66}
T_c(M)^\sharp=M^\sharp\cap  \bigl(\bigcap_{1\le i\le c} \tau^i(\pi^{-1} M)\bigr) .
\end{align}
According to Lemma \ref{lem: inf inter}, we have
\begin{align}\label{eq: 11}
\begin{split}
T_c(\pi M^\sharp)&\subseteq     \bigcap_{0\le i\le c}\tau^i(M^\sharp) \subseteq   M^\sharp\cap  \bigl(\bigcap_{1\le i\le c}\tau^i(\pi^{-1}M)\bigr),\\
M&\subseteq M^\sharp\cap T_c(M)\subseteq M^\sharp\cap \bigl(  \bigcap_{1\le i\le c}\tau^i(\pi^{-1}M)\bigr).
\end{split}
\end{align}
Therefore, we have
\begin{align*}
T_c(M)\stackrel{\eqref{eq: 33}}{\subseteq} M+T_c(\pi M^\sharp) \stackrel{\eqref{eq: 11}}{\subseteq}M^\sharp\cap (  \bigcap_{1\le i\le c}\tau^i(\pi^{-1}M))\stackrel{\eqref{eq: 66}}{=}T_c(M)^\sharp.
\end{align*}

\item Now we assume  $\tau(M)\not\subset M^\sharp$. By construction, we have $T_d(M^\sharp)^\sharp\subseteq M\subseteq   M^\sharp\subseteq T_d(M^\sharp)$. Therefore, we only need to show $\pi T_d(M^\sharp)\subseteq T_d(M^\sharp)^\sharp$. 		
Since $M^\sharp \stackrel{\le 1}{\subset} M^\sharp +\tau(M^\sharp)$ and $\tau(M)\not\subset M^\sharp$, we have  $M^\sharp +\tau(M^\sharp)=M^\sharp +\tau(M)$. In fact, an inductive argument shows that 
\begin{align}\label{eq: 3}
T_d(M^\sharp)=M^\sharp+\tau(M)+\cdots +\tau^d(M).
\end{align}
Equivalently, this implies
\begin{align}\label{eq: 6}
T_d(M^\sharp)^\sharp=M\cap \bigl(\bigcap_{1\le i\le d}\tau^i(M^\sharp)\bigr).
\end{align}
According to Lemma \ref{lem: inf inter}, we have
\begin{align}\label{eq: 1}
\begin{split}
T_d(\pi M)&\subseteq  \bigcap_{0\le i\le d}\tau^i(M) \subseteq   M \cap  \bigl(\bigcap_{1\le i\le d}\tau^i( M^\sharp)\bigr),\\
\pi M^\sharp &\subseteq  M\cap T_d(\pi M^\sharp)\subseteq M \cap \bigl(  \bigcap_{1\le i\le d}\tau^i( M^\sharp)\bigr).
\end{split}
\end{align}
Therefore, we have
\begin{align*}
   \pi  T_d(M^\sharp)\stackrel{\eqref{eq: 3}}{\subseteq} \pi M^\sharp+  T_d(\pi  M) \stackrel{\eqref{eq: 1}}{\subseteq}M \cap \bigl(  \bigcap_{1\le i\le d}\tau^i( M^\sharp)\bigr)\stackrel{\eqref{eq: 6}}{=}T_d(M^\sharp)^\sharp.
\end{align*}

\item Assuming $\tau(M) \subseteq M^\sharp$ and $c\le d$, we will show that $M$ satisfies (Case $\cY$) (which is a stronger statement than $(3)$ in the assertion).  Note that $M+\tau(M)\subseteq M^\sharp$. Inductively, we have $T_c(M)\subseteq T_{c-1}(M^\sharp)$. Since $T_c(M)$ is $\tau$-invariant,  we have $T_c(M)\subseteq \bigcap_i \tau^i(M^\sharp)\subseteq \bigcap_{0\le i\le c} \tau^i(M^\sharp) \subseteq (T_c(M))^\sharp $ by   Lemma \ref{lem: inf inter}. Hence, we have
\begin{align*}
M\subseteq T_c(M) \subseteq (T_c(M))^\sharp \subseteq M^\sharp.
\end{align*}
Since $\pi M^\sharp \subseteq M \subseteq M^\sharp$, we have 
\begin{align*}
\pi (T_c(M))^\sharp \subseteq \pi M^\sharp \subseteq M\subseteq   T_c(M) \subseteq (T_c(M))^\sharp \subseteq M^\sharp.
\end{align*}
\end{altenumerate}
The proof of $(4)$ follows from the same argument as the proof of $(3)$.
\end{proof}

\subsection{Bruhat--Tits stratification}
In this subsection, we define the Bruhat--Tits stratification. Let $\Lambda\subset \mbV$ be a vertex lattice of type $t$. 
Since $t$ is an even integer, we write $t=2\ttt$.
Recall from \S \ref{sec:notation} that for any lattice $\Lambda$, we let $\breve{\Lambda}:=\Lambda\otimes_{O_F}O_{\breve{F}}$. 
In \S \ref{sec:notation}, we also defined two important quotient spaces: $V_{\Lambda}:=\Lambda^\sharp/\Lambda$ equipped with an alternating form and $V_{\Lambda^\sharp}:=\Lambda/\pi\Lambda^\sharp$ equipped with a symmetric form.
By extending scalars to $\F$, we obtain $\Omega_{\Lambda}:=V_\Lambda\otimes_{\F_q}\F$ and $\Omega_{\Lambda^\sharp}:=V_{\Lambda^\sharp}\otimes_{\F_q}\F$.
The alternating form on $\Omega_\Lambda$ and the symmetric form on $\Omega_{\Lambda^\sharp}$ are defined by extending the corresponding forms on $V_\Lambda$ and $V_{\Lambda^\sharp}$ via scalar extension.

From Proposition \ref{prop:RZ-lattice}, we have a lattice-theoretic description of $\cN_{n,\ep,\red}^{[h]}(\kappa)$, where $\kappa$ is a perfect field over $\F$.
These descriptions naturally extend to a lattice-theoretic characterization of the $\kappa$-point of the Bruhat--Tits strata.
\begin{proposition}\label{prop:BT_strata}
Let $\kappa$ be any perfect field over $\F$.
The $\kappa$-points of the Bruhat--Tits strata can be described as follows:
\begin{altenumerate}
\item Assume $\Lambda$ is a vertex lattice of type $t\ge h$. Then 
\begin{align*}
    \cZ(\Lambda)(\kappa)=\{(X,\iota_X,\lambda_X,\rho_X)\in \cN^{[h]}_{n,\ep}(\kappa)\mid    \Lambda\otimes_O W_O(\kappa) \subseteq M(X)\subseteq M(X)^\sharp\subseteq \Lambda^\sharp\otimes_O W_O(\kappa)  \}.
\end{align*}
\item Assume $\Lambda$ is a vertex lattice of type $t\le h$. Then 
\begin{align*}
    \cY(\Lambda^\sharp)(\kappa)=\{(X,\iota_X,\lambda_X,\rho_X)\in \cN^{[h]}_{n,\ep}(\kappa)\mid \pi \Lambda^\sharp\otimes W_O(\kappa) \subseteq \pi M(X)^\sharp \subseteq M(X)\subseteq   \Lambda\otimes W_O(\kappa)\}.
\end{align*}
\end{altenumerate}
\end{proposition}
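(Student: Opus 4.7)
The plan is to translate the moduli-theoretic conditions of Definition \ref{def:YZ-cycles} — that certain compositions of quasi-isogenies are genuine isogenies — into lattice inclusions, via the (relative) covariant Dieudonn\'e functor set up in \S\ref{sec:RZ-def}. The key input will be the standard fact that for a quasi-isogeny $f: Y \to Z$ of strict $O_{F_0}$-modules over a perfect field $\kappa/\F$, the induced $W_O(\kappa)$-linear map $M(f): M(Y) \to M(Z)[1/\pi_0]$ realizes $f$ as an isogeny precisely when its image lies in $M(Z)$. Combined with Proposition \ref{prop:RZ-lattice}, which already identifies $\cN^{[h]}_{n,\ep}(\kappa)$ with a set of $W_O(\kappa)$-lattices $M \subset N \otimes W_O(\kappa)$ satisfying $\pi M^\sharp \subseteq M \subseteq M^\sharp$ together with the $\tau$-conditions, the argument will then reduce to unwinding the relevant identifications.

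For part (1), I would fix $(X, \iota_X, \lambda_X, \rho_X) \in \cN^{[h]}_{n,\ep}(\kappa)$ with Dieudonn\'e module $M(X) \subseteq N \otimes W_O(\kappa)$. By the construction immediately preceding Definition \ref{def:YZ-cycles}, $M(X_\Lambda) = \breve\Lambda$ and $\rho_\Lambda$ realizes this as the lattice $\breve\Lambda \subseteq N$; base-changing to $\kappa$ gives $M((X_\Lambda)_\kappa) = \Lambda \otimes_{O_F} W_O(\kappa) \subseteq N \otimes W_O(\kappa)$. Hence $\rho_{\Lambda, X} = \rho^{-1} \circ (\rho_\Lambda)_\kappa$ is an isogeny if and only if $\Lambda \otimes W_O(\kappa) \subseteq M(X)$. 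The middle inclusion $M(X) \subseteq M(X)^\sharp$ is automatic from the polarization datum recorded in Proposition \ref{prop:RZ-lattice}, and the outermost inclusion $M(X)^\sharp \subseteq \Lambda^\sharp \otimes W_O(\kappa)$ follows by applying the $\sharp$-duality, which reverses inclusions and is involutive, to $\Lambda \otimes W_O(\kappa) \subseteq M(X)$.

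For part (2), I would run the same strategy on $\rho_{\Lambda^\sharp, X^\vee} = \rho^\vee \circ \lambda_{\bX} \circ \rho_{\Lambda^\sharp}: X_{\Lambda^\sharp} \to X^\vee$. Using $\rho_{\Lambda^\sharp} = \rho_\Lambda \circ \lambda_\Lambda^{-1}$ to identify $M(X_{\Lambda^\sharp}) = \breve\Lambda^\sharp$ with the lattice $\breve\Lambda^\sharp \subseteq N$, together with the polarization-induced identifications $M(\bX^\vee) \cong M(\bX)^\sharp$ and $M(X^\vee) \cong M(X)^\sharp$ (both relative to the hermitian form on $N$, which is preserved by the height-zero $\rho$), the composition at the Dieudonn\'e level becomes the natural inclusion $\Lambda^\sharp \otimes W_O(\kappa) \hookrightarrow N \otimes W_O(\kappa)[1/\pi_0] = M(X)^\sharp[1/\pi_0]$. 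Therefore $\rho_{\Lambda^\sharp, X^\vee}$ is an isogeny if and only if $\Lambda^\sharp \otimes W_O(\kappa) \subseteq M(X)^\sharp$; applying $\sharp$-duality gives the equivalent form $M(X) \subseteq \Lambda \otimes W_O(\kappa)$, multiplication by $\pi$ yields $\pi \Lambda^\sharp \otimes W_O(\kappa) \subseteq \pi M(X)^\sharp$, and the inclusion $\pi M(X)^\sharp \subseteq M(X)$ is once again automatic from Proposition \ref{prop:RZ-lattice}.

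The main obstacle will be bookkeeping rather than mathematical substance: one must keep track of the covariance conventions and correctly match $M(X^\vee)$ with $M(X)^\sharp$ using the hermitian form on $N$, which is the point where the height-zero assumption on $\rho$ (ensuring $\rho^*\lambda_{\bX} = \lambda_X$) enters crucially. Once these identifications are installed, both parts reduce to the single observation that a quasi-isogeny is an isogeny exactly when its Dieudonn\'e realization preserves the integral lattice.
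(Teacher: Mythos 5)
Your argument is correct and is essentially the route the paper itself takes implicitly: Proposition \ref{prop:BT_strata} is stated with no written proof precisely because it is the straightforward Dieudonn\'e-theoretic translation of Definition \ref{def:YZ-cycles} via Proposition \ref{prop:RZ-lattice}, namely that a quasi-isogeny over a perfect field is an isogeny exactly when its Dieudonn\'e realization preserves the integral lattice, with the remaining inclusions supplied by $\sharp$-duality and the conditions $\pi M^\sharp\subseteq M\subseteq M^\sharp$ already built into the RZ-space description. Your bookkeeping of $M(X^\vee)\cong M(X)^\sharp$ and the height-zero compatibility $\rho^*\lambda_{\bX}=\lambda_X$ is exactly the point one must check, and you handle it correctly.
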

\begin{proof}
We prove part (1), part (2) follows form the same arguments. 
By Proposition \ref{prop:rz-top flat} and Definition \ref{def:YZ-cycles}, the set of $\kappa$-points in $\cZ(\Lambda)$ is identical to the $\kappa$-points of the corresponding subfunctor defined in $\ov{\cN}_{n,\ep}^{[h],\wedge}$.
By composing with the framing map, the Dieudonn\'e module $M(X)$ is identified  as a lattice in $\bV\otimes_O W_O(\kappa)=\bV\otimes_{F_0}W_O(\kappa)[1/\pi_0]$. 
Since the quasi-isogeny $\rho_{\Lambda,X}$ is an isogeny, it ensures the inclusion $\Lambda\otimes_OW_O(\kappa)=M(X_{\Lambda,\kappa})\subset M(X)$. The other inclusion follows by taking the dual.
\end{proof}

\begin{theorem} \label{thm:BT-decomp}
Recall that $\cL_{\cZ}$ (resp. $\cL_{\cY}$) denotes the set of all vertex lattices in $\bV$ of type $\geq h$ (resp. $\leq h$).
Let $\kappa$ be any perfect field over $\F$, then we have the following:
\begin{altenumerate}
\item 
The reduced locus $\cN_{n,\ep,\red}^{[h]}$ is the union of closed subvarieties:
$$
\cN_{n,\ep}^{[h]}(\kappa)=\Bigl(\bigcup_{\Lambda_1\in\cL_\cZ}\cZ(\Lambda_1)(\kappa)\Bigr)\cup \Bigl(\bigcup_{\Lambda_2\in\cL_\cY}\cY(\Lambda_2^\sharp)(\kappa)\Bigr),
$$
Moreover, these strata satisfy the following inclusion relations:
\begin{altenumerate2}
\item For any $\Lambda_1$ and $\Lambda_1'$ in $\cL_\cZ$, $\cZ(\Lambda_1)(\kappa)\subseteq\cZ(\Lambda_1')(\kappa)
   $ if and only if $\Lambda_1\supseteq\Lambda_1'$,
\item For any $\Lambda_2$ and $\Lambda_2'$ in $\cL_\cY$, $ \cY(\Lambda_2^\sharp)(\kappa)\subseteq\cY(\Lambda_2'^\sharp)(\kappa)$ if and only if $\Lambda_2\subseteq\Lambda_2'$.
\end{altenumerate2}
\item For any $\Lambda_1\in\cL_{\cZ}$ and $\Lambda_2\in\cL_{\cY}$, the intersection $\cZ(\Lambda_1)(\kappa)\cap \cY(\Lambda_2^\sharp)(\kappa)$ is non-empty if and only if $\Lambda_1\subseteq \Lambda_2$. 
\item For $\Lambda\in\cL_{\cZ}\cap\cL_{\cY}$ (i.e., $\Lambda$ is a vertex lattice of type $h$), the set $\cZ(\Lambda)(\kappa)=\cY(\Lambda^{\sharp})(\kappa)$ is a singleton, corresponding to a discrete point in the RZ space called the \emph{worst point}.
\end{altenumerate}
\end{theorem}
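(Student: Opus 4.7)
The backbone of the proof is the crucial lemma (Proposition~\ref{prop:crutial-lem}) together with the lattice descriptions of Proposition~\ref{prop:BT_strata}, supplemented by Witt's theorem on the action of the classical groups on Grassmannians of isotropic subspaces.

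For \emph{part (1) decomposition}, I would take a $\kappa$-point $(X,\iota_X,\lambda_X,\rho_X)\in \cN^{[h]}_{n,\ep,\red}(\kappa)$, pass to its Dieudonn\'e module $M=M(X)$ via Proposition~\ref{prop:RZ-lattice}, and feed $M$ into Proposition~\ref{prop:crutial-lem}. In (Case~$\cY$) the $\tau$-stable lattice $T_c(M)$ descends to a vertex lattice $\Lambda\in\cL_\cY$ satisfying $\pi\breve\Lambda^\sharp\subseteq M\subseteq \breve\Lambda$, placing the point in $\cY(\Lambda^\sharp)(\kappa)$ by Proposition~\ref{prop:BT_strata}(2); the (Case~$\cZ$) branch is dual, using $T_d(M^\sharp)^\sharp$ to produce a vertex lattice in $\cL_\cZ$.

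For the \emph{inclusion relations (1)(i)--(ii) and part (3)}, the ``if'' directions are immediate from the lattice descriptions of Proposition~\ref{prop:BT_strata}. For the converse of (1)(i) I would use the \emph{worst-point trick}: each vertex lattice $\Lambda$ of type $h$ sandwiched between $\Lambda_1$ and $\Lambda_1^\sharp$ yields a geometric point $M=\breve\Lambda\in\cZ(\Lambda_1)(\kappa)$, so the hypothesis $\cZ(\Lambda_1)\subseteq \cZ(\Lambda_1')$ forces $\Lambda_1'\subseteq \Lambda$ for every such $\Lambda$. The parameter set of such $\Lambda$ is the Grassmannian of isotropic subspaces of dimension $(t_1-h)/2$ in the nondegenerate symplectic space $V_{\Lambda_1}=\Lambda_1^\sharp/\Lambda_1$; transitivity of $\Sp(V_{\Lambda_1})$ on this Grassmannian together with irreducibility of $V_{\Lambda_1}$ as an $\Sp$-representation force $\bigcap_\Lambda \Lambda=\Lambda_1$, completing the argument. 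The converse of (1)(ii) runs in parallel inside the nondegenerate orthogonal space $V_{\Lambda_2^\sharp}=\Lambda_2/\pi\Lambda_2^\sharp$, identifying type-$h$ vertex sublattices $\Lambda\subseteq \Lambda_2$ with isotropic subspaces of dimension $(h-t_2)/2$ and invoking transitivity of the orthogonal group. Part~(3) is then immediate: for $\Lambda$ of type $h$, combining $\breve\Lambda\subseteq M\subseteq \breve\Lambda^\sharp$ from the $\cZ$-description with $\pi\breve\Lambda^\sharp\subseteq M\subseteq \breve\Lambda$ from the $\cY$-description pins $M=\breve\Lambda$.

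For \emph{part (2)}, the implication ``nonempty $\Rightarrow \Lambda_1\subseteq \Lambda_2$'' is immediate from $\breve\Lambda_1\subseteq M\subseteq \breve\Lambda_2$. For the converse I would construct an explicit common worst point: given $\Lambda_1\subseteq \Lambda_2$, the image $\Lambda_2/\Lambda_1\subseteq V_{\Lambda_1}$ is automatically isotropic (because $\Lambda_2\subseteq\Lambda_2^\sharp\subseteq\Lambda_1^\sharp$) of dimension $(t_1-t_2)/2\ge (t_1-h)/2$; any subspace of $\Lambda_2/\Lambda_1$ of dimension $(t_1-h)/2$ lifts to a type-$h$ vertex lattice $\Lambda$ with $\Lambda_1\subseteq\Lambda\subseteq \Lambda_2$, and then $M=\breve\Lambda$ is the required point. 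The \emph{main obstacle}, in my estimation, is the intersection/span claim used in the converses of (1)(i)--(ii): generically this is a clean consequence of Witt's theorem, but one must certify that the isotropic Grassmannians under consideration are nonempty, and that the boundary cases (e.g.\ $t_1=h$, where the Grassmannian degenerates to a single point, or when the Witt index of $V_{\Lambda_2^\sharp}$ is saturated) do not cause the transitivity-based argument to fail.
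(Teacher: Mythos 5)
Your proposal matches the paper's proof on every point the paper actually argues. Part (1)'s decomposition is obtained exactly as you describe: feed the Dieudonn\'e lattice of Proposition \ref{prop:RZ-lattice} into the crucial lemma (Proposition \ref{prop:crutial-lem}) and read off membership via Proposition \ref{prop:BT_strata}. Part (2)'s converse is the same sandwich construction (choose a type-$h$ vertex lattice $\Lambda_1\subseteq\Lambda\subseteq\Lambda_2$, set $M=\breve\Lambda$, and check it is a point of the RZ space via Lemma \ref{lem:cycles_stable} together with $\tau(M)=M$ — do state this last verification, which you leave implicit). Part (3) is the same triviality. Where you go beyond the paper is the ``only if'' direction of the inclusion relations (1)(i)--(ii): the paper's written proof is silent there, and your worst-point plus Witt-theorem argument is a correct way to supply it (on the symplectic side the intersection of all $\F_q$-rational isotropic subspaces of dimension $(t_1-h)/2>0$ in $\Lambda_1^\sharp/\Lambda_1$ is indeed zero, e.g.\ two complementary isotropic subspaces already intersect trivially).

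Two loose ends. First, in part (3) your combined chain only pins down the intersection $\cZ(\Lambda)\cap\cY(\Lambda^\sharp)$; to see that each set separately is the singleton $\{\breve\Lambda\}$, use the index count: $\breve\Lambda\subseteq M\subseteq M^\sharp\subseteq\breve\Lambda^\sharp$ with $[M^\sharp:M]=h=[\breve\Lambda^\sharp:\breve\Lambda]$ already forces $M=\breve\Lambda$, and dually for $\cY$. Second, the boundary worry you flag is genuine only on the orthogonal side and only when $h=n$: there the type-$n$ lattices $\Lambda\subseteq\Lambda_2$ correspond to $\F_q$-rational Lagrangians of $\Lambda_2/\pi\Lambda_2^\sharp$, which exist because this quadratic space is split whenever $\bV$ admits $\pi$-modular lattices, as it does for the fixed framing object. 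If you prefer to avoid this case analysis, bypass worst points altogether: choose $M\in\cY(\Lambda_2^\sharp)(\F)$ lying in no $\cY(\Lambda'^\sharp)$ with $\Lambda'\subsetneq\Lambda_2$ (possible since the finitely many such strata have strictly smaller dimension), so that $T_c(M)=\breve\Lambda_2$; then $\cY(\Lambda_2^\sharp)\subseteq\cY(\Lambda_2'^\sharp)$ gives $M\subseteq\breve\Lambda_2'$, and $\tau$-stability of $\breve\Lambda_2'$ yields $\breve\Lambda_2=T_c(M)\subseteq\breve\Lambda_2'$; the analogous argument with $T_d(M^\sharp)$ handles the $\cZ$-side uniformly in all boundary cases.
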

\begin{proof}
\begin{altenumerate}
\item The decomposition follows from the crucial lemma (Proposition \ref{prop:crutial-lem}), combined with the characterization of $\kappa$-points given in Propositions \ref{prop:RZ-lattice} and \ref{prop:BT_strata}.

\item By Proposition \ref{prop:BT_strata}, a $\kappa$-point in $\cY(\Lambda_2^\sharp)$ corresponds to a lattice $M$ such that:
\begin{equation*}
    \pi\Lambda_2^\sharp\otimes W_O(\kappa)\subseteq \pi M^\sharp\subseteq M\subseteq \Lambda_2\otimes W_O(\kappa)
    \subseteq\Lambda_2^\sharp\otimes W_O(\kappa)\subseteq M^\sharp.
\end{equation*}
If $\cZ(\Lambda_1)\cap \cY(\Lambda_2^\sharp)\neq \emptyset$, then there exists a lattice $M\subset \bV\otimes W_O(\kappa)$ of type $h$ such that
\begin{equation*}
    \Lambda_1\otimes W_O(\kappa)\subseteq M\subseteq \Lambda_2\otimes W_O(\kappa).
\end{equation*}
This implies $\Lambda_1\subseteq \Lambda_2$.
Conversely, if $\Lambda_1\subseteq\Lambda_2$, then we can find a vertex lattice of type $h$ such that $\Lambda_1\subseteq\Lambda\subseteq\Lambda_2$.
Let $M:=\Lambda\otimes W_O(\kappa)\subset \Lambda^\sharp\otimes W_O(\kappa)$. By Lemma \ref{lem:cycles_stable}, $M$ is stable under $\uF,\uV$ and $\Pi$. Since $\Lambda\subset \bV$, we have $\tau(M)=M$, hence $M\in\cZ(\Lambda_1)\cap \cY(\Lambda_2^\sharp)(\kappa)$.
\item For any vertex lattice $\Lambda\subset \bV$ of type $h$, we have by definition $\cZ(\Lambda)(\kappa)=\cY(\Lambda)(\kappa)=\{\Lambda\otimes W_O(\kappa)\}$ which is a single point (which corresponds to the worst point in the local model).\qedhere
\end{altenumerate}
\end{proof}

\subsection{Kottwitz--Rapoport strata}\label{sec:KR-strata}
In this subsection, we examine the relationship between two fundamental types of strata: the (closed) BT strata and the (closed) Kottwitz--Rapoport (KR) strata. Our discussion relies on notation and concepts from the local model theory, which is developed in Section \ref{sec:LM}.

\begin{proposition}\label{prop:KR-strata}
Let $M\subset N\otimes W_{O}(\kappa)$ represent a point in $\cN_{n,\ep}^{[h]}(\kappa)$. Then one of the following inclusions must hold:
\begin{equation*}
    \tau(M)\subseteq M^{\sharp},
    \quad\text{or}\quad
    \tau(\Pi M^\sharp)\subseteq M.
\end{equation*}
\end{proposition}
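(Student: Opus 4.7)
The plan is to invoke the crucial lemma (Proposition \ref{prop:crutial-lem}), which gives the dichotomy that $M$ falls into either Case $\cY$ or Case $\cZ$. I will extract the first alternative $\tau(M) \subseteq M^\sharp$ from Case $\cY$, and the second alternative $\tau(\Pi M^\sharp) \subseteq M$ from Case $\cZ$. The single auxiliary fact needed beyond the crucial lemma is that the operator $\Pi = \iota(\pi)$ commutes with $\tau = \Pi \uV^{-1}$: this holds because $\uV$ commutes with the $O_F$-action on $M$, so that $\tau \circ \Pi = \Pi \uV^{-1} \Pi = \Pi^2 \uV^{-1} = \Pi \circ \tau$.

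For Case $\cY$, the crucial lemma supplies the chain
$M \subseteq T_c(M) \subseteq (T_c(M))^\sharp \subseteq M^\sharp,$
in which $T_c(M) = M + \tau(M) + \cdots + \tau^c(M)$ is $\tau$-stable by the minimality of $c$. Thus $\tau(M) \subseteq T_c(M) \subseteq M^\sharp$, yielding the first alternative directly.

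For Case $\cZ$, the crucial lemma supplies the chain
$\Pi M^\sharp \subseteq \Pi T_d(M^\sharp) \subseteq T_d(M^\sharp)^\sharp \subseteq M,$
where the leftmost inclusion reflects exactly the statement that $T_d(M^\sharp)^\sharp$ is a vertex lattice (so $\Pi$ carries its hermitian dual $T_d(M^\sharp)$ into it). Since $T_d(M^\sharp)$ is $\tau$-stable and contains $M^\sharp$, we have $\tau(M^\sharp) \subseteq T_d(M^\sharp)$, hence $\Pi\,\tau(M^\sharp) \subseteq \Pi T_d(M^\sharp) \subseteq M$. Applying the commutativity $\Pi \circ \tau = \tau \circ \Pi$ rewrites the left-hand side as $\tau(\Pi M^\sharp) \subseteq M$, which is the second alternative, stated with the operator $\Pi$ as required.

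No serious technical obstacle arises; the statement is essentially a repackaging of the crucial lemma. The one subtle point worth flagging is the final step in Case $\cZ$: the crucial lemma naturally produces a containment of the form $\Pi \tau(M^\sharp) \subseteq M$, and it is the explicit $\Pi$--$\tau$ commutativity (and not any identification of $\Pi$ with the scalar $\pi$ on the base) that allows us to express the conclusion in the desired form $\tau(\Pi M^\sharp) \subseteq M$.
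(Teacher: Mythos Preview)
Your proof is correct and follows essentially the same route as the paper's first proof: both deduce the dichotomy directly from the crucial lemma (Proposition~\ref{prop:crutial-lem}), using the $\tau$-stability of $T_c(M)$ in Case~$\cY$ and of $T_d(M^\sharp)$ in Case~$\cZ$ as the bridge to the desired inclusions. The paper phrases Case~$\cZ$ via the vertex lattice $\Lambda = T_d(M^\sharp)^\sharp$ (so that $\pi M^\sharp \subseteq \Lambda \subseteq M$ and $\tau(\Lambda)=\Lambda$), but this is the same $\tau$-stable object you use, just dualized.

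It is worth noting that the paper also supplies a second, independent proof via local model theory: it invokes \cite[Thm.~6.2.2]{Luo} to show that for the Hodge filtrations one has either $\lambda(\Fil(X))\subseteq \iota(\pi)D(X^\vee)$ or $\lambda^\vee(\Fil(X^\vee))\subseteq \iota(\pi)D(X)$, and then translates these to the Dieudonn\'e-module inclusions $\uV M \subseteq \Pi M^\sharp$ or $\uV M^\sharp \subseteq M$. This second approach avoids the crucial lemma entirely and instead leverages the Kottwitz--Rapoport stratification of the local model; your approach (and the paper's first proof) is more self-contained within the lattice framework already developed.
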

We present two independent proofs of this result: the first proof uses the crucial lemma, the second proof uses local model theory.
\begin{proof}[First proof]
Assume $(X,\iota_X,\lambda_X,\rho_X)\in \cZ(\Lambda)(\kappa)$ corresponds to the lattices chain $\Lambda\otimes W_O(\kappa)\subseteq M\subseteq M^{\sharp}\subseteq \Lambda^\sharp\otimes W_O(\kappa)$. Then we have
\begin{equation*}
    \pi M^\sharp\subseteq\pi\Lambda^\sharp\otimes W_O(\kappa)\subseteq \Lambda\otimes W_O(\kappa)\subseteq M.
\end{equation*}
Then we have
\begin{equation*}
    \tau(\pi M^\sharp)\subseteq \tau(\Lambda\otimes W_O(\kappa))=\Lambda\otimes W_O(\kappa)\subseteq M.
\end{equation*}
By definition, we have $(X,\iota_X,\lambda_X,\rho_X)\in \cZ$. The same argument holds for $\cY$-strata. 
\end{proof}
\begin{proof}[Second proof]
For a strict $O_{F_0}$-module $X$ over a perfect field $\kappa$, recall its associated Dieudonn\'e module
\begin{equation*}
    \pi_0M\subset \uV M\subset M.
\end{equation*} 
There exists an identification between short exact sequences:
\begin{multline*}
\hspace{2cm}0\to \Fil(X)\to D(X)\to \Lie(X)\to 0\\
\cong\quad 0\to \uV M/\pi_0 M\to M/\pi_0 M\to M/\uV M\to 0.\hspace{2cm}
\end{multline*}
We have two transition maps $\lambda:X\to X^\vee$ and $\lambda^\vee:X^\vee\to X$.
By \cite[Thm. 6.2.2]{Luo}, for the Hodge filtrations 
\begin{equation*}
    \Fil(X)\subset D(X),\quad \Fil(X^\vee)\subset D(X^\vee),
\end{equation*}
we have either 
\begin{equation*}
    \lambda(\Fil(X))\subseteq \iota(\pi)D(X^\vee),\quad\text{or}\quad
    \lambda^\vee(\Fil(X^\vee))\subseteq \iota(\pi)D(X).
\end{equation*}
Translating these conditions into the Dieudonn\'e modules, this means:
\begin{equation}\label{equ:lattice inclusions in KR strata}
\lambda(VM/\pi_0M)\subseteq \Pi^{-1}(\Pi M^\sharp/\pi_0 M^\sharp),
\quad\text{or}\quad
\lambda^\vee(V M^\sharp/\pi_0M^\sharp)\subseteq \Pi M/\pi_0 M.
\end{equation}
Note that $M(X^\vee)=M^\sharp$, hence $D(X^\vee)\cong M^\sharp/\pi_0M^\sharp$ and $\Fil(X^\vee)\cong VM^\sharp/\pi_0M^\sharp$.

The equation \eqref{equ:lattice inclusions in KR strata} is equivalent to having either
\begin{equation*}
    VM\subseteq \Pi M^\sharp,
    \quad\text{or}\quad
    V M^{\sharp}\subseteq  M.
\end{equation*}
The assertion now follows from the definition of $\tau$.
\end{proof}

We define (closed) Kottwitz--Rapoport strata in the reduced locus of the RZ space $\ov{\mcN}_{n,\ep}^{[h]}$.
\begin{definition}
\begin{altenumerate}
\item The $\F$-scheme $\cZ$ is defined as the reduced locus of the subfunctor of $\ov{\cN}_{n,\ep}^{[h]}$ consisting of tuples $(X,\iota,\lambda,\rho)$ over an $\F$-scheme $S$ such that $\lambda^\vee(\Fil(X^\vee))\subseteq\iota(\pi) D(X)$.
\item The $\F$-scheme $\cY$ is defined as the reduced locus of the subfunctor of $\ov{\cN}_{n,\ep}^{[h]}$ consisting of tuples $(X,\iota,\lambda,\rho)$ over an $\F$-scheme $S$ such that $\lambda(\Fil(X))\subseteq\iota(\pi)D(X^\vee)$.
\end{altenumerate}
\end{definition}
The inclusions $\lambda^\vee(\Fil(X^\vee))\subseteq\iota(\pi) D(X)$ and $\lambda(\Fil(X))\subseteq\iota(\pi)D(X^\vee)$ are closed conditions, thanks to \cite[Thm. 6.2.2]{Luo}. Therefore, the corresponding subfunctors are representable as formal schemes, ensuring that the $\F$-schemes $\cZ$ and $\cY$ are well-defined.

\begin{proposition}\label{prop:KR}
\begin{altenumerate}
\item We have the decomposition of the reduced locus:
\begin{equation*}
\cN_{n,\ep,\red}^{[h]}=\cZ\cup \cY.
\end{equation*}
Furthermore, for any perfect field $\kappa$ over $\F$, we have lattice descriptions:
\begin{equation*}
\cZ(\kappa)=\left\{M\in\cN_{n,\ep,\red}^{[h]}(\kappa)\mid \tau(\Pi M^\sharp)\subseteq M\right\},
\quad\text{and}\quad\cY(\kappa)=\left\{M\in\cN_{n,\ep,\red}^{[h]}(\kappa)\mid \tau(M)\subseteq M^\sharp\right\}.
\end{equation*}
\item For any $\Lambda_1\in\cL_{\cZ}$ and $\Lambda_2\in\cL_{\cY}$, we have $\cZ(\Lambda_1)\subset \cZ$ and $\cY(\Lambda_2^\sharp)\subset \cY$.
\end{altenumerate}
\end{proposition}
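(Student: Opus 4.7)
The plan is to extract the lattice descriptions of $\cZ$ and $\cY$ first, apply the dichotomy in Proposition \ref{prop:KR-strata} to obtain the decomposition in part (1), and then verify the inclusions in part (2) by a direct chain of lattice inclusions, using the description of Bruhat-Tits strata from Proposition \ref{prop:BT_strata} together with the vertex lattice condition and $\tau$-invariance.

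For the lattice descriptions in part (1), I would recycle the translation already carried out in the second proof of Proposition \ref{prop:KR-strata}. Under the Dieudonn\'e identification $\Fil(X)=\uV M/\pi_0 M\subset M/\pi_0 M$, the defining conditions $\lambda(\Fil(X))\subseteq \iota(\pi)D(X^\vee)$ of $\cY$ and $\lambda^\vee(\Fil(X^\vee))\subseteq \iota(\pi)D(X)$ of $\cZ$ translate respectively to $\uV M\subseteq \Pi M^\sharp$ and $\uV M^\sharp\subseteq M$. Using $\tau=\Pi^{-1}\uF$ together with the duality relation $(\uV L)^\sharp=\uF^{-1}(L^\sharp)$ coming from $\langle \uF x,y\rangle=\langle x,\uV y\rangle^\sigma$, one then checks that these two conditions are equivalent to $\tau(M)\subseteq M^\sharp$ and $\tau(\Pi M^\sharp)\subseteq M$ respectively, yielding the claimed lattice descriptions. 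Part (1) is then immediate: Proposition \ref{prop:KR-strata} asserts that every point of $\cN_{n,\ep,\red}^{[h]}(\kappa)$ satisfies at least one of the two conditions, so on the level of $\kappa$-points -- and hence schematically, since all three schemes are reduced and of finite type over $\F$ -- one obtains $\cN_{n,\ep,\red}^{[h]}=\cZ\cup \cY$.

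For part (2), I would argue directly on $\kappa$-points. Given $(X,\iota,\lambda,\rho)\in \cZ(\Lambda_1)(\kappa)$ with associated lattice $M$, Proposition \ref{prop:BT_strata} provides the chain $\Lambda_1\otimes W_O(\kappa)\subseteq M\subseteq M^\sharp\subseteq \Lambda_1^\sharp\otimes W_O(\kappa)$. Combining this with the vertex lattice inclusion $\Pi\Lambda_1^\sharp\subseteq \Lambda_1$ and the $\tau$-stability of $\Lambda_1\otimes W_O(\kappa)$, which holds because $\Lambda_1\subseteq \bV=N^{\tau=1}$, one computes
\[
\tau(\Pi M^\sharp)\subseteq \tau\bigl(\Pi\Lambda_1^\sharp\otimes W_O(\kappa)\bigr)\subseteq \tau\bigl(\Lambda_1\otimes W_O(\kappa)\bigr)=\Lambda_1\otimes W_O(\kappa)\subseteq M,
\]
so $M\in \cZ(\kappa)$. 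Similarly, if $(X,\iota,\lambda,\rho)\in \cY(\Lambda_2^\sharp)(\kappa)$ then $M\subseteq \Lambda_2\otimes W_O(\kappa)\subseteq \Lambda_2^\sharp\otimes W_O(\kappa)\subseteq M^\sharp$, and $\tau$-stability of $\Lambda_2\otimes W_O(\kappa)$ yields $\tau(M)\subseteq \Lambda_2\otimes W_O(\kappa)\subseteq M^\sharp$, so $M\in\cY(\kappa)$. Reducedness of all the relevant schemes upgrades these point-wise inclusions to the claimed scheme-theoretic ones.

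The only genuinely delicate step is the translation in part (1) between the Hodge-filtration formulation and the lattice formulation, where one must carefully track the interaction of $\uV$, $\uF$, $\Pi$ and the $\sharp$-duality; this is essentially a bookkeeping exercise once the duality identity $(\uV L)^\sharp=\uF^{-1}(L^\sharp)$ is in hand. Everything else amounts to formal manipulation of lattice inclusions using $\Pi\Lambda^\sharp\subseteq\Lambda\subseteq\Lambda^\sharp$ and the $\tau$-invariance of $\Lambda\otimes W_O(\kappa)$.
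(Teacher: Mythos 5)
Your proposal is correct and takes essentially the same route as the paper: part (1) is the dichotomy of Proposition \ref{prop:KR-strata} combined with the Dieudonn\'e translation of the Hodge-filtration conditions (which the paper has already carried out in the second proof of that proposition, and which you spell out slightly more explicitly via the duality between $\uF$ and $\uV$), and part (2) is the identical chain of lattice inclusions using Proposition \ref{prop:BT_strata}, the vertex lattice condition $\pi\Lambda^\sharp\subseteq\Lambda$, and the $\tau$-invariance of $\Lambda\otimes W_O(\kappa)$.
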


\begin{proof}
Part (1) follows directly from Proposition \ref{prop:KR-strata} and the definition.
For part (2), assume $(X,\iota_X,\lambda_X,\rho_X)\in \cZ(\Lambda)(\kappa)$ corresponds to the lattice chain $\Lambda\otimes W_O(\kappa)\subseteq M\subseteq M^{\sharp}\subseteq \Lambda^\sharp\otimes W_O(\kappa)$. 
Therefore, we have
\begin{equation*}
    \pi M^\sharp\subseteq\pi\Lambda^\sharp\otimes W_O(\kappa)\subseteq \Lambda\otimes W_O(\kappa)\subseteq M.
\end{equation*}
Then we have
\begin{equation*}
    \tau(\pi M^\sharp)\subseteq \tau(\Lambda\otimes W_O(\kappa))=\Lambda\otimes W_O(\kappa)\subseteq M.
\end{equation*}
By definition, we have $(X,\iota_X,\lambda_X,\rho_X)\in \cZ$. The same argument holds for $\cY$-strata. 
\end{proof}

\begin{remark}
\begin{altenumerate}
\item Proposition \ref{prop:BT_strata}(2) can also be derived from the local model computation in \S \ref{sec:LM}.
\item For the converse of Proposition \ref{prop:KR}(2), we have the following proper inclusions:
\begin{equation*}
\bigcup_{\Lambda_1\in\cL_\cZ}\cZ(\Lambda_1)\subsetneq \cZ
\quad\text{and}\quad
\bigcup_{\Lambda_2\in\cL_\cY}\cY(\Lambda_2^\sharp)\subsetneq \cY.
\end{equation*}
This follows from Proposition \ref{prop:crutial-lem}.
\end{altenumerate}
\end{remark}
\section{Deligne--Lusztig varieties}\label{sec: DL var}
In this section, we study a class of generalized Deligne--Lusztig varieties associated with symplectic, orthogonal, and general linear groups. These algebraic varieties are then used to derive the global geometric properties of the Bruhat--Tits strata of Rapoport--Zink spaces.

\subsection{Deligne--Lusztig varieties}
Let $G_0$ be a reductive group over $\F_q$. We fix a maximal torus $T_0$ and Borel subgroup $B_0$ over $\F_q$. Let $G$ be the reductive group $G_0\otimes_{\F_q}\F$ over $\F$ and let $T:=T_0\otimes_{\F_q}\F$ and $B:=B_0\otimes_{\F_q}\F$ be the corresponding subgroups equipped with a Frobenius action $\Phi$. Let $S$ be the set of simple reflections of $W$ corresponding to $(T,B)$.

Let $W=W_G$ be the Weyl group of $G$. For any subset $I\subseteq S$, let $W_I$ be the subgroup of $W$ generated by simple reflections in $I$. Let $P_I=BW_IB$ be the associated parabolic subgroup.

Let $I, J\subset S$ be two non-empty subsets.
Every double coset in $W_I\backslash W\slash W_J$ contains a unique element of minimal length. Let $\prescript{I}{}{W}^J\subset W$ be the set of such elements. Then the map $\prescript{I}{}{W}^J\to W_I\backslash W\slash W_J$ is a bijection, which we regard as an identification.

\begin{definition}
For each $w\in \prescript{I}{}{W}^{\Phi(I)}$, the \emph{(generalized) Deligne--Lusztig variety} $X_{P_I}(w)$ is defined as
\begin{equation*}
    X_{P_I}(w):=\{g\in G/P_I:g^{-1}\Phi(g)\in P_IwP_{\Phi(I)}\}.
\end{equation*}
In the remaining part of the paper, we will write DL variety for Deligne--Lusztig variety.
\end{definition}
We recall the \emph{G\"ortz local model diagram} introduced in \cite[\S 5.2]{Gortz_Yu_2010}:
\begin{equation}\label{equ:gortz-lm}
\begin{aligned}
\xymatrix{
&G\ar[dl]_{\pi}\ar[dr]^{L}&\\
G/P_I&&G/P_{\Phi(I)}
}.
\end{aligned}
\end{equation}
Here, $\pi$ denotes the natural projection and $L$ is the Lang map, which sends an element $g$ to $g^{-1}\Phi(g)P_{\Phi(I)}$. Both maps $\pi$ and $L$ are smooth of relative dimension $\dim P_I$, and we have the equality
\begin{equation*}
    \pi^{-1}(X_{P_I}(w))=L^{-1}(P_IwP_{\Phi(I)}/P_{\Phi(I)}),
\end{equation*}
where $P_IwP_{\Phi(I)}/P_{\Phi(I)}$ is the (generalized) Schubert cell in the partial flag variety $G/P_{\Phi(I)}$.
Since both $\pi$ and $L$ in the diagram above are smooth, we can use this diagram to deduce properties of DL varieties from their corresponding Schubert cells.
\begin{proposition}\label{prop:DL-prop}
Let $I\subset S$ be a non-empty subset and let $w\in \prescript{I}{}{W}^{\Phi(I)}$. 
\begin{altenumerate}
\item The DL variety $X_{P_I}(w)$ is smooth of dimension $\ell(w)+\ell(W_{\Phi(I)})-\ell(W_{I\cap\prescript{w}{}{\Phi(I)}})$. Here $\ell(W_I)$ is the length of the longest element in the Weyl group $W_I$, and we define $\prescript{w}{}{I}:=wIw^{-1}$.
\item The DL variety $X_{P_I}(w)$ is irreducible if and only if $W_I w$ is not contained in a proper $\Phi$-stable standard parabolic subgroup of $W$.
\end{altenumerate}
\end{proposition}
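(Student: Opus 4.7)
The plan is to exploit the Görtz local model diagram \eqref{equ:gortz-lm} and transfer both assertions from the generalized Schubert cell $C_{P_I}(w):=P_IwP_{\Phi(I)}/P_{\Phi(I)}\subset G/P_{\Phi(I)}$. The Lang map $L_0\colon G\to G,\ g\mapsto g^{-1}\Phi(g)$, is étale, since $\Phi$ has vanishing differential at the identity; hence $L$, being $L_0$ composed with the natural projection $G\to G/P_{\Phi(I)}$, is smooth of relative dimension $\dim P_{\Phi(I)}$. Similarly $\pi$ is smooth of relative dimension $\dim P_I$, and $\dim P_I=\dim P_{\Phi(I)}$ because $\Phi$ permutes $S$. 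By the definition of $X_{P_I}(w)$ one has the key identity
\[
\pi^{-1}(X_{P_I}(w))=L^{-1}(C_{P_I}(w)),
\]
so $X_{P_I}(w)$ and $C_{P_I}(w)$ share all smooth-local geometric properties.

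For part (1), $C_{P_I}(w)$ is the $P_I$-orbit of $wP_{\Phi(I)}$ in $G/P_{\Phi(I)}$, hence a smooth locally closed subvariety of dimension
\[
\dim P_I-\dim(P_I\cap wP_{\Phi(I)}w^{-1})=\ell(w)+\ell(W_{\Phi(I)})-\ell(W_{I\cap\prescript{w}{}{\Phi(I)}}),
\]
by the standard Bruhat-theoretic identification of $P_I\cap wP_{\Phi(I)}w^{-1}$, using that $w$ is the minimal length representative of $W_IwW_{\Phi(I)}$ and that $\ell(W_I)=\ell(W_{\Phi(I)})$. Smoothness and the dimension formula for $X_{P_I}(w)$ follow immediately, since the smooth surjection $\pi\colon L^{-1}(C_{P_I}(w))\twoheadrightarrow X_{P_I}(w)$ has relative fibre dimension $\dim P_I$, matching the dimension contribution absorbed by $L$.

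For the ``if'' direction of (2), assume $W_Iw\subseteq W_J$ for some proper $\Phi$-stable $J\subsetneq S$. Taking the identity in $W_I$ forces $w\in W_J$, whence $W_I\subseteq W_J$, so $I\subseteq J$, and also $\Phi(I)\subseteq J$ by $\Phi$-stability. The projection $\mathrm{pr}\colon G/P_I\to G/P_J$ is thus well-defined, and for any $gP_I\in X_{P_I}(w)$ one has $g^{-1}\Phi(g)\in P_IwP_{\Phi(I)}\subseteq P_J$, forcing $\mathrm{pr}(gP_I)\in X_{P_J}(e)=(G/P_J)^{\Phi}$. The latter is a finite set with at least two points, since $G/P_J$ is a positive-dimensional $\F_q$-variety when $J\subsetneq S$, and the induced decomposition of $X_{P_I}(w)$ into open-and-closed fibres is nontrivial, contradicting irreducibility.

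For the converse, $L_0$ exhibits $L_0^{-1}(P_IwP_{\Phi(I)})$ as an étale $G^{\Phi}$-torsor over the irreducible base $P_IwP_{\Phi(I)}$, so irreducibility of $X_{P_I}(w)$ amounts to transitivity of $G^{\Phi}$ on its set of irreducible components, which is controlled by the étale monodromy of $L_0$ restricted to this cell. A Bonnafé--Rouquier type analysis, generalised from the Borel setting, identifies this monodromy in Weyl-group-combinatorial terms and shows that non-transitivity occurs precisely when $W_Iw$ fits into some proper $\Phi$-stable $W_J$. Executing this monodromy argument rigorously in the parabolic setting is the main obstacle; the remaining assertions are routine consequences of the local model diagram and standard Bruhat combinatorics.
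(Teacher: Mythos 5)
Your part (1) is fine: the orbit description of $C_{P_I}(w)=P_IwP_{\Phi(I)}/P_{\Phi(I)}$, the identity $\pi^{-1}(X_{P_I}(w))=L^{-1}(C_{P_I}(w))$ (which is indeed the correct orientation of the displayed equality in the paper), and smooth descent along the two legs of the diagram give exactly the smoothness and the dimension formula; this is in substance the argument behind the reference \cite[Lem. 2.1.3]{Hoeve} that the paper invokes. In the ``if'' direction of (2) your reduction to the projection $G/P_I\to G/P_J$ and the fixed-point set $(G/P_J)^\Phi$ is the right idea, but the claim that the resulting decomposition into open-and-closed fibres is \emph{nontrivial} needs one more line: $X_{P_I}(w)$ is nonempty (Lang) and stable under left translation by $G^\Phi$, and $G^\Phi$ acts transitively on $(G/P_J)(\F_q)$, so the image is all of $(G/P_J)(\F_q)$, which has more than one point since $J\subsetneq S$.

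The genuine gap is the converse in (2), and you acknowledge it yourself: asserting that ``a Bonnafé--Rouquier type analysis, generalised from the Borel setting, identifies this monodromy \ldots and shows that non-transitivity occurs precisely when $W_Iw$ fits into some proper $\Phi$-stable $W_J$'' is not a proof — it is a restatement of the theorem to be proved. The hard content is precisely to show that when $W_Iw$ lies in no proper $\Phi$-stable standard parabolic subgroup of $W$, the $G^\Phi$-torsor $L_0^{-1}(P_IwP_{\Phi(I)})\to P_IwP_{\Phi(I)}$ is connected (equivalently, the relevant monodromy subgroup is all of $G^\Phi$); Bonnafé and Rouquier achieve this by a nontrivial induction on $\ell(w)$ using the Deligne--Lusztig product decompositions $X(w)\approx X(sw)$-type manipulations, and none of that combinatorial machinery appears in your sketch. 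The paper sidesteps this entirely by citing \cite{BONNAFE200637} for (2) (and \cite{Hoeve} for (1)); if you do not want to reproduce that argument, the honest fix is to do the same and cite the irreducibility criterion rather than gesture at a monodromy computation you have not carried out.
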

\begin{proof}
Part $(1)$ follows from \cite[Lem. 2.1.3]{Hoeve} and part $(2)$ follows from \cite{BONNAFE200637}.
\end{proof}

\subsection{Symplectic case}\label{sec:DL_symp}
In this subsection, we study DL varieties for symplectic groups. In contrast to \cite{RTW}, the spaces we consider here are not of Coxeter type.
Recall the notation convention that $h=2\tth$. Let $\Lambda\subset \bV$ be a vertex lattice of type $t(\Lambda)=t>h$, where $\bV$ is the space defined in \S \ref{sec:framing}. Suppose from now on that $t\neq 0$.
We set
\begin{equation*}
    V=V_{\Lambda}:=\Lambda^{\sharp}/\Lambda
\end{equation*}
with induced symplectic form $\langle\,,\,\rangle$. 
We fix a basis $V=\mathrm{Span}_{\F_q}(\sfe_1,\cdots,\sfe_\ttt,\sff_1,\cdots,\sff_\ttt)$ such that $\langle \sfe_i,\sff_j\rangle=\delta_{ij}$, and $\langle \sfe_i,\sfe_j\rangle=\langle \sff_i,\sff_j\rangle=0$ for any $1\leq i,j\leq \ttt$. 

Let $\F$ be the algebraic closure of $\F_q$ with Frobenius $\Phi$.
We denote by $\Omega=\Omega_\Lambda=V_\Lambda\otimes_{\F_q}\F$ the symplectic space over $\F$.
Consider the standard isotropic flags $\sF_\bullet$ in $\Omega$ defined by
\begin{equation}\label{equ:sym:stand-flag}
    \sF_i=\mathrm{Span}_\F\{\sfe_1,\cdots,\sfe_i\}\quad\text{for}\quad 1\leq i\leq \ttt.
\end{equation}
This choice pins down the maximal torus and Borel subgroup $T\subset B\subset G:=\Sp(\Omega_{\Lambda})$, which are stable under the $\Phi$-action.
We use $\Delta^*=\{\sfs_1,\cdots,\sfs_\ttt\}$ to denote the set of corresponding simple reflections in the Weyl group $W=N(T)/T$, where:
\begin{itemize}
\item for $1\leq i\leq \ttt-1$, the reflection $\sfs_i$ interchanges $\sfe_i\leftrightarrow \sfe_{i+1}$ and $\sff_i\leftrightarrow \sff_{i+1}$, while fixing the other basis elements;
\item The element $\sfs_\ttt$ interchanges $\sfe_\ttt\leftrightarrow \sff_\ttt$.
\end{itemize}
For each $0\leq \tts\leq \tth\leq \ttr \leq \ttt-1$, we denote the following subset of $S$ by:
\begin{align}\label{eq: I_{s,r}}
I_{\ttr\tts}:=\{\sfs_1,\cdots,\sfs_{\ttt-\ttr-1},\sfs_{\ttt-\tts+1},\cdots,\sfs_\ttt\}=\{\sfs_1,\cdots,\sfs_\ttt\}\setminus \{\sfs_{\ttt-\ttr},\cdots,\sfs_{\ttt-\tts}\}.
\end{align}
Note that many of the objects we consider in the remainder of this section depend on $\tth$ (e.g. $I_{\ttr\tts}$). 
To simplify the notation, we will omit $\tth$ from the notations. 
We denote by $W_{\ttr\tts}$ the subgroup of the Weyl group generated by the elements in $I_{\ttr\tts}$ and denote by $P^{}_{\ttr\tts}$ the corresponding standard parabolic subgroup.

Consider now the parabolic subgroup $P_{\tth\tth}$. The space $G/P^{}_{\tth\tth}$ parameterizes the isotropic subspaces in $\Omega$ of dimension $\ttt-\tth$.
We consider the subvariety $S_{\Lambda}^{}$ of $G/P^{}_{\tth\tth}$ such that for any field $k$ over $\F$, its $k$-points are given by
\begin{equation}\label{equ:S-Lambda-defn}
    S_{\Lambda}^{}(k)=\{\cV\subset \Omega_{\Lambda,k}\mid \cV\text{ is isotropic},\,\text{and}\, \dim \cV=\ttt-\tth,\,\text{and}\, \dim(\cV\cap \Phi(\cV))\geq \ttt-\tth-1\}.
\end{equation}

\begin{example}
In the case where $\ttt-\tth=1$, the variety $S_{\Lambda}$ parameterizes all isotropic lines in the vector space $\Omega_{\Lambda,k}$, hence is isomorphic to the projective space $\mathbb{P}(\Omega_{\Lambda,k})$.
\end{example}

For any $0\leq \tts\leq \tth< \ttr\leq \ttt$, and $1\le i\le \ttt$, we let:
\begin{itemize}
\item $\sfg_{i}:=\sfs_i \sfs_{i+1} \cdots \sfs_{\ttt-1} \sfs_\ttt \sfs_{\ttt-1} \cdots \sfs_{i+1} \sfs_i.$
\item $\sfw_{\ttr\tts}:=(\sfs_{\ttt-\tth}\sfs_{\ttt-\tth+1}\cdots \sfs_{\ttt-\tts-1}\sfg_{\ttt-\tts})\cdot(\sfs_{\ttt-\tth-1}\sfs_{\ttt-\tth-2}\cdots\sfs_{\ttt-\ttr+1})$.
\item 
 $\sfw'_{\ttr\tts}:=
   (\sfs_{\ttt-\tth}\sfs_{\ttt-\tth-1}\cdots \sfs_{\ttt-\ttr+1})\cdot (\sfs_{\ttt-\tth+1} \sfs_{\ttt-\tth+2}\cdots \sfs_{\ttt-\tts-1})$ if $\tts<\tth$.
\end{itemize}
Here,  we have $\sfw_{\ttr\tts}=\sfs_{\ttt-\tth}\sfs_{\ttt-\tth+1}\cdots \sfs_{\ttt-\tts-1}\sfg_{\ttt-\tts}$ if $\ttt=\tth+1$ or $\ttr=\tth+1$.  Also, $\sfw_{\ttr\tth}=\sfg_{\ttt-\tth}\sfs_{\ttt-\tth-1}\sfs_{\ttt-\tth-2}\cdots \sfs_{\ttt-\ttr+1}$, and $\sfw'_{\ttr\tth-1}=\sfs_{\ttt-\tth}\sfs_{\ttt-\tth-1}\cdots \sfs_{\ttt-\ttr+1}$.

\begin{example}
Assume $\ttt=4$ and $\tth=2$. Then
    \begin{align*}
    w_{40}  &= s_2s_3s_4s_1, & w_{41}  &= s_2s_3s_4s_3 s_1, & w_{42} &= s_2s_3s_4s_3s_2 s_1, \\
    w_{30}  &= s_2s_3s_4,    & w_{31}  &= s_2s_3s_4s_3,     & w_{32} &= s_2s_3s_4s_3s_2, \\
    w_{40}' &= s_2s_1s_3,    & w_{41}' &= s_2s_1,           &        & \\
    w_{30}' &= s_2s_3,       & w_{31}' &= s_2.              &        &
\end{align*}
\end{example}

Note that the element $\sfg_i$ interchanges $\sfe_i\leftrightarrow \sff_i$ and fixes all other elements in the basis. The actions of $\sfw^{}_{\ttr\tts}$ and $\sfw'_{\ttr\tts}$ on the basis are described in the following lemma.
\begin{lemma}\label{lem:rel-pos}
\begin{altenumerate}
\item The element $\sfw^{}_{\ttr\tth}$ stabilizes $\sfe_i$ and $\sff_i$ for $1\leq i\leq \ttt-\ttr$ and $\ttt-\tth+1\leq i\leq \ttt$.
It acts on the remaining elements in the basis as follows:
\begin{equation*}
\begin{aligned}
\xymatrix{
\sfe_{\ttt-\ttr+1}\ar@/^1pc/[rrrrr]
&\cdots \ar@/^/[l]&
\sfe_{\ttt-\tth}\ar@/^/[l]&
\sff_{\ttt-\ttr+1}\ar@/^/[l]
&\cdots\ar@/^/[l]&
\sff_{\ttt-\tth}.\ar@/^/[l]
}
\end{aligned}
\end{equation*}
Moreover, $\sfw^{}_{\ttr\tth}$ is the minimal representative element in the double coset $W_{\ttr}\sfw^{}_{\ttr\tth} W_{\ttr}$.
\item 
If $\tts<\tth$, then the element $\sfw_{\ttr\tts}$ stabilizes $\sfe_i$ and $\sff_i$ for $1\leq i\leq \ttt-\ttr$ and $\ttt-\tts+1\leq i\leq \ttt$.
It acts on the remaining $\sfe_i$'s as follows:
\begin{equation*}
\begin{aligned}
\xymatrix{
\sfe_{\ttt-\ttr+1}\ar@/^1pc/[rrr]&
\cdots\ar@/^/[l]&
\sfe_{\ttt-\tth}\ar@/^/[l]&
\sfe_{\ttt-\tth+1}\ar@/^/[r]&
\cdots\ar@/^/[r]&
\sfe_{\ttt-\tts}\ar[dlll]\\
\sff_{\ttt-\ttr+1}\ar@/_1pc/[rrr] &
\cdots \ar@/_/[l]&
\sff_{\ttt-\tth}\ar@/_/[l] &
\sff_{\ttt-\tth+1}\ar@/_/[r]&
\cdots\ar@/_/[r]&
\sff_{\ttt-\tts}\ar[ulll]|!{[u];[lll]}\hole
}
\end{aligned}
\end{equation*}
Moreover, $\sfw_{\ttr\tts}$ is the minimal representative element in the double coset $W_{\ttr\tts}\sfw_{\ttr\tts}W_{\ttr\tts}$.
\item The element $\sfw'_{\ttr\tts}$ stabilizes $\sfe_i$ and $\sff_i$ for $1\leq i\leq \ttt-\ttr$ and $\ttt-\tts+1\leq i\leq \ttt$.
It acts on the remaining $\sfe_i$'s as follows:
\begin{equation*}
\begin{aligned}
\xymatrix{
\sfe_{\ttt-\ttr+1}\ar@/^1pc/[rrr]&
\cdots\ar@/^/[l]&
\sfe_{\ttt-\tth}\ar@/^/[l]&
\sfe_{\ttt-\tth+1}\ar@/^/[r]&
\sfe_{\ttt-\tth+2}\ar@/^/[r]&
\cdots\ar@/^/[r]&
\sfe_{\ttt-\tts}\ar@/^1pc/[llll]
}
\end{aligned}
\end{equation*}
We have the same action on the remaining $\sff_i$'s.
Moreover, $\sfw'_{\ttr\tts}$ is the minimal representative element in the double coset $W_{\ttr\tts}\sfw'_{\ttr\tts} W_{\ttr\tts}$.
\end{altenumerate}
\end{lemma}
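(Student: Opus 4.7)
The plan is to verify each of the three parts by a direct combinatorial computation in the Weyl group of type $C_\ttt$. Each part contains two assertions: the explicit permutation of basis vectors induced by the given Weyl element, and its minimality among the elements of its double coset. For the action statements, I would apply the product of simple reflections from right to left, using the elementary rules: $\sfs_i$ with $i<\ttt$ transposes indices $i$ and $i+1$ simultaneously among the $\sfe$'s and the $\sff$'s; $\sfs_\ttt$ swaps $\sfe_\ttt\leftrightarrow \sff_\ttt$; and $\sfg_i$ swaps $\sfe_i\leftrightarrow \sff_i$ and fixes all other basis elements (this last fact being a one-line verification from the defining expression of $\sfg_i$, where each end of the palindromic word contributes the movement of $\sfe_i$ out and back).

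The key computational building block is that the right-to-left product $\sfs_{a-1}\sfs_{a-2}\cdots \sfs_b$ (with $b<a$) performs a cyclic shift on the block of indices $\{b,b+1,\ldots,a\}$: it sends $\sfe_b\mapsto \sfe_a$ and $\sfe_{b+j}\mapsto \sfe_{b+j-1}$ for $1\le j\le a-b$, with the analogous action on the $\sff$'s. Instantiating this lemma with the subwords appearing in $\sfw_{\ttr\tth}$, $\sfw_{\ttr\tts}$, and $\sfw'_{\ttr\tts}$, and composing with the pivot swap $\sfg_{\ttt-\tth}$ or $\sfg_{\ttt-\tts}$ in parts (1) and (2) or with the shift $\sfs_{\ttt-\tth}$ in part (3), I recover exactly the cycle diagrams stated in the lemma. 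For the minimality claim, I would invoke the standard criterion: $w$ is the minimum length representative of $W_I w W_J$ precisely when $w^{-1}(\alpha)>0$ for every simple $\alpha\in I$ and $w(\beta)>0$ for every simple $\beta\in J$. The structural observation is that in all three parts the element $w$ and its inverse fix each $\sfe_i$ and each $\sff_i$ for $i$ outside the active window $[\ttt-\ttr+1,\ttt-\tts]$; since the simple reflections in $I_{\ttr\tts}$ are by definition those $\sfs_j$ with $j\notin \{\ttt-\ttr,\ldots,\ttt-\tts\}$, their corresponding simple roots $\alpha_j=e_j-e_{j+1}$ (or $\alpha_\ttt=2e_\ttt$) involve only coordinates in the fixed range and are therefore preserved as positive roots. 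This immediately yields the minimality in each case.

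The main obstacle is purely bookkeeping: one must track the many index ranges carefully through the sequential application of reflections, and handle the degenerate regimes (such as $\tts=\tth$, $\ttr=\tth+1$, $\tts=0$, or the borderline where the window touches $\sfs_\ttt$) in which the defining products collapse and the cycle diagrams shorten. None of these edge cases presents genuine difficulty, but each requires an independent verification to ensure that the reduction to the generic cyclic-shift pattern remains valid. Once this uniform bookkeeping is in place, parts (1) and (3) follow as special or simplified instances of the computation carried out for part (2).
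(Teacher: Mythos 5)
Your plan is correct: the paper itself leaves this lemma as a straightforward verification (it is stated with no written proof), and your right-to-left computation with the cyclic-shift building block, together with the standard criterion that an element which is left- and right-reduced with respect to $W_{I_{\ttr\tts}}$ is the minimal double coset representative, is exactly the intended direct check. Your observation that the simple roots attached to $I_{\ttr\tts}$ involve only coordinates fixed by $\sfw_{\ttr\tts}$ (resp.\ $\sfw'_{\ttr\tts}$) and its inverse does yield the minimality claims in all three parts, so nothing essential is missing.
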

\begin{proof}
The actions of $w_{\ttr\tts}$ and $w_{\ttr\tts}'$ can be checked via direct and explicit computation so we leave them to the reader.
Now we sketch the proof that
$w_{\ttr\tts}$ has minimal length among the elements of $W_{\ttr\tts}w_{\ttr\tts}W_{\ttr\tts}$ (see \cite[Lemma 5.15]{HZBasic} for more details). By \cite[Proposition 5.13]{HZBasic}, the length $\ell(w)$ for $w\in W$ is equal to $\mathrm{inv}(w)$, the number of inversions of $w$. To explain this, we use the bijection 
$$
\begin{aligned}
\left\{e_1, \ldots, e_d\right\} & \rightarrow\{1, \ldots, d\}, \\
e_i & \mapsto d+1-i,
\end{aligned}
$$
and 
$$
\begin{aligned}
\left\{f_{1}, \ldots, f_{d}\right\} & \rightarrow\{-d, \ldots,-1\}, \\
f_{i} & \mapsto-d-1+i
\end{aligned}
$$
to identify $\{f_1,\ldots,f_d,e_d,\ldots,e_1\}$ with $\{-d,\ldots,-1,1,\ldots,d\}$ so that $\{-d,\ldots,-1,1,\ldots,d\}$ has a corresponding action of $W$. Now by \cite[\S 5.3.2]{HZBasic}, we have 
\begin{align*}
    \mathrm{inv}(w)=\mathrm{inv}^+(w)+\mathrm{inv}^-(w),
\end{align*}
where we define
\begin{align*}
    \mathrm{inv}^+(w)= \sharp 
\left\{(i, j) \in\{1, \ldots, \ttt\}^2: i<j, w(i)>w(j)\right\},
\end{align*}
and
\begin{align*}
    \mathrm{inv}^-(w)=\sharp 
\left\{(i, j) \in\{1, \ldots, \ttt\}^2: i\le j, w(-i)>w(j)\right\}.
\end{align*}

On the one hand, $\sfw_{\ttr\tts}$ stabilizes $\sfe_i$ and $\sff_i$ for $1\leq i\leq \ttt-\ttr$ and $\ttt-\tts+1\leq i\leq \ttt$. On the other hand, $W_{\ttr\tts}$ is generated by $s_{i}$ in 
\begin{align*}
    I_{\ttr\tts}:=\{\sfs_1,\cdots,\sfs_{\ttt-\ttr-1},\sfs_{\ttt-\tts+1},\cdots,\sfs_\ttt\}.
\end{align*}
So $W_{\ttr\tts}$ only acts on $\sfe_i$ and $\sff_i$ for $1\leq i\leq \ttt-\ttr$ and $\ttt-\tts+1\leq i\leq \ttt$. Therefore, for any $w,w'\in W_{\ttr\tts}$, we have the inequality
\begin{align*}
    \mathrm{inv}(w w_{\ttr\tts}w')\ge  \mathrm{inv}( w_{\ttr\tts})
\end{align*}
which completes the proof.

The statements about the minimal length in (1) and (3) can be proved similarly, so we omit the details.
\end{proof}

The main result of this subsection is the following:
\begin{theorem}\label{thm:symplectic main}
We have the following stratification: 
\begin{equation}\label{eq:symplectic stratification}
     S_\Lambda=
    \Bigl(\coprod_{0\leq j\leq \tth< i\leq \ttt}X_{P_{ij}}(\sfw_{ij})\Bigr)\amalg
    \Bigl(\coprod_{0\leq j<\tth<i\leq \ttt}X_{P_{ij}}(\sfw'_{ij})\Bigr)
    \amalg X_{P_{\tth\tth}}(\id),
\end{equation}
such that for each  $0\leq \tts \leq \tth\leq \ttr\leq \ttt$, we have the following closure relations:
\begin{align}
\ov{X_{P_{\ttr\tts}}(\sfw_{\ttr\tts})}={}&\Bigl(\coprod_{0\leq j\leq \tts\leq \tth< i\leq \ttr}X_{P_{ij}}(\sfw_{ij})\Bigr)\amalg
    \Bigl(\coprod_{\tts\leq j <\tth<i\leq \ttr}X_{P_{ij}}(\sfw'_{ij})\Bigr)
    \amalg X_{P_{\tth\tth}}(\id);\label{eq:symplectic closure relation 1}\\
\ov{X_{P_{\ttr\tts}}(\sfw'_{\ttr\tts})}={}&\Bigl(\coprod_{\tts\leq j<\tth<i\leq \ttr}X_{P_{ij}}(\sfw'_{ij})\Bigr)
    \amalg X_{P_{\tth\tth}}(\id).\label{eq:symplectic closure relation 2}
\end{align}
Moreover, $S_\Lambda$ is irreducible and normal of dimension $\ttt+\tth$. 
\end{theorem}

\begin{proof}
For $\tts\leq \tth< \ttr$, define a locally closed subvariety $S_{\ttr\tts}$ of $G/P_{\ttr\tts}$ by specifying its $k$-points for any field $k$ over $\F$
\begin{equation*}
S_{\ttr\tts}(k):=\left\{\begin{array}{r}
(\cF_{\ttt-\ttr}\subset \ldots \subset \cF_{\ttt-\tts})\\
\in  G/P_{\ttr\tts}(k)
\end{array}\left\vert
\begin{array}{l}\cF_{i}=\cF_{i+1}\cap \Phi(\cF_{i+1})\text{ for }\ttt-\ttr\leq i\leq \ttt-\tth-1;\\
\cF_{j+1}=\cF_{j}+\Phi(\cF_{j})\text{ for }\ttt-\tth \leq j \leq \ttt-\tts-1;\\
\cF_{\ttt-\ttr}=\Phi(\cF_{\ttt-\ttr})\text{ and }\cF_{\ttt-\tts}+\Phi(\cF_{\ttt-\tts})\text{ is non-isotropic}.
\end{array}\right\}\right..
\end{equation*}
For $\tts< \tth< \ttr$, define a locally closed subvariety $S'_{\ttr\tts}$ of $G/P_{\ttr\tts}$ by specifying its $k$-points for any field $k$ over $\F$
\begin{equation*}
S'_{\ttr\tts}(k):=\left\{\begin{array}{r}
(\cF_{\ttt-\ttr}\subset \ldots \subset \cF_{\ttt-\tts})\\
\in  G/P_{\ttr\tts}(k)
\end{array}\left\vert
\begin{array}{l}\cF_{i}=\cF_{i+1}\cap \Phi(\cF_{i+1})\text{ for }\ttt-\ttr\leq i\leq \ttt-\tth-1;\\
\cF_{j+1}=\cF_{j}+\Phi(\cF_{j})\text{ for }\ttt-\tth \leq j \leq \ttt-\tts-1;\\
\cF_{\ttt-\ttr}=\Phi(\cF_{\ttt-\ttr})\text{ and }\cF_{\ttt-\tts}=\Phi(\cF_{\ttt-\tts}).
\end{array}\right\}\right..
\end{equation*}
Finally it is clear that $X_{P_{\tth\tth}}(\id)$ is the subvariety of $G/P_{\tth\tth}$ whose $k$-points are the finite set 
\[X_{P_{\tth\tth}}(\id)(k)=\{\cF_{\ttt-\tth}\in G/P_{\tth\tth}(k)\mid \cF_{\ttt-\tth}= \Phi(\cF_{\ttt-\tth})
\}. \]
We claim that:
\begin{equation}\label{eq:fake-symplectic stratification}
     S_\Lambda=
    \Bigl(\coprod_{0\leq j\leq \tth< i\leq \ttt}S_{\ttr\tts}\Bigr)\amalg
    \Bigl(\coprod_{0\leq j<\tth<i\leq \ttt}S'_{\ttr\tts}\Bigr)
    \amalg X_{P_{\tth}}(\id).
\end{equation}
Assuming \eqref{eq:fake-symplectic stratification}, the decomposition \eqref{eq:symplectic stratification} now follows from Proposition \ref{prop:moduli interpretation w_r,s}, and Proposition \ref{prop:moduli interpretation w'_r,s} below. 
The closure relations \eqref{eq:symplectic closure relation 1} and \eqref{eq:symplectic closure relation 2} now follow from the moduli interpretations of $S_{\ttr\tts}$ and $S'_{\ttr\tts}$. In particular, we see that $X_{P_{\ttt\tth}}(\sfw_{\ttt\tth})$ is an open dense subvariety of $S_\Lambda$, hence the statements about the irreducibility and dimension of $S_\Lambda$ follow from the corresponding statements for $X_{P_{\ttt\tth}}(\sfw_{\ttt\tth})$. Then Proposition \ref{prop:DL-prop} implies the irreducibility of $X_{P_{\ttt\tth}}(\sfw_{\ttt\tth})$ since $\sfw_{\ttt\tth}$ is a long word that contains every $s_i$ so that condition (2) of Proposition \ref{prop:DL-prop} is satisfied. 
The dimension of $X_{P_{\ttt\tth}}(\sfw_{\ttt\tth})$ follows from Proposition \ref{prop:moduli interpretation w_r,s}. The normality of $S_\Lambda$ follows from the normality of Schubert varieties and the G\"ortz local model diagram \eqref{equ:gortz-lm}.

In the remaining part of the proof, we prove \eqref{eq:fake-symplectic stratification}.
For any point $z=(\cF_{\ttt-\tth})\in S_\Lambda(k)$, consider the flag 
\[(\cF_i)_{i\in \Z}:=
\bigl(\ldots \subseteq \cF_{\ttt-\tth-1}\subseteq\cF_{\ttt-\tth}\subseteq \cF_{\ttt-\tth+1}\subseteq \ldots\bigr), \]
defined such that for $i<\ttt-\tth$, $\cF_i=\cF_{i+1}\cap \Phi(\cF_{i+1})$ and for $i>\ttt-\tth$, $\cF_i=\cF_{i-1}+ \Phi(\cF_{i-1})$.
Since $\Omega_\Lambda$ is a finite dimensional vector space, the flag $(\cF_i)_{i\in \Z}$ stabilizes at both ends. 

Let $\ttr\geq \tth$ be the unique integer such that $\cF_{\ttt-\ttr-1}=\cF_{\ttt-\ttr}\subsetneq \cF_{\ttt-\ttr+1}$. Similarly, let $\tts\leq \tth$ be the unique integer such that one of the following two situations occurs:
\begin{enumerate}
    \item[(a)] $\cF_{\ttt-\tts-1}\subsetneq \cF_{\ttt-\tts}$, $\cF_{\ttt-\tts}$ is isotropic, and $\cF_{\ttt-\tts+1}$ is anisotropic.
    \item[(b)] $\cF_{\ttt-\tts-1}\subsetneq \cF_{\ttt-\tts}=\cF_{\ttt-\tts+1}$ and $\cF_{\ttt-\tts}$ is isotropic.
\end{enumerate}
Depending on whether the flag $(\cF_i)_{i\in \Z}$ stabilizes first or becomes anisotropic first at the right end, exactly one of the above situations will occur. 

Consider the isotropic flag $(\cF_{\ttt-\ttr}\subset \ldots \subset \cF_{\ttt-\tts})$, we claim that
$$
\dim_k \cF_i=i\quad\text{for}\quad\ttt-\ttr\leq i \leq \ttt-\tts.
$$
Indeed, for any $\ttt-\ttr<i<\ttt-\tth$, we have 
\[ [\cF_i:\cF_{i-1}]=[\cF_i:\cF_{i}\cap\Phi(\cF_i)]=[\Phi(\cF_i)+\cF_i:\Phi(\cF_i)]=[\cF_i+\Phi^{-1}(\cF_i):\cF_i],  \]
where for any $\cF\subset\cG$, we denote by $[\cF:\cG]:=\dim_k \cF/\cG$.
Since $\cF_i=\cF_{i+1}\cap\Phi(\cF_{i+1})$, we have $\cF_i+\Phi^{-1}(\cF_i)\subseteq\cF_{i+1}$. Hence we obtain 
\[[\cF_i:\cF_{i-1}] \leq [\cF_{i+1}:\cF_{i}]\leq \ldots \leq [\cF_{\ttt-\tth}:\cF_{\ttt-\tth-1}]\leq 1, \]
where the last inequality follows from the definition of $S_\Lambda$. 

Since by assumption $\cF_i\neq \cF_{i-1}$ for any $\ttt-\ttr< i\leq \ttt-\tth$, it follows that $[\cF_i:\cF_{i-1}]=1$, and hence $\dim_k \cF_{i-1}=i-1$. 
Similarly $\dim_k \cF_i=i$ for $\ttt-\tth< i \leq \ttt-\tts$. In other words the isotropic flag $(\cF_{\ttt-\ttr}\subset \ldots \subset \cF_{\ttt-\tts})$ defines a point in $G/P_{\ttr\tts}(k)$. 

Now if situation (a) occurs for $\cF_{\ttt-\tts}$, then $(\cF_{\ttt-\ttr}\subset \ldots \subset \cF_{\ttt-\tts})$ lies in $S_{\ttr\tts}(k)$. If situation (b) occurs, then $(\cF_{\ttt-\ttr}\subset \ldots \subset \cF_{\ttt-\tts})$ lies in $S'_{\ttr\tts}(k)$, resp. in $X_{P_{\tth}}(\id)(k)$, if in addition $\ttr=\tts=\tth$. Conversely, any point $z=(\cF_{\ttt-\ttr}\subset \ldots \subset \cF_{\ttt-\tts})$ in $S_{\ttr\tts}(k)$ or $S'_{\ttr\tts}(k)$ or $X_{P_{\tth}}(\id)(k)$ gives rise to a point $\cF_{\ttt-\tth}\in S_\Lambda$, from which the point $z$ can be recovered from $\cF_{\ttt-\tth}$ by the above procedure.
This proves \eqref{eq:fake-symplectic stratification}.
\end{proof}

\begin{remark}\label{rmk:first decomp}
Let
$$\sfw_{\Lambda}:=\sfs_{\ttt-\tth} \sfs_{\ttt-\tth+1} \cdots \sfs_{\ttt-1} \sfs_\ttt \sfs_{\ttt-1} \cdots \sfs_{\ttt-\tth+1} \sfs_{\ttt-\tth}
\quad\text{and}\quad
\sfw_{\Lambda}':=\sfs_{\ttt-\tth}.$$
One can verify that (cf. the proof of Proposition \ref{prop:moduli interpretation w_r,s})
\begin{equation}\label{equ:symplectic-first-step}
    S_\Lambda=X_{P_{\tth\tth}}(\sfw_{\Lambda})\amalg X_{P_{\tth\tth}}(\sfw'_{\Lambda}) \amalg X_{P_{\tth\tth}}(\id),
\end{equation}
where we can describe these DL varieties as follows:
\begin{align*}
X_{P_{\tth\tth}}(\sfw_{\Lambda})={}&\{
\cV\in S_\Lambda\mid \cV\neq \Phi(\cV)\text{ and }\cV+\Phi(\cV)\text{ is not isotropic}\};\\
X_{P_{\tth\tth}}(\sfw'_{\Lambda})={}&\{
\cV\in S_\Lambda\mid \cV\neq \Phi(\cV)\text{ and }\cV+\Phi(\cV)\text{ is isotropic}
\};\\
X_{P_{\tth\tth}}(\id)={}&\{
\cV\in S_\Lambda\mid \cV= \Phi(\cV)
\}.
\end{align*}
We have the following decomposition, which refines the stratification \eqref{eq:symplectic stratification}:
\begin{itemize}
\item $\displaystyle X_{P_{\tth\tth}}(\sfw_{\Lambda})=\coprod_{i=h+1}^{\ttt}X_{P_{i\tth}}(\sfw_{i\tth})$; 
\item $\displaystyle X_{P_{\tth\tth}}(\sfw'_{\Lambda})= \Bigl(\coprod_{0\leq j<\tth< i\leq \ttt}X_{P_{ij}}(\sfw_{ij})\Bigr)\amalg
    \Bigl(\coprod_{0\leq j<\tth<i\leq \ttt}X_{P_{ij}}(\sfw'_{ij})\Bigr)$.
\end{itemize}
This result can be established through two approaches: either via a group-theoretical argument as shown in \cite[Thm. 5.4]{HZBasic}, or through the moduli descriptions given in Propositions \ref{prop:moduli interpretation w_r,s} and \ref{prop:moduli interpretation w'_r,s}.

This decomposition corresponds to the stratification of \emph{fine DL varieties}, as discussed in \cite[\S 2.3]{He-Li-Zhu}. In our context, these strata themselves are Deligne--Lusztig varieties, as demonstrated in \cite[Thm. 2.1.7(1)]{Hoeve}. This provides an alternative proof of Theorem \ref{thm:symplectic main}, though we will not elaborate on this approach here.
When $\tth=0$, the component $X_{P_{\tth\tth}}(\sfw'_{\Lambda})$ is absent, which is evident both from the indices in \eqref{eq:symplectic closure relation 1} and from the moduli description in Proposition \ref{prop:moduli interpretation w'_r,s}.
\end{remark}

To complete the proof of Theorem \ref{thm:symplectic main}, it remains to state and prove Proposition \ref{prop:moduli interpretation w_r,s} and Proposition \ref{prop:moduli interpretation w'_r,s} below.

\begin{proposition}\label{prop:moduli interpretation w_r,s}
For any integers $0\leq \tts\leq \tth<\ttr\leq \ttt$, the DL variety $X_{P_{\ttr\tts}}(\sfw_{\ttr\tts})$ is the subvariety of $G/P_{\ttr\tts}$ whose $k$-points for any field $k$ over $\F$ are characterized by:
\begin{equation*}
X_{P_{\ttr\tts}}(\sfw_{\ttr\tts})(k):=\left\{\begin{array}{r}
(\cF_{\ttt-\ttr}\subset \ldots \subset \cF_{\ttt-\tts})\\
\in  G/P_{\ttr\tts}(k)
\end{array}\left\vert
\begin{array}{l}\cF_{i}=\cF_{i+1}\cap \Phi(\cF_{i+1})\text{ for }\ttt-\ttr\leq i\leq \ttt-\tth-1;\\
\cF_{j+1}=\cF_{j}+\Phi(\cF_{j})\text{ for }\ttt-\tth \leq j \leq \ttt-\tts-1;\\
\cF_{\ttt-\ttr}=\Phi(\cF_{\ttt-\ttr})\text{ and }\cF_{\ttt-\tts}+\Phi(\cF_{\ttt-\tts})\text{ is non-isotropic}.
\end{array}\right\}\right..
\end{equation*}
Its dimension is $\ttr+\tts$.
\end{proposition}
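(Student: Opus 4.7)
My plan is to identify $X_{P_{\ttr\tts}}(\sfw_{\ttr\tts})$ with the subvariety cut out by the displayed moduli conditions via the G\"ortz local model diagram \eqref{equ:gortz-lm}, and then to extract the dimension from Proposition \ref{prop:DL-prop}(1). Since $\Sp(\Omega)$ is split over $\F_q$, the Frobenius $\Phi$ acts trivially on the Weyl group and $P_{\ttr\tts}$ is $\Phi$-stable; hence for $g\in G$ the condition $g^{-1}\Phi(g)\in P_{\ttr\tts}\sfw_{\ttr\tts}P_{\ttr\tts}$ is equivalent to the pair of partial flags $\bigl(g\sF_\bullet,\Phi(g\sF_\bullet)\bigr)$ lying in the $G$-orbit of $(\sF_\bullet,\sfw_{\ttr\tts}\sF_\bullet)$. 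By Lemma \ref{lem:rel-pos}(2), $\sfw_{\ttr\tts}$ is the minimal representative of its $W_{\ttr\tts}$-double coset, so this orbit condition depends only on $\cF_\bullet:=g\sF_\bullet$ and its Frobenius twist.

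Next I would translate this relative-position condition into the moduli conditions by computing the dimensions $\dim_k(\cF_i\cap\Phi(\cF_j))$ for $\ttt-\ttr\leq i,j\leq \ttt-\tts$. Reading off the permutation action in Lemma \ref{lem:rel-pos}(2): (a) the fixed points $\sfe_1,\ldots,\sfe_{\ttt-\ttr}$ give $\sF_{\ttt-\ttr}=\sfw_{\ttr\tts}\sF_{\ttt-\ttr}$, equivalent to $\cF_{\ttt-\ttr}=\Phi(\cF_{\ttt-\ttr})$; (b) the cyclic shift on the block $\{\sfe_{\ttt-\ttr+1},\ldots,\sfe_{\ttt-\tth}\}$ forces $\dim(\sF_i\cap\sfw_{\ttr\tts}\sF_{i+1})=i$ for $\ttt-\ttr\leq i\leq \ttt-\tth-1$, equivalent to $\cF_i=\cF_{i+1}\cap\Phi(\cF_{i+1})$; (c) on $\{\sfe_{\ttt-\tth+1},\ldots,\sfe_{\ttt-\tts}\}$ the element $\sfw_{\ttr\tts}$ sends each basis vector into the $\sff$-side, yielding $\dim(\sF_{j+1}+\sfw_{\ttr\tts}\sF_{j+1})=j+2$ for $\ttt-\tth\leq j\leq \ttt-\tts-1$, equivalent to $\cF_{j+1}=\cF_j+\Phi(\cF_j)$; (d) finally, the factor $\sfg_{\ttt-\tts}$ swaps $\sfe_{\ttt-\tts}\leftrightarrow\sff_{\ttt-\tts}$, forcing the plane $\mathrm{Span}(\sfe_{\ttt-\tts},\sff_{\ttt-\tts})\subset \sF_{\ttt-\tts}+\sfw_{\ttr\tts}\sF_{\ttt-\tts}$ to be non-isotropic, equivalent to non-isotropy of $\cF_{\ttt-\tts}+\Phi(\cF_{\ttt-\tts})$. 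Each equivalence is reversible because both sides pin down the full intersection-dimension data that determines the relative position.

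For the dimension count, I observe that $\sfw_{\ttr\tts}$ stabilizes $\sfe_i,\sff_i$ for $i\leq \ttt-\ttr$ and $i\geq \ttt-\tts+1$, and in particular fixes $\sfe_\ttt,\sff_\ttt$, so it commutes with every simple reflection in $I_{\ttr\tts}$ including $\sfs_\ttt$. Therefore $I_{\ttr\tts}\cap {}^{\sfw_{\ttr\tts}}I_{\ttr\tts}=I_{\ttr\tts}$, and Proposition \ref{prop:DL-prop}(1) specializes to $\dim X_{P_{\ttr\tts}}(\sfw_{\ttr\tts})=\ell(\sfw_{\ttr\tts})$. A direct word-length count from the definition, using $\ell(\sfg_{\ttt-\tts})=2\tts+1$, gives $\ell(\sfw_{\ttr\tts})=(\tth-\tts)+(2\tts+1)+(\ttr-\tth-1)=\ttr+\tts$.

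The main obstacle is the bookkeeping in the second step: verifying that each dimension statement arising from the explicit permutation action of $\sfw_{\ttr\tts}$ corresponds to exactly one displayed moduli condition, and that these conditions together force the relative position to be $\sfw_{\ttr\tts}$ rather than a larger union of Bruhat cells. The minimality of $\sfw_{\ttr\tts}$ in its $W_{\ttr\tts}$-double coset guaranteed by Lemma \ref{lem:rel-pos}(2) is what secures this uniqueness.
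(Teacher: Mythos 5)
The broad outline of your proposal—translate the condition $g^{-1}\Phi(g)\in P_{\ttr\tts}\sfw_{\ttr\tts}P_{\ttr\tts}$ into a relative-position statement for the pair $(\cF_\bullet,\Phi(\cF_\bullet))$, read off the moduli conditions from the explicit permutation action of $\sfw_{\ttr\tts}$ in Lemma~\ref{lem:rel-pos}(2), and compute the dimension from Proposition~\ref{prop:DL-prop}(1) via the fact that $\sfw_{\ttr\tts}$ normalizes $W_{\ttr\tts}$—is essentially the paper's approach, and your dimension argument is correct. However, there is a genuine gap in the converse inclusion: you need to show that a point satisfying the displayed moduli conditions has relative position \emph{exactly} $\sfw_{\ttr\tts}$, and your argument for this does not work.

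The issue is your claim that ``both sides pin down the full intersection-dimension data,'' supposedly secured by the minimality of $\sfw_{\ttr\tts}$ in its double coset. But minimality of the representative is a normalization convention; it says nothing about whether the four displayed conditions—which only control $\dim(\cF_i\cap\Phi(\cF_{i+1}))$ for consecutive indices in one range, $\cF_j+\Phi(\cF_j)$ for consecutive indices in the other range, $\Phi$-stability at the bottom, and (an)isotropy at the top—actually determine $\dim(\cF_i\cap\Phi(\cF_j))$ for \emph{all} pairs $(i,j)$, together with the isotropy invariants that pin down an orbit of $\Sp(\Omega)$ on pairs of isotropic flags. Without that, the moduli conditions could a priori cut out a union of several orbits. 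The paper closes this gap by an explicit, three-step construction: starting from a point of the moduli functor it builds a common ``staircase'' basis $u_1,\ldots,u_{\ttt-\tts+1}$ spanning every $\cF_i$ and every $\Phi(\cF_j)$ (the key inductive input is that $\cF_i=\cF_{i+1}\cap\Phi(\cF_{i+1})$ forces $\Phi(\cF_{i+1})=\cF_i\oplus\langle u_{\ttt-\tth+1}\rangle$ for all $i$ in the descending range), uses the isotropy/anisotropy hypotheses to identify the pairings $\langle u_i,u_j\rangle$, and then invokes Witt's theorem to extend the resulting partial isometry to an element $g\in\Sp(\Omega)$ with $g\sF_\bullet=\cF_\bullet$ and $\Phi(g)\sF_\bullet=g\sfw_{\ttr\tts}\sF_\bullet$. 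It is this explicit exhibition of $g$—not any abstract uniqueness principle—that verifies the relative position, and the appeal to Witt's theorem is essential because intersection dimensions alone do not classify orbits on pairs of isotropic flags in a symplectic space. Your plan would become a proof if you carried out this basis construction (or an equivalent Schubert-cell computation verifying all the intersection dimensions), but as written the crucial step is asserted rather than proved.
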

\begin{proof}
We start with the following general observation.
Let $gP\in X_P(w)$ be any point in the DL variety, we have $g^{-1}\Phi(g)\in Pw\Phi(P)$, in particular, we can find $p_1,p_2\in P$ such that $\Phi(g)=gp_1w\Phi(p_2)$.
Let $g_0=gp_1$, then we have 
\begin{equation*}
    \Phi(g_0)=g_0w\Phi(p_2)\Phi(p_1)=g_0 w\Phi(p_2p_1).
\end{equation*}
Since $g_0P=gP$, we conclude that:
\begin{equation}\label{equ:moduli}
gP\in X_P(w)\quad\text{if and only if}\quad
\exists g_0\in gP\text{ such that  }\Phi(g_0)\Phi(P)=g_0w\Phi(P).
\end{equation}

In particular, by identifying the right coset $gP$ with partial flags $(\cF_{\ttt-\ttr}\subset \ldots \subset \cF_{\ttt-\tts})$, we can use Lemma \ref{lem:rel-pos}(1)(2) and equation \eqref{equ:moduli} to show that any partial flag $(\sfF_i)=g_0P_{\ttr\tts}\in X_{P_{\ttr\tts}}(\sfw_{\ttr\tts})(k)$ belongs to the moduli functor defined in the assertion.
Conversely, consider a partial flag $(\sfF_i)=g_0P_{\ttr\tts}$ in $\Sp(\Omega_\Lambda)/P_{\ttr\tts}(k)$ that defines a point in the moduli functor. We will prove that this flag lies in $X_{P_{\ttr\tts}}(\sfw_{\ttr\tts})(k)$.

We find a common basis for all $\cF_i$ and $\Phi(\cF_j)$ in the following three steps:
\begin{altenumerate}
\item[\emph{Step 1.}\,] We choose vectors $u_i\in \sfF_{\ttt-\tth}$ for $1\leq i\leq \ttt-\tth$ such that $\sfF_{\ttt-\ttr}=\mathrm{Span}_{k}(u_1,\cdots,u_{\ttt-\ttr})$ and that
\begin{equation*}
\sfF_{i}=\sfF_{i-1}\oplus \langle u_i\rangle\quad\text{for all}\quad\ttt-\ttr\leq i\leq \ttt-\tth.
\end{equation*}  

\item[\emph{Step 2.}\,] For each $\ttt-\ttr+1\leq i\leq \ttt-\tth-1$, we have $\Phi(\sfF_i)\stackrel{1}{\subset}\Phi(\sfF_{i+1}).$
For each $\ttt-\ttr\leq i\leq \ttt-\tth-1$, the equality $\sfF_i=\sfF_{i+1}\cap \Phi(\sfF_{i+1})$ implies that 
\begin{equation}\label{equ:vector-f-correct}
    \sfF_i=\sfF_{i+1}\cap\Phi(\sfF_{i+1})\stackrel{1}{\subset}\Phi(\sfF_{i+1}).
\end{equation}
These inclusions yield:
\begin{equation}\label{equ:vector-f}
    \Phi(\sfF_{i+1})=\sfF_i+\Phi(\sfF_i),\quad\text{for all}\quad\ttt-\ttr+1\leq i\leq \ttt-\tth-1.
\end{equation}
Therefore, we can find a vector $u_{\ttt-\tth+1}\in\Phi(\sfF_{\ttt-\ttr})$ such that:
\begin{equation*}
    \Phi(\sfF_{\ttt-\ttr})=\sfF_{\ttt-\ttr-1}\oplus\langle u_{\ttt-\tth+1}\rangle.
\end{equation*}
We have $u_{\ttt-\tth+1}\in \Phi(\sfF_{i})$ for all $\ttt-\ttr\leq i\leq \ttt-\tth$. By \eqref{equ:vector-f-correct} and induction, we have $u_{\ttt-\tth+1}\notin \sfF_i$ for any $\ttt-\ttr\leq i\leq \ttt-\tth$, and that: 
\begin{equation*}
    \Phi(\sfF_{i+1})=\sfF_{i}\oplus\langle u_{\ttt-\tth+1}\rangle,\quad\text{for all}\quad\ttt-\ttr\leq i\leq \ttt-\tth-1.
\end{equation*}

\item[\emph{Step 3.}\,] Finally, for each $\ttt-\tth+2\leq j\leq \ttt-\tts-1$ (if $\tts=\tth$, we skip this step), we denote by 
\begin{equation*}
    u_j:=\Phi(u_{j-1})=\Phi^{j-(\ttt-\tth+1)}(u_{\ttt-\tth+1}).
\end{equation*}
Since $\cF_{j}=\cF_{j-1}+\Phi(\cF_{j-1})$, by induction, we have
\begin{equation*}
\cF_j=\mathrm{Span}_{k}(u_1,\cdots,u_j)\quad\text{and}\quad
    \Phi(\cF_j)=\mathrm{Span}_{k}(u_1,\cdots,u_{\ttt-\tth-1},u_{\ttt-\tth+1},\cdots,u_{j+1}).
\end{equation*}
\end{altenumerate}
To summarize, we have formed a set of vectors $u_i$ such that
$$\cF_i=\mathrm{Span}_{k}(u_1,\cdots,u_i), \quad\text{for all}\quad \ttt-\ttr\leq i\leq \ttt-\tts,$$ 
and that for any $\ttt-\ttr\leq j\leq \ttt-\tts$, we have
\begin{equation*}
\Phi(\cF_j)=\left\{\begin{array}{ll}
\mathrm{Span}_{k}(u_1,\cdots,u_{j},u_{\ttt-\tth+1}),& \ttt-\ttr\leq j\leq \ttt-\tth;\\
\mathrm{Span}_{k}(u_1,\cdots,u_{\ttt-\tth-1},u_{\ttt-\tth+1},\cdots,u_{j+1}),&\ttt-\tth+1\leq j\leq \ttt-\tts.\\
\end{array}\right.
\end{equation*}

Notice that:
\begin{altitemize}
\item Since the subspace $\cF_{\ttt-\tts}$ is isotropic, we have
\begin{equation*}
    \langle u_i,u_j\rangle=0\quad\text{for all}\quad 1\leq i,j\leq \ttt-\tts.
\end{equation*}
\item Since the subspace $\Phi(\cF_{\ttt-\tts})$ is isotropic, we have
\begin{equation*}
    \langle u_i,u_{\ttt-\tts+1}\rangle=0\quad\text{for all}\quad 1\leq i\neq \ttt-\tth\leq \ttt-\tts.
\end{equation*}
\item Since $\cF_{\ttt-\tts}+\Phi(\cF_{\ttt-\tts})$ is non-isotropic, we have
\begin{equation}\label{equ:F+phiF-anisotropic}
    \langle u_{\ttt-\tth},u_{\ttt-\tts+1}\rangle\neq 0.
\end{equation}
By normalizing $u_{\ttt-\tts+1}$, we may assume that the pairing equals $1$.
\end{altitemize}

Define a linear map between two subspaces of $\Omega_\Lambda$ by sending
\begin{equation*}
    \sfe_i\mapsto u_i, \quad\text{for }1\leq i\leq \ttt-\tts;
    \quad\text{and}\quad \sff_{\ttt-\tth}\mapsto u_{\ttt-\tts+1}.
\end{equation*}
This is an isometry, and by Witt's theorem, we can find a $g\in G$ that extends this isometry to an isometry of $\Omega_\Lambda$.
For such $g$, we have:\begin{equation*}
\begin{aligned}
\xymatrix{
\Bigl[
\sF_{\ttt-\ttr}\subset\cdots\subset\sF_{\ttt-\tts}
\Bigr]\ar@{~>}[r]^-{g}&\Bigl[
\sfF_{\ttt-\ttr}\subset\cdots\subset \sfF_{\ttt-\tts}
\Bigr],
}
\end{aligned}
\end{equation*}
and
\begin{equation*}
\begin{aligned}
\xymatrix{
\Bigl[
\sfw_{\ttr\tth}(\sF_{\ttt-\ttr})\subset\cdots\subset\sfw_{\ttr\tth}(\sF_{\ttt-\tts})
\Bigr]\ar@{~>}[r]^-{g}&\Bigl[
\Phi(\sfF_{\ttt-\ttr})\subset\cdots\subset \Phi(\sfF_{\ttt-\tts})
\Bigr].
}
\end{aligned}
\end{equation*}
Recall that $\sF_i$ are standard flags defined in \eqref{equ:sym:stand-flag}.
Equivalently, we have $gP_{\ttr\tts}=g_0P_{\ttr\tts}$ and that $\Phi(g)P_{\ttr\tts}=g\sfw_{\ttr\tts}P_{\ttr\tts}$.
By \eqref{equ:moduli}, we have $g_0P_{\ttr\tts}=gP_{\ttr\tts}\in X_{P_{\ttr\tts}}(\sfw_{\ttr\tts})$.

Finally, by Lemma \ref{lem:rel-pos}(1)(2), we see that $\sfw_{\ttr\tts}$ acts trivially on $\sfe_i$ and $\sff_i$ for $1\leq i\leq \ttt-\ttr$ and $\ttt-\tts+1\leq i\leq \ttt$, hence $\sfw_{\ttr\tts} I_{\ttr\tts}=I_{\ttr\tts}\sfw_{\ttr\tts},$. By Proposition \ref{prop:DL-prop}(1), the dimension of the DL variety $X_{P_{\ttr\tts}}(\sfw_{\ttr\tts})$ equals
$\ell(\sfw_{\ttr\tts})=\ttr+\tts$, as desired.
\end{proof}

We have similar statements for the other class of DL varieties:
\begin{proposition}\label{prop:moduli interpretation w'_r,s}
For any integers $0\leq \tts < \tth<\ttr\leq \ttt$, the DL variety $X_{P_{\ttr\tts}}(\sfw'_{\ttr\tts})$ is the subvariety of $G/P_{\ttr\tts}$ whose $k$-points for any field $k$ over $\F$ are characterized by:
\begin{equation*}
X_{P_{\ttr\tts}}(\sfw'_{\ttr\tts})(k):=\left\{\begin{array}{r}
(\cF_{\ttt-\ttr}\subset \ldots \subset \cF_{\ttt-\tts})\\
\in  G/P_{\ttr\tts}(k)
\end{array}\left\vert
\begin{array}{l}\cF_{i}=\cF_{i+1}\cap \Phi(\cF_{i+1})\text{ for }\ttt-\ttr\leq i\leq \ttt-\tth-1;\\
\cF_{j+1}=\cF_{j}+\Phi(\cF_{j})\text{ for }\ttt-\tth \leq j \leq \ttt-\tts-1;\\
\cF_{\ttt-\ttr}=\Phi(\cF_{\ttt-\ttr})\text{ and }\cF_{\ttt-\tts}=\Phi(\cF_{\ttt-\tts}).
\end{array}\right\}\right..
\end{equation*}
Its dimension is $\ttr-\tts-1$.
\end{proposition}
\begin{proof}
It is straightforward to verify that $\sfw'_{\ttr\tts} I_{\ttr\tts}=I_{\ttr\tts}\sfw'_{\ttr\tts}$, which concludes the proof of the dimension formula.
By Lemma \ref{lem:rel-pos}(iii) and \eqref{equ:moduli}, any partial flag $(\sfF_i)=g_0P_{\ttr\tts}\in X_{P_{\ttr\tts}}(\sfw'_{\ttr\tts})(k)$ lies in the moduli functor.
Conversely, suppose $(\sfF_i)=g_0P_{\ttr\tts}$ is a partial flag in $G/P_{\ttr\tts}(k)$ that defines a point in the moduli functor, we show that this flag lies in $X_{P_{\ttr\tts}}(\sfw'_{\ttr\tts})(k)$.

Following with the same procedure as Proposition \ref{prop:moduli interpretation w_r,s}, we can find vectors $u_i\in \Omega_{\Lambda}(k)$ such that
$$\cF_i=\mathrm{Span}_{k}(u_1,\cdots,u_i),\quad\text{for all}\quad\ttt-\ttr\leq i\leq \ttt-\tts,$$ 
and that for any $\ttt-\ttr\leq j\leq \ttt-\tts-1$, we have
\begin{equation*}
\Phi(\cF_j)=\left\{\begin{array}{ll}
\mathrm{Span}_{k}(u_1,\cdots,u_{j},u_{\ttt-\tth+1}),& \ttt-\ttr\leq j\leq \ttt-\tth;\\
\mathrm{Span}_{k}(u_1,\cdots,u_{\ttt-\tth-1},u_{\ttt-\tth+1},\cdots,u_{j+1}),&\ttt-\tth+1\leq j\leq \ttt-\tts-1.\\
\end{array}\right.
\end{equation*}
Since $\cF_{\ttt-\tts}=\cF_{\ttt-\tts-1}+\Phi(\cF_{\ttt-\tts-1})$ is $\Phi$-stable now, we have 
\begin{equation*}
    \Phi(\cF_{\ttt-\tts})=\cF_{\ttt-\tts}=\mathrm{Span}_{k}(u_1,\cdots,u_{\ttt-\tts}).
\end{equation*}
Since $\cF_{\ttt-\tts}$ is isotropic, we conclude that 
$$
\langle u_i,u_j\rangle=0,\quad\text{for all}\quad 1\leq i,j\leq \ttt-\tts.
$$
Define a linear map between two subspaces of $\Omega_\Lambda$ by sending $\sfe_i\mapsto u_i, \text{ for }1\leq i\leq \ttt-\tts.$
This is an isometry and, by Witt's theorem, we can find a $g\in G$ that extends this isometry to an isometry of $\Omega_\Lambda$. By the same argument as in the proof of Proposition \ref{prop:moduli interpretation w_r,s}, we have $g_0P_{\ttr\tts}=gP_{\ttr\tts}\in X_{P_{\ttr\tts}}(\sfw_{\ttr\tts}')$.
\end{proof}

\begin{remark}
The DL variety
$X_{P_{\ttr\tts}}(\sfw'_{\ttr\tts})\subset \Sp(\Omega_{\Lambda})/P_{\ttr\tts}$ in Proposition \ref{prop:moduli interpretation w'_r,s} is not irreducible in general. Its irreducible components are all isomorphic to the DL variety associated with the general linear group, as described in Proposition \ref{prop:GL-DLV}. These components are indexed by pairs of $\F_q$-rational subspaces $\cF^0_{\ttt-\ttr}\subset\cF^0_{\ttt-\tts}\subset V_{\Lambda}$ of dimensions $\ttt-\ttr$ and $\ttt-\tts$, resp. For further details on this matter, we refer the reader to \cite[Prop. 5.7]{RTW}.
\end{remark}

\subsection{The orthogonal case}
First, we introduce the relevant group-theoretic notation in the orthogonal case. Let $\Lambda\subset \mbV$ be a vertex lattice of  type $t< h$. Since $h$ and $t$ are always even, we still let $\tth=\frac{h}{2}$ and $\ttt=\frac{t}{2}$. Let $\ttn:=\lfloor n/2\rfloor$ and $\tth'=\ttn-\tth$.
\subsubsection{The even dimensional orthogonal case} 
Assume $n$ is even so that $n-t=2\tt(\ttn-\ttt)$ is even. 
Let $\Omega_{\Lambda^{\sharp}}=V_{\Lambda^\sharp}\otimes_{\F_\p} \F$  be the quadratic space over $\F$ with basis $\left\{\sfe_1, \ldots,  \sfe_{\ttn-\ttt}, \sff_1, \ldots, \sff_{\ttn-\ttt}\right\}$ such that $\operatorname{Span}_\F\left\{\sfe_1, \ldots, \sfe_{\ttn-\ttt}\right\}$ and $\operatorname{Span}_\F\left\{\sff_1, \ldots, \sff_{\ttn-\ttt}\right\}$ are totally isotropic and $\left(\sfe_i, \sff_j\right)=\delta_{i, j}$. If $V_{\Lambda^\sharp}$ is split, then the Frobenius $\Phi$ fixes $\sfe_1, \ldots, \sfe_{\ttn-\ttt}, \sff_1, \ldots, \sff_{\ttn-\ttt}$. If $V_{\Lambda^\sharp}$ is non-split, then $\Phi$ fixes $\sfe_1, \ldots, \sfe_{\ttn-\ttt-1}, \sff_1, \ldots, \sff_{\ttn-\ttt-1}$ but interchanges $\sfe_{\ttn-\ttt} \leftrightarrow \sff_{\ttn-\ttt}$.

Consider the isotropic flags $\mathscr{F}_{\bullet}^{+}$and $\mathscr{F}_{\bullet}^{-}$in $\Omega_{\Lambda^{\sharp}}$ defined by
$$
\begin{aligned}
\mathscr{F}_i^{ \pm} & =\operatorname{Span}_\F\left\{\sfe_1, \ldots, \sfe_i\right\} \quad\text{for}\quad 1 \leqslant i \leqslant \ttn-\ttt-1 \\
\mathscr{F}_{\ttn-\ttt}^{+} & =\operatorname{Span}_\F\left\{\sfe_1, \ldots, \sfe_{\ttn-\ttt-1}, \sfe_{\ttn-\ttt}\right\} \\
\mathscr{F}_{\ttn-\ttt}^{-} & =\operatorname{Span}_\F\left\{\sfe_1, \ldots, \sfe_{\ttn-\ttt-1}, \sff_{\ttn-\ttt}\right\} .
\end{aligned}
$$
Hence when $V_{\Lambda^\sharp}$ is split, we have $\Phi\mathscr{F}_{\bullet}^{ \pm}= \mathscr{F}_{\bullet}^{\pm }$. When $V_{\Lambda^\sharp}$ is non-split, we have $\Phi\mathscr{F}_{\bullet}^{ \pm}= \mathscr{F}_{\bullet}^{\mp }$. 
The stabilizers of  $\mathscr{F}_{\bullet}^{\pm}$ are the same, which we denote by $B \subset H=\mathrm{SO}(\Omega_{\Lambda^{\sharp}})$. Hence $B$ is a $\Phi$-stable Borel subgroup containing $T$ where $T$ is a maximal $\Phi$-stable torus of $\mathrm{SO}(\Omega_{\Lambda^{\sharp}})$. 
We use $\Delta^*=\left\{\sfs_1, \ldots,  s_{\ttn-\ttt}\right\}$ to denote the  set of corresponding simple reflections in the Weyl group $W=N(T) / T$ where
\begin{itemize}
    \item $\sfs_i$ interchanges $\sfe_i \leftrightarrow \sfe_{i+1}$ and $\sff_i \leftrightarrow \sff_{i+1}$, and fixes the other basis elements if $i<\ttn-\ttt$.
    \item $s_{\ttn-\ttt}$ interchanges $\sfe_{\ttn-\ttt-1} \leftrightarrow \sff_{\ttn-\ttt}$ and $\sff_{\ttn-\ttt-1} \leftrightarrow \sfe_{\ttn-\ttt}$, and fixes the other basis elements.
\end{itemize}
We also use $t^+$ and $t^-$ to denote $s_{\ttn-\ttt-1}$ and  $s_{\ttn-\ttt}$ respectively.

For each $0\leq \tts< \tth'\leq \ttr \leq \ttn-\ttt-1$, we let
\begin{align*}
    I_{\ttr\tts}^{}\coloneqq
    \begin{cases}
       \{\sfs_1,\ldots,\sfs_{\ttn-\ttt-\ttr-1}\}   & \text{ if $\tts=0$},\\
      \{\sfs_1,\ldots,\sfs_{\ttn-\ttt-\ttr-1}, \sfs_{\ttn-\ttt-\tts},\ldots,\sfs_{\ttn-\ttt-2},s_{\ttn-\ttt-1},s_{\ttn-\ttt}\}     & \text{ otherwise},
    \end{cases}
\end{align*}
and let $P_{\ttr\tts}^{}$ be the corresponding parabolic subgroup.  
Next, we define Weyl group elements for DL varieties.

For each $0\leq \tts<\tth'< \ttr \leq \ttn-\ttt$ (we set $\tts=0$ when $\tth'=0$),  let
\begin{itemize}
\item $
\sfg_{i}^\pm:=\begin{cases}
t^\pm & \text{if $i=\ttn-\ttt$ and $\ttn=\tth$},\\
 t^-t^+& \text{if $i=\ttn-\ttt-1$},\\
  \sfs_i \sfs_{i+1} \cdots \sfs_{\ttn-\ttt-2} t^- t^+ \sfs_{\ttn-\ttt-2} \cdots \sfs_{i+1} \sfs_i & \text{otherwise}.
\end{cases}$
\item $
\sfw_{\ttr\tts}^\pm:=\begin{cases}
g_{\ttn-\ttt-\tts}^\pm\sfs_{\tth-\ttt-2}\sfs_{\tth-\ttt-3}\cdots\sfs_{\ttn-\ttt-\ttr+1} & \text{if $\ttn=\tth$},\\
(\sfs_{\tth-\ttt}\sfs_{\tth-\ttt+1}\cdots \sfs_{\ttn-\ttt-\tts-2}\sfg_{\ttn-\ttt-1-\tts})\cdot(\sfs_{\tth-\ttt-1}\sfs_{\tth-\ttt-2}\cdots\sfs_{\ttn-\ttt-\ttr+1}) & \text{if $\ttn\neq \tth$}.
\end{cases}$
\end{itemize}
Here, when $h=n$, we define $w_{10}^\pm =\mathrm{Id}$. Note that when $h\neq n$, then $\sfw_{\ttr\tts}^\pm$ (resp. $g_i^\pm$) is independent of $\pm$ sign and we will simply denote it as  $\sfw_{\ttr\tts}$ (resp. $g_i$) in this case.  

When $\tth=\ttn-1$, for each $0\leq \tts <\tth'< \ttr \leq \ttn-\ttt$,  let
\begin{itemize}
\item $w_{\ttr0}^{\prime,+}= \sfs_{\tth-\ttt}\cdots \sfs_{\ttn-\ttt-\ttr+1}= \sfs_{\ttn-\ttt-1}\cdots \sfs_{\ttn-\ttt-\ttr+1}=t^+ \sfs_{\ttn-\ttt-2}\cdots \sfs_{\ttn-\ttt-\ttr+1}$
\item $\sfw^{\prime,-}_{\ttr0}:=
  t^-\sfs_{\ttn-\ttt-2}\cdots \sfs_{\ttn-\ttt-\ttr+1}$.
\end{itemize}

When $\tth<\ttn-1$, for each $0\leq \tts <\tth'< \ttr \leq \ttn-\ttt$,  let
\begin{itemize}
\item $\sfw'_{\ttr\tts}=\sfw_{\ttr\tts}^{\prime,+}:=
   (\sfs_{\tth-\ttt}\sfs_{\tth-\ttt-1}\cdots \sfs_{\ttn-\ttt-\ttr+1})\cdot (\sfs_{\tth-\ttt+1} \sfs_{\tth-\ttt+2}\cdots \sfs_{\ttn-\ttt-\tts-1})$.
\end{itemize}
In particular, $\sfw_{\ttr\tts}=\sfs_{\tth-\ttt}\sfs_{\tth-\ttt+1}\cdots \sfs_{\ttn-\ttt-\tts-2}\sfg_{\ttn-\ttt-\tts-1}$ if $\tth=\ttt+1$ or $\ttr=\ttn -\tth+1$, $\sfw_{\ttr(\tth'-1)}=\sfg_{\tth-\ttt}\sfs_{\tth-\ttt-1}\sfs_{\tth-\ttt-2} \cdots \sfs_{\ttn-\ttt-\ttr+1}$ if $\ttn\neq \tth$.
Note that the element $\sfg_i$ interchanges $\sfe_i$ with  $\sff_i$,  $\sfe_{\ttn-\ttt}$ with $ \sff_{\ttn-\ttt}$ and fixes all the other  basis vectors when $i<\ttn-\ttt$ and $g_{\ttn-\ttt}^{\pm}=t^\pm$.

\begin{example}
\begin{altenumerate}
\item   Assume $\ttn-\ttt=3$ and $\tth'=1$. Hence $\tth-\ttt=2$. Then
\begin{equation*}
w_{30}=t^+t^-s_1,\quad w_{20}=t^+t^-,\quad 
w_{30}^{\prime,\pm}= t^\pm s_1,\quad
w_{20}^{\prime,\pm}= t^\pm.
\end{equation*}
\item Assume $\ttn-\ttt=3$ and $\tth'=2$. Hence $\tth-\ttt=1$. Then
\begin{equation*}
 w_{30}= s_1t^-t^+,\quad
 w_{31}= s_1t^-t^+s_1,\quad
w_{30}^{\prime}=  s_1 t^+,\quad
w_{31}^{\prime}= s_1.
\end{equation*}
\end{altenumerate}
\end{example}

Since the case $\ttn=\tth$ is treated in detail in \cite{HP2}, we assume $\ttn\neq \tth$  until the end of this section unless explicitly mentioned.
\begin{lemma}\label{lem:rel-pos even ortho}
\begin{altenumerate}
\item The element $\sfw_{\ttr(\tth'-1)}$ stabilizes $\sfe_i$ and $\sff_i$ for $1\leq i\leq \ttn-\ttt-\ttr$ and $\tth-\ttt+1\leq i\leq \ttn-\ttt-1$, and switches $e_{\ttn-\ttt}$ and $f_{\ttn-\ttt}$.
It acts on the remaining elements in the basis as follows:
\begin{equation*}
\begin{aligned}
\xymatrix{
\sfe_{\ttn-\ttt-\ttr+1}\ar@/^1pc/[rrrrr]
&\cdots \ar@/^/[l]&
\sfe_{\tth-\ttt}\ar@/^/[l]&
\sff_{n-\ttt-\ttr+1}\ar@/^/[l]
&\cdots\ar@/^/[l]&
\sff_{\tth-\ttt}.\ar@/^/[l]
}
\end{aligned}
\end{equation*}
Moreover, $\sfw_{\ttr(\tth'-1)}$ is the minimal representative element in the double coset $W_{\ttr}\sfw_{\ttr(\tth'-1)}W_{\ttr}$. 
\item 
If $\tts<\tth'-1$, then the element $\sfw_{\ttr\tts}$  stabilizes $\sfe_i$ and $\sff_i$ for $1\leq i\leq \ttn-\ttt-\ttr$ and $\ttn-\ttt-\tts\leq i\leq \ttn-\ttt-1$, and switches $e_{\ttn-\ttt}$ and $f_{\ttn-\ttt}$.
It acts on the remaining $\sfe_i$'s as follows:
\begin{equation*}
\begin{aligned}
\xymatrix{
\sfe_{\ttn-\ttt-\ttr+1}\ar@/^1pc/[rrr]&
\cdots\ar@/^/[l]&
\sfe_{\tth-\ttt}\ar@/^/[l]&
\sfe_{\tth-\ttt+1}\ar@/^/[r]&
\cdots\ar@/^/[r]&
\sfe_{\ttn-\ttt-\tts-1}\ar[dlll]\\
\sff_{\ttn-\ttt-\ttr+1}\ar@/_1pc/[rrr] &
\cdots \ar@/_/[l]&
\sff_{\tth-\ttt}\ar@/_/[l] &
\sff_{\tth-\ttt+1}\ar@/_/[r]&
\cdots\ar@/_/[r]&
\sff_{\ttn-\ttt-\tts-1}\ar[ulll]|!{[u];[lll]}\hole
}.
\end{aligned}
\end{equation*}
Moreover, $\sfw_{\ttr\tts}$ is the minimal representative element in the double coset $W_{\ttr\tts}\sfw_{\ttr\tts}W_{\ttr\tts}$.
\item The element $\sfw'_{\ttr\tts}$ stabilizes $\sfe_i$ and $\sff_i$ for $1\leq i\leq \ttn-\ttt-\ttr$ and $\ttn-\ttt-\tts+1\leq i\leq \ttn-\ttt$.
It acts on the remaining $\sfe_i$'s as follows:
\begin{equation*}
\begin{aligned}
\xymatrix{
\sfe_{\ttn-\ttt-\ttr+1}\ar@/^1pc/[rrr]&
\cdots\ar@/^/[l]&
\sfe_{\tth-\ttt}\ar@/^/[l]&
\sfe_{\tth-\ttt+1}\ar@/^/[r]&
\sfe_{\tth-\ttt+2}\ar@/^/[r]&
\cdots\ar@/^/[r]&
\sfe_{\ttn-\ttt-\tts}\ar@/^1pc/[llll]
}
\end{aligned}
\end{equation*}
We have the same action on the remaining $\sff_i$'s.

If $\tth=\ttn-1$, then the element $\sfw_{\ttr0}^{\prime,-}$ stabilizes $\sfe_i$ and $\sff_i$ for $1\leq i\leq \ttt-\ttr$. 
It acts on the remaining $\sfe_i$'s as follows:
\begin{equation*}
\begin{aligned}
\xymatrix{
\sfe_{\ttn-\ttt-\ttr+1}\ar@/^1pc/[rrrrr]
&\cdots \ar@/^/[l]&
\cdots\ar@/^/[l]&
\sfe_{\ttn-\ttt-2}\ar@/^/[l]
&\sfe_{\ttn-\ttt-1}\ar@/^/[l]&
\sff_{\ttn-\ttt}.\ar@/^/[l]
}
\end{aligned}
\end{equation*}
We have the same action on the remaining $\sff_i$'s.

Moreover, $\sfw'_{\ttr\tts}$ is the minimal representative element in the double coset $W_{\ttr\tts}\sfw'_{\ttr\tts} W_{\ttr\tts}$.\qed
\end{altenumerate}
\end{lemma}

\subsubsection{The odd dimensional orthogonal case}
In this subsection, we assume $n=2\ttn+1$ is odd. 
Let $\Omega_{\Lambda^{\sharp}}=V_{\Lambda^\sharp}\otimes_{\F_\p} \F$ be a quadratic space over $\F$. Since $V_{\Lambda^\sharp}$ is of odd dimension,  we can choose a basis $\left\{\sfe_1, \ldots, \sfe_{\ttn-\ttt}, \sff_1, \ldots, \sff_{\ttn-\ttt}, \sfe_{2(\ttn-\ttt)+1}\right\}$  such that $\operatorname{Span}_\F\left\{\sfe_1, \ldots, \sfe_{\ttn-\ttt}\right\}$ and $\operatorname{Span}_\F\left\{\sff_1, \ldots, \sff_{\ttn-\ttt}\right\}$ are totally isotropic and $\left(\sfe_i, \sff_j\right)=\delta_{i, j}$ and $(\sfe_{2(\ttn-\ttt)+1},\sfe_{2(\ttn-\ttt)+1})=1$ and $\sfe_{2(\ttn-\ttt)+1}$ is orthogonal to all the other basis vectors and $\Phi$ fixes all the basis vectors.

Consider the isotropic flags $\mathscr{F}_{\bullet}$ in $\Omega_{\Lambda^{\sharp}}$ defined by
$$
\begin{aligned}
\mathscr{F}_i & =\operatorname{Span}_\F\left\{\sfe_1, \ldots, \sfe_i\right\} \text { for } 1 \leqslant i \leqslant \ttn-\ttt.
\end{aligned}
$$
The stabilizer of  $\mathscr{F}_{\bullet}$ is denoted by $B \subset H=\mathrm{SO}(\Omega_{\Lambda^\sharp})$. Hence $B$ is a $\Phi$-stable Borel subgroup containing $T$ where $T$ is a maximal $\Phi$-stable torus of $\mathrm{SO}(\Omega_{\Lambda^\sharp})$. 

We use $\Delta^*=\left\{\sfs_1, \ldots, \sfs_{\ttn-\ttt-2}, s_{\ttn-\ttt-1}, s_{\ttn-\ttt}\right\}$ to denote the  set of corresponding simple reflections in the Weyl group $W=N(T) / T$, where
\begin{itemize}
    \item $\sfs_i$ interchanges $\sfe_i \leftrightarrow \sfe_{i+1}$ and $\sff_i \leftrightarrow \sff_{i+1}$, and fixes the other basis elements if $i<\ttn-\ttt$.
    \item $s_{\ttn-\ttt}$ interchanges $\sfe_{\ttn-\ttt} \leftrightarrow \sff_{\ttn-\ttt}$ and $e_{2(\ttn-\ttt)+1} \leftrightarrow -\sfe_{2(\ttn-\ttt)+1}$, and fixes the other basis elements.
\end{itemize}
For each $0\leq \tts\leq \tth'\leq \ttr \leq \ttn-\ttt-1$, we let
\begin{align*}
    I_{\ttr\tts}^{}\coloneqq
    \begin{cases}
       \{\sfs_1,\ldots,\sfs_{\ttn-\ttt-\ttr-1}\}   & \text{ if $h\ge n-2$},\\
      \{\sfs_1,\ldots,\sfs_{\ttn-\ttt-\ttr-1}, \sfs_{\ttn-\ttt-\tts+1},\ldots,\sfs_{\ttn-\ttt-2},s_{\ttn-\ttt-1},s_{\ttn-\ttt}\}     & \text{ if $h<n-2$},
    \end{cases}
\end{align*}
and let $P_{\ttr\tts}^{}$ be the corresponding parabolic subgroup.  

For each $0\leq \tts\leq \ttn-\tth< \ttr \leq \ttn-\ttt$,  let
\begin{itemize}
\item $\sfg_{i}:=\sfs_i \sfs_{i+1} \cdots \sfs_{\ttn-\ttt-1} \sfs_{\ttn-\ttt} \sfs_{\ttn-\ttt-1} \cdots \sfs_{i+1} \sfs_i.$
\item $\sfw_{\ttr\tts}:=(\sfs_{\tth-\ttt}\sfs_{\tth-\ttt+1}\cdots \sfs_{\ttn-\ttt-\tts-1}\sfg_{\ttn-\ttt-\tts})\cdot(\sfs_{\tth-\ttt-1}\sfs_{\tth-\ttt-2}\cdots\sfs_{\ttn-\ttt-\ttr+1})$.
\item $\sfw'_{\ttr\tts}:=
   (\sfs_{\tth-\ttt}\sfs_{\tth-\ttt-1}\cdots \sfs_{\ttn-\ttt-\ttr+1})\cdot (\sfs_{\tth-\ttt+1} \sfs_{\tth-\ttt+2}\cdots \sfs_{\ttn-\ttt-\tts-1})$ if $\tts<\ttn-\tth$.
\end{itemize}

In particular, $\sfw_{\ttr\tts}=\sfs_{\tth-\ttt}\sfs_{\tth-\ttt+1}\cdots \sfs_{\ttn-\ttt-\tts-1}\sfg_{\ttn-\ttt-\tts}$ if $\ttt=\tth-1$ or $\ttr=\ttn-\tth+1$, and $\sfw_{\ttr,\ttn-\tth}=\sfg_{\tth-\ttt}\sfs_{\tth-\ttt-1}\sfs_{\tth-\ttt-2} \cdots \sfs_{\ttn-\ttt-\ttr+1}$, and $\sfw'_{\ttr,\ttn-\tth-1}=\sfs_{\tth-\ttt}\sfs_{\tth-\ttt-1}\cdots \sfs_{\ttn-\ttt-\ttr+1}$.
Note that the element $\sfg_i$ interchanges $\sfe_i\leftrightarrow \sff_i$, $\sfe_{2d+1}\leftrightarrow -\sfe_{2d+1}$, and fixes all other elements in the basis.

\subsubsection{Orthogonal Deligne--Lusztig varieties}
For a quadratic space $\Omega$ over $\F$, let $\operatorname{OGr}(\tth-\ttt,\Omega)$ be the orthogonal Grassmannian that parameterizes the isotropic $(\tth-\ttt)$-dimensional subspaces of $\Omega$. Let $R_{\Lambda^\sharp}^{} \subset \operatorname{OGr}(\tth-\ttt,\Omega_{\Lambda^{\sharp}})$  be the reduced closed subscheme such that for any field $k$ over $\F$, we have
\begin{equation}\label{equ:R-Lambda-defn}
    R_{\Lambda^\sharp}^{}(k)  = \{\cV\subset\Omega_{\Lambda^{\sharp},k}\mid\cV\text{ is isotropic},\,\text{and}\,
  \mathrm{dim}_k \cV=\tth-\ttt, \,\text{and}\,\operatorname{dim}_k(\cV+\Phi(\cV))\le \tth-\ttt+1 \}.
\end{equation}
\begin{example}
\begin{altenumerate}
\item When $\tth-\ttt=1$, $R_{\Lambda^\sharp}$ parameterizes lines in $\Omega$ that are isotropic. Hence, it is a quadratic hypersurface in $\mbP(\Omega)$. 
In particular, when $\dim \Omega=n-t=3$, this is isomorphic to a projective line.
When $\dim\Omega=n-t=4$, this is isomorphic to a ruled surface.

\item When $\dim\Omega=n-t=4$ and $\tth-\ttt=2$, $R_{\Lambda^\sharp}$ parameterizes $2$-dimensional isotropic subspaces in $\Omega$, which are isomorphic to the disjoint union of two projective lines.
\end{altenumerate}
\end{example}

The following result can be proved in essentially the same way as Theorem \ref{thm:symplectic main}, and we will not repeat it here (the case $h=n$ is a bit different and is treated in detail in \cite{HP2}). We also refer the readers to \cite{HZBasic} for a group-theoretic approach.
\begin{theorem}\label{thm:orthogonal main}
Let $\tth':=\ttn-\tth$ and $\ttt':=\ttn-\ttt$, where $\ttn=\lfloor n/2\rfloor$ and $\tth=h/2$ and $\ttt=t/2$.
We have the following stratification:
\begin{align*}\label{eq:orthogonal stratification}
    R_{\Lambda^\sharp}=
    \begin{cases}
     \displaystyle  \coprod_{\delta\in \{\pm\}}R_{\Lambda^\sharp}^{\delta} =\coprod_{\delta\in\{\pm\}}\big(\coprod_{\substack{0\leq j\le \tth'< i\leq \ttt' }}X_{P_{ij}}(\sfw_{ij}^{\delta})\big) 
     & \text{ if $h=n$},\\
     \bigl(\displaystyle\coprod_{0\leq j< \tth'< i\leq \ttt'}X_{P_{ij}}(\sfw_{ij})\bigr)\amalg
    \bigl(\coprod_{\substack{0\leq j<\tth'< i\leq \ttt',\\
    \delta\in \{\pm\}}} X_{P_{ij}}(\sfw^{\prime,\delta}_{ij}  )\bigr)
    \amalg X_{P_{\tth'\tth'}}(\id) & \text{ if $h=n-2$},\\
         \bigl(\displaystyle\coprod_{0\leq j< \tth'< i\leq \ttt'}X_{P_{ij}}(\sfw_{ij})\bigr)\amalg
    \bigl(\coprod_{0\leq j<\tth'< i\leq \ttn-\ttt}X_{P_{ij}}(\sfw'_{ij})\bigr)
    \amalg X_{P_{\tth'\tth'}}(\id) & \text{ otherwise },
    \end{cases}
\end{align*}
such that for each  $0\leq \tts <\tth'\leq \ttr\leq \ttt'$, we have the following closure relations: 
\begin{align*}
\overline{X_{P_{\ttr\tts}}(\sfw^{\pm}_{\ttr\tts})}
={}&\begin{cases}
\displaystyle\coprod_{0\leq j\le \tts<\tth'< i\leq \ttr}X_{P_{ij}}(\sfw_{ij}^{\pm}) &   \text{ if $h=n$},\\
    \displaystyle\Bigl(\coprod_{0\leq j\le \tts< \tth'< i\leq \ttr}X_{P_{ij}}(\sfw_{ij})\Bigr)\amalg
    \Bigl(\coprod_{\substack{\tts\leq j<\tth'<i\leq \ttr,\\ \delta\in\{\pm\}}} X_{P_{ij}}(\sfw^{\prime,\delta}_{ij})\Bigr)
    \amalg X_{P_{\tth'\tth'}}(\id) & \text{ if $h=n-2$,}\\
     \displaystyle\Bigl(\coprod_{0\leq j\le \tts<\tth'< i\leq \ttr}X_{P_{ij}}(\sfw_{ij})\Bigr)\amalg
    \Bigl(\coprod_{\tts\leq j<\tth'<i\leq \ttr}X_{P_{ij}}(\sfw^{\prime}_{ij})\Bigr)
    \amalg X_{P_{\tth'\tth'}}(\id) & \text{ otherwise.}\\
\end{cases} \\
\overline{X_{P_{\ttr\tts}}(\sfw^{\prime,\pm}_{\ttr\tts})}={}&\Bigl(\coprod_{\tts\leq j<\tth'<i\leq \ttr}X_{P_{ij}}(\sfw^{\prime,\pm}_{ij})\Bigr)
    \amalg X_{P_{\tth'\tth'}}(\id). 
\end{align*}
The dimension of $R_{\Lambda^\sharp}$ is  $n-(\ttt+\tth)-1$.  Moreover, $R_{\Lambda^\sharp}$ is irreducible and normal if $h\neq n$. If $h=n$,  $R_{\Lambda^\sharp}^{\delta}$ is irreducible and normal.
\end{theorem}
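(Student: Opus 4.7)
The plan is to follow the template of the proof of Theorem~\ref{thm:symplectic main} for the symplectic case, with modifications to accommodate the special features of the orthogonal group, most notably the bifurcation of the Dynkin diagram of type $D$ that produces the $\pm$ components when $h \in \{n, n-2\}$.

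First, I would define locally closed subvarieties $R_{\ttr\tts}$ (resp.\ $R^{\pm}_{\ttr\tts}$, $R'_{\ttr\tts}$, $R^{\prime,\pm}_{\ttr\tts}$) of the partial orthogonal flag variety $H/P_{\ttr\tts}$ by specifying their $k$-points through moduli conditions that parallel those in the symplectic case: isotropic flags $\cF_{\tth'-\ttr} \subset \cdots \subset \cF_{\tth'-\tts}$ with $\cF_i = \cF_{i+1} \cap \Phi(\cF_{i+1})$ on the left portion, $\cF_{j+1} = \cF_j + \Phi(\cF_j)$ on the right portion, and prescribed terminal behavior (either $\cF_{\tth'-\tts}$ anisotropic after one more $+\Phi$-step, or $\Phi$-stable). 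For each $\cV \in R_{\Lambda^\sharp}(k)$, construct the canonical extended flag $(\cF_i)_{i \in \Z}$ by iterating $\cap\Phi$ on the left of $\cV = \cF_{\tth'-\tts}$ and $+\Phi$ on the right, extract $\ttr, \tts$ from the positions where the flag stabilizes or becomes anisotropic, and thereby obtain a decomposition
\begin{equation*}
R_{\Lambda^\sharp} = \coprod R_{ij} \amalg \text{(primed strata)} \amalg X_{P_{\tth'\tth'}}(\id),
\end{equation*}
with the $\pm$ refinement inserted in the appropriate cases.

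Next, identify each stratum with the claimed DL variety. This requires checking that the advertised Weyl element $\sfw$ is the unique minimal representative in $W_{I_{\ttr\tts}} \backslash W / W_{\Phi(I_{\ttr\tts})}$ (the orthogonal analogue of Lemma~\ref{lem:rel-pos}) and then using Witt's theorem to extend a common basis for the flag and its Frobenius-translates to an element $g \in H$ satisfying $g^{-1}\Phi(g) \in P_{\ttr\tts}\,\sfw\,P_{\Phi(I_{\ttr\tts})}$, whence $gP_{\ttr\tts} \in X_{P_{\ttr\tts}}(\sfw)$, by the criterion used in Propositions~\ref{prop:moduli interpretation w_r,s} and \ref{prop:moduli interpretation w'_r,s}. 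Closure relations read directly from the moduli descriptions; dimension and (piecewise) irreducibility follow from Proposition~\ref{prop:DL-prop}; and normality of $R_{\Lambda^\sharp}$ is transferred from the normality of Schubert varieties via the G\"ortz local model diagram~\eqref{equ:gortz-lm}.

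The main obstacle is tracking the two families of maximal isotropics when $n$ is even. In the case $h = n$, every $\cV \in R_{\Lambda^\sharp}(k)$ is Lagrangian and hence lies in one of two $H$-orbits in the orthogonal Grassmannian; since iterating $\cap\Phi$ preserves the ruling (because $\Phi$ preserves or interchanges the two families according to whether $V_\Lambda$ is split or non-split), the full extended flag stays within a single ruling, splitting the stratum into $+$ and $-$ pieces realized by $\sfw^{\pm}_{\ttr\tts}$. In the case $h = n-2$, the step from $\cF_{\tth'-\tts-1}$ to its Lagrangian closure $\cF_{\tth'-\tts} = \Phi(\cF_{\tth'-\tts})$ admits two distinct choices corresponding to the two families, encoded by $\sfw^{\prime,\pm}_{\ttr\tts}$. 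Once these bookkeeping issues are handled (essentially a careful case analysis of how $t^{+}, t^{-}$ and $\sfg_i^{\pm}$ act on the standard basis), the remaining steps are a direct transcription of the symplectic argument; in the odd-$n$ case no bifurcation occurs and the proof becomes essentially identical to that of Theorem~\ref{thm:symplectic main}.
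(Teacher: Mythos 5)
Your proposal matches the paper's own treatment: the paper explicitly defers the proof, saying only that Theorem~\ref{thm:orthogonal main} ``can be proved by essentially the same way as Theorem~\ref{thm:symplectic main}'' and pointing to \cite{HZBasic} for a group-theoretic alternative, so your sketch (define moduli strata $R_{\ttr\tts}$, $R^{\prime}_{\ttr\tts}$, etc.\ by the $\cap\Phi / +\Phi$ flag construction, identify each with a Deligne--Lusztig variety via Witt's theorem and the orthogonal analogue of Lemma~\ref{lem:rel-pos}, then read off closure relations, dimension, and normality) is exactly the intended argument.

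Two small points worth fixing if you write this out in full. First, the flag indices should run from $\cF_{\ttt'-\ttr}$ to $\cF_{\ttt'-\tts}$ (with $\cV = \cF_{\ttt'-\tth'}$), not from $\cF_{\tth'-\ttr}$ to $\cF_{\tth'-\tts}$ as you wrote; the latter would give negative dimensions. Second, the phrase ``iterating $\cap\Phi$ preserves the ruling'' does not literally parse, since the intermediate $\cF_i$ for $i<\ttt'$ are not maximal isotropic and so have no ruling; what actually governs the $\pm$-bifurcation is that for $h\ge n-2$ the parabolic $I_{\ttr\tts}$ omits both fork reflections $t^{+},t^{-}$, so the double cosets of $\sfw^{+}_{\ttr\tts}$ and $\sfw^{-}_{\ttr\tts}$ (resp.\ $\sfw^{\prime,\pm}_{\ttr\tts}$) in $W_{I_{\ttr\tts}}\backslash W/W_{\Phi(I_{\ttr\tts})}$ are genuinely distinct, while for $h<n-2$ both $t^{\pm}$ lie in $I_{\ttr\tts}$ and the $\pm$-distinction collapses. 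This is the group-theoretic counterpart of your moduli-side observation that the terminal Lagrangian $\cF_{\ttt'}$ carries a ruling.
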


\subsection{General linear case}\label{sec:DL_GL}
In this subsection, we present the necessary results concerning generalized DL varieties associated with the general linear group.
Let $\Lambda_1\subset \Lambda_2$ be vertex lattices of type $t_1$ and $t_2$, resp., such that $t_2\leq h\leq t_1$. 
Recall that we use the notation $t_i=2\ttt_i$.
Let $V=V_{[\Lambda_1,\Lambda_2]}=\Lambda_2/\Lambda_1$ denote the vector space of dimension $t=\ttt_1-\ttt_2$ over $\F_q$ and define $\Omega=\Omega_{[\Lambda_1,\Lambda_2]}:=V\otimes_{\F_q}\F$, with Frobenius $\Phi$. 
We choose a basis $V=\mathrm{Span}(\sfe_{\ttt_2+1},\cdots,\sfe_{\ttt_1})$. Note that the notation used here is not standard.
Consider the standard flags $\sF_\bullet$ defined by
\begin{equation*}
    \sF_i=\mathrm{Span}_\F\{\sfe_{\ttt_2+1},\cdots,\sfe_i\},\quad \ttt_2\leq i\leq \ttt_1.
\end{equation*}
In particular, we have $\sF_{\ttt_2}=(0)$ and $\dim_{\F}\sF_i=i-\ttt_2$. This pins the choice of the maximal torus and the Borel $T\subset B\subset G:=\GL(\Omega_{[\Lambda_1,\Lambda_2]})$.
We use $\Delta^*=\{\sfs_{\ttt_2+1},\cdots,\sfs_{\ttt_1-1}\}$ to denote the set of corresponding simple reflections in the Weyl group $W$, where $\sfs_i$ is the permutation between $\sfe_i$ and $\sfe_{i+1}$. 
For each $\ttt_2\leq \tts\leq \tth\leq \ttr\leq \ttt_1$, let
\begin{equation*}
    I_{\ttr\tts}^{}:=\{\sfs_{\ttt_2},\cdots,\sfs_{\tts-1},\sfs_{\ttr+1},\cdots,\sfs_{\ttt_1-1}\}.
\end{equation*}
Denote by $W^{}_{\ttr\tts}$ the subgroup of the Weyl group generated by elements in $I_{\ttr\tts}^{}$ and by $P_{\ttr\tts}^{}$ the corresponding parabolic subgroup.

The partial flag variety $G/P_{\tth\tth}$ parametrizes the subspaces in $\Omega$ of dimension $\ttt_1-\tth$. We consider the subvariety $S_{[\Lambda_1,\Lambda_2]}$ of $G/P_{\tth\tth}$ whose $k$-points for any field $k$ over $\F$ are given by
\begin{equation}\label{equ:S Lambda12}
S_{[\Lambda_1,\Lambda_2]}(k)=\{\cV\subset\Omega_{[\Lambda_1,\Lambda_2],k}\mid \dim_k\cV=\ttt_1-\tth\quad\text{and}\quad \dim_k(\cV\cap\Phi(\cV))\geq \ttt_1-\tth-1\}.
\end{equation}

For each  $\ttt_2\leq \tts \leq \tth\leq \ttr\leq \ttt_1$, we let
\begin{equation*}
\sfw_{\ttr\tts}:=
\left(
\sfs_{\tth}\cdots\sfs_{\ttr-2}\sfs_{\ttr-1}
\right)\cdot\left(
\sfs_{\tth-1}\cdots \sfs_{\tts+2}\sfs_{\tts+1}
\right).
\end{equation*}
The main result is the following:
\begin{proposition}\label{Prop: decom SLambda1Lambda2 into DL var}
We have a stratification of
$S^{}_{[\Lambda_1,\Lambda_2]}$
\begin{equation*}
    S^{}_{[\Lambda_1,\Lambda_2]}=\coprod_{\ttt_2\leq j\leq \tth\leq i\leq \ttt_1}X_{P_{ij}}(\sfw^{}_{ij}),
\end{equation*}
such that for each $\ttt_2\leq \tts \leq \tth\leq \ttr\leq \ttt_1$, we have:
\begin{equation*}
    \ov{X_{\ttr\tts}(\sfw^{}_{\ttr\tts})}=\coprod_{\tts\leq j\leq \tth\leq i\leq \ttr}X_{P_{ij}}(\sfw^{}_{ij}).
\end{equation*}
In particular, $S^{}_{[\Lambda_1,\Lambda_2]}$ is irreducible and normal of dimension $\ttt_1-\ttt_2-1$.
\end{proposition}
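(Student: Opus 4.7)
The plan is to adapt the argument of Theorem \ref{thm:symplectic main} to the present linear-algebraic setting. The key simplification is that, since $\Omega_{[\Lambda_1,\Lambda_2]}$ carries no bilinear form, there is no dichotomy analogous to the ``isotropic versus anisotropic'' alternative that produced the two families $\sfw_{\ttr\tts}$ and $\sfw'_{\ttr\tts}$ in the symplectic case. This explains structurally why only one family of DL varieties appears in the decomposition.

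First, I would introduce, for each $\ttt_2\leq \tts \leq \tth\leq \ttr\leq \ttt_1$, a locally closed subvariety $S_{\ttr\tts}\subset G/P_{\ttr\tts}$ whose $k$-points parameterize flags $(\cF_{\ttt_1-\ttr}\subset \cdots \subset \cF_{\ttt_1-\tts})$ satisfying
\begin{equation*}
\cF_i=\cF_{i+1}\cap \Phi(\cF_{i+1})\ \ \text{for}\ \ttt_1-\ttr\leq i\leq \ttt_1-\tth-1,\quad \cF_{j+1}=\cF_j+\Phi(\cF_j)\ \ \text{for}\ \ttt_1-\tth\leq j\leq \ttt_1-\tts-1,
\end{equation*}
together with the requirement that both $\cF_{\ttt_1-\ttr}$ and $\cF_{\ttt_1-\tts}$ are $\Phi$-stable. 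I would then prove $S_{[\Lambda_1,\Lambda_2]}=\coprod_{\ttt_2\leq \tts\leq \tth\leq \ttr\leq \ttt_1} S_{\ttr\tts}$ by starting from an arbitrary $\cV\in S_{[\Lambda_1,\Lambda_2]}(k)$, iterating the operations $\cap\Phi$ downward and $+\Phi$ upward to produce a bi-infinite chain $(\cF_i)_{i\in\Z}$ that stabilizes at both ends by finite-dimensionality, and letting $\ttr\geq \tth$ and $\tts\leq \tth$ record the stabilization indices. The inductive dimension-jump argument from Theorem \ref{thm:symplectic main} applies verbatim to show $\dim_k \cF_i=i-\ttt_2$; crucially, the absence of a form removes the need to split into the cases (a) and (b) of that proof, and every stabilized flag automatically lands in some $S_{\ttr\tts}$.

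Next, I would identify $S_{\ttr\tts}$ with the DL variety $X_{P_{\ttr\tts}}(\sfw_{\ttr\tts})$ by mirroring Proposition \ref{prop:moduli interpretation w_r,s}. Given a $k$-point of $S_{\ttr\tts}$, a three-step basis construction (choose $u_{\ttt_2+1},\ldots,u_{\ttt_1-\ttr+?}$ spanning $\cF_{\ttt_1-\ttr}$, lift to a basis of $\cF_{\ttt_1-\tth}$, then use $\Phi(\cF_{i+1})=\cF_i+\Phi(\cF_i)$ to extract one auxiliary vector $u_{\ttt_1-\tth+1}$, and finally set $u_{j+1}:=\Phi(u_j)$ on the ascending side) produces a basis that simultaneously trivializes the flag and its Frobenius images. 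Since there is no bilinear form to preserve, the role of Witt's theorem is played trivially by $\GL$: any two such bases are conjugate by an element $g\in G$, and this $g$ witnesses $g^{-1}\Phi(g)\in P_{\ttr\tts}\sfw_{\ttr\tts}P_{\ttr\tts}$ via the criterion \eqref{equ:moduli}. A short combinatorial check, analogous to Lemma \ref{lem:rel-pos}, verifies that $\sfw_{\ttr\tts}$ acts on the basis in the required cyclic manner and is the minimal representative of its double coset.

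Finally, the closure relations follow from the moduli descriptions of the $S_{\ttr\tts}$ (degenerations of the ambient flag correspond to increasing $\tts$ or decreasing $\ttr$). For the top-dimensional stratum one has $P_{\ttt_1,\ttt_2}=B$ and $\ell(\sfw_{\ttt_1,\ttt_2})=(\ttt_1-\tth)+(\tth-\ttt_2-1)=\ttt_1-\ttt_2-1$, yielding the asserted dimension. Irreducibility of $S_{[\Lambda_1,\Lambda_2]}$ follows from Proposition \ref{prop:DL-prop}(2) because the simple reflections $\sfs_{\ttt_2+1},\ldots,\sfs_{\ttt_1-1}$ all appear in the factorization of $\sfw_{\ttt_1,\ttt_2}$, so $\sfw_{\ttt_1,\ttt_2}$ is not contained in any proper standard parabolic; normality is inherited from Schubert varieties through the G\"ortz local model diagram \eqref{equ:gortz-lm}. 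The main technical obstacle is the bookkeeping in the basis construction and the corresponding relative-position computation in step three, but this is a genuinely simpler version of the symplectic argument, free of the isotropy/anisotropy distinction.
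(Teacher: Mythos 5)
Your proposal is correct and follows essentially the same route as the paper, which proves this proposition precisely by running the (simpler, form-free) analogue of the stratification argument of Theorem \ref{thm:symplectic main} and invoking the moduli description of the $\GL$ Deligne--Lusztig varieties $X_{P_{\ttr\tts}}(\sfw_{\ttr\tts})$ (Proposition \ref{prop:GL-DLV}), with normality again coming from Schubert varieties via the diagram \eqref{equ:gortz-lm}. The only blemish is a harmless bookkeeping slip: with your indexing $(\cF_{\ttt_1-\ttr}\subset\cdots\subset\cF_{\ttt_1-\tts})$ the middle term $\cF_{\ttt_1-\tth}$ must have dimension $\ttt_1-\tth$, so the dimension count should read $\dim_k\cF_i=i$ rather than $i-\ttt_2$.
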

\begin{proof}
This follows from a similar but much simpler argument than the one in the proof of Theorem \ref{thm:symplectic main}, combined with Proposition \ref{prop:GL-DLV}.
\end{proof}

\begin{proposition}\label{prop:GL-DLV}
For any integers $\ttt_2\leq \tts\leq \tth<\ttr\leq \ttt_1$, the DL variety $X_{P_{\ttr\tts}}(\sfw_{\ttr\tts}^{})$ is the subvariety of $G/P^{}_{\ttr\tts}$ whose $k$-points for any field $k$ over $\F$ are characterized by:
\begin{equation*}
X_{P_{\ttr\tts}}(\sfw_{\ttr\tts}^{})(k):=\left\{\begin{array}{r}
(\cF_{\ttt-\ttr}\subset \ldots \subset \cF_{\ttt-\tts})\\
\in  G/P_{\ttr\tts}(k)
\end{array}\left\vert
\begin{array}{l}\sfF_i=\sfF_{i+1}\cap \Phi(\sfF_{i+1})\text{ for }\tts\leq i\leq \tth-1;\\
\sfF_j=\sfF_{j-1}+\Phi(\sfF_{j-1})\text{ for }\tth+1\leq j\leq \ttr;\\
\sfF_{\ttt}=\Phi(\sfF_{\ttt})\text{ and }\sfF_{\tts}=\Phi(\sfF_{\tts}).
\end{array}\right\}\right..
\end{equation*}
It is smooth of dimension $\ttr-\tts-1$. Moreover, the DL variety $X_{P_{[\ttt_1,\ttt_2]}}(\sfw_{[\ttt_1,\ttt_2]}^{})$ is irreducible.
\end{proposition}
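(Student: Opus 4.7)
The strategy mirrors the proof of Proposition \ref{prop:moduli interpretation w_r,s} in the symplectic case, but is significantly simpler: we are in the general linear setting, so Witt's theorem is replaced by the fact that any linearly independent set extends to a basis, and $\Phi$ acts trivially on $W$.

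First, I would analyze the permutation action of $\sfw_{\ttr\tts}$ on the basis $\sfe_{\ttt_2+1}, \ldots, \sfe_{\ttt_1}$. Reading the word $(\sfs_\tth \cdots \sfs_{\ttr-1})(\sfs_{\tth-1} \cdots \sfs_{\tts+1})$ from right to left, a direct computation shows that $\sfw_{\ttr\tts}$ fixes $\sfe_i$ for $i \leq \tts$ or $i > \ttr$ and acts as a single $(\ttr-\tts)$-cycle on the remaining indices:
\[
\sfe_{\tts+1} \longmapsto \sfe_{\tth+1} \longmapsto \cdots \longmapsto \sfe_\ttr \longmapsto \sfe_\tth \longmapsto \sfe_{\tth-1} \longmapsto \cdots \longmapsto \sfe_{\tts+2} \longmapsto \sfe_{\tts+1}.
\]
In particular the expression is reduced and $\ell(\sfw_{\ttr\tts}) = (\ttr-\tth)+(\tth-\tts-1) = \ttr - \tts - 1$. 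Since the reflections in $I_{\ttr\tts} = \{\sfs_{\ttt_2+1}, \ldots, \sfs_{\tts-1}, \sfs_{\ttr+1}, \ldots, \sfs_{\ttt_1-1}\}$ involve only indices in $\{\ttt_2+1, \ldots, \tts\} \cup \{\ttr+1, \ldots, \ttt_1\}$, which $\sfw_{\ttr\tts}$ fixes pointwise, $\sfw_{\ttr\tts}$ centralizes $W_{I_{\ttr\tts}}$, is therefore the minimal representative of its double coset, and satisfies $I_{\ttr\tts} \cap \prescript{\sfw_{\ttr\tts}}{}{\Phi(I_{\ttr\tts})} = I_{\ttr\tts}$. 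Proposition \ref{prop:DL-prop}(1) then yields $\dim X_{P_{\ttr\tts}}(\sfw_{\ttr\tts}) = \ell(\sfw_{\ttr\tts}) = \ttr - \tts - 1$.

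Next, I would establish the moduli description using the criterion \eqref{equ:moduli}. The forward direction is immediate from the cycle structure: the relation $\Phi(g_0)\Phi(P_{\ttr\tts}) = g_0\sfw_{\ttr\tts}\Phi(P_{\ttr\tts})$ forces precisely the three stated incidence relations between $\sfF_i$ and $\Phi(\sfF_i)$ (the third condition being read as $\sfF_\ttr = \Phi(\sfF_\ttr)$). For the converse, given a partial flag $(\sfF_\tts \subset \cdots \subset \sfF_\ttr)$ satisfying the conditions, I would construct a common basis $\{u_i\}_{\ttt_2 < i \leq \ttr}$ following the template of Proposition \ref{prop:moduli interpretation w_r,s}: pick an $\F_q$-rational basis $u_{\ttt_2+1}, \ldots, u_\tts$ of the $\Phi$-stable $\sfF_\tts$; extend to $u_{\tts+1}, \ldots, u_\tth$ so that $\sfF_i = \mathrm{Span}(u_{\ttt_2+1}, \ldots, u_i)$ for $\tts \leq i \leq \tth$; pick $u_{\tth+1} \in \Phi(\sfF_\tth) \setminus \sfF_{\tth-1}$ (this works because $\sfF_i = \sfF_{i+1} \cap \Phi(\sfF_{i+1})$ cascades to $\Phi(\sfF_{i+1}) = \sfF_i \oplus \langle u_{\tth+1} \rangle$ for all $\tts \leq i \leq \tth - 1$); and set $u_{\tth+j} := \Phi(u_{\tth+j-1})$ for $2 \leq j \leq \ttr - \tth$. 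The second moduli relation together with the $\Phi$-stability of $\sfF_\ttr$ then ensures $\sfF_j = \mathrm{Span}(u_{\ttt_2+1}, \ldots, u_j)$ for all $\tth \leq j \leq \ttr$. Extending arbitrarily to a basis of $\Omega$ and defining $g \in \GL(\Omega)$ by $\sfe_i \mapsto u_i$ gives the desired element.

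Finally, for the irreducibility of $X_{P_{\ttt_1 \ttt_2}}(\sfw_{\ttt_1 \ttt_2})$: with $\tts = \ttt_2$ and $\ttr = \ttt_1$ we have $I_{\ttt_1 \ttt_2} = \emptyset$, and $\sfw_{\ttt_1 \ttt_2}$ is a full $(\ttt_1 - \ttt_2)$-cycle by the first step. A full cycle preserves no nontrivial partition into consecutive blocks, hence lies in no proper standard parabolic of $W$, so Proposition \ref{prop:DL-prop}(2) gives irreducibility. The main technical obstacle is the bookkeeping in the converse moduli construction -- namely verifying that the constructed basis $\{u_i\}$ satisfies $\Phi(u_i) \equiv u_{\sigma(i)} \pmod{\text{blocks of } P_{\ttr\tts}}$ with $\sigma = \sfw_{\ttr\tts}$ -- but this reduces to finitely many checks forced by the cycle structure and the three moduli relations.
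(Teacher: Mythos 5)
Your proof is correct and follows the same route as the paper: the paper's own proof of Proposition~\ref{prop:GL-DLV} simply records the cycle action of $\sfw_{\ttr\tts}$ (which matches your computation exactly) and the resulting relative-position identities for the standard flag, then declares that the assertions follow, tacitly outsourcing the remaining work to the template of Propositions~\ref{prop:moduli interpretation w_r,s} and~\ref{prop:moduli interpretation w'_r,s}. Your write-up supplies precisely those outsourced details — the common-basis construction in the converse direction, the length/dimension count via Proposition~\ref{prop:DL-prop}(1) using $\sfw_{\ttr\tts}W_{I_{\ttr\tts}}=W_{I_{\ttr\tts}}\sfw_{\ttr\tts}$, and irreducibility via Proposition~\ref{prop:DL-prop}(2) — at the same level of rigor the paper employs in the symplectic case.
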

\begin{proof}
The element $\sfw_{\ttr\tts}$ acts on $V_{[\Lambda_1,\Lambda_2]}$ by
\begin{equation*}
\begin{aligned}
\xymatrix{
\sfe_{\tts+1}\ar@/^1pc/[rrrr]&
\cdots\ar@/^/[l]&
\sfe_{\tth-1}\ar@/^/[l]&
\sfe_{\tth}\ar@/^/[l]&
\sfe_{\tth+1}\ar@/^/[r]&
\sfe_{\tth+2}\ar@/^/[r]&
\cdots\ar@/^/[r]&
\sfe_{\ttr}\ar@/^1pc/[llll]
}
\end{aligned}
\end{equation*}
One can immediately check that:
\begin{align*}
    & \sF_i=\sF_{i+1}\cap \sfw_{\ttr\tts}(\sF_{i+1}),& i=\tts+1,\cdots,\tth-1;\\
    & \sF_j=\sF_{j-1}+\sfw_{\ttr\tts}(\sF_{j-1}),& j=\tth+1,\cdots,\ttr-1;\\
    &\sF_{k}=\sfw_{\ttr\tts}(\sF_{k}),&k=\ttr,\tts.
\end{align*}
The assertions now follow directly.
\end{proof}

\section{Relation between Bruhat--Tits strata and Deligne--Lusztig varieties}\label{sec:iden}
In this section, we establish the identification between the BT-strata defined in \S \ref{sec:BT} and certain spaces introduced in \S \ref{sec: DL var}. 
While one could construct this isomorphism directly using $O_{F_0}$-displays as indicated in \eqref{equ:display-U(X)}, we instead follow the $p$-divisible group construction to maintain consistency with the existing literature \cites{Vollaard,Vollaard-Wedhorn,RTW,Wu16,HLSY,KR-BG,Cho18}.

\subsection{$\cZ$-strata stratification}\label{sec:Z-strata-strata}
Throughout the remainder of this paper, we fix a framing object $\bX_{n,\ep}^{[h]}$. Recall from \S \ref{sec:framing} the hermitian space $\bV$ over $F$ of dimension $n$ with $\mathrm{Hasse}(\bV)=-\ep$.
Let $\Lambda\subset \bV$ be any vertex lattice of type $t(\Lambda)\geq h$. We will construct an isomorphism
$$
\Psi_{\cZ}: \cZ(\Lambda)\cong S_{\Lambda },
$$
where $S_{\Lambda}$ is defined in \eqref{equ:S-Lambda-defn}.
For any $\F$-algebra $R$ and any $R$-point $(X,\iota,\lambda,\rho)\in{\cZ}(\Lambda)(R)$ in the $\cZ$-stratum (see Definition \ref{def:YZ-cycles}), we have a chain of isogenies
\begin{equation*}
    \rho_{\Lambda,\Lambda^\sharp}:X_{\Lambda,R}\xrightarrow{\rho_{\Lambda,X}}X\xrightarrow{\lambda}X^\vee\xrightarrow{\rho_{X^\vee,\Lambda^\sharp}} X_{\Lambda^\sharp,R}.
\end{equation*}
Let $\rho_{X,\Lambda^\sharp}$ be the composition $\rho_{X^\vee,\Lambda^\sharp}\circ\lambda$.
Applying de Rham realization, we have  a sequence of $R$-modules:
\begin{equation*}
D(\rho_{\Lambda,\Lambda^\sharp}):D(X_{\Lambda,R})\xrightarrow{\rho_{\Lambda,X}}D(X)\xrightarrow{\lambda}D(X^\vee)\xrightarrow{\rho_{X^\vee,\Lambda^\sharp}} D(X_{\Lambda^\sharp,R}).
\end{equation*}
By definition, the image $\mathrm{Im}(D(\rho_{\Lambda,\Lambda^\sharp}))$ is a locally free direct summand of $D(X_{\Lambda^\sharp,R})=\Lambda^\sharp\otimes_{\F_q} R$ of corank $t=t(\Lambda)=2\ttt $, such that
\begin{equation*}
    D(X_{\Lambda^\sharp,R})\slash\mathrm{Im}(D(\rho_{\Lambda,\Lambda^\sharp}))\simeq \Lambda^\sharp\slash\Lambda\otimes_{\F_q} R=\Omega_{\Lambda,R}.
\end{equation*}
Since $\Lambda$ is a vertex lattice, we have $\ker(\rho_{\Lambda,\Lambda^\sharp})\subseteq X_{\Lambda}[\iota(\pi)]$. This implies that the kernel of the composition $$\rho_{X,\Lambda^\sharp}:=\rho_{X^\vee,\Lambda^\sharp}\circ\lambda:X\longrightarrow X_{\Lambda^\sharp,R}$$ 
lies in $X[\iota(\pi)]$. Therefore, there exists an isogeny $\widetilde{\rho}_{X,\Lambda^\sharp}:X_{\Lambda^\sharp}\rightarrow X$ 
such that $\widetilde{\rho}_{X,\Lambda^\sharp}\circ \rho_{X,\Lambda^\sharp}=\iota(\pi):X\to X$.

\begin{lemma}\label{BT_Z_isotropic-subspace}
For $(X,\iota,\lambda,\rho)\in\cZ(\Lambda)(R)$, we have induced filtrations
\begin{equation*}
\begin{aligned}
\xymatrix{
D(X_{\Lambda,R})\ar[r]^-{D(\rho_{\Lambda,\Lambda^\sharp})}&D(X_{\Lambda^\sharp,R})\ar[r]^-{D(\widetilde{\rho}_{X,\Lambda^\sharp})}&D(X) \\
\Pi D(X_{\Lambda,R})\ar[r]\ar@{^(->}[u]&\Pi D(X_{\Lambda^\sharp,R})\ar@{^(->}[u]\ar[r]&\Fil(X).\ar@{^(->}[u]
}
\end{aligned}
\end{equation*}
The preimage $D(\widetilde{\rho}_{X,\Lambda^\sharp})^{-1}(\Fil(X))\subseteq D(X_{\Lambda^\sharp,R})$ is a locally free direct summand that contains $\mathrm{Im}(D(\rho_{\Lambda,\Lambda^\sharp}))$. Moreover, the quotient
\begin{equation}
U(X):=D(\widetilde{\rho}_{X,\Lambda^\sharp})^{-1}(\Fil(X))\slash\mathrm{Im}(D(\rho_{\Lambda,\Lambda^\sharp}))\subset \Omega_{\Lambda,R}
\end{equation}
    is a locally free isotropic direct summand of rank $\frac{1}{2}(t-h)$.
\end{lemma}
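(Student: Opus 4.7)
The plan is to verify the three assertions of the lemma in turn: first, the containment $\Im(D(\rho_{\Lambda,\Lambda^\sharp})) \subseteq D(\widetilde{\rho}_{X,\Lambda^\sharp})^{-1}(\Fil(X))$; second, that the preimage is a locally free direct summand of $D(X_{\Lambda^\sharp,R})$ of the expected rank so that $U(X)$ is well-defined as a locally free direct summand of $\Omega_{\Lambda,R}$ of rank $(t-h)/2$; and third, the isotropy of $U(X)$.

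For the first assertion, I would apply the de Rham realization functor to the defining identity $\widetilde{\rho}_{X,\Lambda^\sharp} \circ \rho_{X,\Lambda^\sharp} = \iota(\pi)$ and to the factorization $\rho_{\Lambda,\Lambda^\sharp} = \rho_{X,\Lambda^\sharp} \circ \rho_{\Lambda,X}$ to obtain
\[
D(\widetilde{\rho}_{X,\Lambda^\sharp}) \circ D(\rho_{\Lambda,\Lambda^\sharp}) = D(\rho_{\Lambda,X}) \circ \iota(\pi).
\]
The image then equals $D(\rho_{\Lambda,X})(\Pi D(X_{\Lambda,R})) = D(\rho_{\Lambda,X})(\Fil(X_{\Lambda,R})) \subseteq \Fil(X)$, by functoriality of the Hodge filtration applied to the isogeny $\rho_{\Lambda,X}\colon X_{\Lambda,R} \to X$.

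For the second assertion, I would reduce to geometric points via base change. Over a perfect field $\kappa$, Dieudonn\'e theory identifies a point of $\cZ(\Lambda)(\kappa)$ with a lattice $M := M(X)$ satisfying $\breve{\Lambda} \subseteq M \subseteq M^\sharp \subseteq \breve{\Lambda}^\sharp$ in $N \otimes W_O(\kappa)$, where $\breve{\Lambda} := \Lambda \otimes_{O_F} W_O(\kappa)$. The map $D(\widetilde{\rho}_{X,\Lambda^\sharp})$ is then induced by multiplication by $\pi$ on $\breve{\Lambda}^\sharp$ (with image landing in $M$). Hodge functoriality applied to the framing isogeny $\rho_{X^\vee,\Lambda^\sharp}\colon X^\vee \to X_{\Lambda^\sharp}$ yields $\uV M^\sharp \subseteq \pi \breve{\Lambda}^\sharp$, which combined with $M \subseteq M^\sharp$ gives $\pi^{-1} \uV M \subseteq \breve{\Lambda}^\sharp$ and hence
\[
D(\widetilde{\rho}_{X,\Lambda^\sharp})^{-1}(\Fil(X)) = (\pi^{-1} \uV M)/\pi_0 \breve{\Lambda}^\sharp.
\]
An index computation using the chain $\pi_0 \breve{\Lambda}^\sharp \subseteq \breve{\Lambda} \subseteq M \subseteq M^\sharp \subseteq \breve{\Lambda}^\sharp$ with $[M^\sharp\!:\!M] = h$, $[\breve{\Lambda}^\sharp\!:\!\breve{\Lambda}] = t$, and the Hodge index $[M\!:\!\uV M] = n$ yields the $\F$-dimension $(t-h)/2$ for $U(X)$. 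Rank constancy at all geometric points then gives the locally free direct summand property for general $R$.

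For isotropy, working over perfect fields, the adjoint relation $h(\uV x, y) = \sigma^{-1} h(x, \uF y)$ between the hermitian form and the operators $\uF,\uV$ gives $(\uV M)^\sharp = \uF^{-1} M^\sharp$, hence $(\pi^{-1} \uV M)^\sharp = \pi \uF^{-1} M^\sharp$. Using $\uF \uV = \pi$, the desired self-orthogonality $\pi^{-1} \uV M \subseteq (\pi^{-1} \uV M)^\sharp$ reduces to $\uV M \subseteq \uV M^\sharp$, an immediate consequence of the $\cZ$-stratum condition $M \subseteq M^\sharp$. Passing to the quotient and invoking the definition of the induced symplectic form $\pi h(\widetilde{x},\widetilde{y}) \bmod \pi$ on $\Omega_\Lambda = \breve{\Lambda}^\sharp/\breve{\Lambda}$ then gives the isotropy of $U(X)$.

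The main obstacle will be establishing the direct summand property over a general $R$-algebra, since a priori the preimage of a direct summand under a map of locally free modules is only a submodule. I plan to resolve this by rank constancy from the Dieudonn\'e module computation above combined with standard flatness arguments, or alternatively by invoking the local model diagram of \S \ref{sec:LM-diagram}, where the strata local model provides an \'etale-local trivialization that makes the direct summand structure manifest.
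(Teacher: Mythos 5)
Your proof reaches the correct conclusions and broadly parallels the paper's, but two of your three steps take genuinely different routes, and one of them has a small computational slip that you should repair. For the containment $\mathrm{Im}(D(\rho_{\Lambda,\Lambda^\sharp}))\subseteq D(\widetilde{\rho}_{X,\Lambda^\sharp})^{-1}(\Fil(X))$, your argument via the identity $D(\widetilde{\rho}_{X,\Lambda^\sharp})\circ D(\rho_{\Lambda,\Lambda^\sharp}) = D(\rho_{\Lambda,X})\circ\Pi$, the equality $\Pi D(X_{\Lambda,R}) = \Fil(X_{\Lambda,R})$, and Hodge functoriality is valid and has the virtue of working directly over an arbitrary $\F$-algebra $R$; the paper instead immediately reduces to a perfect field $\kappa$, where the containment drops out of $\breve{\Lambda}=\tau^{-1}\breve{\Lambda}\subseteq\tau^{-1}M = \Pi^{-1}\uV M$. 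For the isotropy, the paper identifies $U(X) = \tau^{-1}M/\breve{\Lambda} = \Phi^{-1}(M/\breve{\Lambda})$ and then simply observes that the orthogonal complement of $M/\breve{\Lambda}$ inside $\Omega_{\Lambda,\kappa}$ is $M^\sharp/\breve{\Lambda}$, so $M\subseteq M^\sharp$ makes $M/\breve{\Lambda}$ isotropic and hence so is $\Phi^{-1}(M/\breve{\Lambda})$; this avoids the $\uF,\uV$-adjoint computation you perform. Both approaches leave the passage from geometric points to the direct-summand statement for general $R$ somewhat implicit, and you at least flag this gap and propose two plausible resolutions (rank constancy, or the local model diagram of \S\ref{sec:LM-diagram}).

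The slip is in your isotropy step. You write ``$(\pi^{-1}\uV M)^\sharp = \pi\uF^{-1}M^\sharp$; using $\uF\uV=\pi$, the containment reduces to $\uV M\subseteq\uV M^\sharp$.'' But the correct relation for the relative Dieudonn\'e module of a strict $O_{F_0}$-module is $\uF\uV = \pi_0 = \pi^2$, not $\uF\uV=\pi$ (the line ``$\uF\uV=\uV\uF=\pi\,\mathrm{id}$'' in \S\ref{sec:strict-module} of the paper is a typo for $\pi_0$; compare with the chain $\pi_0 M\subset\uV M\subset M$ in Proposition \ref{prop:RZ-lattice}). Only with $\uF\uV = \pi_0$ do you get $\pi\uF^{-1} = \pi^{-1}\uV$, so that $(\pi^{-1}\uV M)^\sharp = \pi^{-1}\uV M^\sharp$ and the desired containment reduces to $M\subseteq M^\sharp$. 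Taking $\uF\uV=\pi$ at face value would instead force $M\subseteq\pi M^\sharp$, which is false in general (it would say the point lies at the worst point of the $\pi$-modular stratum). Your conclusion is right, but you should replace $\pi$ by $\pi_0$ in that relation.
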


\begin{proof}
By universality, it suffices to check the case where $\Spec R$ is an affine sub-formal scheme of $\cZ(\Lambda)$. In this case, by Nakayama's lemma, it suffices to check the condition on $\kappa$-points of $\cZ(\Lambda)$, where $\kappa$ is any algebraically closed field over $\F$. 
Recall that we use the notation $O=O_{F_0}$.
For the remainder of the proof, fix one such $\kappa$ and denote $\breve{\Lambda}:=\Lambda\otimes_O W_O(\kappa)$ and $\breve{\Lambda}^\sharp:=\Lambda^\sharp\otimes_O W_O(\kappa)$, where $W_O(\kappa)$ denotes the ring of $O$-Witt vectors (Note that elsewhere in the paper, we used the notation $\breve{\Lambda}:=\Lambda\otimes_O O_{\breve{F_0}}$).

By Proposition \ref{prop:BT_strata}, a point $(X,\iota,\lambda,\rho)\in\cZ(\Lambda)(\kappa)$ corresponds to a chain of Dieudonn\'e lattices 
\begin{equation*}
    \breve{\Lambda}\subseteq M(X)\subseteq M(X)^\sharp\subseteq \breve{\Lambda}^\sharp.
\end{equation*}
The isogeny $\widetilde{\rho}_{X,\Lambda^\sharp}$ induces a map between Dieudonn\'e lattices 
\begin{equation*}
\breve{\Lambda}^\sharp\longrightarrow M(X),\quad x\mapsto \Pi\cdot x.
\end{equation*}
Since $\Fil(X)=\uV M(X)/\pi_0 M(X)$, we have
    \begin{equation*}
        M(\widetilde{\rho}_{X,\Lambda^\sharp})^{-1}(\Fil(X))=\Pi^{-1}V M/\pi_0\breve{\Lambda}^\sharp=\tau^{-1}M/\pi_0\breve{\Lambda}^\sharp.
    \end{equation*}
Moreover, since $\breve{\Lambda}\subseteq M(X)$, it follows that $\breve{\Lambda}=\tau^{-1}(\breve{\Lambda})\subseteq\tau^{-1}(M(X))$. Therefore
$$\text{Im}(M((\rho_\Lambda)_R))=\breve{\Lambda}/\pi_0 
\breve{\Lambda}^\sharp\subseteq M(\widetilde{\rho}_{X,\Lambda^\sharp})^{-1}(\Fil(X)).$$
The quotient 
$$U(X)\cong \tau^{-1}(M)\slash\breve{\Lambda}=\Phi^{-1}(M\slash\breve{\Lambda})\subset \Lambda^{\sharp}\slash\Lambda\otimes \kappa =\Omega_{\Lambda,\kappa}$$ 
has dimension $\frac{1}{2}(t-h)$, where $\Phi$ is the Frobenius in $\Omega_{\Lambda,\kappa}$.

For any $x\in M$ and $y\in\breve{\Lambda}^\sharp$, we have $\langle \bar{x},\bar{y}\rangle_{\breve{\Lambda}^\sharp\slash\breve{\Lambda}}=0$ if and only if $\pi\langle x,y\rangle\in (\pi)\subset O_F\otimes_{O}W_O(\kappa)$. This is equivalent to $y\in M^\sharp$, which implies that $M^\sharp/\breve{\Lambda}$ is the orthogonal complement of $M/\breve{\Lambda}$ in $\Omega_{\Lambda,\kappa}$. In particular, $\Phi^{-1}(M/\breve{\Lambda})$ is an isotropic subspace.
\end{proof}

For the remainder of our discussion, recall that $\Omega_{\Lambda} = \Lambda^{\sharp}/\Lambda \otimes \F$, when $t(\Lambda) \neq h$. The partial flag variety $\Sp(\Omega_{\Lambda})/P_{\tth\tth}$ parameterizes isotropic subspaces of dimension $\ttt-\tth$, and $S_{\Lambda}$ in \S \ref{sec:DL_symp} is defined as a closed subvariety whose $k$-points for any field $k$ over $\F$ are given by
\begin{equation*}
    S_{\Lambda}^{}(k)=\{\cV\subset \Omega_{\Lambda,k}\mid \cV\text{ is isotropic},\,\text{and}\, \dim_k \cV=\ttt-\tth,\,\text{and}\, \dim_k(\cV\cap \Phi(\cV))\geq \ttt-\tth-1\}.
\end{equation*}
When $t(\Lambda)=h$, the space $S_\Lambda$ consists of the zero-dimensional subspace. Hence, the stratification in Theorem \ref{thm:symplectic main} degenerates into $S_\Lambda=X_{P_{\tth\tth}}(\id)$, which is irreducible of dimension $0$.
By Lemma \ref{BT_Z_isotropic-subspace}, we define the map
\begin{equation*}
    \Psi_{\cZ}:\cZ(\Lambda)\longrightarrow \Sp(\Omega_\Lambda)/P_{\tth\tth},\quad (X,\iota,\lambda,\rho)\mapsto U(X).
\end{equation*}

\begin{lemma}\label{lem:Z-strata_points}
Let $\kappa$ be any perfect field over $\F$. The map $\Psi_{\cZ}$ defines a bijection between $\cZ(\Lambda)(\kappa)$ and $S_{\Lambda}(\kappa)$, hence it factors through $S_\Lambda$, which we will still denote by $\Psi_\cZ$.
\end{lemma}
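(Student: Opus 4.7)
By Proposition \ref{prop:BT_strata}, $\cZ(\Lambda)(\kappa)$ is parametrized by Dieudonn\'e lattices $M \subset N \otimes W_O(\kappa)$ with $\breve{\Lambda} \subseteq M \subseteq M^\sharp \subseteq \breve{\Lambda}^\sharp$, subject to the $\tau$-conditions of Proposition \ref{prop:RZ-lattice}. The key dictionary is that $\tau$ preserves both $\breve{\Lambda}$ and $\breve{\Lambda}^\sharp$ (since $\Lambda \subset \bV = N^{\tau=1}$), so it descends to the quotient $\breve{\Lambda}^\sharp/\breve{\Lambda} \cong \Omega_{\Lambda,\kappa}$, where it acts precisely as the geometric Frobenius $\Phi$. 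The plan is to exploit this dictionary to verify that $\Psi_\cZ$ lands in $S_\Lambda$, and is bijective on $\kappa$-points.

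To see that $\Psi_\cZ(X) \in S_\Lambda(\kappa)$: from the proof of Lemma \ref{BT_Z_isotropic-subspace} we have $U(X) = \tau^{-1}(M)/\breve{\Lambda}$, hence $\Phi(U(X)) = M/\breve{\Lambda}$, and
\begin{equation*}
\dim_\kappa\bigl(U(X) + \Phi(U(X))\bigr) = [\tau^{-1}(M)+M : M] + [M : \breve{\Lambda}].
\end{equation*}
Applying $\tau$ (which preserves $W_O(\kappa)$-lengths since $\kappa$ is perfect), the first index equals $[M+\tau(M) : \tau(M)]$; and since $M$ and $\tau(M)$ are Dieudonn\'e lattices of the same type, this equals $[M+\tau(M) : M] \leq 1$ by Proposition \ref{prop:RZ-lattice}. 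Combined with $[M:\breve{\Lambda}] = \ttt-\tth$, this yields $\dim(U(X) \cap \Phi(U(X))) \geq \ttt-\tth-1$. The isotropy and dimension of $U(X)$ were already established in Lemma \ref{BT_Z_isotropic-subspace}.

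For surjectivity, given $\cV \in S_\Lambda(\kappa)$ with preimage $\widetilde{\cV} \subset \breve{\Lambda}^\sharp$, define $M := \tau(\widetilde{\cV})$. The $\tau$-stability of $\breve{\Lambda}$ and $\breve{\Lambda}^\sharp$ yields $\breve{\Lambda} \subseteq M \subseteq \breve{\Lambda}^\sharp$; the isotropy of $\cV$ gives $M \subseteq M^\sharp$; and from $\Pi \breve{\Lambda}^\sharp \subseteq \breve{\Lambda}$ we deduce $\Pi M \subseteq \breve{\Lambda} \subseteq \widetilde{\cV} = \tau^{-1}(M)$ and $\tau^{-1}(M) \subseteq \breve{\Lambda}^\sharp \subseteq \Pi^{-1}M$. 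Finally, the condition $\dim(\cV + \Phi(\cV)) \leq \ttt-\tth+1$ translates back, via the same index bookkeeping, into $[M+\tau(M):M] \leq 1$. All hypotheses of Proposition \ref{prop:RZ-lattice} are satisfied, producing a point of $\cZ(\Lambda)(\kappa)$ mapping to $\cV$. Injectivity is immediate since $M$ is recovered as $\tau$ of the preimage of $U(X)$ in $\breve{\Lambda}^\sharp$.

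The only delicate step is the identification $\Phi = \tau$ on the quotient $\Omega_{\Lambda,\kappa}$ and the correct translation of indices between the quotient and the lattice level; once this dictionary is in hand, all remaining verifications reduce to routine manipulations of lattices.
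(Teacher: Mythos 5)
Your proof is correct and follows essentially the same route as the paper's: you identify a $\kappa$-point of $\cZ(\Lambda)$ with a Dieudonn\'e lattice $M$, use the fact that $\tau$ descends to the geometric Frobenius $\Phi$ on $\Omega_{\Lambda,\kappa}$, translate the condition $M \stackrel{\leq 1}{\subset} M+\tau(M)$ into the intersection condition defining $S_\Lambda$, and for surjectivity take $M$ to be the lattice with $M/\breve{\Lambda} = \Phi(\cV)$ (your $M = \tau(\widetilde{\cV})$ is the same lattice as the paper's preimage of $\Phi(U)$). The only cosmetic difference is that where you invoke isotropy of $\cV$ and compatibility of $\tau$ with $\sharp$ to get $M \subseteq M^\sharp$ (with the exact colength $h$ then following from the dimension count), the paper carries out the dual computation $M^\sharp = M_{\Phi(U)^\sharp}$ explicitly, and you do not spell out the trivial inclusion $\pi M^\sharp \subseteq \pi\breve{\Lambda}^\sharp \subseteq \breve{\Lambda} \subseteq M$; but these are routine verifications, so the argument stands.
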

\begin{proof}
By Dieudonn\'e theory, a point $z\in \cZ(\Lambda)(\kappa)$ corresponds to a lattice $M$ satisfying
$$\breve{\Lambda}\stackrel{\ttn-\tth-\ttt}{\subset} M\stackrel{t}{\subset} M^\sharp\stackrel{\ttn-\tth-\ttt}{\subset}\breve{\Lambda}^\sharp,$$ see Proposition \ref{prop:BT_strata}. 
The wedge condition implies $M\cap \tau(M)\stackrel{\leq 1}{\subset} M$, which is equivalent to
\begin{equation*}
    \Phi^{-1}(M/\breve{\Lambda})\cap M/\breve{\Lambda}\stackrel{\leq 1}{\subset}\Phi^{-1}(M/\breve{\Lambda}).
\end{equation*}
Therefore, $\Psi_{\cZ}(z)\in S_\Lambda(\kappa)$.

Conversely, let $U\in S_\Lambda(\kappa)$ be any geometric point. The dual space $U^\perp\subset \Omega_\Lambda$ is a subspace of dimension $\ttn-\tth+\ttt$ containing $U$. By the definition of $S_\Lambda$, we have
\begin{equation}\label{equ:Z-strata_points}
    U\cap \Phi(U)\stackrel{\leq 1}{\subset} U.
\end{equation}
We denote by $M\subset \breve{\Lambda}^\sharp$  (resp. $M_{\Phi(U)^\sharp}\subset \breve{\Lambda}^\sharp$) the preimage of $\Phi(U)$ (resp. $\Phi(U)^\perp$) under the projection map $\breve{\Lambda}^\sharp\rightarrow\breve{\Lambda}^\sharp/\breve{\Lambda}$.

Let $M^\sharp$ be the hermitian dual of the lattice $M\subset M(\bX)\otimes W_O(\kappa)[1/\pi_0]$, which agrees with the symplectic dual in the same space, see \S \ref{sec:framing}. Since $\breve{\Lambda}\subseteq M$, we have $M^\sharp\subseteq \breve{\Lambda}^\sharp$.
Therefore, we have
\begin{align*}
    M^\sharp={}&\{x\in M(\bX)\otimes W_O(\kappa)[1/\pi_0]\mid \langle x,y\rangle\subset W_O(\kappa)\quad\text{for all } y\in M\},\\
    ={}&\{x\in \breve{\Lambda}^\sharp\mid \pi\langle x,y\rangle\equiv 0\mod \pi_0\quad\text{for all } y\in M\},\\
    ={}&\{x\in \breve{\Lambda}^\sharp\mid \langle \bar{x},\bar{y}\rangle=0, \quad\text{for all } y\in M\},\\
    ={}&\{x\in\breve{\Lambda}^\sharp\mid \langle \bar{x},\bar{y}\rangle=0\quad\text{for all } y\in \Phi(U)\},\\
    ={}&\{x\in\breve{\Lambda^\sharp}\mid \bar{x}\in \Phi(U)^\perp\subset \Omega_{\Lambda,\kappa}\}=M_{\Phi(U)^\sharp}.
\end{align*}
Since $U\stackrel{h}{\subset} U^\sharp$, we have $M\stackrel{h}{\subset} M^\sharp$.
The relation \eqref{equ:Z-strata_points} implies that $M\cap \tau(M)\stackrel{\leq 1}{\subset} M$.
Next, we have
\begin{equation*}
    \Pi M\subset \Pi\breve{\Lambda}^\sharp\subseteq\breve{\Lambda}=\tau^{-1}\breve{\Lambda}\subseteq\tau^{-1}M\subseteq \tau^{-1}\breve{\Lambda}^\sharp=\breve{\Lambda}^\sharp\subseteq\Pi^{-1}\Lambda\subseteq \Pi^{-1}M.
\end{equation*}
Hence the lattice $M$ defines a point in $\cN_{n,\ep}^{[h]}(\kappa)$. We conclude that $M\in \cZ(\Lambda)(\kappa)$.
\end{proof}

\begin{theorem}\label{prop:ZcycleDLV}
    Let $\Lambda$ be a vertex lattice of type $t\ge h$ in $\bV$. Then the morphism $\Psi_{\cZ}$ defines an isomorphism $\cZ(\Lambda)\rightarrow S_{\Lambda}$.
\end{theorem}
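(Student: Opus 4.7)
The plan is to establish this isomorphism in three stages: (i) promote the pointwise construction of Lemma \ref{BT_Z_isotropic-subspace} to show that $\Psi_{\cZ}$ is a well-defined morphism of schemes landing in $S_{\Lambda}$ (and not merely in the ambient partial flag variety $\Sp(\Omega_\Lambda)/P_{\tth\tth}$); (ii) construct a functorial inverse on $R$-points via Grothendieck–Messing theory; (iii) deduce the scheme-theoretic isomorphism.

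For stage (i), Lemma \ref{BT_Z_isotropic-subspace} already produces, for any $\F$-algebra $R$ and any $(X,\iota,\lambda,\rho)\in\cZ(\Lambda)(R)$, a locally free isotropic direct summand $U(X)\subset\Omega_{\Lambda,R}$ of rank $\ttt-\tth$. To land in $S_{\Lambda}$, we need to verify the relative position condition $\dim(U\cap\Phi(U))\geq\ttt-\tth-1$ in families. This is precisely the incarnation of the wedge condition in Definition \ref{def:basic-triple}: the Dieudonn\'e-theoretic inequality $M\cap\tau(M)\stackrel{\leq 1}{\subset}M$ (which is the local-model-compatible wedge condition, holding universally rather than just on perfect points) translates through the identification in the proof of Lemma \ref{lem:Z-strata_points} into the desired rank condition on $U(X)$. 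Combined with the already-established pointwise bijectivity of Lemma \ref{lem:Z-strata_points}, this gives a morphism $\Psi_{\cZ}:\cZ(\Lambda)\to S_{\Lambda}$ bijective on geometric points.

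For stage (ii), given a flat family $U\subset\Omega_{\Lambda,R}$ in $S_\Lambda(R)$, we form its preimage $\widetilde{U}\subset D(X_{\Lambda^\sharp})_R=\breve{\Lambda}^{\sharp}\otimes_\F R$ under the reduction map, which provides a candidate lift of the Hodge filtration inside the standard framing $X_{\Lambda^\sharp}$. The isotropy of $U$ under $\langle\,,\,\rangle$ translates into the required duality compatibility, while the inclusion $\Lambda\otimes R\subset\widetilde{U}$ (coming from the chain $\Lambda\subset\Lambda^\sharp$ and the definition of $U$) ensures the Kottwitz and wedge conditions at the level of Hodge filtrations. Grothendieck–Messing theory then promotes this lift uniquely to an $R$-point $(X,\iota,\lambda,\rho)$ of $\cZ(\Lambda)$ together with the requisite isogeny $\widetilde{\rho}_{X,\Lambda^\sharp}$. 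The assignment is clearly functorial in $R$ and is inverse to $\Psi_{\cZ}$ by construction, which forces $\Psi_{\cZ}$ to be an isomorphism of functors, hence of representing schemes.

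The main obstacle will be the careful bookkeeping in stage (ii) to confirm that the Grothendieck–Messing deformation of $X_{\Lambda^{\sharp}}$ preserves all the PEL data, including the polarization, $O_F$-linearity, and (when $h=n$) the spin condition. As a fallback, one can sidestep the explicit inverse construction entirely: both $\cZ(\Lambda)$ and $S_\Lambda$ are normal, reduced, and projective over $\F$ of the same dimension $\ttt+\tth$ (Theorem \ref{thm:LM-results}(1)(2) and Theorem \ref{thm:symplectic main}), and $\Psi_{\cZ}$ is a proper morphism bijective on geometric points. By Zariski's main theorem $\Psi_{\cZ}$ is finite, and a finite birational morphism onto a normal Noetherian scheme is an isomorphism; birationality follows from bijectivity together with the equality of dimensions and reducedness of source and target. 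This latter approach is essentially forced by the machinery already developed in the paper and furnishes a clean conclusion once stage (i) is established.
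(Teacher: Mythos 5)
Your fallback argument --- which is essentially the paper's actual route --- has a gap, and it is precisely the subtlety the paper is careful to address. You claim that properness, bijectivity on geometric points, normality of the target, and equality of dimensions with reducedness of both sides suffice to conclude birationality of $\Psi_{\cZ}$ and hence that it is an isomorphism. This inference is false in characteristic $p$. The relative Frobenius $F:\mathbb{A}^1_\F\to\mathbb{A}^1_\F$ is finite, bijective on geometric points, with smooth irreducible source and target of the same dimension, yet on function fields it induces $\F(t)\hookrightarrow\F(t),\ t\mapsto t^p$, which is not surjective --- so $F$ is not birational and certainly not an isomorphism. Bijectivity on $\bar{k}$-points says nothing about non-closed points (equivalently, about field-valued points over non-perfect fields), and that is exactly where a Frobenius twist can hide.

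The paper's proof closes this gap by showing in Proposition \ref{thm:Z-strata_points} that $\Psi_{\cZ}(k)$ is a bijection for \emph{every} field $k$ over $\F$, not merely algebraically closed or perfect ones. From that, birationality follows by evaluating at the generic point of $S_\Lambda$, and only then does the normality-plus-Zariski argument finish. Extending the pointwise bijectivity of Lemma \ref{lem:Z-strata_points} beyond perfect fields is where the $O_{F_0}$-display formalism becomes essential: over a non-perfect field one cannot recover the submodule $Q$ (i.e.\ the Hodge filtration) from $P$ except up to a Frobenius twist, as the remark following Proposition \ref{thm:Z-strata_points} spells out, so the inverse has to be built directly from $Q$ --- lift $U$ to $\cU\subset P_{\Lambda^\sharp}$, set $Q:=\Pi\cU$, and let $P$ be generated by $\dF(Q)$. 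Your stage (ii) via Grothendieck--Messing does not supply this: Grothendieck--Messing is a deformation statement along PD-thickenings and does not by itself produce a strict $O_F$-module over a general $\F$-algebra $R$ from a filtration inside $\breve{\Lambda}^{\sharp}\otimes R$; and even if one set such a deformation up, the Frobenius-twist bookkeeping would still have to be done by hand.
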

\begin{proof}
Recall that $\cZ(\Lambda)$ is a projective scheme (see the discussion after Definition \ref{def:YZ-cycles}) and is normal by Theorem \ref{thm:LM-results}.
By Lemma \ref{lem:Z-strata_points}, we know that $\Psi_{\cZ}$ is a bijection for any perfect field $\kappa$ over $\F$. We will show in Proposition \ref{thm:Z-strata_points} that this property extends to any field $k$ over $\F$. Applying this to the function field of $S_{\Lambda}$ shows that $\cZ(\Lambda)$ is irreducible and $\Psi_{\cZ}$ is birational. Given this result, the normality of $S_{\Lambda}$, and the properness of $\Psi_{\cZ}$ (as it is a morphism between projective varieties), it follows from Zariski's main theorem that $\Psi_{\cZ}$ is an isomorphism.
\end{proof}

\begin{proposition}\label{prop:comp-inclusion}
The isomorphism $\Psi_{\cZ}$ is compatible with inclusions of vertex lattices. In other words, given two vertex lattices $\Lambda_1\subseteq \Lambda_2$ in $\cL_\cZ$, we have the following commutative diagram
\begin{equation*}
\begin{aligned}
\xymatrix{
\cZ(\Lambda_2)\ar@{^(->}[r]\ar[d]^-{\Psi_\cZ}&\cZ(\Lambda_1)\ar[d]^-{\Psi_\cZ}\\
S_{\Lambda_2}\ar@{^(->}[r]&S_{\Lambda_1},
}
\end{aligned}
\end{equation*}
here, the inclusion in the top row is defined in a natural way. The inclusion in the bottom row is given by taking the image of $\cV\subset \Omega_{\Lambda_2}$ under the morphism 
$$
\Omega_{\Lambda_2}= \Lambda_2^\sharp/\Lambda_2\otimes_{\F_q}\F\subseteq \Lambda_1^\sharp/\Lambda_2\otimes_{\F_q}\F\to \Lambda_1^\sharp/\Lambda_1\otimes_{\F_q}\F=\Omega_{\Lambda_1}
$$
Equivalently, $\cV$ corresponds to a vertex lattice $M$ satisfying
$$\breve{\Lambda}_2\subseteq M\subseteq M^\sharp\subseteq \breve{\Lambda}_2^\sharp,$$
and we define the map via $M\mapsto M/\breve{\Lambda}_1\subset \breve{\Lambda}_1^\sharp/\breve{\Lambda}_1$. Since the spaces in the diagram are varieties, these descriptions uniquely characterize the morphisms.
\end{proposition}
\begin{proof}
We can check this on $k$-valued points for any field $k$ over $\F$, where it is obvious from the definition of the map $\Psi_\cZ$, as we explained in the statement.
\end{proof}

To extend Lemma \ref{lem:Z-strata_points} to an arbitrary field $k$ over $\F$, we use $O_{F_0}$-displays. We refer the reader to \S \ref{sec:reviewdisplays} for the details of notations.
For any $R\in\Nilp_{O_{\breve{F}}}$ and $R$-point $(X,\iota,\lambda,\rho)\in \cZ(\Lambda)(R)$, let $\cP(X)=\cP=(P,Q,\bF,\dF)$ be the associated $O_{F_0}$-display  (see Theorem \ref{thm:ACZ-main}). 
Let $\cP_{\Lambda}=(P_{\Lambda},Q_{\Lambda},\bF,\dF)$ and $\cP_{\Lambda^\sharp}=(P_{\Lambda^{\sharp}},Q_{\Lambda^{\sharp}},\bF,\dF)$ denote the displays $\cP(X_{\Lambda^{\sharp}})$ and $\cP(X_{\Lambda})$, resp.
The isogeny $X\to X_{\Lambda^{\sharp}}$ induces an $O_{F_0}$-display morphism $\rho_{X,\Lambda^{\sharp}}:\cP(X)\to \cP_{\Lambda^{\sharp}}$.
Define
\begin{equation*}
\cU(X):=\ker\bigl[\Pi:P_{\Lambda^{\sharp}}\to P_{\Lambda^{\sharp}}\twoheadrightarrow P_{\Lambda^{\sharp}}/\rho_{X,\Lambda^{\sharp}}(Q)\bigr]
\end{equation*}
This defines a $W_O(R)$-submodule of $P_{\Lambda^{\sharp}}$.

When $R=\kappa$ is an algebraically closed field, by Dieudonn\'e theory, we have $P=M(X)$ and $\uV M(X)=Q\cong \rho_{X,\Lambda^{\sharp}}(Q)\subseteq P_{\Lambda^{\sharp}}=\Lambda^{\sharp}\otimes W_O(\kappa)$. Hence
\begin{align*}
\cU(X)={}&\bigl\{
    v\in \Lambda^{\sharp}\otimes W_O(\kappa)\mid \Pi(v)\in \uV M(X)
    \bigr\}\\
    ={}&\bigl\{
    v\in \Lambda^{\sharp}\otimes W_O(\kappa)[1/\pi] \mid v\in \Pi^{-1}\uV M(X)\cap \Lambda^{\sharp}\otimes W_O(\kappa)
    \bigr\}\\
    ={}&\Pi^{-1}VM(X)=\tau^{-1}M(X)\subseteq \Lambda^{\sharp}\otimes W_O(\kappa).
\end{align*}
Therefore, we can re-define 
\begin{equation}\label{equ:display-U(X)}
    U(X):=\cU(X)/\rho_{\Lambda,\Lambda^{\sharp}}(P_{\Lambda})\subset P_{\Lambda^{\sharp}}/\rho_{\Lambda,\Lambda^{\sharp}}(P_{\Lambda})=\Lambda^{\sharp}/\Lambda\otimes W_O(R)=\Lambda^{\sharp}/\Lambda\otimes  R.
\end{equation}
This construction agrees with our previous one, as verified by checking geometric points.

For any field $k$ over $\F$, recall from Proposition \ref{prop:ramified Witt vector} that $W_O(k)$ is an integral domain.
Consequently, for any $X\in \cZ(\Lambda)(k)$, the induced maps $P_{\Lambda}\to P\to P_{\Lambda^{\sharp}}$ are injective.

\begin{proposition}\label{thm:Z-strata_points}
Let $k$ be any field over $\F$.
The map $\Psi_{\cZ}$ defines a bijection between $\cZ(\Lambda)(k)$ and $S_{\Lambda}(k)$.
\end{proposition}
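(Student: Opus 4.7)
The plan is to extend Lemma \ref{lem:Z-strata_points} from perfect fields to an arbitrary field $k$ over $\F$ by working with $O_{F_0}$-displays, which (unlike Dieudonn\'e modules) are available over any base in $\Nilp O_{\breve F}$. The key tool is the reinterpretation \eqref{equ:display-U(X)} of $U(X)$ via $\cU(X)=\ker[\Pi: P_{\Lambda^\sharp}\to P_{\Lambda^\sharp}/\rho_{X,\Lambda^\sharp}(Q)]$, together with the fact noted above the statement that $W_O(k)\hookrightarrow W_O(\ov k)$ is an embedding into an integral domain, so the induced maps $P_\Lambda\hookrightarrow P\hookrightarrow P_{\Lambda^\sharp}$ are all injective.

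For injectivity, suppose $X_1,X_2\in \cZ(\Lambda)(k)$ satisfy $U(X_1)=U(X_2)=:U$. Then $\cU(X_1)$ and $\cU(X_2)$ are both equal to the preimage of $U$ under $P_{\Lambda^\sharp}\twoheadrightarrow P_{\Lambda^\sharp}/\rho_{\Lambda,\Lambda^\sharp}(P_\Lambda)$, so they coincide as submodules of $P_{\Lambda^\sharp}$. One then recovers the display of $X_i$ from this common $\cU$: the submodule $Q_i=\rho_{X_i,\Lambda^\sharp}(Q(X_i))$ equals $\Pi\cU$, the operators $\bF,\dF$ are induced by restriction from those on $\cP_{\Lambda^\sharp}$, and the ambient module $P(X_i)$ is determined rationally inside $P_{\Lambda^\sharp}[1/\pi_0]$. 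Via the equivalence \eqref{eq:equiv_display} this gives $X_1\cong X_2$.

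For surjectivity, given $U\in S_\Lambda(k)$, I would produce a preimage $X\in\cZ(\Lambda)(k)$ by reversing the above construction. Let $\cU\subset P_{\Lambda^\sharp}$ be the preimage of $U$; this is defined over $W_O(k)$ since $U$ is $k$-rational. Base change to $\ov k$: by Lemma \ref{lem:Z-strata_points} there is $X_{\ov k}\in\cZ(\Lambda)(\ov k)$ with $U(X_{\ov k})=U_{\ov k}$, and its display $\cP(X_{\ov k})$ is explicitly determined by $\cU\otimes_{W_O(k)}W_O(\ov k)$ together with the restriction of the operators from $\cP_{\Lambda^\sharp,\ov k}$. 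Since every piece of this data descends to $W_O(k)$, the display itself descends to an $O_{F_0}$-display $\cP$ over $k$, which under \eqref{eq:equiv_display} yields the desired $X\in\cZ(\Lambda)(k)$, equipped with the induced isogeny $\rho:X_\Lambda\to X\to X_{\Lambda^\sharp}$ coming from $\rho_{\Lambda,\Lambda^\sharp}(P_\Lambda)\subset\cU\subset P_{\Lambda^\sharp}$.

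The main obstacle is verifying the display axioms (Definition \ref{def:O_display}) for the constructed $\cP$ when $k$ is non-perfect: over a perfect field one has the clean description $P=\tau\cU$ with $\tau=\Pi V^{-1}$, but $V^{-1}$ is not available in general, so one must characterize $P$ rationally inside $P_{\Lambda^\sharp}[1/\pi_0]$ and check integrality. The hypotheses defining $S_\Lambda$—that $U$ is isotropic of dimension $\ttt-\tth$ with $\dim(U\cap\Phi(U))\geq\ttt-\tth-1$—are exactly what ensure the polarization compatibility, the Kottwitz condition, and the wedge condition for the resulting $(X,\iota,\lambda)$; the spin condition in the $\pi$-modular case follows from the open-ness of $S_\Lambda$ in its ambient flag variety. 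Once this descent is in place, the inverse map $S_\Lambda(k)\to\cZ(\Lambda)(k)$ is constructed, completing the proof.
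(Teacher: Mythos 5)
Your framework (constructing the inverse via $O_{F_0}$-displays, using $\cU$ from \eqref{equ:display-U(X)}) matches the paper's setup, but there is a genuine gap at the heart of the surjectivity step.

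You propose to produce $X\in\cZ(\Lambda)(k)$ by passing to $\ov k$, invoking Lemma \ref{lem:Z-strata_points}, and then descending the resulting display $\cP(X_{\ov k})$, claiming it is ``explicitly determined'' by $\cU\otimes_{W_O(k)}W_O(\ov k)$ and the operators from $\cP_{\Lambda^\sharp,\ov k}$. This claim is exactly where the argument needs a new idea and you do not supply one. A display is $(P,Q,\bF,\dF)$; setting $Q=\Pi\cU$ is clear, but over $\ov k$ the module $P$ is only characterized as $\tau\cU$ with $\tau=\Pi V^{-1}$, and — as you correctly note — $V^{-1}$ is not available over non-perfect $k$. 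Saying ``one must characterize $P$ rationally inside $P_{\Lambda^\sharp}[1/\pi_0]$ and check integrality'' names the problem without solving it, so the descent you want (that $P$ is $W_O(k)$-rational) is asserted, not proved. The same vagueness infects your injectivity argument, where you say $P(X_i)$ is ``determined rationally'' without a formula.

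The paper's proof sidesteps descent entirely and is substantially shorter: given $U\in S_\Lambda(k)$, lift it to $\cU$ with $P_\Lambda\subset\cU\subset P_{\Lambda^\sharp}$, set $Q:=\Pi\cU$, and \emph{define} $P$ to be the $W_O(k)$-submodule of $P_{\Lambda^\sharp}$ generated by $\dF(Q)$. The crucial point, which your proposal misses, is that $\dF$ is already defined over $W_O(k)$ as part of the display $\cP_{\Lambda^\sharp}$, and $Q\subset Q_{\Lambda^\sharp}$, so $\dF(Q)\subset P_{\Lambda^\sharp}$ makes direct sense with no base change and no rationality to check. The resulting $(P,Q,\bF,\dF)$ satisfies the display axioms by construction (axiom (1) holds by definition of $P$), one gets inclusions $\cP_\Lambda\hookrightarrow\cP\hookrightarrow\cP_{\Lambda^\sharp}$, and $\mathrm{BT}(\cP)$ is the required point. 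Note also that the remark following the proposition in the paper addresses exactly the Frobenius-twist pitfall you are circling: using $\mathrm{Im}[P\to P_{\Lambda^\sharp}]$ in place of $\cU$ would twist by Frobenius, which is why the operator $\dF$, rather than any attempt to invert $V$, is the right tool. Finally, your claim that the spin condition ``follows from the openness of $S_\Lambda$ in its ambient flag variety'' is unsubstantiated and does not appear in the paper; the paper attributes all the conditions simply to ``the definition of $S_\Lambda$.''
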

\begin{proof}
To construct the inverse of $\Psi_{\cZ}(k)$, consider a $k$-point $U\in S_\Lambda(k)$ defining an isotropic subspace $U\stackrel{\ttt-\tth}{\subset}\Omega_{\Lambda,k}$. 
Via the isomorphism $\Omega_{\Lambda,k}= \Lambda^\sharp/\Lambda\otimes k\cong\Lambda^\sharp/\Lambda\otimes W_O(k)=P_{\Lambda^{\sharp}}/P_{\Lambda}$, the subspace $U$ lifts to a chain of $W_O(k)$-modules
\begin{equation}\label{equ:cU-lattice}
    P_\Lambda\stackrel{\ttt-\tth}{\subseteq} \cU\subseteq P_{\Lambda^{\sharp}}
\end{equation}
Let $Q:=\Pi \cU\subset \Pi\cP_{\Lambda^{\sharp}}\subset Q_{\Lambda^\sharp}$ and let $P$ be the $W_O(k)$-submodule of $\cP_{\Lambda^{\sharp}}$ generated by $\dF(Q)$, where $\dF$ is the restriction of $\dF:Q_{\Lambda^\sharp}\to P_{\Lambda^\sharp}$. Similarly, $\bF$ is the restriction from $P_{\Lambda^\sharp}$.

We claim that $\cP:=(P,Q,\bF,\dF)$ defines an $O$-display over $k$. By the equivalence of categories in Theorem \ref{thm:cohen-equiv}, it suffices to prove the assertion after replacing all constructions involving $O$-displays with those for the $O$-Cohen ring $C_O(k)$, which is a complete discrete valuation ring.
In particular, both $P$ and $Q$ are finite projective modules over the $O$-Cohen ring $C_O(k)$. The existence of a normal decomposition then follows from the classification of finitely generated modules over a discrete valuation ring, together with the fact that $(\varpi)P\subseteq Q\subseteq P$.

By \eqref{equ:cU-lattice}, we have
\begin{equation*}
    \Pi P_\Lambda=Q_\Lambda\subset Q\subset \Pi P_{\Lambda^{\sharp}}=Q_{\Lambda^{\sharp}},
\end{equation*}
which yields natural inclusions $\cP_{\Lambda}\hookrightarrow\cP\hookrightarrow \cP_{\Lambda^{\sharp}}$. Since $\mcP_{\Lambda}$ and $\mcP_{\Lambda^{\sharp}}$ are bi-nilpotent displays, it is clear that $\mcP$ is also a bi-nilpotent display. Then we obtain a biformal strict $O_{F_0}$-module $\mathrm{BT}(\mcP)$. The definition of $S_\Lambda$ ensures that $\mathrm{BT}(\cP)$ defines a point in $\cZ(\Lambda)(k)$. Conversely, given a point in $\cZ(\Lambda)(k)$, we obtain a filtration $\cP_\Lambda\subset\cP\subset\cP_{\Lambda^{\sharp}}$. By reversing the above procedure, we obtain a point in $S_{\Lambda}(k)$. Hence the above construction is the inverse of $\Psi_{\cZ}(k)$.
\end{proof}

\begin{remark}\label{rmk:correct-VW}
It is well-known among experts that the original construction of the morphism from Bruhat--Tits strata to Deligne--Lusztig varieties by Vollaard and Wedhorn \cite{Vollaard-Wedhorn} is not an isomorphism, but rather differs by a Frobenius twist. This is evident from the display construction and the proof of Proposition \ref{thm:Z-strata_points}.

We first remind the reader that for ($O$-) Witt vectors, the Frobenius morphism $\sigma$ is no longer an isomorphism. In particular, when generalizing Dieudonn\'e theory from perfect fields to arbitrary fields or rings, the Verschiebung cannot be generalized directly as it is $\sigma^{-1}$-linear. However, its ``inverse'', which is $\dot{\uF}$ in $O$-display, can be generalized.

In fact, by \cite[Lem. 2.2]{ACZ}, the Verschiebung can only be constructed up to a twist:
\begin{equation*}
    V^{\sharp}:P\to W_O(R)\otimes_{\sigma,W_O(R)}P.
\end{equation*}
Therefore, while $Q\subset P_{\Lambda}$ uniquely determines $P\subset P_{\Lambda}$ (since $P\text{``=''}V^{-1}Q$), the converse fails. 
In other words, $Q\text{``=''}\uV P$ can be recovered from $P$ only up to a Frobenius twist, which fails to be an isomorphism over non-perfect fields $k$.
Thus, the construction in \cite{Vollaard-Wedhorn}, using the language of $O$-displays, given by 
$$\cZ(\Lambda)\to S_\Lambda\quad X\mapsto \cU(X)^{\mathrm{fake}}:=\mathrm{Im}[P\to P_{\Lambda^\sharp}]$$
is not an isomorphism but differs by a Frobenius twist.
\end{remark}
\begin{remark}
While the proof of Proposition \ref{thm:Z-strata_points} is using the frame of a $O$-Cohen subring, we stick with the $O$-display construction for our definitions. The $O$-Witt vector construction is more natural here, since the construction of Cohen subrings is only valid for fields, and depends on a choice of a $p$-basis. For example, it is not clear how to handle functoriality when using the Cohen frame.
\end{remark}

\subsection{$\cY$-strata stratification}
Let $\Lambda\subset \bV$ be any vertex lattice of type $t(\Lambda)\leq h$. We sketch the construction (similar to \S \ref{sec:Z-strata-strata}) of the isomorphism
\begin{equation*}
    \Psi_{\cY}:\cY(\Lambda^\sharp)\simeq R_{\Lambda^{\sharp}},
\end{equation*}
where $R_{\Lambda^{\sharp}}$ is defined in \eqref{equ:R-Lambda-defn}. 
For any $\F$-algebra $R$ and any $R$-point $(X,\iota,\lambda,\rho)\in{\cY}(\Lambda^\sharp)(R)$ in the $\cY$-stratum (see Definition \ref{def:YZ-cycles}), we have an isogeny $\rho_{\Lambda^\sharp,X^\vee}: X_{\Lambda^\sharp}\to X^\vee$. This implies that the quasi-isogeny $\rho_{X^\vee,\pi^{-1}\Lambda}:X^\vee\to X_\Lambda$ defined by $\rho_{X^\vee,\pi^{-1}\Lambda}\circ\rho_{\Lambda^\sharp,X^\vee}=\rho_{\Lambda^\sharp,\pi^{-1}\Lambda}$ is an isogeny.

Since $\Lambda$ is a vertex lattice, we have $\ker[\rho_{X^\vee,\pi^{-1}\Lambda}]\subset X^\vee[\pi]$. Therefore, there exists an isogeny $\widetilde{\rho}_{X^\vee,\pi^{-1}\Lambda}: X_{\pi^{-1}\Lambda}\to X^\vee$ such that $\widetilde{\rho}_{X^\vee,\pi^{-1}\Lambda}\circ\rho_{X^\vee,\pi^{-1}\Lambda}=\iota(\pi):X^\vee\to X^\vee$.

The proof of the following lemma is the same as that of Lemma \ref{BT_Z_isotropic-subspace} and we leave the details to the reader.
\begin{lemma}\label{BT_Y_isotropic-subspace}
For $X\in\cY(\Lambda^\sharp)(R)$, we have induced filtrations
\begin{equation*}
\begin{aligned}
\xymatrix @C=4pc{
D(X_{\Lambda^\sharp})\ar[r]^-{D(\rho_{\Lambda^\sharp,\pi^{-1}\Lambda})}&D(X_\Lambda)\ar[r]^-{D(\widetilde{\rho}_{X^\vee,\pi^{-1}\Lambda})}&D(X^\vee)\\
\Pi D(X_{\Lambda^\sharp})\ar@{^(->}[u]\ar[r]&\Pi D(X_{\Lambda})\ar@{^(->}[u]\ar[r]&\Fil(X^\vee).\ar@{^(->}[u]
}
\end{aligned}
\end{equation*}
The preimage
$D(\widetilde{\rho}_{X^\vee,\pi^{-1}\Lambda})^{-1}(\Fil(X^\vee))\subseteq D(X_{\pi^{-1}\Lambda,R})$ is a locally free direct summand that contains $\mathrm{Im}(D(\rho_{\Lambda^\sharp,\pi^{-1}\Lambda}))$. Moreover, the quotient
    \begin{equation}
        U(X):=M(\widetilde{\rho}_{X^\vee,\pi^{-1}\Lambda})^{-1}(\Fil(X^\vee))\slash\mathrm{Im}M(\rho_{\Lambda^\sharp,\pi^{-1}\Lambda})\subset \Omega_{\Lambda^\sharp,R}
    \end{equation}
    is a locally free isotropic direct summand of rank $\frac{1}{2}(h-t)$.\qed
\end{lemma}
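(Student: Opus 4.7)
The plan is to mirror the proof of Lemma~\ref{BT_Z_isotropic-subspace} verbatim, swapping the roles of $\breve\Lambda$ and $\breve\Lambda^\sharp$ and using the chain from Proposition~\ref{prop:BT_strata}(2) in place of the one from (1). All assertions are local on $R$ and concern direct summands of specified rank, so it suffices to check them on $\kappa$-points for an arbitrary algebraically closed field $\kappa$ over $\F$; Dieudonn\'e theory then turns everything into a lattice calculation inside the rational Dieudonn\'e module $N$.

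Concretely, I would fix $(X,\iota,\lambda,\rho)\in \cY(\Lambda^\sharp)(\kappa)$, set $M:=M(X)$, and invoke Proposition~\ref{prop:BT_strata}(2) to identify $M$ with a lattice satisfying $\pi\breve\Lambda^\sharp \subseteq \pi M^\sharp \subseteq M \subseteq \breve\Lambda$, equivalently $\breve\Lambda^\sharp\subseteq M^\sharp\subseteq \pi^{-1}\breve\Lambda$. The defining relation $\widetilde\rho_{X^\vee,\pi^{-1}\Lambda}\circ\rho_{X^\vee,\pi^{-1}\Lambda}=\iota(\pi)$ shows, exactly as in Lemma~\ref{BT_Z_isotropic-subspace}, that $\widetilde\rho_{X^\vee,\pi^{-1}\Lambda}$ realises on covariant Dieudonn\'e modules the map $\pi^{-1}\breve\Lambda=M(X_{\pi^{-1}\Lambda})\to M^\sharp=M(X^\vee)$, $x\mapsto \Pi x$, with image $\breve\Lambda\subseteq M^\sharp$. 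Since $\tau$ fixes both $\breve\Lambda$ and $\breve\Lambda^\sharp$, the lattice $\tau^{-1}(M^\sharp)$ is sandwiched between $\breve\Lambda^\sharp$ and $\pi^{-1}\breve\Lambda$, and pulling back $\Fil(X^\vee)=\uV M^\sharp/\pi_0 M^\sharp$ yields the preimage $\Pi^{-1}\uV(M^\sharp)/\pi_0\pi^{-1}\breve\Lambda = \tau^{-1}(M^\sharp)/\pi_0\pi^{-1}\breve\Lambda$. This is a direct summand containing $\breve\Lambda^\sharp/\pi_0\pi^{-1}\breve\Lambda$, the image of $D(\rho_{\Lambda^\sharp,\pi^{-1}\Lambda})$, so the quotient becomes
\[
    U(X)\;\cong\;\tau^{-1}(M^\sharp)/\breve\Lambda^\sharp\;=\;\Phi^{-1}\!\bigl(M^\sharp/\breve\Lambda^\sharp\bigr),
\]
which embeds into $\Omega_{\Lambda^\sharp,\kappa}=\breve\Lambda/\pi\breve\Lambda^\sharp$ via the isomorphism $\pi\colon \pi^{-1}\breve\Lambda/\breve\Lambda^\sharp\xrightarrow{\sim}\breve\Lambda/\pi\breve\Lambda^\sharp$. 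The rank equals $[M^\sharp:\breve\Lambda^\sharp]$; bookkeeping in the chain using $[\breve\Lambda:\pi\breve\Lambda^\sharp]=n-t$, the unitary-type identity $[M:\pi M^\sharp]=n-h$, and the self-duality $[M^\sharp:\breve\Lambda^\sharp]=[\breve\Lambda:M]$ yields $2[M^\sharp:\breve\Lambda^\sharp]+(n-h)=n-t$, hence $[M^\sharp:\breve\Lambda^\sharp]=(h-t)/2$.

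Finally, since $\Phi$ preserves isotropic subspaces, it suffices to verify that $\pi M^\sharp/\pi\breve\Lambda^\sharp\subset\breve\Lambda/\pi\breve\Lambda^\sharp$ is isotropic for the symmetric form $(\,,\,)$ on $\Omega_{\Lambda^\sharp,\kappa}$. For $x,y\in M^\sharp$ one has $h(x,y)\in\pi^{-1}O_F\otimes W_O(\kappa)$ (the defining property of the type-$h$ hermitian structure on $M^\sharp/M$), whence
\[
    h(\pi x,\pi y)\;=\;\bar\pi\pi\,h(x,y)\;=\;-\pi_0\,h(x,y)\;\in\;\pi O_F\otimes W_O(\kappa),
\]
so $(\overline{\pi x},\overline{\pi y})=0$ in $\Omega_{\Lambda^\sharp,\kappa}$. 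The one real subtlety is keeping the sign and shift conventions between $\Pi$, $\pi^{-1}$, and the two lattices straight, so that the identification $\pi^{-1}\breve\Lambda/\breve\Lambda^\sharp\cong\Omega_{\Lambda^\sharp,\kappa}$ is set up correctly and the stated index $(h-t)/2$ actually drops out (the naive bookkeeping using the wrong value $[M:\pi M^\sharp]=h$ would erroneously give $(n-t-h)/2$); otherwise the argument is a transparent transcription of the $\cZ$-case.
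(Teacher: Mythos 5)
Your argument is correct and is exactly the transcription of the $\cZ$-case proof that the paper declines to write out (the paper's text before the lemma reads ``the proof is the same as that of Lemma~\ref{BT_Z_isotropic-subspace} and we leave the details to the reader''); the reduction to $\kappa$-points, the realization of $\widetilde\rho_{X^\vee,\pi^{-1}\Lambda}$ as $\Pi$ on Dieudonn\'e modules, the identification $U(X)\cong\tau^{-1}(M^\sharp)/\breve\Lambda^\sharp=\Phi^{-1}(M^\sharp/\breve\Lambda^\sharp)$, the index count $2[M^\sharp:\breve\Lambda^\sharp]+(n-h)=n-t$ giving rank $(h-t)/2$, and the check that $h(M^\sharp,M^\sharp)\subseteq\pi^{-1}O_F\otimes W_O(\kappa)$ forces isotropy all match what the analogy demands. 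Your caution about the sign/shift conventions is well taken but resolved correctly; in particular you correctly use $[M:\pi M^\sharp]=n-h$ (not $h$).
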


For the remainder of our discussion, recall that $\Omega_{\Lambda^\sharp}=\Lambda/\pi\Lambda^\sharp\otimes\F$ when $t(\Lambda)\neq h$. The variety $R_{\Lambda^\sharp}$, defined in \eqref{equ:R-Lambda-defn}, has $k$-points for any field $k$ over $\F$ are given by
\begin{equation}
    R_{\Lambda^\sharp}^{}(k)  = \{\cV\subset\Omega_{\Lambda^\sharp,k}\mid\cV\text{ is isotropic},\,\text{and}\,
  \mathrm{dim}_k \cV=\tth-\ttt, \,\text{and}\,\operatorname{dim}_k(\cV+\Phi(\cV))\le \tth-\ttt+1 \}.
\end{equation}
When $t(\Lambda)=h$, the space $R_{\Lambda^\sharp}$ consists of the zero-dimensional subspace, hence the stratification in Theorem \ref{thm:orthogonal main} degenerates into $R_{\Lambda^\sharp}=X_{P_{[\tth',\tth']}}(\id)$, which is irreducible of dimension $0$.

By Lemma \ref{BT_Y_isotropic-subspace}, we define the map $\Psi_\cY:\cY(\Lambda^\sharp)\rightarrow R_{\Lambda^\sharp}$.

\begin{lemma}\label{lem:Y-strata_points}
Let $\kappa$ be any perfect field over $\F$.
The map $\Psi_{\cY}$ defines a bijection between $\cY(\Lambda^\sharp)(\kappa)$ and $R_{\Lambda^\sharp}(\kappa)$.
\end{lemma}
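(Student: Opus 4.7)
The plan is to adapt the proof of Lemma \ref{lem:Z-strata_points} to the dual setting, using the lattice description of $\cY(\Lambda^\sharp)(\kappa)$ from Proposition \ref{prop:BT_strata}(2) together with Lemma \ref{BT_Y_isotropic-subspace}. First I would make explicit the lattice-theoretic description of $U(X)$: for $(X,\iota,\lambda,\rho)\in \cY(\Lambda^\sharp)(\kappa)$ with Dieudonn\'e module $M$, the isogeny $\widetilde{\rho}_{X^\vee,\pi^{-1}\Lambda}:X_{\pi^{-1}\Lambda}\to X^\vee$ corresponds at the level of lattices to the multiplication-by-$\pi$ map $\pi^{-1}\breve{\Lambda}\to M^\sharp$. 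Using $\uV M^\sharp=\Pi\tau^{-1}(M^\sharp)=\pi\tau^{-1}(M^\sharp)$, one obtains $\cU(X)^{-1}(\Fil(X^\vee))=\tau^{-1}(M^\sharp)$ as a sublattice of $\pi^{-1}\breve{\Lambda}$. After identifying $\pi^{-1}\breve{\Lambda}/\breve{\Lambda}^\sharp$ with $\Omega_{\Lambda^\sharp}=\breve{\Lambda}/\pi\breve{\Lambda}^\sharp$ via multiplication by $\pi$, this yields
\[
U(X)\ \cong\ \tau^{-1}(\pi M^\sharp)\big/\pi\breve{\Lambda}^\sharp\ =\ \Phi^{-1}\bigl(\pi M^\sharp/\pi\breve{\Lambda}^\sharp\bigr)\ \subset\ \Omega_{\Lambda^\sharp,\kappa}.
\]

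For the forward direction I would verify that $\Psi_\cY(z)\in R_{\Lambda^\sharp}(\kappa)$. The dimension count $\dim_\kappa U(X)=[M^\sharp:\breve{\Lambda}^\sharp]=\tth-\ttt$ is immediate. Isotropy follows because for $v_i=\pi w_i\in\pi M^\sharp$ with $w_i\in M^\sharp\subseteq\pi^{-1}M$, one has $h(v_1,v_2)=-\pi_0 h(w_1,w_2)\in\pi O_F$, so the symmetric form $h(\cdot,\cdot)\bmod\pi$ vanishes on $\pi M^\sharp/\pi\breve{\Lambda}^\sharp$. The dimension bound $\dim(U+\Phi U)\le\dim U+1$ is equivalent, after applying $\Phi$ and passing to lattices, to $M^\sharp\stackrel{\le 1}{\subset}M^\sharp+\tau(M^\sharp)$, which is exactly the dual form of the wedge condition $M\stackrel{\le 1}{\subset}M+\tau(M)$ via Lemma \ref{lem: same ind}.

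For the converse I would construct an inverse map: given $U\in R_{\Lambda^\sharp}(\kappa)$, let $\widetilde{M}\subseteq\breve{\Lambda}$ be the preimage of $\Phi(U)$ under the projection $\breve{\Lambda}\twoheadrightarrow\Omega_{\Lambda^\sharp}$, and set $M^\sharp:=\pi^{-1}\widetilde{M}$, so that $\breve{\Lambda}^\sharp\subseteq M^\sharp\subseteq\pi^{-1}\breve{\Lambda}$. Define $M:=(M^\sharp)^\sharp$; by hermitian duality this yields $\pi\breve{\Lambda}^\sharp\subseteq M\subseteq\breve{\Lambda}$. The key computation then identifies $M/\pi\breve{\Lambda}^\sharp=\Phi(U)^\perp\subseteq\Omega_{\Lambda^\sharp}$ by the nondegeneracy of the induced symmetric form; since $U$ is isotropic of dimension $\tth-\ttt$, the inclusion $\pi M^\sharp\subseteq M$ of index $n-h$ follows, giving $M\subseteq M^\sharp$ with index $h$. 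The remaining lattice conditions from Proposition \ref{prop:RZ-lattice} are then easily checked: $\Pi M=\pi M\subseteq\pi\breve{\Lambda}\subseteq\pi\breve{\Lambda}^\sharp\subseteq\tau^{-1}(M)$ and, by taking duals, $\tau^{-1}(M)\subseteq\breve{\Lambda}\subseteq\pi^{-1}M=\Pi^{-1}M$; finally the wedge condition $M\stackrel{\le 1}{\subset}M+\tau(M)$ follows from the dimension bound on $U$ via the same duality and Lemma \ref{lem: same ind}. Thus $M\in\cY(\Lambda^\sharp)(\kappa)$, and by construction $\Psi_\cY$ sends it to $U$, which establishes the bijection.

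The main obstacle will be bookkeeping: correctly identifying the $\pi$-shift between the naive target $\pi^{-1}\breve{\Lambda}/\breve{\Lambda}^\sharp$ of Lemma \ref{BT_Y_isotropic-subspace} and the conventional ambient space $\Omega_{\Lambda^\sharp}=\breve{\Lambda}/\pi\breve{\Lambda}^\sharp$, and tracking the sign $-\pi_0$ from $\bar\pi=-\pi$ when passing between the hermitian form on the lattices and the symmetric form on $\Omega_{\Lambda^\sharp}$. Everything else proceeds in close parallel to Lemma \ref{lem:Z-strata_points}, with the vertex lattice $\Lambda$ replaced by its hermitian dual $\Lambda^\sharp$ (up to $\pi$-shift) throughout.
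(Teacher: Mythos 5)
Your proposal is correct and follows the same overall strategy as the paper: identify $U(X)=\Phi^{-1}\bigl(\pi M^\sharp/\pi\breve{\Lambda}^\sharp\bigr)$ on $\kappa$-points, transfer the wedge condition from $M$ to $M^\sharp$ via Lemma \ref{lem: same ind}, and build an explicit inverse from a lattice-theoretic description; you have also correctly tracked the $\pi$-shift $\pi^{-1}\breve{\Lambda}/\breve{\Lambda}^\sharp\cong\Omega_{\Lambda^\sharp}$ and the sign coming from $\bar\pi=-\pi$. Where your write-up differs from the paper's is in the converse direction, and there your version is actually the more careful one. The paper asserts ``the preimage $\pi_\Lambda^{-1}(U)$ defines a point in $\cY(\Lambda^\sharp)$,'' which if read literally as $M:=\pi_\Lambda^{-1}(U)$ gives a lattice with $\dim_\kappa M/\pi\breve{\Lambda}^\sharp=\tth-\ttt$, whereas a point of $\cY(\Lambda^\sharp)(\kappa)$ must have $\dim_\kappa M/\pi\breve{\Lambda}^\sharp=n-\tth-\ttt$; this only matches when $h=n$. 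It also makes the paper's subsequent remark that ``$U$ isotropic corresponds to $\pi M^\sharp\subset M$'' go the wrong way (it would correspond to $M\subset\pi M^\sharp$). Your construction $\pi M^\sharp:=\pi_\Lambda^{-1}(\Phi(U))$ and $M:=(M^\sharp)^\sharp$, so that $M/\pi\breve{\Lambda}^\sharp=\Phi(U)^\perp$, is the one that genuinely inverts $\Psi_\cY$: it gives the right index, makes ``$U$ isotropic $\Leftrightarrow$ $\pi M^\sharp\subset M$'' come out correctly, and yields $U(X)=\Phi^{-1}(\pi M^\sharp/\pi\breve{\Lambda}^\sharp)=U$ on the nose. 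In short, you are filling in precisely the bookkeeping the paper elides, and the one place you deviate from the paper's literal statement is a place where the paper's statement is sloppy and yours is right.
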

\begin{proof}
By Dieudonn\'e theory, a point $y\in \cY(\Lambda^\sharp)(\kappa)$ corresponds to a lattice $M$ satisfying 
$$\pi\breve{\Lambda}^\sharp\stackrel{\tth-\ttt}{\subset} \pi M^\sharp \stackrel{n-h}{\subset} M\stackrel{\tth-\ttt}{\subset}\breve{\Lambda},$$
see Proposition \ref{prop:BT_strata}. The wedge condition implies $M\cap \tau(M)\stackrel{\leq 1}{\subseteq} M$, which is equivalent to  
$$M^\sharp \cap \tau(M^\sharp) \stackrel{\leq 1}{\subseteq} M^\sharp$$ by Lemma \ref{lem: same ind}. Hence, we have
\begin{equation*}
    \Phi^{-1}(\pi M^\sharp /\pi \breve{\Lambda}^\sharp)\cap \pi M^\sharp/\pi \breve{\Lambda}^\sharp \stackrel{\leq 1}{\subseteq}\Phi^{-1}(\pi M^\sharp /\pi \breve{\Lambda}^\sharp).
\end{equation*}
Therefore $\Psi_{\cY}(y)\in R_{\Lambda^\sharp}(\kappa)$.

Conversely, let $U\in R_{\Lambda^\sharp}(\kappa)$ be any closed point.
Let  $\pi_\Lambda: \Lambda \to \Lambda/\pi \Lambda^\sharp$ denote the natural projection map. Then  the preimage $\pi_{\Lambda}^{-1}(U)$ defines a point in $\cY(\Lambda^\sharp)$. Indeed, $U\cap \Phi(U) \stackrel{\le 1}{\subset} U$ corresponds to $M^\sharp \cap \tau(M^\sharp)\stackrel{\le 1}{\subseteq}{M^\sharp}$ and the condition that $U$ is isotropic corresponds to the condition $\pi M^\sharp \subset M$ by computations similar to those in the proof of Lemma \ref{lem:Z-strata_points}. It is straightforward to check that the two maps are inverses to each other.
\end{proof}

By the same argument as in Theorem \ref{prop:ZcycleDLV}, we obtain the following.
\begin{theorem}\label{prop:YcycleDLV}
Let $\Lambda\in \cL_{\cY}$.   Then the morphism $\Psi_{\cY}$ defines an isomorphism $\cY(\Lambda^\sharp)\rightarrow R_{\Lambda^\sharp}$.\qed
\end{theorem}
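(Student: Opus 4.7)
The plan is to mirror the proof of Theorem \ref{prop:ZcycleDLV} essentially verbatim, with $\cZ$-strata replaced by $\cY$-strata and the symplectic quotient $\Lambda^\sharp/\Lambda$ replaced by the orthogonal quotient $\Lambda/\pi\Lambda^\sharp$. The three ingredients needed are: (i) the bijection on perfect-field points, (ii) the upgrade of this bijection to arbitrary fields $k$ over $\F$ via $O_{F_0}$-displays, and (iii) an application of Zariski's main theorem using normality of the target.

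Ingredient (i) is already supplied by Lemma \ref{lem:Y-strata_points}, so I would not re-prove it. For (iii) I would observe that $\Psi_{\cY}\colon \cY(\Lambda^\sharp)\to R_{\Lambda^\sharp}$ is a morphism between projective $\F$-schemes and hence proper, and that $R_{\Lambda^\sharp}$ is normal by Theorem \ref{thm:orthogonal main}. Then Zariski's main theorem converts a bijective morphism onto a normal target into an isomorphism, provided that bijectivity holds on all $k$-points (not just perfect ones).

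Thus the real work is step (ii), which will be an analogue of Proposition \ref{thm:Z-strata_points}. For a point $(X,\iota,\lambda,\rho)\in\cY(\Lambda^\sharp)(R)$, I would work with the $O_{F_0}$-display $\cP(X^\vee)=(P,Q,\bF,\dF)$ of the dual $p$-divisible group, together with the inclusion $\cP_{\Lambda^\sharp}\hookrightarrow \cP(X^\vee)\hookrightarrow \cP_{\pi^{-1}\Lambda}$ induced from the isogenies $X_{\Lambda^\sharp}\to X^\vee\to X_{\pi^{-1}\Lambda}$. Define
\begin{equation*}
\cU(X):=\ker\bigl[\Pi\colon P_{\pi^{-1}\Lambda}\twoheadrightarrow P_{\pi^{-1}\Lambda}/\rho_{X^\vee,\pi^{-1}\Lambda}(Q)\bigr],
\end{equation*}
so that $U(X)=\cU(X)/\mathrm{Im}(P_{\Lambda^\sharp})\subset \Omega_{\Lambda^\sharp,R}$ (one verifies that over algebraically closed fields this matches the Dieudonné-theoretic construction via $\tau^{-1}(\pi M^\sharp)/\pi \breve\Lambda^\sharp$). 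Conversely, given a $k$-point $U\in R_{\Lambda^\sharp}(k)$, lift it to a submodule $P_{\Lambda^\sharp}\subseteq \cU\subseteq P_{\pi^{-1}\Lambda}$; set $Q:=\Pi\cU$ and let $P$ be the $W_O(k)$-submodule of $P_{\pi^{-1}\Lambda}$ generated by $\dF(Q)$. The isotropy and the relative position condition $\dim_k(U+\Phi(U))\leq \dim_k U+1$ defining $R_{\Lambda^\sharp}$ are exactly what is needed to check the display axioms \eqref{eq:display_axiom} and the dimension/polarization conditions, producing an $O_{F_0}$-display inserted between $\cP_{\Lambda^\sharp}$ and $\cP_{\pi^{-1}\Lambda}$; passing through the equivalence $\mathrm{BT}$ produces the desired $k$-point of $\cY(\Lambda^\sharp)$ lifting $U$.

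The main obstacle is precisely this step (ii): verifying that the display so constructed satisfies the wedge, Kottwitz, and (when $h=n$) spin conditions over non-perfect base rings. The wedge-type relative position $\cU\cap\Phi(\cU)\stackrel{\leq 1}{\subset}\cU$ in $\Omega_{\Lambda^\sharp,k}$ must translate cleanly into $M^\sharp\cap\tau(M^\sharp)\stackrel{\leq 1}{\subseteq}M^\sharp$ on the display side, which by Lemma \ref{lem: same ind} is equivalent to the analogous condition on $M$; the subtle point is that over non-perfect $k$ one has to work with $\bF$, $\dF$ (and not $\uV$) because Verschiebung is only defined up to Frobenius twist, exactly as in the remark following Proposition \ref{thm:Z-strata_points}. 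Once this verification is in place, combining the bijection on all $k$-points with the normality of $R_{\Lambda^\sharp}$ and the properness of $\Psi_{\cY}$ yields the isomorphism via Zariski's main theorem.
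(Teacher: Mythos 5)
Your proposal follows exactly the paper's intended argument: the paper introduces Theorem~\ref{prop:YcycleDLV} with the single sentence ``Now the same proof as that of Proposition~\ref{prop:ZcycleDLV} proves the following,'' and your three-step outline (bijection on perfect fields via Lemma~\ref{lem:Y-strata_points}, extension to arbitrary fields $k$ via $O_{F_0}$-displays in analogy with Proposition~\ref{thm:Z-strata_points}, and Zariski's main theorem using normality of $R_{\Lambda^\sharp}$ from Theorem~\ref{thm:orthogonal main} and properness) is precisely that transfer. The display-theoretic construction you sketch, with the dual $p$-divisible group and the lattice sandwiched between $P_{\Lambda^\sharp}$ and $P_{\pi^{-1}\Lambda}$, is the correct $\cY$-side analogue.
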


\subsection{Intersection between $\cY$-strata and $\cZ$-strata}
We now discuss the intersection of $\cY$-strata and $\cZ$-strata. Let $\Lambda_1\in\cL_\cZ$ and $\Lambda_2\in\cL_\cY$ be two vertex lattices such that $\Lambda_1\subseteq\Lambda_2$. Recall from \eqref{equ:S Lambda12} that we defined the variety $S_{[\Lambda_1,\Lambda_2]}$ whose $k$-points for any field $k$ over $\F$ are given by
\begin{equation*}
S_{[\Lambda_1,\Lambda_2]}(k)=\{\cV\subset\Omega_{[\Lambda_1,\Lambda_2],k}\mid \dim_k\cV=\ttt_1-\tth\quad\text{and}\quad \dim_k(\cV\cap\Phi(\cV))\geq \ttt_1-\tth-1\}.
\end{equation*}
The main result is:
\begin{proposition}\label{prop: Z int Y}
Let $\Lambda_1\in \cL_{\cZ}$ and let $\Lambda_2\in \cL_{\cY}$ be vertex lattices satisfying $\Lambda_1\subseteq\Lambda_2$. 
The restriction of $\Psi_\cZ$ to the intersection $\cZ(\Lambda_1)\cap\cY(\Lambda_2^\sharp)$ defines an isomorphism:
\begin{equation*}
\Psi_{\cZ\cap \cY}:\cZ(\Lambda_1)\cap \cY(\Lambda_2^\sharp)\cong S_{[\Lambda_1,\Lambda_2]}
\end{equation*}
\end{proposition}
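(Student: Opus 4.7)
The plan is to restrict the isomorphism $\Psi_{\cZ}\colon \cZ(\Lambda_1)\xrightarrow{\sim} S_{\Lambda_1}$ from Theorem \ref{prop:ZcycleDLV} to the closed subscheme $\cZ(\Lambda_1)\cap \cY(\Lambda_2^\sharp)$ and to identify its image with $S_{[\Lambda_1,\Lambda_2]}$. First I would record the geometric picture inside $\Omega_{\Lambda_1}=\Lambda_1^\sharp/\Lambda_1\otimes_{\F_q}\F$: since $\Lambda_2\subseteq \Lambda_2^\sharp\subseteq \Lambda_1^\sharp$, the symplectic form on $\Omega_{\Lambda_1}$ vanishes on the $\Phi$-stable subspace $\Omega_{[\Lambda_1,\Lambda_2]}=\Lambda_2/\Lambda_1\otimes_{\F_q}\F$ (for $x,y\in\Lambda_2$ one has $\pi h(x,y)\in (\pi)$). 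Hence any subspace of $\Omega_{[\Lambda_1,\Lambda_2]}$ is automatically isotropic in $\Omega_{\Lambda_1}$, and $S_{[\Lambda_1,\Lambda_2]}$ sits inside $S_{\Lambda_1}$ as the closed subvariety cut out by the condition that the parametrized subspace lies inside $\Omega_{[\Lambda_1,\Lambda_2]}$; the dimensions match and the intersection-with-Frobenius condition in the definition of $S_{[\Lambda_1,\Lambda_2]}$ is inherited from that of $S_{\Lambda_1}$.

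Next I would verify the bijection on $\kappa$-points for any perfect field $\kappa$ over $\F$. By Proposition \ref{prop:BT_strata}, a point $M\in \cZ(\Lambda_1)(\kappa)$ belongs to $\cY(\Lambda_2^\sharp)(\kappa)$ if and only if $M\subseteq \Lambda_2\otimes W_O(\kappa)$. Recall from Lemma \ref{BT_Z_isotropic-subspace} that $\Psi_\cZ(M)=U=\Phi^{-1}(M/\Lambda_1\otimes W_O(\kappa))$; thus the condition $M\subseteq \Lambda_2\otimes W_O(\kappa)$ reads $\Phi(U)\subseteq \Omega_{[\Lambda_1,\Lambda_2],\kappa}$, which by the $\Phi$-stability of $\Omega_{[\Lambda_1,\Lambda_2]}$ is equivalent to $U\subseteq \Omega_{[\Lambda_1,\Lambda_2],\kappa}$. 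This identifies the bijection $\Psi_\cZ(\kappa)$ with a bijection $(\cZ(\Lambda_1)\cap \cY(\Lambda_2^\sharp))(\kappa)\isoarrow S_{[\Lambda_1,\Lambda_2]}(\kappa)$, yielding the candidate morphism $\Psi_{\cZ\cap\cY}$.

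To upgrade this to a scheme-theoretic isomorphism, I would extend the bijection to arbitrary field points $k$ over $\F$ by the $O_{F_0}$-display argument from Proposition \ref{thm:Z-strata_points}: given $U\subseteq \Omega_{[\Lambda_1,\Lambda_2],k}$ representing a $k$-point of $S_{[\Lambda_1,\Lambda_2]}$, lift $U$ to a submodule $\cU$ of $P_{\Lambda_1^\sharp}$ with $P_{\Lambda_1}\subseteq \cU\subseteq P_{\Lambda_2}$, set $Q=\Pi\cU$, define $P$ as the $W_O(k)$-submodule generated by $\dF(Q)$, and verify that the resulting display $\cP=(P,Q,\bF,\dF)$ sits between $\cP_{\Lambda_1}$ and $\cP_{\Lambda_2}$. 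Applying $\mathrm{BT}$ then produces a point of $(\cZ(\Lambda_1)\cap \cY(\Lambda_2^\sharp))(k)$ whose image under $\Psi_{\cZ\cap \cY}$ is $U$. Since $\cZ(\Lambda_1)\cap \cY(\Lambda_2^\sharp)$ is projective, $S_{[\Lambda_1,\Lambda_2]}$ is normal by Proposition \ref{Prop: decom SLambda1Lambda2 into DL var}, and $\Psi_{\cZ\cap\cY}$ is a proper map bijective on all field-valued points, Zariski's main theorem delivers the isomorphism.

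The main obstacle will be the third step: confirming that the display constructed from a lift $\cU\subseteq P_{\Lambda_2}$ genuinely lands in $\cP_{\Lambda_2}$ (and not merely in $\cP_{\Lambda_1^\sharp}$), so that the resulting $p$-divisible group satisfies the $\cY(\Lambda_2^\sharp)$-condition rather than just the $\cZ(\Lambda_1)$-condition. Over a non-perfect field $k$ one cannot invoke Dieudonn\'e modules, so this must be checked directly at the level of displays; however this reduces to the chain of inclusions $\Pi P_{\Lambda_2}=Q_{\Lambda_2}\supseteq Q\supseteq Q_{\Lambda_1}=\Pi P_{\Lambda_1}$ together with the corresponding containments for $P$ generated by $\dF(Q)$, and these follow from $\cU\subseteq P_{\Lambda_2}$ precisely as in the proof of Proposition \ref{thm:Z-strata_points}.
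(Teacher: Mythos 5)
Your first two paragraphs reproduce the paper's argument: restrict the isomorphism $\Psi_\cZ$ of Theorem \ref{prop:ZcycleDLV} and observe, on geometric points, that the condition cutting out $\cY(\Lambda_2^\sharp)$ inside $\cZ(\Lambda_1)$ corresponds under $\Psi_\cZ$ to $U\subseteq \Omega_{[\Lambda_1,\Lambda_2]}$, i.e.\ to the closed subvariety $S_{[\Lambda_1,\Lambda_2]}\subseteq S_{\Lambda_1}$. The third paragraph is redundant, however: once $\Psi_\cZ$ is known to be a scheme-theoretic isomorphism, its restriction to the closed subscheme $\cZ(\Lambda_1)\cap\cY(\Lambda_2^\sharp)$ is automatically an isomorphism onto a closed subscheme of $S_{\Lambda_1}$, and since both this image and $S_{[\Lambda_1,\Lambda_2]}$ are reduced (Theorem \ref{thm:LM-results} and Proposition \ref{Prop: decom SLambda1Lambda2 into DL var}), the agreement on $\kappa$-points from your second paragraph already forces them to coincide — no fresh display construction over non-perfect fields or appeal to Zariski's main theorem is needed.
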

\begin{proof}
By Proposition \ref{prop:ZcycleDLV}, we have a natural isomorphism:
\begin{equation*}
    \Phi_{\cZ}:\cZ(\Lambda_1)\overset{\sim}{\longrightarrow} S_{\Lambda_1},\quad X\mapsto U(X).
\end{equation*}
The inclusion $\Lambda_1\subseteq\Lambda_2\subseteq\Lambda_2^\sharp\subseteq\Lambda_1^\sharp$ implies that the subspace $(0)\subseteq \Lambda_2/\Lambda_1\subseteq V_{\Lambda_1}$ is isotropic. Let $\kappa$ be a fixed algebraically closed field over $\F$. Let $W:=\breve{\Lambda}_2/\breve{\Lambda}_1\subset \Omega_{\Lambda_1,\kappa}$.
Via the isomorphism $\Psi_{\cZ}$, the intersection $\cZ(\Lambda_1)\cap \cY(\Lambda_2^\sharp)$
corresponds to the subvariety of $S_{\Lambda_1}$ that parameterizes all subspaces 
\begin{equation*}
    \{\cV\subset \Omega_{\Lambda_1,\kappa}\mid \cV\subseteq W \text{ is isotropic},\, \dim \cV=\ttt_1-\tth\quad\text{and}\quad \dim(\cV\cap\Phi(\cV))\geq \ttt_1-\tth-1\}.
\end{equation*}
Since $W$ is isotropic, any subspace $U\subset W$ is automatically isotropic.
Moreover, since $\Lambda_2\subset \bV$ is closed under Frobenius, this subvariety is isomorphic to the subvariety of the partial flag variety that parameterizes
\begin{equation*}
    \{\cV\subset \Omega_{\Lambda_1,\kappa}\mid \cV\subseteq W\quad\text{with}\quad \dim \cV=\ttt_1-\tth\quad\text{and}\quad \dim(\cV\cap\Phi(\cV))\geq \ttt_1-\tth-1\}.
\end{equation*}
By definition, this is isomorphic to $S_{[\Lambda_1,\Lambda_2]}$.
\end{proof}

\subsection{Proof of Proposition \ref{prop:KR-DLV}}\label{sec:proof-KR-DLV}
We recall the statement:
\begin{proposition*}
\begin{altenumerate}
\item By restriction, the isomorphism $\Psi_{\cZ}$ induces isomorphisms:
\begin{altenumerate2}
\item $\displaystyle\cZ(\Lambda_1)\setminus \cY \cong\coprod_{i=0}^{\tth-1}X_{P_{[i,\tth]}}(\sfw_{[i,\tth]})$;
\item $\displaystyle\cZ(\Lambda_1)\cap \cY \cong \Bigl(\coprod_{0\leq j<\tth< i\leq \ttt}X_{P_{ij}}(\sfw_{ij})\Bigr)\amalg
    \Bigl(\coprod_{0\leq j<\tth<i\leq \ttt}X_{P_{ij}}(\sfw'_{ij})\Bigr)
    \amalg X_{P_{\tth\tth}}(\id)$.
\end{altenumerate2}
\item Similarly, by restriction, the isomorphism $\Psi_\cY$ induces isomorphisms:
\begin{altenumerate2}
\item $\displaystyle
\cY(\Lambda_2^\sharp)\setminus \cZ \cong\coprod_{i=0}^{\tth'-1}X_{P_{i,\tth'}}(\sfw_{i,\tth'})$;
\item  $\displaystyle\cY(\Lambda_2^\sharp)\cap \cZ \cong\left\{\begin{array}{ll} \emptyset & \text{ if $h=n$,}\\
    \displaystyle\Bigl(\coprod_{0\leq j<\tth'< i\leq \ttt'}X_{P_{ij}}(\sfw_{ij})\Bigr)\amalg
    \Bigl(\coprod_{\substack{0\leq j<\tth'<i\leq \ttt',\\ \delta\in\{\pm\}}}X_{P_{ij}}(\sfw^{\prime,\delta}_{ij})\Bigr)
    \amalg X_{P_{\tth'\tth'}}(\id) & \text{ if $h=n-2$,}\\
     \displaystyle\Bigl(\coprod_{0\leq j<\tth'< i\leq \ttt'}X_{P_{ij}}(\sfw_{ij})\Bigr)\amalg
    \Bigl(\coprod_{0\leq j<\tth'<i\leq \ttt'}X_{P_{ij}}(\sfw'_{ij})\Bigr)
    \amalg X_{P_{\tth'\tth'}}(\id) & \text{otherwise}.\end{array}\right.$
\end{altenumerate2}    
\end{altenumerate}    
\end{proposition*}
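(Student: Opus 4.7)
The plan is to translate the defining conditions of the KR-strata $\cZ$ and $\cY$ through the isomorphisms $\Psi_\cZ$ and $\Psi_\cY$ of Theorems \ref{prop:ZcycleDLV} and \ref{prop:YcycleDLV}, and then to match the resulting closed conditions on isotropic subspaces against the stratifications of $S_{\Lambda_1}$ and $R_{\Lambda_2^\sharp}$ in Theorems \ref{thm:symplectic main} and \ref{thm:orthogonal main}.

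I would first establish a dictionary between the KR-conditions and the locus of $U$ with $U+\Phi(U)$ isotropic. For $X\in\cZ(\Lambda_1)(\kappa)$ with Dieudonn\'e lattice $M$ satisfying $\breve\Lambda_1\subseteq M\subseteq M^\sharp\subseteq\breve\Lambda_1^\sharp$, the subspace $U=\Phi^{-1}(M/\breve\Lambda_1)\subset\Omega_{\Lambda_1}$ satisfies $\Phi(U)=M/\breve\Lambda_1$, and the identification $M^\sharp/\breve\Lambda_1=(M/\breve\Lambda_1)^\perp$ from the proof of Lemma \ref{lem:Z-strata_points} translates the KR-condition $\tau(M)\subseteq M^\sharp$ into $\Phi^2(U)\subseteq\Phi(U)^\perp$, equivalently (since $\Phi$ is an isometry) into $\Phi(U)\subseteq U^\perp$, i.e.\ into $U+\Phi(U)$ being isotropic. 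A parallel computation — using $\Phi(U(X))=\pi M^\sharp/\pi\breve\Lambda_2^\sharp$ together with $M/\pi\breve\Lambda_2^\sharp=(\pi M^\sharp/\pi\breve\Lambda_2^\sharp)^\perp$ in $\Omega_{\Lambda_2^\sharp}$ — shows that for $X\in\cY(\Lambda_2^\sharp)(\kappa)$, the KR-condition $\tau(\Pi M^\sharp)\subseteq M$ translates into $U+\Phi(U)$ being isotropic with respect to the symmetric form on $\Omega_{\Lambda_2^\sharp}$. Since $\cZ(\Lambda_1)\subset\cZ$ and $\cY(\Lambda_2^\sharp)\subset\cY$ automatically by Proposition \ref{prop:KR}, these dictionaries describe exactly the restrictions $\Psi_\cZ|_{\cZ(\Lambda_1)\cap\cY}$ and $\Psi_\cY|_{\cY(\Lambda_2^\sharp)\cap\cZ}$ and their open complements.

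For part (1), I would invoke the first-step decomposition of Remark \ref{rmk:first decomp}:
\begin{equation*}
S_{\Lambda_1}=X_{P_{\tth\tth}}(\sfw_\Lambda)\amalg X_{P_{\tth\tth}}(\sfw'_\Lambda)\amalg X_{P_{\tth\tth}}(\id),
\end{equation*}
in which $X_{P_{\tth\tth}}(\sfw_\Lambda)$ is characterised as the locus where $U+\Phi(U)$ is non-isotropic, while $X_{P_{\tth\tth}}(\sfw'_\Lambda)\amalg X_{P_{\tth\tth}}(\id)$ is the locus where $U+\Phi(U)$ is isotropic. Combining with the dictionary gives
\begin{equation*}
\Psi_\cZ(\cZ(\Lambda_1)\setminus\cY)=X_{P_{\tth\tth}}(\sfw_\Lambda),\qquad \Psi_\cZ(\cZ(\Lambda_1)\cap\cY)=X_{P_{\tth\tth}}(\sfw'_\Lambda)\amalg X_{P_{\tth\tth}}(\id).
\end{equation*}
Unfolding each piece through the finer stratification in Theorem \ref{thm:symplectic main} (as recorded at the end of Remark \ref{rmk:first decomp}) yields the coproducts claimed in (1a) and (1b).

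Part (2) proceeds identically, using the orthogonal analog of the first-step decomposition, which is the $\tth'\tth'$-piece of Theorem \ref{thm:orthogonal main}, to split $R_{\Lambda_2^\sharp}$ into the non-isotropic locus and the isotropic locus. The cases $h=n$ and $h=n-2$ require separate bookkeeping: when $h=n$ one has $\tth'=0$, so every stratum of $R_{\Lambda_2^\sharp}$ in Theorem \ref{thm:orthogonal main} has $j=\tth'$ and therefore parameterizes $U$ with $U+\Phi(U)$ non-isotropic, giving $\cY(\Lambda_2^\sharp)\cap\cZ=\emptyset$; when $h=n-2$ the non-isotropic locus breaks into two $\pm$-families, reflecting the two orbits of $\SO$ on $1$-dimensional non-isotropic extensions of a given totally isotropic subspace. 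The main technical step — and the step I expect to require the most care — is organizing the orthogonal first-step decomposition with the correct accounting of these $\pm$-signs in the $h=n-2$ case; once this is done, the matching against Theorem \ref{thm:orthogonal main} is a routine unfolding that produces the three-case formula in (2b).
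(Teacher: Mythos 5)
Your proof is essentially the paper's argument. The paper establishes the dictionary $\tau(M)\subseteq M^\sharp \Leftrightarrow U(X)+\Phi(U(X))$ is isotropic via $\Psi_\cZ$, cites the first-step decomposition of Remark \ref{rmk:first decomp}, and dispatches part (2) with ``Part (2) follows by the same argument.'' You reproduce the same dictionary (with a harmless extra step through $\Phi^2(U)\subseteq\Phi(U)^\perp$, valid since $\Phi$ is an isometry), and you write out the parallel $\cY$-computation explicitly, a modest improvement on the paper's brevity. One slip in your closing remarks on the $h=n-2$ case: the $\pm$-doubling occurs in the $\sfw'_{ij}$-strata, which by the very dictionary you set up lie in the isotropic locus $\cY(\Lambda_2^\sharp)\cap\cZ$, not in the non-isotropic locus $\cY(\Lambda_2^\sharp)\setminus\cZ$ as you wrote; this misattribution is only in a heuristic aside and does not affect the main argument.
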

\begin{proof}[Proof of Proposition \ref{prop:KR-DLV}]
Let $\kappa$ be a perfect field over $\F$ and let $X \in \cZ(\Lambda_1)(\kappa)$. Denote by $M = M(X)$ its associated Dieudonn\'e module.

Via the map $\Psi_\cZ$, the sum $M + \tau(M)$ induces a subspace $U(X) + \Phi(U(X))$ in $\Omega_{\Lambda} = \breve{\Lambda}^\sharp/\breve{\Lambda}$.
This subspace is isotropic if and only if $\Phi(U(X))\subseteq U(X)^\perp$. Equivalently, this occurs if and only if $\tau(M)\subseteq M^\sharp$, which in turn holds precisely when $X \in \cY(\kappa)$.
The claimed decomposition then follows from the moduli description, see also Remark \ref{rmk:first decomp}, particularly \eqref{equ:symplectic-first-step}. Part $(2)$ follows by the same argument as part $(1)$.
\end{proof}

\begin{remark}\label{rmk:relation-to-KR}
Recall from Remark \ref{equ:symplectic-first-step} that we have the following first-step decomposition:
\begin{equation*}
    S_\Lambda=X_{P_{\tth\tth}}(\sfw_{\Lambda})\amalg X_{P_{\tth\tth}}(\sfw'_{\Lambda}) \amalg X_{P_{\tth\tth}}(\id),
\end{equation*}
where we can describe these DL varieties as follows:
\begin{align*}
X_{P_{\tth\tth}}(\sfw_{\Lambda})={}&\{
\cV\in S_\Lambda\mid \cV\neq \Phi(\cV)\text{ and }\cV+\Phi(\cV)\text{ is not isotropic}\};\\
X_{P_{\tth\tth}}(\sfw'_{\Lambda})={}&\{
\cV\in S_\Lambda\mid \cV\neq \Phi(\cV)\text{ and }\cV+\Phi(\cV)\text{ is isotropic}
\};\\
X_{P_{\tth\tth}}(\id)={}&\{
\cV\in S_\Lambda\mid \cV= \Phi(\cV)
\}.
\end{align*}
As established in the proof of Proposition \ref{prop:KR-DLV}, for any  $M(X)\in \cZ(\Lambda_1)$ with corresponding isotropic subspace $U(X)\in S_\Lambda$, we have $\tau(M)\subseteq M^\sharp$ if and only if $U(X)+\Phi(U(X))$ is a totally isotropic subspace.
Therefore, via the isomorphism $\Psi_\cZ$, we have:
\begin{equation*}
    \cZ(\Lambda_1)\setminus \cY\cong X_{P_{\tth\tth}}(w_\Lambda),\quad\text{and}\quad
    \cZ(\Lambda_1)\cap \cY\cong X_{P_{\tth\tth}}(w_{\Lambda}')\amalg X_{P_{\tth\tth}}(\id).
\end{equation*}
Similar results exist for $\cY$-strata.
\end{remark}

\begin{remark}\label{rmk:Yu-case}
We remark that when $h=n-2$, part (2) of Proposition \ref{prop:KR-DLV} agrees with the computation in the local model. More precisely, the special fiber of the local model $\cM^{[n-2]}_{n}$ (see Definition \ref{def:local-model} for our notation) consists of three irreducible components:
\begin{equation*}
    \ov{\cM}_{n}^{[n-2]}=\cY\cup \cZ^+\cup \cZ^{-}.
\end{equation*}
See \cite[\S 1.5]{Yu2019} for the relevant computations. Following the notation in loc.cit., $\cY$ is defined as the vanishing locus of  $x_1,x_2$, while $\cZ^+$ (resp. $\cZ^-$) is defined as the vanishing locus of the first (resp. second) row of $Y$, where the matrix $Y$ is defined in p.22 of loc. cit. 
In particular, the intersection $\cZ^+\cap\cZ^-$ is the vanishing locus of the entire matrix $Y$, which represents the worst point.

The closure $\overline{X_{P_{\ttt'0}}(\sfw^{\prime,\pm}_{\ttt'0})}$     can be regarded as $\cY(\Lambda_2^\sharp)\cap \cZ^\pm$ (where we use $\cZ^\pm$ to denote both the KR strata and their corresponding components in the local model). Moreover, we have: $\overline{X_{P_{\ttt'0}}(\sfw^{\prime,+}_{\ttt'0})}\cap \overline{X_{P_{\ttt'0}}(\sfw^{\prime,-}_{\ttt'0})}=X_{P_{\tth'\tth'}}(\id)$, 
which aligns with the fact that $\cZ^+\cap\cZ^-$ is the worst point.

When $\mbV$ is split, let $\Lambda\subset \mbV$ be a vertex lattice of type $n-2$, which corresponds to a worst point in the RZ space. 
The quadratic space $\Lambda^{\sharp}$ is of dimension $2$, and consists of exactly two isotropic lines. This corresponds to two vertex lattices of type $n$, meaning that there are exactly two $\mcZ$-strata containing a given worst point. This matches of the decomposition of $\mcZ^{\pm}$ that we discussed above.
\end{remark}

\appendix
\section{Ramified Cohen frame}
In this appendix, we construct a Cohen subring within the $O$-Witt vectors, and show that it forms an $O$-frame. Throughout this section, we allow $p=2$.

\subsection{Relative theory of displays}\label{sec:reviewdisplays}
In this section, we review the relative theory of displays. For a comprehensive treatment, we refer the reader to \cites{ACZ,Mihatsch_2022,kudla2023padicuniformizationunitaryshimura,LMZ}.
Let $K/\Q_p$ be a finite extension of $p$-adic fields with ring of integers $O$. Let $\varpi\in O$ be a fixed uniformizer. We assume that the residue field is finite of order $q$.

\subsubsection{$O$-Witt vectors}\label{sec:OWittvector}
We recall basic facts about $O$-Witt vectors.
For any $n\geq 0$, we define the \emph{$n$-th Witt polynomial}
\begin{equation*}
\bfw_n:=X_0^{q^n}+\varpi X_1^{q^{n-1}}+\cdots+\varpi^n X_n
\in O[X_0,\cdots,X_n]
\end{equation*}
Consider the functor
\begin{equation*}
\mcF:O\mathrm{-algebras}\to O\mathrm{-algebras},\quad R\mapsto R^{\mbN},
\end{equation*}
we write $[r_i]_{i\geq 0}$ for an element in $\mcF(R)$, where $r_i\in R$ for any $i\geq 0$.
By \cite[Lemma 1.2.1]{Fargues--Fontaine}, there exists a unique factorization
\begin{equation*}
\begin{aligned}
\xymatrix{
O\mathrm{-algebras}\ar[rr]^-{\mcF}\ar[dr]_-{W_O(-)}&&\mathrm{Sets}\\
&O\mathrm{-algebras}\ar[ur]&
}
\end{aligned}
\end{equation*}
such that the natural transformation 
\begin{equation*}
W_O(R):W_O(R)\to R^{\mbN},\quad
[r_i]_{i\geq 0}\mapsto (\bfw_n(r_0,\cdots,r_n))_{n\geq 0}
\end{equation*}
is an $O$-algebra morphism. We call $W_O(-)$ the $O$-Witt vectors (which depends on the choice of the uniformizer $\varpi$).

For any $O$-algebra $R$, the $O$-Witt vector $W_O(R)$ is equipped with a Frobenius endomorphism $\sigma$ and a Verschiebung endomorphism, such that $\sigma\circ V=V\circ \sigma=\varpi$. 
We collect some properties of $O$-Witt vectors that will be used in the remaining part of the section.
\begin{proposition}\label{prop:ramified Witt vector}
\begin{altenumerate}
\item For any $n\geq 1$, $V^nW_O(R)$ is an ideal of $W_O(R)$ that is independent of the choice of uniformizer $\varpi$.
The ring of $O$-Witt vectors is separated and complete with respect to the $VW_O(R)$-adic topology: let $W_O(R)_n:=W_O(R)/V^nW_O(R)$ be the truncated $O$-Witt vectors, we have an isomorphism
$$
W_O(R)\overset{\sim}{\to} \varprojlim_n W_O(R)_n.
$$

\item For any $n\geq 1$, we have the following inclusions of ideals in $W_O(R)$:
$$\varpi^nW_O(R)\subseteq (VW_O(R))^n\subseteq \varpi^{n-1}W_O(R).$$
\item Let $R=\kappa$ be a perfect field over $\mbF_q$. The $O$-Witt vector $W_O(\kappa)$ is a complete discrete valuation ring with maximal ideal $(\varpi)$.

\item Let $R=k$ be a field over $\mbF_q$. The $O$-Witt vector $W_O(k)$ is a local integral domain with maximal ideal $VW_O(k)$.
\end{altenumerate}
\end{proposition}
\begin{proof}
Parts (1) and (3) follow from \cite[\S 1.2]{Fargues--Fontaine}. For part (2), the left inclusion is clear from $\varpi W_O(R)\subseteq VW_O(R)$.
For the right-hand side inclusion, recall that for any $x,y\in W_O(R)$, we have $V(F(x)y)=xV(y)\in W_O(R)$, see for example, \cite[\S 1.2]{Fargues--Fontaine}. Therefore, we have
$$
V(x)V(y)=V(FV(x)\cdot y)=\varpi V(x\cdot y).
$$
This implies that $(VW_O(R))^2\subset \varpi VW_O(R)$. Applying this repeatedly, we obtain the right inclusion.

For part (4), the natural map $k\to\ov{k}$ induces an embedding $W_O(\kappa)\to W_O(\ov{k})$, making $W_O(\kappa)$ an integral domain as a subring of $W_O(\ov{k})$. It remains to show that it is a local ring. 
First, for any $x\in VW_O(k)$, the element $1-x$ is a unit with inverse
$$
(1-x)^{-1}=1+x+x^2+\cdots
$$
This is well-defined by part (1) of the Proposition. 
Next, for any $u\in W_O(k)\setminus VW_O(k)$, we can find $v\in W_O(k)$ and $x\in VW_O(k)$ such that $uv=1-x$. Hence $u$ is a unit, and therefore $W_O(k)$ is a local ring with maximal ideal $VW_O(k)$.
\end{proof}

\subsubsection{$O$-frame and $O$-windows}
We recall the theory of $O$-frame and $O$-windows in \cite{ACZ}, which generalizes the work of Lau \cite{Lau-frmae}.

\begin{definition}
An $O$-frame is a quintuple $\mcF=(S,I,R,\sigma,\dot{\sigma})$, where $S$ is an $O$-algebra, $I\subseteq S$ is an ideal, and $R=S/I$, together with an $O$-algebra morphism $\sigma:S\to S$ and a $\sigma$-linear morphism of $S$-modules $\dot{\sigma}:I\to S$, which satisfy the following conditions:
\begin{enumerate}
    \item $I+\varpi S\subseteq \mathrm{Rad}(S)$,
    \item $\sigma(a)\equiv a^q\mod \varpi S$ for all $a\in S$, and
    \item $\dot{\sigma}(I)$ generates $S$ as an $S$-module.
\end{enumerate}
\end{definition}
Let $\mcF_i=(S_i,I_i,R_i,\sigma_i,\dot{\sigma}_i)$ ($i\in \{1,2\}$) be two $O$-frames. A (strict) morphism of $O$-frames $\alpha:\mcF_1\to \mcF_2$ is an $O$-algebra morphism $\alpha:S_1\to S_2$ compatible with the additional structures.

\begin{example}
A special example is the Witt $O$-frame:
\begin{equation*}
\mcW_O(R)=(W_O(R),I_{O,R},R,\sigma,\dot{\sigma}),
\end{equation*}
where $I(R)=I_O(R):=\ker(W_O(R)\to R)$ is the kernel of the projection $W_O(R)\to R$, $\sigma$ is the Frobenius and $\dot{\sigma}=V^{-1}:I(R)\to W_O(R)$ is the inverse of the Verschiebung. Since $V$ is injective with image $I(R)$, this is a well-defined $\sigma$-linear surjection.
See \cite[\S 1.2.1]{ACZ} or \cite[\S 4.1]{LMZ} for a precise definition.
\end{example}

\begin{definition}
Let $\mcF=(S,I,R,\sigma,\dot{\sigma})$ be an $O$-frame. An \emph{$O$-window over $\mcF$}, or an \emph{$\mcF$-window}, is a quadruple $\mcP=(P,Q,\bF,\dot{\bF})$, where 
\begin{itemize}
\item $P$ is a finitely generated projective $S$-module;
\item $Q\subseteq P$ is a submodule with $IP\subseteq Q$ and such that $P/Q$ is a projective $R$-module;
\item $\bF:P\to P$ and $\dot{\bF}:Q\to P$ are $\sigma$-linear maps of $S$-modules.
\end{itemize}
We require that it satisfy the following conditions:
\begin{enumerate}
\item There is a normal decomposition $P=T\oplus L$ with $Q=IT\oplus L$,
\item $\dot{\bF}(ax)=\dot{\sigma}(a)\bF(x)$ for all $a\in I$ and $x\in P$.
\item $\dot{\bF}(Q)$ generates $P$ as an $S$-module.
\end{enumerate}
We refer the reader to \cite[Def. 3.3]{ACZ} for the definition of nilpotent $\mcF$-windows.
\end{definition}

\begin{example}\label{exm:recover dieudonne theory}
\begin{altenumerate}
\item For any $O$-algebra $R$, we call a $\mcW_O(R)$-windows an \emph{$O$-display} over $R$. 
\item When $R$ is a perfect algebra, the Frobenius map $\sigma$ is an isomorphism, we conclude from \cite[Lem. 2.2]{ACZ} that nilpotent $O$-display is equivalent to the relative Dieudonn\'e module $M(X):=P(X)$ over $W_O(R)$, by passing along the isomorphism $M(X)\cong \sigma^*M(X)$.  In this case, $I(R)\subset W_O(R)$ is generated by $\varpi\in O\subseteq W_O(R)$, and $Q(X)$ is identical to $VM(X)$, hence the Hodge filtration $\Fil(X)=Q(X)/I(R)P(X)\cong VM(X)/\varpi M(X)$.
\end{altenumerate}
\end{example}

\begin{definition}\label{def:pd-structure}
Let $R$ be an $\varpi$-adic $O$-algebra and let $I\subseteq R$ be an ideal. An \emph{$O$-pd-structure} on $I$ is a map $\gamma:I\to I$, such that
\begin{itemize}
    \item $\varpi\gamma(x)=x^q$,
    \item $\gamma(r\cdot x)=r^q\cdot \gamma(x)$, and
    \item $\gamma(x+y)=\gamma(x)+\gamma(y)+\sum_{0<i<q}(\binom{q}{i}/\varpi)\cdot x^i\cdot y^{q-i}$
\end{itemize}
hold for all $r\in R$ and $x,y\in I$. An $O$-pd-ring is a triple $(R,I,\gamma)$, where $R$ is an $O$-algebra, $I\subseteq R$ is an ideal, and $\gamma$ is an $O$-pd-structure. We call $(R,I,\gamma)$ an $O$-pd-ring.
\end{definition}
This recovers the usual definition of pd-structure, see \cite[Remarque B.5.2]{Fargues08}. The structure ring $(O,(\varpi),\gamma)$ with $\gamma(x):=\frac{x^{q}}{\varpi}$ forms an $O$-pd-structure. It is straightforward to verify the following:
\begin{proposition}\label{prop:O-pd structure extension}
Let $(A,I,\gamma)$ be an $O$-pd-ring. Let $A\to B$ be a ring map. If $\gamma$ extends to $(B,IB,\ov{\gamma})$ then the extension is unique. When $I$ is principal, the extension exists.
\end{proposition}
\begin{proof}
This is a direct generalization of \cite[\href{https://stacks.math.columbia.edu/tag/07H1}{07H1}]{stacks-project}.
\end{proof}

Let $\mcP$ be a nilpotent $O$-display over $R$, the base change of the frame defines a sheaf over the category of $O$-pd-thickenings, which is called the \emph{Witt crystal} and denoted by $\mcK_\mcP$. We define the \emph{Dieudonn\'e crystal} by 
$$
\mbD_P(S)=\mcK_\mcP(S)/I_S\mcK_\mcP(S).
$$

We refer the reader to \S \ref{sec:strict-module} for the definition of strict $O$-modules.
The following result is proved by Zink and Lau, and is generalized by Ahsendorf--Chen--Zink.
\begin{theorem}[\protect{\cite{ACZ}}]\label{thm:ACZ-main}
Let $S$ be a formal scheme over $\Spf O$.
There is an equivalence of categories  over $S$:
\begin{equation}\label{eq:equiv_display}
\mathrm{BT}:\left\{\text{nilpotent $O$-displays}\right\} \overset{\sim}{\longrightarrow} \left\{\text{\begin{varwidth}{\textwidth} \centering strict formal $O$-modules \end{varwidth}}\right\},
\end{equation}
which is compatible with the Faltings dual (in the biformal case) and base change. In particular, this functor induces an $O$-crystal $\bD(X)$ valued in the category of $O$-pd-thickenings. We denote the quasi-inverse of this equivalence by 
$$X\mapsto \cP(X)=(P(X),Q(X),\bF(X),\dF(X)).$$ 
In particular, the relative Grothendieck--Messing theorem holds for strict formal $O$-modules, i.e., deformations of $X$ along $O$-pd-thickenings are in canonical bijection with liftings of the Hodge filtration. See \cite[Lem. 3.12]{ACZ}, or \cite[Prop. B.8.2]{Fargues08}.\qed
\end{theorem}

\subsection{Cohen subring of ring of $O$-Witt vectors}
We keep the notation from the previous subsection. 
We construct the Cohen subring for $O$-Witt vectors ring, generalizing the construction in \cite{Schneider-note}. Since the original text is written in German, we provide full details here.

Let $k$ be a field over $\F_q$, where $q=p^r$ for some $r>0$.
The subset $k^{p^n}$ is a subfield of $k$ for any $n\geq 1$. Recall that a family of elements $(x_i)_{i\in I}$ in $k$ is called a \emph{$p$-basis} if the following map is bijective:
$$
k^{p}[\{X_i\}]/(X_i^p-x_i^p)\to k,
\quad 
X_i\to x_i.
$$
The following lemma is standard, see for example, \cite[\href{https://stacks.math.columbia.edu/tag/07P0}{07P0}]{stacks-project}. 
\begin{lemma}\label{lem:p-basis}
For every $p$-basis $(x_i)_{i\in I}$ of $k$, the following holds:
\begin{altenumerate}
\item $k=k^{p^n}(x_i)_{i\in I}$ for all $n\geq 1$.
\item The elements $\prod_{i\in I}x_i^{\mu_i}$ with $0\leq \mu_i< p^n$ and $\mu_i\neq 0$ for at most finitely many $i$, form a basis of $k$ as a $k^{p^n}$-vector space.\qed
\end{altenumerate}
\end{lemma}

\begin{definition}
A subring $C\subseteq W_O(k)$ is called an \emph{$O$-Cohen subring} if:
\begin{itemize}
    \item $C$ is a complete discrete valuation ring with maximal ideal generated by $\varpi$.
    \item $W_O(k)=VW_O(k)+C$.
\end{itemize}
In particular, since 
$
C/(VW_O(k)\cap C)\cong W_O(k)/VW_O(k)\cong k,
$
we have $VW_O(k)\cap C=\varpi C$, and $k$ is the residue field of $C$.
\end{definition}
\begin{theorem}\label{thm:ramified cohen subring}
Let $(a_i)_{i\in I}$ be a family of elements in $W_O(k)$ such that the family $(x_i)_{i\in I}$ defined by $x_i:=\bfw_0(a_i)$ is a $p$-basis of $k$. Then there exists a unique $O$-Cohen subring $C\subseteq W_O(k)$ containing all $a_i$.
\end{theorem}
In the remaining part of the subsection, we prove Theorem \ref{thm:ramified cohen subring}.
We adopt the following notation:
$$
A:=W_O(k), \mfm:=VA, \pr:=\bfw_0, \quad\text{and}\quad S:=\{a_i:i\in I\}\subset A.
$$

Fix an integer $m\geq 1$. For every $n\geq m-1$, let
\begin{equation}\label{def:cnm}
C_{n,m}:=\text{the subring of $A$ generated by }S\cup \bfw_n(W_O(A))\cup \mfm^m.
\end{equation}

\begin{lemma}\label{lem:count order}
Let $R$ be an $O$-algebra such that $p$ is not a zero divisor. Let $n\geq 0$ be an integer. For any $a,b\in R$,
$$
\text{if }a\equiv b\mod \varpi,\text{ then }a^{q^n}\equiv b^{q^n}\mod \varpi^{1+n}.
$$
\end{lemma}
\begin{proof}
Write $a=b+\varpi c$ for some $c\in R$. Then we have
$$
a^{q^n}-b^{q^n}=\sum_{i=1}^{q^n-1}(\binom{q^n}{i}\varpi^{i}) b^{q^n-i}c^i.
$$
Since $v_{\varpi}(q^n)=nv_{\varpi}(q)\geq n$, the assertion follows from Kummer's theorem.
\end{proof}

\begin{lemma}\label{lem:construct Cnm}
$C_{n,m}$ is the smallest subring of $A$ that satisfies $C_{n,m}+\mfm=A$ that contains $S\cup \mfm^m$.
In particular, $C_{n,m}$ is unique and is independent of the choice of $n$. We denote it by $C_m$.
\end{lemma}
\begin{proof}
Since 
$$
\bfw_n(W_O(A))=\{a_0^{q^n}+\varpi a_1^{q^{n-1}}+\cdots+\varpi^na_n:
a_0,\cdots,a_n\in A
\},
$$
and $\varpi A\subset \mfm$, it follows that $\pr(\bfw_n(W_O(A)))=k^{q^n}$ and  $\pr(C_{n,m})=k^{q^n}(\pr (S))$. By Lemma \ref{lem:p-basis}, we have $\pr(C_{n,m})=k$, which implies $C_{n,m}+\mfm=A$.

Now let $A'\subseteq A$ be a subring such that $A'+\mfm=A$ and $S\cup \mfm^m\subseteq A'$. We show that $\bfw_n(W_O(A))\subseteq A'$.
For any $a_0,\cdots,a_n\in A$, since $A'+\mfm=A$, there exist $a_0',\cdots,a_n'\in A'$ such that $a_i\equiv a_i'\mod \mfm$ for all $0\leq i\leq n$. Since $n\geq m-1$, we see that $\bfw_n(a_0,\cdots,a_n)\equiv \bfw_n(a_0',\cdots,a_n') \mod \mfm^m$ by 
Proposition \ref{prop:ramified Witt vector}(2) and Lemma \ref{lem:count order}. We conclude that $\bfw_n(a_0,\cdots,a_n)\in A'$.
\end{proof}

\begin{lemma}\label{lem:C_m maximal ideal}
$C_m\cap \mfm=\varpi C_m+\mfm^m$.
\end{lemma}
\begin{proof}
Since $\varpi\in\mfm$, it follows immediately that $\varpi C_m+\mfm^m\subseteq C_m\cap \mfm$. To show the opposite inclusion, let $\Lambda_m$ be the set of all tuples $\mu=(\mu_i)_{i\in I}$ of integers $0\leq \mu_i< q^m$ such that $\mu_i= 0$ for all but finitely many $i\in I$. For any $\mu\in\Lambda_m$, define 
$$
Z_\mu:=\prod_{i\in I}a_i^{\mu_i}.
$$
Since $S^{q^m}=\{
\bfw_m(a_i,0,\cdots,0):i\in I
\}\subseteq \bfw_m(W_O(A))$, the subring $C_m=C_{m,m}$ is generated as a module over the subring $\bfw_m(W_O(A))+\mfm^m$ by the elements $Z_\mu$. Using the identity 
$$
\bfw_m(a_0,\cdots,a_m)=a_0^{q^m}+\varpi \bfw_{m-1}(a_1,\cdots,a_m),
$$
we see that
$$
\bfw_m(W_O(A))\subseteq A^{q^m}+\varpi C_{m-1,m}=A^{q^m}+\varpi C_m.
$$
Therefore, every element $c\in C_m$ can be written in the form 
$$
c=\sum_{\mu\in \Lambda_m}c_\mu^{q^m}Z_\mu+\varpi c'+c'',\quad\text{with}\quad c_\mu\in A,c'\in C_m,c''\in\mfm^m.
$$
Now assume $c\in C_m\cap \mfm$. Then 
$$
0=\pr(c)=\sum_{\mu\in\Lambda_m}\pr(c_\mu)^{q^m}\pr(Z_\mu).
$$
By Lemma \ref{lem:p-basis}, the elements $\pr(Z_\mu)$ form a $k^{q^m}$-basis of $k$. Hence $\pr(c_\mu)=0$, i.e. $c_\mu\in \mfm$ and therefore $c_\mu^m\in\mfm^m$. We conclude that $c\in \varpi C_m+\mfm^m$.
\end{proof}

By the minimality of $C_m$, we have 
$$
C_m=C_{m+1}+\mfm^m\quad\text{for all }m\geq 1.
$$
We define
$$
C:=\bigcap_{m\geq 1}C_m.
$$
It is clear that $S\subseteq C$. 
We have the inclusion of the inverse system $(C_m/\mfm^m)_m\subset (A/\mfm^m)_m$, it induces a commutative diagram:
\begin{equation*}
\begin{aligned}
\xymatrix{
A\ar[r]^-{\sim}&\varprojlim_m A/\mfm^m\\
C\ar[r]\ar@{^(->}[u]&\varprojlim_{m} C_m/\mfm^m\ar@{^(->}[u]
}
\end{aligned}
\end{equation*}
The top horizontal map is an isomorphism by definition, see for example, \cite[Definition 1.2.2]{Fargues--Fontaine}.
The bottom horizontal map is induced by the morphism of inverse systems $(C_m)_m\to (C_m/\mfm^m)_m$, and is therefore surjective.
Injectivity follows from the commutative diagram.
Therefore, the bottom horizontal map is also an isomorphism, which induces surjections
$
C\to C_m/\mfm^m.
$
In particular, 
\begin{equation}\label{eq:C/C cap m}
C/C\cap \mfm\cong C_1/\mfm\cong A/\mfm\cong k,    
\end{equation}
where the second isomorphism follows by definition. Thus $C\cap \mfm$ is a maximal ideal in $C$. 

We claim that $C$ is a local ring.
For any $a\in C\setminus C\cap \mfm$, by \eqref{eq:C/C cap m}, we can find $b\in C$ such that $ab=1-c$ for some $c\in \mfm$. The latter is invertible, with inverse 
$$
(1-c)^{-1}=1+c+c^2+\cdots,
$$
this is well-defined since $\sum_{i=0}^n c^i\in C\subset C_m$. 

Next, we show that $C\cap\mfm$ is generated by $\varpi\in C$. 
\begin{lemma}
\begin{altenumerate}
\item For any $m\geq 1$, we have
    $\bigcap_{j\geq m}(\varpi C_m+\mfm^j)=\varpi C_m$.
\item We have $C\cap\mfm=\varpi C$.
\end{altenumerate}
\end{lemma}
\begin{proof}
\begin{altenumerate}
\item 
First, it is clear that 
$$
\bigcap_{j\geq m}(\varpi C_m+\mfm^j)\subseteq \bigcap_{j\geq m}(\varpi A+\mfm^j)=\varpi A.
$$
For any $c\in A$ such that $\varpi c\in \bigcap_{j\geq m}(\varpi C_m+\mfm^j)$, we can write $\varpi c=\varpi c_j+m_j$ with $c_j\in C_m$ and $m_j\in\mfm^j$. Hence $\varpi(c-c_{m+2})=m_{m+2}\in \mfm^{m+2}\subset \varpi^{m+1}A$, where the last inclusion follows from Proposition \ref{prop:ramified Witt vector}(2).
Since $A$ is an integral domain, we conclude that $c-c_{m+2}\in \varpi^mA\subseteq\mfm^m$. Hence $c\in C_m+\mfm^m=C_m$. Hence we have 
$$
\varpi C_m\subseteq \bigcap_{j\geq m}(\varpi C_m+\mfm^j)=\varpi A\cap \bigcap_{j\geq m}(\varpi C_m+\mfm^j)\subseteq \varpi C_m.
$$
This proves the assertion.
\item By Lemma \ref{lem:C_m maximal ideal}, we have
$$
C\cap \mfm=\bigcap_m(C_m\cap \mfm)=\bigcap_m (\varpi C_m+\mfm^m)=\bigcap_{m\geq 1}\bigcap_{j\geq m}(\varpi C_m+\mfm^j).
$$
Using part (1), we see that $C\cap\mfm=\bigcap_{m\geq 1}\varpi C_m=\varpi (\bigcap_{m\geq 1} C_m)=\varpi C$. Here we use the fact that $A$ is an integral domain.\qedhere
\end{altenumerate}
\end{proof}

The following commutative algebra result is standard:
\begin{lemma}\label{lem:almost dvr}
Let $R$ be a ring with exactly one maximal ideal $\mfm$, which is a principal ideal $\mfm=(\varpi)$ and satisfies $\bigcap_{i\geq 0}\mfm^i=(0)$. Then every ideal is of the form $(\varpi^k)$ for some $k\geq 0$.\qed
\end{lemma}

\begin{corollary}
$C$ is a complete discrete valuation ring.
\end{corollary}
\begin{proof}
Since $C$ is a subring of $A$, it is an integral domain with $p\neq 0$. Furthermore, we have $\bigcap_{m\geq 1}\varpi^m C\subseteq \bigcap_{m\geq 1}\varpi^mA=\{0\}$. Hence $C$ is a discrete valuation ring by Lemma \ref{lem:almost dvr}. It remains to show that $C$ is complete, we have
$$
C\overset{\sim}{\to}\varprojlim_m C/C\cap \mfm^m\overset{\sim}{\to}\varprojlim_m C_m/\mfm^m.
$$
Since $C$ is a DVR, $C\cap \mfm^m=\varpi^{j(m)} C$ for some $j(m)\geq 0$. This shows that $C$ is complete.
\end{proof}

It remains to show the uniqueness. Let $C'$ be another $O$-Cohen subring with $S\subseteq C'$. Recall that we have $C'+\mfm=A$ and $C'\cap\mfm=\varpi C'$ is the maximal ideal of $C'$. By Lemma \ref{lem:construct Cnm}, we have $C'+\mfm^m\supset C_m$ for all $m\geq 1$. 
We have the following commutative diagram:
\begin{equation*}
\begin{aligned}
\xymatrix{
C'\ar[r]^-{\sim}&\varprojlim_{m}C'/C'\cap\mfm^m\ar[d]^-{\sim}\\
&\varprojlim_m C'+\mfm^m/\mfm^m\\
C\ar[r]^-{\sim}&\varprojlim_m C_m/\mfm^m\ar@{^(->}[u].
}
\end{aligned}
\end{equation*}
By composing all isomorphisms, we get $C\hookrightarrow C'$. Since both are discrete valuation rings with the same uniformizer $\varpi$ and residue field, this is an isomorphism by \cite[\href{https://stacks.math.columbia.edu/tag/09E5}{09E5}]{stacks-project}. This completes the proof of Theorem \ref{thm:ramified cohen subring}.\qed

\subsection{$O$-Cohen frame}\label{sec:ramified cohen frame}
Let $k$ be a field over $\mbF_q$ and let $\{x_i\}_{i\in I}$ be a $p$-basis of $k$. Let $\{a_i\}_{i\in I}=\{\iota(a_i)\}_{i\in I}$ be the \emph{Teichm\"uller lift} of $\{x_i\}_{i\in I}$. We will keep this assumption throughout the remaining part of the section. In particular, we have a unique $O$-Cohen subring $C_O(k)\subset W_O(k)$ containing $S$  (which still depends on the choice of a $p$-basis).

\begin{lemma}\label{lem:restriction of FV to Cohen ring}
We have 
$$
\sigma(C)\subseteq C\quad\text{and}\quad\varpi C\subseteq VC,
$$
where $\sigma:W_O(k)\to W_O(k)$ and $V:W_O(k)\to W_O(k)$ are the Frobenius and Verschiebung in the $O$-Witt vectors ring.
\end{lemma}
\begin{proof}
For any $i\in I$, we have
$$
\sigma(\iota(x_i))=\iota(\sigma(x_i))=\iota(x_i^q)=\iota(x_i)^q\in C.
$$
Therefore, $\sigma(S)\subset C\subset C_m$ for any $m\geq 1$. Therefore we have $F(C_m)\subset C_m$ by \eqref{def:cnm}. Hence, we have $F(C)\subseteq \bigcap_{m}\sigma(C_m)\subseteq \bigcap_m C_m=C$.
Applying the Verschiebung, we obtain the second inclusion.
\end{proof}

\begin{definition}
We define the $O$-Cohen frame as 
$\mcC_O(k):=(C,\varpi C,\sigma,\dot{\sigma})$, where $\sigma=F:C\to C$ and $\dot{\sigma}:=V^{-1}:\varpi C\to C$ is the inverse of the Verschiebung, which is well-defined by Lemma \ref{lem:restriction of FV to Cohen ring} and the fact that $V$ is injective. 
\end{definition}

We have a natural morphism from $O$-Cohen frame to the $O$-Witt frame: $i:\mcC_O(k)\to \mcW_O(k)$, which defines a functor from the category of $\mcW_O(k)$-windows to $\mcC_O(k)$-windows. We show that it defines an equivalence of the categories. The crucial observation is the following:

\begin{theorem}\label{thm:cohen-equiv}
\begin{altenumerate}
\item The ideal $\varpi C\subset C$ defines an $O$-pd-structure.
\item The functor
$$
i^*:
\{\mcC_O(k)-\text{windows}\}
\to
\{\mcW_O(k)-\text{windows}\}
$$
defines an equivalence of the categories, which remains an equivalence after restricting to the subcategories of nilpotent windows.
\item The category of nilpotent $\mcC_O(k)$-windows is equivalent to the category of formal strict $O$-modules.
\end{altenumerate}
\end{theorem}
\begin{proof}
Part (1) follows from Proposition \ref{prop:O-pd structure extension}. Part (2) follows from the same idea as the proof of \cite[Theorem 4]{Zink_windows}.
We construct the quasi-inverse of the functor $i^*$. For any $\mcW_O(k)$-window $\mcP$, i.e., an $O$-display, by Theorem \ref{thm:ACZ-main}, we obtain a Dieudonn\'e crystal $\mbD_\mcP$ over the category of $O$-pd-thickenings. We recover the $\mcC_O(k)$-windows by evaluating $\mbD_\mcP(C)$ by part (1). Part (3) is a consequence of Theorem \ref{thm:ACZ-main}.
\end{proof}

\bibliographystyle{alpha}
\bibliography{reference}
\end{document}